%typeset with AMSLaTeX format file
%\include{diagram.sty}
%\include{auxdefs.sty}
%\include{punot3}
%Style section
%\documentclass[amstex,amssymb,verbatim,12pt,leqno,diagram]{amsart}
%\usepackage{
\documentclass[10pt]{amsart}
\usepackage{latexsym, amssymb, amsfonts, amscd, multicol}

%Declaration section
\theoremstyle{plain}
\newtheorem{Thm}{Theorem}[subsection]
\newtheorem{Cor}[Thm]{Corollary}

\newtheorem{Prop}[Thm]{Proposition}
\newtheorem{Lem}[Thm]{Lemma}
\newtheorem{Cl}[Thm]{Claim}

\theoremstyle{definition}
\newtheorem{Emp'}{}[section]
\newtheorem{Def}[Thm]{Definition}

\newtheorem{Emp}[Thm]{}

\newtheorem{Not}[Thm]{Notation}

%\renewcommand{\theDef}{}

%Command section
\errorcontextlines=0
% page settings

%\voffset = -0.8cm
%\topmargin = 1in
%\evensidemargin = 0.3in
%\oddsidemargin = 0.3in
%\textheight = 7.6in
%\headheight = 12pt
%\textwidth = 5.9in

\newcommand{\fa}{\mathfrak a}
\numberwithin{equation}{section}

\newcommand{\qlbar}{\overline{\B{Q}}_{\ell}}
\newcommand{\fqbar}{\overline{\B{F}}_q}
\newcommand{\om}{\omega}
\newcommand{\cycl}{\langle u\rangle}
\newcommand{\La}{\Lambda}
\newcommand{\fg}{\frak{g}}

\newcommand{\ov}{\overline}

\newcommand{\fq}{\B{F}_q}

\newcommand{\B}[1]{\mathbb#1}

\newcommand{\C}[1]{\mathcal#1}

\newcommand{\gr}{\operatorname{gr}}

\newcommand{\isom}{\overset {\thicksim}{\to}}

\newcommand{\Om}{\Omega}
\newcommand{\Mod}{\on{Mod}}
\newcommand{\form}[1]{(\ref{Eq:#1})}
\newcommand{\ka}{\kappa}
\newcommand{\nr}{\operatorname{nr}}
\newcommand{\Ind}{\operatorname{Ind}}

\newcommand{\si}{\sigma}

\newcommand{\lra}{\longrightarrow}
\newcommand{\lla}{\longleftarrow}
\newcommand{\hra}{\hookrightarrow}
\newcommand{\wt}{\widetilde}
\newcommand{\wh}{\widehat}
\newcommand{\Gm}{\Gamma}
\newcommand{\G}{\bo{G}}
\newcommand{\bo}{\mathbf}

\newcommand{\gm}{\gamma}

\newcommand{\Dt}{\Delta}
\newcommand{\bs}{\backslash}

\newcommand{\lan}{\langle}
\newcommand{\ran}{\rangle}

\newcommand{\m}{^{\times}}

\newcommand{\al}{\alpha}

\newcommand{\la}{\lambda}

\newcommand{\rs}[1]{Section~\ref{S:#1}}
\newcommand{\rl}[1]{Lemma~\ref{L:#1}}
\newcommand{\rn}[1]{Notation~\ref{N:#1}}
\newcommand{\rcl}[1]{Claim~\ref{C:#1}}
\newcommand{\rp}[1]{Proposition~\ref{P:#1}}

\newcommand{\re}[1]{\ref{E:#1}}
\newcommand{\rco}[1]{Corollary~\ref{C:#1}}

\newcommand{\rt}[1] {Theorem~\ref{T:#1}}

\newcommand{\sm}{\smallsetminus}

\newcommand{\on}{\operatorname}

\newcommand{\inv}{\operatorname{inv}}
\newcommand{\I}{\mathbf I}
\renewcommand{\P}{\mathbf P}
\renewcommand{\Q}{\mathbf Q}
\newcommand{\T}{\mathbf T}
\newcommand{\ind}{\operatorname{ind}}
\newcommand{\sep}{\operatorname{sep}}
\newcommand{\can}{\operatorname{can}}
\newcommand{\pr}{\operatorname{pr}}
\newcommand{\ev}{\operatorname{ev}}
\newcommand{\Out}{\operatorname{Out}}
\newcommand{\Ker}{\operatorname{Ker}}
\newcommand{\Coker}{\operatorname{Coker}}

\newcommand{\val}{\operatorname{val}}

\newcommand{\Int}{\operatorname{Int}}
\newcommand{\im}{\operatorname{Im}}

\newcommand{\Spec}{\operatorname{Spec}}
\newcommand{\Aut}{\operatorname{Aut}}

\newcommand{\Ad}{\operatorname{Ad}}

\newcommand{\Gal}{\operatorname{Gal}}
\newcommand{\Tr}{\operatorname{Tr}}
\newcommand{\rss}{\operatorname{rss}}
\newcommand{\Fr}{\operatorname{Fr}}

\newcommand{\st}{\operatorname{st}}
\newcommand{\red}{\operatorname{red}}
\newcommand{\der}{\operatorname{der}}
\newcommand{\tor}{\operatorname{tor}}
\renewcommand{\sp}{\operatorname{sp}}

\newcommand{\Lie}{\operatorname{Lie}}
\newcommand{\reg}{\operatorname{reg}}
\newcommand{\ad}{\operatorname{ad}}
\renewcommand{\sc}{\operatorname{sc}}
\newcommand{\Id}{\operatorname{Id}}
\newcommand{\Fl}{\operatorname{Fl}}
\newcommand{\Rep}{\operatorname{Rep}}
\newcommand{\Proj}{\operatorname{Proj}}

\newcommand{\End}{\operatorname{End}}

\newcommand{\rk}{\operatorname{rk}}
\newcommand{\Hom}{\operatorname{Hom}}

\newcommand{\gen}{\operatorname{gen}}
\newcommand{\adj}{\operatorname{adj}}
\newcommand{\colim}{\operatorname{colim}}

\newcommand{\e}{\par \noindent}
\begin{document}
%Topmatter

\title[Affine Springer fibers and depth zero $L$-packets]%
{Affine Springer fibers and depth zero $L$-packets}

\author{Roman Bezrukavnikov}
\address{Department of Mathematics\\
Massachusetts Institute of Technology\\
77 Massachusetts Avenue\\
Cambridge, MA 02139, USA} \email{bezrukav@math.mit.edu}
\date{\today}
\author{Yakov Varshavsky}
\address{Einstein Institute of Mathematics\\
Edmond J. Safra Campus\\
The Hebrew University of Jerusalem\\
Givat Ram, Jerusalem, 9190401, Israel}
\email{yakov.varshavsky@mail.huji.ac.il}

\thanks{This research was partially supported by
the BSF grant 2020189. The research of R.B. was also partially supported by NSF grant
DMS-2101507. The research of Y.V. was partially supported by the
ISF grant 2091/21.}
%\author[]{}
%\address{} \email{}
\date{\today}

%\abstract
\begin{abstract}
Let $G$ be a connected reductive group over a field $F=\fq((t))$ splitting over $\fqbar((t))$. Following \cite{KV,DR}, a
tamely unramified Langlands parameter $\la:W_F\to{}^L G(\qlbar)$ in general position gives rise to a finite set $\Pi_{\la}$ of irreducible admissible representations of $G(F)$, called the $L$-packet.

The main goal of this work is to provide a geometric description of characters $\chi_{\pi}$ of $\pi\in\Pi_{\la}$ and of their endoscopic linear combinations $\chi_{\la}^{\ka}$ in terms of homology of affine Springer fibers, thus establishing an analog of Lusztig conjectures in this case.
Furthermore, each $\chi_{\la}^{\ka}$ can be described as the trace of Frobenius function of a conjugation equivariant perverse sheaf on the
loop group by the sheaf-function correspondence.

As another application, we prove that the sum $\chi_{\la}^{\st}:=\sum_{\pi\in\Pi_{\la}}\chi_{\pi}$ is stable and show that the $\chi_{\la}^{\st}$'s are compatible with inner twistings. More generally, we prove that each $\chi_{\la}^{\ka}$ is $\C{E}_{\la,\ka}$-stable.
\end{abstract}
\maketitle

%\centerline{Preliminary version}

\tableofcontents

\section*{Introduction}

\begin{Emp'}
{\bf The goal of this work.}

\smallskip

Let $F$ be a local non-archimedean field, $\Gm_F$ the absolute Galois group of $F$,
$I^w_F\subseteq I_F\subseteq W_F\subseteq\Gm_F$ the wild inertia, the inertia and the Weil group of $F$,
$\ell$ a prime number different
from the characteristic of $F$, $G$ a connected reductive group over
$F$ split over a maximal unramified extension $F^{\nr}$ of $F$, and
${}^LG=\wh{G}\rtimes\Gm_F$ the Langlands dual group of $G$ over $\qlbar$.

\smallskip

The local Langlands conjecture predicts that every Langlands parameter $\la:W_F\to{}^L G(\qlbar)$ gives rise
to a certain finite set $\Pi_{\la}$ (called the {\em $L$-packet}) of irreducible admissible representations of
$G(F)$. It is also believed that at least in the tempered case a certain explicit linear combination
$\chi^{\st}_{\la}=\sum_{\pi\in\Pi_{\la}}d_{\pi}\chi_{\pi}$ of characters of representations from $\Pi_{\la}$
is stable.

\smallskip

More generally (see \cite{Lan}), every semisimple element $\ka\in Z_{\wh{G}}(\la)\subseteq \wh{G}$ in the centralizer of $\la$ gives rise to  an {\em endoscopic datum} $\C{E}_{\la,\ka}$ for $G$ (see Section~\re{exenddatum}(b)), and a certain explicit linear combination $\chi^{\ka}_{\la}=\sum_{\pi\in\Pi_{\la}}d_{\pi}(\ka)\chi_{\pi}$ is expected to be {\em $\C{E}_{\la,\ka}$-stable}
(see Section~\re{estable}(d)). Furthermore, we have $\chi_{\la}^{\ka}=\chi_{\la}^{\st}$ when $\ka=1$, and
the $\chi^{\ka}_{\la}$'s spans the same vector space as  $\{\chi_{\pi}\}_{\pi\in\Pi_{\la}}$.

%, the linear span $\on{Span}_{\qlbar}\{\chi_{\pi}\,|\,\pi\in\Pi_{\la}\}$ is expected to have an explicit {\em endoscopic basis} $\{\chi^{\ka}_{\la}\}_{\ka}$, parameterized by the set of conjugacy classes in a finite group $S_{\la}:=\pi_0(Z_{\wh{G}}(\im\la))/Z(\wh{G})^{\Gm_F})$, where $Z_{\wh{G}}(\im\la)\subseteq \wh{G}$ denotes the centralizer and $Z(\wh{G})^{\Gm_F}$ denotes the group $\Gm_F$-invariants in the center of $\wh{G}$. Namely, each conjugacy class $\ka$ in $S_{\la}$ gives rise to an {\em endoscopic datum}  $\C{E}_{\la,\ka}$ for $G$, and each invariant generalized $\chi^{\ka}_{\la}$ on $G(F)$ is expected to be {\em $\C{E}_{\la,\ka}$-stable}.
%In particular, one  expects to have a map
%$\la\mapsto\chi^{\st}_{\la}$ from the set of Langlands parameters
%to the space $D_{\st}(G(F),\qlbar)$ of stable generalized functions on $G(F)$.

\smallskip

Though the conjectural generalized functions $\chi^{\ka}_{\la}$ are
predicted in some cases, there is no general direct procedure  producing the generalized
functions $\chi^{\ka}_{\la}$ for a given $\la$. It is expected, however, that
it should be possible to carry out such a construction using geometry of $\ell$-adic sheaves.

\smallskip

In the present article we realize this approach in a special case. In fact, due to the recent work \cite{BouKaV},
our result implies that the endoscopic linear combinations of characters in these $L$-packets arise as
the trace of Frobenius function of a conjugation equivariant perverse sheaves on the loop group $LG$.\footnote{As of now, this is only done 
under the assumption that the derived group $G^{\der}$ of $G$ is simply connected. Moreover, the perverse $t$-structure is only constructed on the locus of bounded regular semi-simple elements.}

\smallskip

These perverse sheaves are natural candidates for character sheaves on $LG$, even though this notion has not yet been defined in the literature in full generality; see, however, \cite{NY}, \cite{Lu3} for related notions. Thus, we establish the first nontrivial connection of this emerging theory to irreducible characters and endoscopy. Note that our earlier work  \cite{BKV1} connected similar $\ell$-adic sheaves to elements in the Bernstein center.

%\smallskip

%Conjugation equivariant sheaves on the loop group do not explicitly appear in this work. However, our result provides a new piece of evidence for existence of a loop group analogue of the theory of character sheaves: from this perspective our present results describe stalks of a hypothetical generic depth zero character sheaf (see \cite{Lu3}, where hypothetical unipotent character sheaves on the loop group have been introduced, and \cite{BKV1}, where related sheaves were used to categorify elements in Bernstein center).
%We plan to return to this topic in a future work.
\end{Emp'}

\begin{Emp'} \label{E:0}
{\bf Main results.}

\smallskip

Assume that $G$ splits over a maximal unramified extension $F^{\nr}$ of $F$, that Langlands parameter $\la$ is {\em elliptic}, that is, does not factor through ${}^LM\subseteq {}^LG$ for a proper Levi subgroup $M\subsetneq G$, that $\la$ is {\em tamely ramified}, that is, $I_F^w\subseteq\Ker \la$, and that $\la$ is in a {\em general position}, that is, $Z_{\wh{G}}(\la(I))\subseteq\wh{G}$ is a maximal torus $\wh{T}$.
In this case, $\la$ factors through ${}^LT\subseteq{}^LG$ for a certain elliptic maximal unramified torus $T\subseteq G$ over $F$, we have $Z_{\wh{G}}(\la)=\wh{T}^{\Gm_F}$.

\smallskip

This setting was considered first in the work \cite{KV}, where the expected $L$-packet $\Pi_{\la}$ and generalized functions $\chi_{\la}^{\ka}$ were introduced (for all fields). Moreover, the $\C{E}_{\la,\ka}$-stability of $\chi_{\la}^{\ka}$ was proven there in the mixed characteristic case by analytic methods (see \cite[Theorem~2.1.6]{KV} and Section~\re{KV} below). Slightly later, the stability for $\ka=1$ was also shown in \cite{DR} by a similar method.

%\smallskip
%Namely, the case of mixed characteristic was first studied by \cite{KV},
%non geometric (in the sense of geometry of $\ell$-adic sheaves)
%and the $\C{E}_{\la,\ka}$-stability of $\chi_{\la}^{\ka}$ was shown using a generalization of a theorem of Waldspurger asserting that
%Fourier transform preserves $\C{E}$-stability.   \footnote{For $\kappa=1$ the stability was also shown by a similar method in a slightly later work \cite{DR}.}.

%see \cite{KV} and (a slightly later work with a weaker result) \cite{DR}.

\smallskip
Assume now that $F=\fq((t))$. The main goal of this paper is to give a geometric description of the restriction of
%characters $\chi_{\pi}|_{G^{\rss}(F)},\pi\in\Pi$ and of their linear combinations
$\chi^{\ka}_{\la}|_{G^{\rss}(F)}$ to the set of  regular semisimple elements of $G(F)$. Namely, we express it in terms of a trace of Frobenius on the derived coinvariants of the homology of affine Springer fibers.\footnote{We do it under the assumption that the characteristic $p$ of $\fq$ satisfies $p>2h$, where $h$ is the Coxeter number of $G$.} In particular, our results can be viewed as analogs of Lusztig's conjectures \cite{Lu4} for non-unipotent depth zero representations.
%and to use this description in order to show the $\C{E}_{\la,\ka}$-stability
%relating the endoscopy phenomenon to a natural geometric statement. Namely, we do it under a mild restriction on the characteristic $p$ of $\fq$.
%\footnote{Namely, we do it under an assumption that the characteristic $p$ of $\fq$ satisfies $p>2h$, where $h$ is the Coxeter number of $G$.}

\smallskip

More precisely, to every compact element $\gm\in G^{\rss}(F)$ one can associate the affine Springer fiber $\Fl_{G,\gm}$ at $\gm$. By the local Langlands correspondence for tori, the Langlands parameter $\la$ gives rise to an $\ell$-adic local system $\C{F}^{\st}_{\theta}$ on $\Fl_{G,\gm}$ of rank $|W|$, the cardinality of the Weyl group $W$ of $G$ (the local system is trivial geometrically but it carries a nontrivial action of Frobenius, see Sections~\re{1}, \re{2}).

\smallskip

Generalizing \cite{Lu2}, \cite{Yun}, we equip each homology group $H_i(\Fl_{G,\gm},\C{F}^{\st}_{\theta})$ with an action of the semidirect product $\wt{W}_G\rtimes\Gal(\fqbar/\fq)$ of the extended affine Weyl group $\wt{W}_G$ of $G$ and the absolute Galois group of $\fq$.
Moreover, we show that  $H_i(\Fl_{G,\gm},\C{F}^{\st}_{\theta})$ is a finitely generated $\qlbar[\wt{W}_G]$-module, hence the group cohomology
$H_j(\wt{W}_G,H_i(\Fl_{G,\gm},\C{F}^{\st}_{\theta}))$ is a finite-dimensional $\qlbar[\Gal(\fqbar/\fq)]$-module, which is non-zero only for finitely many pairs $i,j$.

\smallskip

One of the main results of this work asserts that we have an equality
\begin{equation} \label{Eq:Main}
\chi^{\st}_{\la}(\gm)=\sum_{i,j}(-1)^{i+j}\Tr(\si,H_j(\wt{W}_G,H_i(\Fl_{G,\gm},\C{F}^{\st}_{\theta}))),
\end{equation}
where $\si\in\Gal(\fqbar/\fq)$ denotes the arithmetic Frobenius element. Furthermore, we have a similar geometric description for each $\chi^{\ka}_{\la}(\gm)$ (see equality~\form{Mainka}). In order to prove the result, we establish a geometric description of $\chi_{\pi}(\gm)$ for every $\pi\in\Pi_{\la}$ (see equality~\form{Main4}).

\smallskip

In fact, due to the recent work \cite{BouKaV},
our result implies that the endoscopic linear combinations of characters in these $L$-packets arise as
the trace of Frobenius function of a conjugation equivariant perverse sheaves on the loop group $LG$ (see Section~\re{char} below).

\smallskip

Using identity \form{Main} and (a group analog of) the result of \cite{Yun} on compatibility between the Springer--Lusztig action
and the centralizer action (see Section~\re{4}), we show that the restriction $\chi^{\st}_{\la}|_{G^{\rss}(F)}$ is stable, that is,
for every pair of stably conjugate elements $\gm,\gm'\in G^{\rss}(F)$ we have an equality
$\chi^{\st}_{\la}(\gm)=\chi^{\st}_{\la}(\gm')$. Furthermore, we show that the stable characters $\chi^{\st}_{\la}|_{G^{\rss}(F)}$ are compatible with inner twistings.\footnote{By a theorem of Harish--Chandra, in the mixed characteristic case the restriction of $\chi_{\la}^{\st}$ to the  regular semisimple locus is a locally constant function, which is locally $L^1$.
Therefore, to show stability of $\chi_{\la}^{\st}$ (and compatibility with inner twistings) it suffices to show the corresponding assertions for restrictions to the  regular semisimple locus.
On the other hand, the analogue of the theorem of Harish-Chandra does not seem to be known in equal characteristic.}

\smallskip

We also obtain a similar geometric description of $\chi^{\ka}_{\la}|_{G^{\rss}(F)}$
for each $\ka\in\wh{T}^{\Gm_F}$, and deduce (using theorem of Yun) that $\chi^{\ka}_{\la}|_{G^{\rss}(F)}$ is $\C{E}_{\la,\ka}$-stable.

\smallskip

Thus we provide the first proof of stability and endoscopy property (on the regular semisimple locus) for the above $L$-packets in the equal  characteristic case. In fact, it may also be possible to deduce these results from the mixed characteristic case
established in \cite{KV}
%\footnote{but not from the result of \cite{DR} since the bound on $p$ used in {\em loc. cit.} depends on the ramification over $\B{Q}_p$}
by the  techniques of \cite{Ka}.
Thus we view the geometric interpretation of characters and of their linear combinations
 rather than the proof of stability per se as the main novelty of this work.

\end{Emp'}

We now describe our results in more detail and outline the strategy of proofs.

\begin{Emp'} \label{E:1}
{\bf The construction of \cite{KV}.}

\smallskip

(a) By the local Langlands correspondence for tori, the Langlands parameter $\la:W_F\to{}^LT\subseteq {}^LG$
corresponds to a tamely ramified homomorphism
\[
\theta=\theta_{\la}:T(F)\to\qlbar\m.
\]

Let $\C{O}$ and $\fq$ be the ring of integers and the residue field of $F$, respectively. Since $T$ is unramified, it has a natural structure $T_{\C{O}}$ over $\C{O}$, hence gives rise to a torus $\ov{T}$ over $\fq$. Since $\theta$ is tamely ramified, the restriction $\theta|_{T(\C{O})}:T(\C{O})\to\qlbar\m$ factors through a character
$\ov{\theta}:\ov{T}(\fq)\to\qlbar\m$.

\smallskip

(b) Let $\fa:T\hra G$ be an embedding, which is stably
conjugate to the inclusion $\fa_0:T\hra G$. Since $\fa(T)\subseteq G$ is a maximal elliptic torus, there exists
a unique parahoric subgroup $G_{\fa}\subseteq G(F)$ such that
$\fa(T(\C{O}))\subseteq G_{\fa}$.

We denote by $LG$ the loop group ind-scheme of $G$, by $L^+(G_{\fa})\subseteq LG$ the arc-group scheme of the parahoric subgroup $G_{\fa}$,\footnote{that is, the arc-group scheme of the corresponding Bruhat--Tits group scheme over $\C{O}$, whose generic fiber is $G$.}
by $M_{\fa}$ the quotient of $L^+(G_{\fa})$ by its pro-unipotent radical, by $\ov{\fa}:\ov{T}\hra M_{\fa}$ the map induced by $\fa$.

Let  $R_{\ov{\fa}}^{\ov{\theta}}$ be the virtual Deligne--Lusztig representation of $M_{\fa}(\fq)$
corresponding to the maximal torus $\ov{\fa}(\ov{T})\subseteq M_{\fa}$ and
character $\ov{\theta}$. Since character $\theta$ is in general position,
$R_{\ov{\fa}}^{\ov{\theta}}$ is an irreducible cuspidal representation up to a sign.

\smallskip

(c) Let $Z(G)$ be the center of $G$, and denote by $R_{\fa}^{\theta}$ the representation of group $\wt{G}_{\fa}:=G_{\fa}\cdot Z(G)(F)$ such that the restriction of $R_{\fa}^{\theta}$ to $G_{\fa}$ is the inflation of $R_{\ov{\fa}}^{\ov{\theta}}$
and restriction of $R_{\fa}^{\theta}$ to $Z(G)(F)$ is the restriction of $\theta$.

Next, let $\pi_{\fa,\theta}$ be the induced representation $\ind_{\wt{G}_{\fa}}^{G(F)}(R_{\fa}^{\theta})$. Then each $\pi_{\fa,\theta}$ is an irreducible cuspidal representation up to a sign.

\smallskip

(d) Let $\fa_1,\fa_2,\ldots,\fa_n$ be a set of representatives of
conjugacy classes of embeddings $T\hra G$, which are stably
conjugate to the inclusion $\fa_0$. For every $i=1,\ldots,n$, we can consider the {\em general position} $\inv(\fa_i,\fa_0)\in H^1(F,T)$. Then, by the Tate-Nakayama duality, for every $\ka\in\wh{T}^{\Gm_F}$ we can form
a pairing $\lan\ka,\inv(\fa_i,\fa)\ran\in\qlbar\m$.

\smallskip

We denote by $\chi_{\la}^{\st}:=\sum_i\chi_{\pi_{\fa_i,\theta}}$
the sum of characters of the $\pi_{\fa_i,\theta}$'s. More generally, for every $\ka\in\wh{T}^{\Gm_F}$,
we set $\chi_{\la}^{\ka}:=\sum_{i=1}^n\lan\ka,\inv(\fa_i,\fa_0)\ran\chi_{\pi_{\fa_i,\theta}}$. By definition, we have
$\chi_{\la}^{\ka}=\chi_{\la}^{\st}$ when $\ka=1$.

%The result of \cite{KV}, \cite{DR} asserts that under some mild characteristic of $\fq$, the generalized function $\chi_{\la}^{\st}$ is stable.
\end{Emp'}

\begin{Emp'} \label{E:2}
{\bf Geometric interpretation of $\chi^{\st}_{\la}|_{G^{\rss}(F)_{c}}$.}

\smallskip

We now describe our geometric expression \form{Main} for the restriction of
$\chi^{\st}_{\la}$ to the locus $G^{\rss}(F)_c$ of compact
elements of $G^{\sc}(F)$ in more detail.\footnote{This implies the description of the restriction to the locus of all  regular semisimple elements, because characters
$\chi_{\pi_{\fa_i,\theta}}$ are supported on elements which are products of a compact element and a central element.}
%vanish on noncompact elements so we focus on compact regular semisimple elements.

\smallskip

(a) By Lang's isogeny, a character $\ov{\theta}:\ov{T}(\fq)\to\qlbar\m$ gives rise to a rank one local system $\C{L}_{\theta}$ on $\ov{T}$, equipped with a Weil structure.

\smallskip

(b) Let $\ov{T}_G$ be the ``abstract Cartan'' of the loop group $LG$ of $G$, that is, $\ov{T}_G$ is a torus over $\fq$,
which is canonically isomorphic to the quotient $\I/\I^+$ for every Iwahori subgroup scheme $\I$ of $LG$ (over $\fqbar$) with pro-unipotent radical $\I^+$.\footnote{Notice that though $\I$ is only defined over $\fqbar$, its Frobenius Galois conjugate ${}^{\si}\I$ is another  Iwahori subgroup scheme of $LG$, hence we have a natural isomorphism between $\ov{T}_G=\I/\I^+$ and ${}^{\si}\I/{}^{\si}\I^+\simeq {}^{\si}(\I/\I^+)={}^{\si}\ov{T}_G$. Therefore the torus $\ov{T}_G$, which is a priori is only defined over $\fqbar$, has a natural structure over $\fq$.}

\smallskip

(c) Let $\I$ be an Iwahori subgroup scheme of $LG$, defined over $\fqbar$ and containing the arc-group scheme
$L^+(T)=L^+(T_{\C{O}})$ of $T$. Then $\I$ defines an isomorphism $\varphi=\varphi_{T,\I}:\ov{T}\isom\I/\I^+\isom\ov{T}_G$, where the isomorphism $\ov{T}\isom\I/\I^+$ is induced by the composition $L^+(T)\hra\I\to\I/\I^+$. Such an isomorphism $\varphi$ we call {\em admissible}.

\smallskip

(d) Consider the quotient $\ov{W}=\ov{W}_G:=\wt{W}_G/\La_G$, where $\wt{W}_G$ is the extended Weyl group of $G$ and  $\La_G:=X_*(\ov{T}_G)$ is a lattice of cocharacters of $\ov{T}_G$. Then $\ov{W}$ is non-canonically isomorphic to the Weil group $W=W_G$ of $G$, and the collection of admissible isomorphisms $\varphi:\ov{T}\isom\ov{T}_G$ form a $\ov{W}$-torsor,
 and is stable under the $\Gal(\fqbar/\fq)$-action.

\smallskip

(e) Every admissible $\varphi$ gives rise to the rank one local system $\C{L}_{\theta,\varphi}:=\varphi_*(\C{L}_{\theta})$ on $\ov{T}_G$. Then it follows from part~(d) that $\C{L}^{\st}_{\theta}:=\bigoplus_{\varphi}\C{L}_{\theta,\varphi}$ is naturally a $\ov{W}$-equivariant local system on $\ov{T}_G$ of rank $|\ov{W}|$, equipped with a Weil structure. Also we have a canonical isomorphism $\C{L}^{\st}_{\theta}\simeq\bigoplus_{\ov{w}\in \ov{W}}\ov{w}_*(\C{L}_{\theta,\varphi})$
for every $\varphi$.
%}

\smallskip

(f) For every $\gm\in G^{\rss}(F)_c$, let $\Fl_{G,\gm}$ be the affine Springer fiber of $G$ at $\gm$, and
let $\red_{\gm}:\Fl_{G,\gm}\to\ov{T}_G$ be the natural projection. Namely, $\Fl_{G,\gm}$ classifies the locus of all $[g]\in LG/\I$ such that $g^{-1}\gm g\in\I$ and  $\red_{\gm}([g])$ is
the projection of $g^{-1}\gm g\in\I$ under the natural projection $\I\to\I/\I^+\simeq\ov{T}_G$, where  $\I$ is a fixed Iwahori subgroup.

\smallskip

(g) Since $\C{L}^{\st}_{\theta}$ is a local system on $\ov{T}_G$ equipped with a Weil structure,  its pull back $\C{F}^{\st}_{\theta}:=\red_{\gm}^*(\C{L}^{\st}_{\theta})$ is a (geometrically trivial) Weil local system on $\Fl_{G,\gm}$.
%since $\C{L}^{\st}_{\theta}$ is $W$-equivariant,

We show (see \rp{whaction}) that each homology group $H_i(\Fl_{G,\gm},\C{F}^{\st}_{\theta})$ is equipped with a $\wt{W}_G$-action. Moreover, we have a decomposition
\[
H_i(\Fl_{G,\gm},\C{F}^{\st}_{\theta})\simeq \bigoplus_{\varphi} H_i(\Fl_{G,\gm},\C{F}_{\theta,\varphi}),
\]
where $\C{F}_{\theta,\varphi}:=\red_{\gm}^*(\C{L}_{\theta,\varphi})$ a (geometrically trivial) rank one local system  on $\Fl_{G,\gm}$, and an element $w\in \wt{W}_G$ induces an isomorphism
\[
H_i(\Fl_{G,\gm},\C{F}_{\theta,\varphi})\isom H_i(\Fl_{G,\gm},\C{F}_{\theta,\ov{w}\circ\varphi}),
\]
where $\ov{w}\in\ov{W}$ is the class of $w$, for every $\varphi$.

\smallskip

(h) It turns out (see \rp{fingen}) that each $H_i(\Fl_{G,\gm},\C{F}^{\st}_{\theta})$ is a
finitely generated $\wt{W}_G$-module, therefore the group homology
$H_j(\wt{W}_G, H_i(\Fl_{G,\gm},\C{F}^{\st}_{\theta}))$
is a finite dimensional vector space equipped with an
action of $\Gal(\fqbar/\fq)$. Moreover, $H_j(\wt{W}_G, H_i(\Fl_{G,\gm},\C{F}^{\st}_{\theta}))$
is non-zero only for finitely many pairs $(i,j)$. Thus
\[
H_*(\wt{W}_G, H_*(\Fl_{G,\gm},\C{F}^{\st}_{\theta})):=\sum_i\sum_{j}(-1)^{i+j}H_j(\wt{W}_G, H_i(\Fl_{G,\gm},\C{F}^{\st}_{\theta}))
\]
is a virtual finite dimensional $\qlbar[\Gal(\fqbar/\fq)]$-module, and our identity \form{Main} can be rewritten as
\begin{equation} \label{Eq:Main2}
\chi_{\la}^{\st}(\gm)=\Tr(\si, H_*(\wt{W}_G, H_*(\Fl_{G,\gm},\C{F}^{\st}_{\theta}))).\footnote{see \form{Mainka} with $\ka=1$ for a slightly different expression of $\chi_{\la}^{\st}(\gm)$
which is less canonical but more economical, closer resembling the description of the stalk of a character sheaf on a finite dimensional group, and has a generalization to $\chi_{\la}^{\ka}(\gm)$ for all $\ka$.}
\end{equation}
%As a consequence, we give a geometric proof of the fact that the character
%$\chi_{\la}^{\st}$ is stable. Furthermore, we show that the $\chi_{\la}^{\st}$'s are {\em compatible with inner twistings}.
\end{Emp'}

\smallskip

To prove \form{Main2}, we provide a geometric interpretation of $\chi_{\pi_{\fa,\theta}}|_{G^{\rss}(F)_{c}}$ for each $\fa$ as in Section~\ref{E:1}(b).

\begin{Emp'} \label{E:3}
{\bf Geometric interpretation of $\chi_{\pi_{\fa,\theta}}|_{G^{\rss}(F)_{c}}$: geometrically elliptic case.}

\smallskip

(a) Fix an admissible isomorphism $\varphi:\ov{T}\isom\ov{T}_G$ (see Section~\re{1}). %It gives rise to the (geometrically trivial) rank one local system   $\C{F}_{\theta,\varphi}:=\red_{\gm}^*(\C{L}_{\theta,\varphi})$ on $\Fl_{G,\gm}$.
Let $G^{\sc}$ be the simply connected covering of the derived group of $G$, $\wt{W}:=\wt{W}_{G^{\sc}}$ the affine Weyl group of $G$, $\La:=X_*(\ov{T}_{G^{sc}})$ the group of cocharacters,  $T_{\fa}\subseteq G$ the image of $\fa$, and $\varphi_{\al}:\ov{T}_{\fa}\isom\ov{T}_G$ the composition $\varphi\circ\fa^{-1}$.

\smallskip

(b) We choose an Iwahori subgroup scheme $\I_{\fa,\varphi}\supseteq L^+(T_{\fa})$ of $LG$ defined over $\fqbar$ such $\varphi_{T_{\fa},\I_{\fa,\varphi}}=\varphi_{\fa}$. Then the  Galois conjugate ${}^{\si}\I_{\fa,\varphi}$ is another Iwahori subgroup of $LG$, so we can consider their relative position $\wt{w}_{\fa,\varphi}:=\wt{w}_{\I_{\fa,\varphi},{}^{\si}\I_{\fa,\varphi}}\in\wt{W}$, getting thus an element $\wt{u}_{\fa,\varphi}:=\wt{w}_{\fa,\varphi}\si\in\wt{W}\si\subseteq\wt{W}_G\rtimes\lan\si\ran$.

By construction, element $\wt{u}_{\fa,\varphi}$ is unique up to a $\La$-conjugacy, and each homology group $H_i(\Fl_{G,\gm},\C{F}_{\theta,\varphi})$ is naturally a $\qlbar[\La_G\rtimes\lan\wt{u}_{\fa,\varphi}\ran]$-module.

\smallskip

(c) Assume that $G$ is semisimple, and that element $\gm$ is geometrically elliptic. In this case, the affine Springer fiber $\Fl_{G,\gm}$ is a scheme  of finite type over $\fq$, hence each homology group $H_i(\Fl_{G,\gm},\C{F}_{\theta,\varphi})$ is a finite-dimensional $\qlbar$-vector space.

Therefore $H_*(\Fl_{G,\gm},\C{F}_{\theta,\varphi}):=\sum_i(-1)^i H_i(\Fl_{G,\gm},\C{F}_{\theta,\varphi})$ is a virtual finite-dimensional
$\qlbar[\lan\wt{u}_{\fa,\varphi}\ran]$-module, and our result asserts that in this case we have an equality
\begin{equation} \label{Eq:Main3}
\chi_{\pi_{\fa,\theta}}(\gm)=\Tr(\wt{u}_{\fa,\varphi},H_*(\Fl_{G,\gm},\C{F}_{\theta,\varphi})).
\end{equation}
In particular, the right hand side of \form{Main3} is independent of the choice of $\varphi$ and $\I_{\fa,\varphi}$.

\smallskip

(d) The identity \form{Main3} is an affine analog of a theorem of Lusztig (see \cite{Lu} or \cite{La}), asserting that characters of  Deligne--Lusztig representations of reductive groups over finite fields can be described as the traces of Frobenius on the cohomology of Springer fibers.
\end{Emp'}

The interpretation of $\chi_{\pi_{\fa,\theta}}$ in general is based on the following purely algebraic construction.

\begin{Emp'} \label{E:gentr}
{\bf Generalized trace} (see Appendix A){\bf.}

\smallskip

(a) Let  $\La$ be a finitely generated free abelian group, and let $\psi\in \Aut(\La)$ be an {\em elliptic} automorphism of finite order, where {\em elliptic} means that $\La^{\psi}=\{0\}$. Consider the semi-direct product $\Dt:=\La\rtimes\lan u\ran$, where $\lan u\ran$ is the infinite cyclic group generated by $u$, and $u$ acts on $\La$ as $\psi$, that is, $u\la u^{-1}=\psi(\la)$ for every $\la\in\La$.

\smallskip

(b) Recall that a $\qlbar[\Dt]$-module $M$ is called {\em $u$-locally finite}, if it is a union of $u$-invariant finite dimensional vector $\qlbar$-subspaces.
\smallskip

(c) To a $u$-locally finite $\qlbar[\Dt]$-module $M$, which is finitely generated as a $\qlbar[\La]$-module,
we associate a {\em generalized  trace} $\Tr_{\gen}(u,M)\in\qlbar$ characterized by the condition that the assignment
$M\mapsto \Tr_{\gen}(u,M)$ has the following three properties:

\smallskip

\quad\quad(i) ({\em additivity}) for every short exact sequence $0\to M'\to M\to M''\to 0$, we have an equality
 \[
 \Tr_{\gen}(u,M)=\Tr_{\gen}(u,M')+\Tr_{\gen}(u,M'');
 \]

\smallskip

\quad\quad(ii) we have  $\Tr_{\gen}(u,M)=\Tr(u,M)$, if $M$ is finite-dimensional over $\qlbar$;

\smallskip

\quad\quad(iii) for a one-dimensional representation $V$ of $\lan u\ran$ over $\qlbar$, viewed as a representation of $\Dt$, trivial on $\La$,
we have an equality
\[
\Tr_{\gen}(u,M\otimes_{\qlbar} V)= \Tr_{\gen}(u,M)\cdot\Tr(u,V).
\]

\smallskip

Explicitly, we choose a $\psi$-stable finite subset $\La_1\subseteq\La$, generating $\La$  as a monoid and a $u$-invariant  finite-dimensional subspace $M_0\subseteq M$, which generates $M$ as a $\qlbar[\La]$-module. This data gives a $u$-stable increasing filtration
$\{M_n\}_n$ of $M$, where $M_{n+1}=\sum_{\mu\in\La_1}\mu(M_n)\subseteq M$  for all $n\in\B{Z}_{\geq 0}$. Then every $M_n$ is finite-dimensional over $\qlbar$, we have an equality
\[
\Tr_{\gen}(u,M)=\Tr(u,M_n)\text{ for every sufficiently large }n.
\]
\end{Emp'}

\begin{Emp'} \label{E:3'}
{\bf Geometric interpretation of $\chi_{\pi_{\fa,\theta}}$: the general case.}

\smallskip

(a) Notice that in the notation of Section~\re{3} each $\qlbar[\La_{G}\rtimes\lan\wt{u}_{\fa,\varphi}\ran]$-module $H_i(\Fl_{G,\gm},\C{F}_{\theta,\varphi})$ is $\wt{u}_{\fa,\varphi}$-locally finite and finitely generated as a $\qlbar[\La_G]$-module.

\smallskip

(b) Let $Z_0\subseteq Z(G)$ be the maximal split torus, and let $\La_0:=X_*(Z_0)$ be the group of cocharacters. Then we have a natural isomorphism $\La_G/\La_0\simeq \La_{G/Z_0}$ and the group of coinvariants $H_i(\Fl_{G,\gm},\C{F}_{\theta,\varphi})_{\La_0}$ is naturally a $\qlbar[\La_{G/Z_0}\rtimes\lan\wt{u}_{\fa,\varphi}\ran]$-module, which is $\wt{u}_{\fa,\varphi}$-locally finite and finitely generated as a $\qlbar[\La_{G/Z_0}]$-module.

\smallskip

(c) Moreover, since the torus $T_{\fa}\subseteq G$ is elliptic, we see that the automorphism $\wt{u}_{\fa,\varphi}|_{\La_{G/Z_0}}\in\Aut( \La_{G/Z_0})$ is elliptic. Therefore all assumptions of Section~\re{gentr} are satisfied, thus we can form a generalized trace $\Tr_{\gen}(\wt{u}_{\fa,\varphi},H_i(\Fl_{G,\gm},\C{F}_{\theta,\varphi})_{\La_0})$ for each $i$, hence a generalized trace
\[
\Tr_{\gen}(\wt{u}_{\fa,\varphi},H_*(\Fl_{G,\gm},\C{F}_{\theta,\varphi})_{\La_0}):=
\sum_{i}(-1)^i\Tr_{\gen}(\wt{u}_{\fa,\varphi},H_i(\Fl_{G,\gm},\C{F}_{\theta,\varphi})_{\La_0}).
\]

\smallskip

(d) The main technical result of this work asserts that for every $\gm\in G^{\rss}(F)_c$, we have an equality
\begin{equation} \label{Eq:Main4}
\chi_{\pi_{\fa,\theta}}(\gm)=\Tr_{\gen}(\wt{u}_{\fa,\varphi},H_*(\Fl_{G,\gm},\C{F}_{\theta,\varphi})_{\La_0}).
\end{equation}
\end{Emp'}

\begin{Emp'} \label{E:ka}
{\bf Geometric interpretation of $\chi_{\la}^{\ka}|_{G^{\rss}(F)_{c}}$.} Fix $\ka\in\wh{T}^{\Gm_F}$.

\smallskip
(a) Fix an admissible isomorphism $\varphi:\ov{T}\isom \ov{T}_G$ (see Section~\re{3}), let $\fa_0:T\hra G$ be an inclusion, and let $\wt{u}_{\fa_0,\varphi}$ be as in Section~\re{3}(b). Then $\varphi$ induces an isomorphism $X_*(T)\simeq X_*(\ov{T})\simeq X_*(\ov{T}_G)=\La_G$. By construction, it interchanges the action of $\si$ on $X_*(T)$ with the action of $\wt{u}_{\fa_0,\varphi}$ on $\La_G$. Therefore
element $\ka\in \wh{T}^{\Gm_F}=\Hom(X_*(T)_{\Gm_F},\qlbar\m)$
can be viewed as a character $\ka:\La_G\ltimes\lan\wt{u}_{\fa_0,\varphi}\ran\to\qlbar\m$, trivial on $\wt{u}_{\fa_0,\varphi}$.

\smallskip

(b) As in Section~\ref{E:1}, $H_i(\Fl_{G,\gm},\C{F}_{\theta,\varphi})$ is a representation of $\La_G\ltimes\lan\wt{u}_{\fa_0,\varphi}\ran$. Let
$(\qlbar)_{\ka}$ be a one-dimensional representation of $\La_G\ltimes\lan\wt{u}_{\fa_0,\varphi}\ran$, corresponding to the character $\ka$ from part~(a). Taking tensor product, we get a representation
\[
H_i(\Fl_{G,\gm},\C{F}_{\theta,\varphi})_{\ka}:=H_i(\Fl_{G,\gm},\C{F}_{\theta,\varphi})\otimes_{\qlbar}(\qlbar)_{\ka}
\]
of $\La_G\ltimes\lan\wt{u}_{\fa_0,\varphi}\ran$. As in Section~\ref{E:2}(h), it is a finitely generated $\qlbar[\La_G]$-module.

\smallskip

(c) Consider an element $\ov{w}_{T,\varphi}\in \ov{W}$ such that ${}^{\si}\varphi=\ov{w}_{T,\varphi}^{-1}\circ\varphi$ and an element
$\ov{u}_{T,\varphi}:=\ov{w}_{T,\varphi}\si\in \ov{W}\si\subseteq \ov{W}\rtimes\lan\si\ran$. Then $\ov{u}_{T,\varphi}$ is the image of
$\wt{u}_{\fa_0,\varphi}$,  so the space
$H_j(\La_G,H_i(\Fl_{G,\gm},\C{F}_{\theta,\varphi})_{\ka})$ is a finite-dimensional $\qlbar[\lan\ov{u}_{T,\varphi}\ran]$-module. As in Section~\ref{E:2}(h), it is non-zero only for finitely many pairs $(i, j)$, so we can form a virtual finite-dimensional $\qlbar[\lan\ov{u}_{T,\varphi}\ran]$-module
\[
H_*(\La_G,H_*(\Fl_{G,\gm},\C{F}_{\theta,\varphi})_{\ka}):=\sum_{i,j}(-1)^{i+j}H_j(\La_G,H_i(\Fl_{G,\gm},\C{F}_{\theta,\varphi})_{\ka}).
\]
One of our main results asserts that we have an equality
\begin{equation} \label{Eq:Mainka}
\chi_{\la}^{\ka}(\gm)=\Tr(\ov{u}_{T,\varphi}, H_*(\La_G, H_*(\Fl_{G,\gm},\C{F}_{\theta,\varphi})_{\ka})).
\end{equation}
\end{Emp'}

\begin{Emp'}
{\bf Outline of proofs.}

\smallskip

(a) First we show  identity \form{Main4}. In the case when $G$ is semisimple and $\gm$ is geometrically elliptic
(hence identity \form{Main4} specializes to identity \form{Main3}), the assertion follows from an analogous assertion for finite groups (Lusztig's theorem \cite[Propositions~8.15 and 9.2]{Lu}) using Lefschetz trace formula and the observation that $\chi_{\pi_{\fa,\theta}}(\gm)$ can be written as an orbital integral of the character of the Deligne--Lusztig representation (compare \cite{Ar}).

The proof in the general case is much more technically involved and uses in addition finiteness properties of the affine Springer fibers, a group version of a theorem of Yun \cite{Yun} (see Section~\ref{E:4} below) and the injectivity property of the homology of affine Springer fibers established in \cite{BV}.

\smallskip

(b) Next we claim that identity \form{Main2} is equivalent to the identity \form{Mainka} for $\ka=1$.
Indeed, for every $\qlbar[\wt{W}_G\rtimes\lan\si\ran]$-module $V$, which is a finitely generated $\qlbar[\wt{W}_G]$-module, and every $j\in\B{N}$ we have an equality (see Section~\re{remtr})
\[
\Tr(\si,H_j(\wt{W}_G,V))=\Tr(\si,H_j(\La_G,V)^{\ov{W}})=\frac{1}{|\ov{W}|} \sum_{\ov{w}\in \ov{W}} \Tr(\ov{w}\si,H_j(\La_G,V)).
\]
Using this identity, the assertion follows from the identity $\C{L}^{\st}_{\theta}=\bigoplus_{\ov{w}\in \ov{W}}\ov{w}_*(\C{L}_{\theta,\varphi})$.

\smallskip
%\vskip 4 truept
(c) It remains to deduce identity \form{Mainka} from identity \form{Main4}. Assume that $G$ is semi-simple and simply connected. In this case, we have a formula
\begin{equation} \label{Eq:M}
\Tr(\ov{u}_{T,\varphi},H_*(\La_G,H_*(\Fl_{G,\gm},\C{F}_{\theta,\varphi})_{\ka}))=\sum_{\wt{u}}
\Tr_{\gen}(\wt{u},H_*(\Fl_{G,\gm},\C{F}_{\theta,\varphi})_{\ka}),
\end{equation}
where $\wt{u}$ runs over a set of representatives of the set of $\La_G$-conjugacy classes in
$\wt{W}\si\subseteq\wt{W}\rtimes\lan\si\ran$, whose image in $\ov{W}\rtimes\lan\si\ran$ equals $\ov{u}_{T,\varphi}$.

Indeed, this follows from a purely algebraic assertion (see \rp{trform}(b)) asserting that for every $u,\La$ and $M$ as in Section~\re{gentr}
we have an equality
\begin{equation} \label{Eq:M1}
\Tr(u,H_*(\La,M))=\sum_{\wt{u}}\Tr_{\gen}(\mu u,M),
\end{equation}
where $\mu u$ runs over the set of $\La$-conjugacy classes in $\La u$.

\smallskip

(d) To show formula \form{M1}, we note that both of its sides are additive in $M$. Set $\La_{\psi}:=\La/(\psi-1)\La$.
Using localization theorem in equivariant $K$-theory we reduce to the case, when $M=M_{\xi}$ corresponds to a character
\[
\La\rtimes\lan u\ran\to\La_{\psi}\times\lan u\ran\to \La_{\psi}\overset{\xi}{\to}\qlbar\m.
\]

In this case, we have to show that the equality
\[
\Tr(\psi,H_*(\La,M_{\xi}))=\sum_{\mu\in\La_{\psi}}\la(\xi).
\]

If $\xi$ is non-trivial, then both sides of the last equality
vanish, while if $\xi$ is trivial, both sides are equal to $\det(1-{\psi},\La)=|\La_\psi|$.

\smallskip

(e) By Galois cohomology arguments, the correspondence $\fa_i\mapsto \wt{u}_{\fa_i,\varphi}$ defines a bijection between the set representatives of conjugacy classes of embeddings $\fa:T\hra G$, stably conjugate to the inclusion $\fa_0:T\hra G$, and the set of representatives of conjugacy classes in the summation of the right hand side of formula \form{M}. Moreover, unwinding definitions we have an equality
\[
\Tr_{\gen}(\wt{u}_{\fa_i,\varphi},H_*(\Fl_{G,\gm},\C{F}_{\theta,\varphi})_{\ka})=\lan\ka,\inv(\fa_i,\fa_0)\ran
\Tr_{\gen}(\wt{u}_{\fa_i,\varphi},H_*(\Fl_{G,\gm},\C{F}_{\theta,\varphi})).
\]
Therefore identity \form{Mainka} is an immediate consequence of the combination of formulas \form{Main4} and \form{M}. For a general reductive group $G$, the strategy is similar.

\end{Emp'}

\begin{Emp'} \label{E:char} 
{\bf Relation to equivariant perverse sheaves.} Let $\frak{C}_{\bullet}\subseteq LG$ be the locally closed sub-indscheme such that $\frak{C}_{\bullet}(\fq)\subseteq LG(\fq)=G(F)$ is the locus of compact regular semisimple elements.

\smallskip 

(a) The local system $\C{L}_{\theta,\varphi}$ on $\ov{T}_G$ (see Section~\re{2}(e)) gives rise to the affine Grothendieck--Springer sheaf $\mathcal{S}_{\theta,\varphi,\bullet}:=\mathcal{S}_{\C{L}_{\theta,\varphi},\bullet}$ on the quotient stack $[\frak{C}_{\bullet}/LG]$, equipped with an action of the group $\La_G$ (see \cite[Corollary~3.1.4(b)]{BouKaV}). Moreover, an element $\ka\in\wh{T}^{\Gm_F}$ gives rise to a character $\ka$ of $\La_G$ (see Section~\re{ka}(a)), and
\cite[Theorem~3.2.1]{BouKaV} asserts that the sheaf of $\ka$-coinvariants $\mathcal{S}_{\theta,\varphi,\bullet,\ka}$ of $\mathcal{S}_{\theta,\varphi,\bullet}$ is perverse.

\smallskip 

(b) Furthermore, the local system $\C{L}_{\theta,\varphi}$ on $\ov{T}_G$ is equipped with a Weil structure, and this structure induces Weil  structures on $\mathcal{S}_{\theta,\varphi,\bullet}$ and $\mathcal{S}_{\theta,\varphi,\bullet,\ka}$. Then, combining
formula \form{Mainka} with the ind-fp-proper base change one can show that the function $\chi_{\la}^{\ka}$ on $G^{\rss}(F)$ is obtained from
perverse sheaf $\mathcal{S}_{\theta,\varphi,\bullet,\ka}$ by the Grothendieck sheaf-to-function correspondence (see \cite{BKV2}). 
\end{Emp'}

\begin{Emp'} \label{E:4}
{\bf A group analog of a theorem of Yun and applications.} Our arguments rely heavily on the following result,
which is essentially due to Yun \cite{Yun}.\footnote{More precisely,
Yun proved the corresponding result for Lie algebras using global
methods, while we deduce its group version (see \rt{action}) using the
topological Jordan decomposition and quasi-logarithm maps (see Appendix B).}

\smallskip

(a) Each the homology group
$H_i(\Fl_{G,\gm},\C{F}_{\theta,\varphi})$ is equipped with an action of the loop group $LG^0_{\gm}:=L(G_{\gm}^0)$ of the connected centralizer
$G^0_{\gm}=(G_{\gm})^0\subseteq G$, commuting with the action of $\La_G$. Moreover,
this action factors through the group of connected components
$\pi_0(LG^0_{\gm})$.

\smallskip

(b) Note that there is a canonical homomorphism
$\pr'_{\gm}:\qlbar[\La_G]^{W_G}\to\qlbar[\pi_0(LG^0_{\gm})]$ of group algebras (see Section~\re{setup comp}(c)).
Namely, we have natural isomorphisms $\qlbar[\pi_0(LG^0_{\gm})]\simeq\qlbar[\wh{G^0_{\gm}}^{\Gm_F}]$
and  $\qlbar[\La_G]^{W_G}\simeq\qlbar[c_{\wh{G}}]$, where algebras on the right are the algebras of regular functions and
$c_{\wh{G}}$ denotes the Chevalley space of $\wh{G}$.

Under these identifications, homomorphism $\pr'_{\gm}$ is defined to be the pullback map
$(\can'_{\gm})^*: \qlbar[c_{\wh{G}}]\to \qlbar[\wh{G^0_{\gm}}^{\Gm_F}]$, where  $\can'_{\gm}:\wh{G^0_{\gm}}^{\Gm_F}\to c_{\wh{G}}$ denotes the composition
\[
\wh{G^0_{\gm}}^{\Gm_F}\overset{\iota}{\lra}\wh{G^0_{\gm}}^{\Gm_F}\hra\wh{G}\overset{\nu_{\wh{G}}}{\lra}c_{\wh{G}},
\]
where $\iota$ is the map $g\mapsto g^{-1}$, and $\nu_{\wh{G}}$ is the Chevalley map.

\smallskip

(c) Theorem of Yun asserts that there is a finite filtration of $H_i(\Fl_{G,\gm},\C{F}_{\theta,\varphi})$, stable under the action of  $\La_G\times \pi_0(LG^0_{\gm})$, such that the induced action of $\qlbar[\La_G]^{W_G}$ on each graded
piece is induced from the action of $\pi_0(LG^0_{\gm})$ via
homomorphism $\pr'_{\gm}$.

\smallskip

%Recall that
%$\Fl_{G,\gm}$ is an ind-scheme and there exists a lattice $\La_{G,\gm}\subseteq LG^0_{\gm}$  such that the quotient
%$\La_{G,\gm}\bs \Fl_{G,\gm}$ is of finite type (compare Section~\re{cent}). Therefore each
%It is well-known that each $H_i(\Fl_{G,\gm},\C{F}_{\theta,\varphi})$ is a finitely generated
%$\qlbar[\La_{G,\gm}]$-module, hence a finitely generated
%$\qlbar[\pi_0(LG^0_{\gm})]$-module.

(d) It is well-known that each $H_i(\Fl_{G,\gm},\C{F}_{\theta,\varphi})$ is a finitely generated $\qlbar[\pi_0(LG^0_{\gm})]$-module.
Using the theorem of Yun from part~(c) we deduce that each $H_i(\Fl_{G,\gm},\C{F}_{\theta,\varphi})$ is a finitely
generated $\qlbar[\La_G]^{W_G}$-module, hence a finitely generated $\qlbar[\La_G]$-module.\footnote{The statement that homologies of affine Springer fibers are finitely generated over $\wt{W}_G$ appears also as Conjecture 3.6 in \cite{Lu3}. It is also
mentioned in {\em loc. cit.} that the statement should follow from the result of \cite{Yun}.}
%
%\footnote{The statement that cohomology of an affine Springer fiber is finitely generated over $\wt{W}$ and the idea
%to prove it using the result of \cite{Yun} is due to Lusztig \cite{Lu3}.}
This finiteness property is needed in order to make sure that the right hand side of formulas \form{Main2}, \form{Main4} and \form{Mainka} above
are defined.
%
%the trace $\Tr(\si,H_*(\wt{W}_G, H_*(\Fl_{G,\gm},\C{F}^{\st}_{\theta})))$
%in Section~\re{1} and generalized trace $\Tr_{\gen}(\wt{u}_{\fa,\varphi},H_*(\Fl_{G,\gm},\C{F}_{\theta,\varphi})_{\La_0})$ in Section~\re{3'}.

\smallskip

(e) Theorem of Yun also implies that $\pi_0(LG^0_{\gm})$ acts unipotently on each homology group
$H_j(\wt{W}_G, H_i(\Fl_{G,\gm},\C{F}^{\st}_{\theta}))$. Combining this fact with formula \form{Main2},  stability of $\chi^{\st}_{\la}|_{G^{\rss}(F)}$ follows.
%This implies the stability of the function
%$\gm\mapsto\Tr(\si,H_*(\wt{W}_G, H_*(\Fl_{G,\gm},\C{F}^{\st}_{\theta})))$.
\footnote{In a work in progress \cite{BKV2} we found an alternative (purely local) proof of the fact that each $H_i(\Fl_{G,\gm},\C{F}^{\st}_{\theta})$ is a finitely generated $\qlbar[\wt{W}_G]$-module. However,
we do not know an alternative proof of stability.}

\smallskip

(f) More generally, theorem of Yun plays a central role in showing that the restriction $\chi^{\ka}_{\la}|_{G^{\rss}(F)}$ is $\C{E}_{\la,\ka}$-stable for every $\ka\in\wh{T}^{\Gm_F}$. Namely, using the algebra isomorphism $\qlbar[\pi_0(LG^0_{\gm})]\simeq \qlbar[\wh{G^0_{\gm}}^{\Gm_F}]$,
each representation $H_j(\La_G, H_i(\Fl_{G,\gm},\C{F}_{\theta,\varphi})_{\ka})$ of $\pi_0(LG^0_{\gm})$ gives rise to a coherent sheaf $\C{A}$ on $\wh{G^0_{\gm}}^{\Gm_F}$ and theorem of Yun implies that $\C{A}$ is supported on the finite subset
$(\can'_{\gm})^{-1}(\nu_{\wh{G}}(\ka^{-1}))$.

Combining this fact with formula \form{Mainka} we conclude that $\chi^{\ka}_{\la}|_{G^{\rss}(F)}$ is $\C{E}_{\la,\ka}$-stable, if the center $Z(G)$ of $G$ is assumed to be connected. To show the assertion in general, we observe that the assertion for
$G$ follows from that for $G_1:=G\times^{Z(G)}T$ and that the center of $G_1$ is connected.
\end{Emp'}

\begin{Emp'} \label{E:KV}
{\bf Comparison with \cite{KV}.}

\smallskip

(a) While the method of the current paper is very different from that of \cite{KV},
the proofs of the $\C{E}_{\la,\ka}$-stability of $\chi_{\la}^{\ka}$ have one common feature: though the arguments in both papers are mostly local, each of them uses one crucial ingredient, whose proof is global. Namely, the argument of \cite{KV} is based on a generalization of a   theorem of Waldspurger \cite{Wa} asserting that Fourier transform preserves $\C{E}$-stability, whose proof uses Arthur--Selberg trace formula, while the argument of the current paper is based on a theorem of Yun, described above, whose proof uses geometry of the Hitchin fibration.

\smallskip

(b) As our proof of $\C{E}_{\la,\ka}$-stability of $\chi_{\la}^{\ka}$ is mostly geometric, the argument for a general $\ka$ is not significantly more difficult than the case of $\ka=1$. On the other hand, the situation is very different in the setting of \cite{KV}.
Namely, there the $\C{E}_{\la,\ka}$-stability of $\chi_{\la}^{\ka}$ is deduced from the fact that the restrictions of $\chi_{\la}^{\ka}$ to topologically unipotent elements are compatible with inner twistings, and  even the formulation of the latter assertion is significantly
more difficult for $\ka\neq 1$.
\end{Emp'}

\begin{Emp'}
{\bf Remark.}
It is tempting to speculate that the parallel roles of the  Arthur--Selberg trace formula and of the geometry of the Hitchin fibration in the two approaches to the present story
(see Section~\re{KV}(a))
 can be explained by a direct connection between the two theories. In particular, one can hope for a categorification of the trace formula
involving sheaves on the Hitchin space.
\end{Emp'}

\begin{Emp'} \label{E:convention}
{\bf Notation and conventions.} Let $k$ \label{a:k} be an algebraically closed field, and  $\ell$ \label{a:ell} be a prime different from the characteristic of $k$.

\smallskip

(a) For a field $L$, we denote by $\ov{L}$ \label{a:lbar} and $L^{\sep}$ \label{a:lsep}the algebraic and the separable closure of $L$, respectively, and by $\Gm_L$ \label{a:gml} the absolute Galois group $\Gal(L^{\sep}/L)$ of $L$. We denote by $\fq$ the finite field with $q$ elements and by $\si\in\Gm_{\fq}$ \label{a:si} the arithmetic Frobenius.

\smallskip

(b) Let $Y$ be a scheme of finite type over $k$.
For every $\C{F}\in D_c^b(Y,\qlbar)$ and $i\in\B{Z}$ we set $H_i(Y,\C{F}):=H^i(Y,\C{F})^{*}$\label{a:hi}. Every morphism $f:Y'\to Y$  induces a morphism on homology $f_*:H_i(Y',f^*\C{F})\to H_i(Y,\C{F})$. For an ind-scheme
$Y=\colim_i Y_i$ we denote by $H_i(Y,\C{F})$ the inductive limit $\on{colim}_i H_i(Y,\C{F}|_{Y_i})$, where $\C{F}|_{Y_i}$ denoted the $*$-pullback of $\C{F}$ to $Y_i$.

\smallskip

(c) For every scheme $Y$ of finite type over a finite field $\fq$, we will identify every $\C{F}\in D_c^b(Y,\qlbar)$ with the corresponding
Weil sheaf, that is, a pair $(\C{F},\varphi)$, where $\C{F}\in D_c^b(Y\otimes_{\fq}\fqbar,\qlbar)$ and $\varphi$ is an isomorphism
$\si^*\C{F}\isom\C{F}$. Also to simplify the notation, we will always write $H^i(Y,\C{F})$ instead of
 $H^i(Y\otimes_{\fq}\fqbar,\C{F})$ and similarly for $H_i(Y,\C{F})$.

\smallskip

(d) Let $G$ be a connected reductive group over $K=k((t))$, and let $G^{\rss}\subseteq G$ be the locus of regular semisimple elements. Following Yun \cite[Section~1.4]{Yun} we assume that either $\on{char} k=0$ or $\on{char} k > 2h$, where $h$ is
the Coxeter number of $G$.

\smallskip

(e) In Sections~4 and 5 we assume that $k=\fqbar$ and that
 $G$ is a connected reductive group over $F=\fq((t))$, usually split over $F^{\nr}:=F\otimes_{\fq}\fqbar$.

%one plus the sum of coefficients of the highest root of $G$ written in terms of simple roots.
\end{Emp'}

\begin{Emp'}
{\bf Plan of the paper.} The paper is organized as follows:

\smallskip

In Section 1 we introduce basic notation and formulate a theorem of Lusztig \cite{Lu} describing characters of
Deligne--Lusztig representations in terms of the cohomology of Springer fibers.

\smallskip

In Section 2 we recall the construction of the Lusztig action of the extended affine group on the homology of the affine Springer fibers, slightly generalizing \cite{Lu2}. We also formulate a group version of a theorem of Yun mentioned above.

\smallskip

In Section 3, we will establish finiteness properties of the homology affine Springer fibers, which we need later.

\smallskip

In Section 4 we review basic notions related to endoscopy, classify maximal unramified tori in unramified reductive groups over $p$-adic fields,
and formulate our main theorems. The proofs of these results are carried out in Section 5.

\smallskip

We finish this paper by two appendixes used in Section 5 and having independent interest:

\smallskip

In Appendix A we introduce the generalized trace, described above, and show its basic properties.

\smallskip

In Appendix B we study properties of the affine Springer fibers and deduce the group version of a theorem of Yun
from its original (Lie algebra) version.
\end{Emp'}

\begin{Emp'}
{\bf Acknowledgements.} The idea of this work dates back to 2009,
and its present form has been influenced by the many discussions with David Kazhdan and Zhiwei Yun
held since then.\footnote{The first draft of this work under the title ``Homology of affine Springer fibers, characters of Deligne--Lusztig representations, and stability", appeared around 2011 and was cited in \cite{Yun} and \cite{BKV1}.}  It is a pleasure to express our gratitude to both of them.
Our key point here is reduction of stability to a geometric question resolved in \cite{Yun}, so we are also grateful to Yun for this contribution that made our work possible.
The first author wants to thank Bob Kottwitz for discussions that influenced his understanding of the subject reflected, in particular, in this work.

%The authors were supported by the ISF, BSF and NSF.

\end{Emp'}

\section{Springer fibers and Deligne--Lusztig representations}
In this section we will recall mostly standard facts concerning action of the Weyl group on the cohomology of Springer fibers
and its relation with Deligne--Lusztig representations.

\subsection{The abstract Cartan and the Weyl group} \label{S:prel}
Let $H$ be a connected algebraic group over an algebraically closed field $k$.

\begin{Emp} \label{E:abscartan}
{\bf The construction}. Following \cite[Section~1.1]{DL}, we define the abstract Cartan $T_H$ \label{a:th} of
$H$, the Weyl group  $W_H$  \label{a:wh}  of $H$, and the set of simple reflections $S_H\subseteq W_H$. \label{a:sh}

\smallskip

(a) For every pair $(T,B)$, where $B\subseteq H$ is a Borel subgroup and $T\subseteq B$ is a maximal torus, we set $W_{H,T}:=N_H(T)/T$,  \label{a:wht} where $N_H(T)\subseteq H$  \label{a:nht} is the normalizer of $T$, and let $S_{H,B,T}\subseteq
W_{H,T}$  \label{a:shbt}  be the corresponding set of simple reflections. Then group $W_{H,T}$ acts on $T$ by conjugation, and the inclusion of groups $N_T(H)\hra H$ induces a bijection of sets $W_{H,T}\isom B\bs H/B$.

\smallskip

(b) For every two pairs $(B,T)$ and $(B',T')$ as in part~(a),
there exists a unique isomorphism $\phi:T\isom T'$, which is of the form
$t\mapsto hth^{-1}$ for some $h\in H$ such that $h(B,T)h^{-1}=(B',T')$.
Then isomorphism $\phi$ induces a group isomorphism $W_{H,T}\isom W_{H,T'}$ and a bijections $S_{H,T,B}\isom
S_{H,T',B'}$.

\smallskip

(c) We denote by  $T_H$ the projective limit $\lim_{(B,T)}T$, taken over the set of pairs
$(B,T)$ as in part~(a) where the transition maps are as in part~(b), and call $T_H$ the {\em abstract Cartan} of $H$.
Similarly, we set $W_H:=\lim_{(B,T)}W_{H,T}$ and call it the {\em abstract Weyl group} of $H$, and
denote by $S_H:=\lim_{(B,T)}S_{H,B,T}\subseteq W_H$ the {\em set of simple reflections} in $W_H$.
Note that $T_H$ is a torus, and the Weyl group $W_H$ naturally acts on $T_H$.
\end{Emp}

\begin{Emp} \label{E:weylprop}
{\bf Properties.}

\smallskip

(a) By construction, for each pair $(B,T)$ as in Section~\re{abscartan}(a), we have a canonical isomorphism $\varphi_{T,B}:T\isom T_H$
\label{a:varphitb} of tori,
a canonical isomorphism $W_{H,T}\isom W_{H}$ of groups, and a canonical bijection  $S_{H,T,B}\isom S_{H}$ of sets.

\smallskip

(b) For every pair $(T,B)$ as in Section~\re{abscartan}(a), we have a natural isomorphism of tori
$T\isom B/R_u(B)$, defined to be the composition $T\hra B\to B/R_u(B)$.  Moreover, the composition $B/R_u(B)\isom T\overset{\varphi_{T,B}}{\lra}
T_H$ is independent of $T$. We denote by $\pr_B$  \label{a:prb} the composition
$\pr _B:B\to B/R_u(B)\isom T_H$.

\smallskip

(c) For every two Borel subgroups $B,B'\subseteq H$, we can define
their relative position $w_{B,B'}\in W_H$  \label{a:wbb'} to be the image of
$h\in H$ such that $hBh^{-1}=B'$ under the  composition $H\to B\bs
H/B\isom W_{H,T}\isom W_H$ (for some maximal torus $T\subseteq B$).

By direct calculation, one sees that $w_{B,B'}$ is independent
of the choice of $T\subseteq B$, and that for every three Borel
subgroups $B,B',B''$ containing a maximal torus $T$, we have an equality
$w_{B,B''}=w_{B,B'}\cdot w_{B',B''}$.

\smallskip

(d) Consider the {\em Chevalley space} $c_H:=\Spec k[H]^H$  \label{a:ch} of $H$, where $H$ acts on $k[H]$ via the adjoint action,
and let $\nu_H$  \label{a:nuh} be the natural projection $H\to c_H$.

Then $\nu_H$
induces a morphisms $\nu_{T_H}:T_H\to c_H$  \label{a:nuth}  defined as a limit of morphisms
$\nu_H|_T:T\to c_H$, taken over all $(T,B)$ as in Section~\re{abscartan}(a).

Moreover, morphism $\nu_{T_H}$ induces an isomorphism $W_H\bs T_H\isom c_H$, and the restriction
$\nu^{\rss}_{T_H}:T^{\rss}_H\to c^{\rss}_H$ to the
 regular semisimple locus $c_H^{\rss}\subseteq c_H$ is a $W_H$-torsor.
\end{Emp}

\begin{Emp}  \label{E:admis}
{\bf Admissible isomorphisms.} Let $T\subseteq H$ be a maximal torus.

\smallskip

(a) We say that an isomorphism $\varphi:T\isom T_H$ of tori is {\em
admissible},  \label{a:adisom}
if there exists a Borel subgroup $B\supseteq T$ of $H$
such that $\varphi=\varphi_{T,B}$ (see Section~\re{weylprop}(a)).

\smallskip

(b) Note that for every two Borel subgroups $B,B'\supseteq T$, we have a
formula $\varphi_{T,B'}=w_{B,B'}^{-1}\circ \varphi_{T,B}$.
Therefore the set of admissible isomorphisms is a $W_H$-torsor,
and the map $B\mapsto\varphi_{T,B}$ is a bijection between the set of
Borel subgroups of $H$ containing $T$ and the set of admissible isomorphisms
$T\isom T_H$.

\smallskip

(c) Note that every admissible isomorphism $\varphi:T\isom T_H$ induces an isomorphism
$\varphi^{-1}_*:X_*(T_H)\isom X_*(T)$ between groups of cocharacters, hence an isomorphism
$\varphi^{-1}_*:L[X_*(T_H)]\isom L[X_*(T)]$ between the corresponding group algebras
for every field $L$. Moreover, since the set of admissible isomorphisms is a $W_H$-torsor (see part~(b)),
its restriction
\[
\varphi_*^{-1}|_{L[X_*(T_H)]^{W_H}}: L[X_*(T_H)]^{W_H}\hra L[X_*(T_H)]\isom L[X_*(T)]
\]
to the algebra of $W_H$-invariants is independent of $\varphi$, and makes $L[X_*(T)]$ a finite $L[X_*(T_H)]^{W_H}$-algebra.
\end{Emp}

\begin{Emp} \label{remadm}
{\bf Remark.} Notice that an isomorphism $\varphi:T\isom T_H$ of tori is admissible if and only if
$\nu_{T_H}\circ\varphi=\nu_H|_{T}$. Indeed, for every admissible isomorphism $\varphi$, the equality $\nu_{T_H}\circ\varphi_{T,B}=\varphi|_T$
follows by the definition of $\nu_{T_H}$ (see Section~\re{weylprop}(d)). Thus the
assertion follows from the fact that $\Aut_{c_H}(T_H)=W_H$, and that the set of
admissible isomorphisms is a $W_H$-torsor.
\end{Emp}

\begin{Emp} \label{E:rationality}
{\bf Rationality.} Let $H$ be a connected reductive group over $k$, defined over its subfield $f\subseteq k$.
Then we claim that the abstract Cartan $T_H$ has a natural structure of a torus over $f$, the Weyl group $W_H$ is equipped with a $\Gm_{f}$-action, the action of $W_H$ on $T_H$ is $\Gm_f$-equivariant, and the subset $S_H\subseteq W_H$ is $\Gm_f$-invariant.
Furthermore, $T_H$ is split over $f$, if $H$ is split over $f$.

\smallskip

\begin{proof}

First we claim that if $H$ splits over $f$, then $T_H$ has a natural structure of a split torus over $f$. Indeed, choose a maximal torus $T$ of $H$ split over $f$ and a Borel subgroup $B\supseteq T$ of $H$. Then the canonical isomorphism $\varphi_{T,B}:T\isom T_H$ from Section~\re{weylprop}(a)) gives to $T_H$ a structure of a split torus over $f$, and it is easy to see that this structure is independent of the choice of $(T,B)$.

\smallskip

Since $H$ splits over $f^{\sep}$, it follows from the proven above that $T_H$ has a natural structure of a torus over $f^{\sep}$. Next we claim that for every element $\tau\in\Gm_f$ of the absolute Galois group of $f$ we have natural isomorphisms ${}^{\tau}T_H\isom T_H$ and $\tau:W_H\isom W_H$.

\smallskip

Choose a maximal torus $T$ of $H$ defined (and hence split) over $f^{\sep}$ and a Borel subgroup $B\supseteq T$ of $H$ (automatically defined over $f^{\sep}$). Then the Galois conjugates ${}^{\tau}B\subseteq H$ and   ${}^{\tau}T\subseteq {}^{\tau}B$ are a Borel subgroup and a maximal torus of $H$ as well, and it is easy to see that the composition
\[
\varphi_{{}^{\tau}T,{}^{\tau}B}\circ {}^{\tau}\varphi_{T,B}^{-1}:{}^{\tau}T_H\isom {}^{\tau}T \isom T_H
\]
is independent of the choice of $(T,B)$.

\smallskip

Similarly, the map $\tau:T\to {}^{\tau}T:t\mapsto {}^{\tau}t$ induces a group isomorphism
\[
\tau:W_{H,T}=N_H(T)/T\isom N_H({}^{\tau}T)/{}^{\tau}T=W_{H,{}^{\tau}T},
\]
and the composition
\[
W_H\isom W_{H,T}\isom W_{H,{}^{\tau}T}\isom W_H
\]
is independent of the choice of $(T,B)$. Finally, the assertion about $S_H$ follows from the fact that isomorphism
$\tau:W_{H,T}\isom W_{H,{}^{\tau}T}$ maps $S_{H,B,T}$ to $S_{H,{}^{\tau}B,{}^{\tau}T}$.
\end{proof}
\end{Emp}

%\begin{Emp} \label{E:levi}
%{\bf Parabolic subgroups.} (a) For each parabolic subgroup
%$P\subseteq H$, we have a canonical isomorphism $T_P\isom T_H$, and
%canonical inclusions $W_P\hra W_H$ and $S_P\hra S_H$. Indeed, this follows from
%the fact that every Borel subgroup $B$ of $P$ is a Borel subgroup of $H$.

%(b) We will call the subset $S_P\subseteq S_H$ the {\em type} of $P$. Note that two
%parabolic subgroups $P$ and $P'$ of $H$ are conjugate is and only
%if $S_P=S_{P'}$.

%(c) Let $M=M_P:=P/R_u(P)$ be the reductive quotient of $P$.
%Then the projection $P\to M$ induces isomorphisms $T_P\isom T_M$,
%$W_P\isom W_M$ and $S_P\isom S_M$.
%\end{Emp}

\subsection{The Grothendieck--Springer resolution and the $W_H$-action} \label{S:spr}

Suppose that we are in the situation of Section \ref{S:prel}.

\begin{Emp} \label{E:whaction}
{\bf Construction.}

\smallskip

(a) Note that we have a commutative diagram
\begin{equation} \label{Eq:spr}
\CD
[\frac{B}{B}]   @>\pr_B>>  T_H \\
@V[p_H]VV       @VV\nu_{T_H}V \\
[\frac{H}{H}]                     @>\nu_H>>   c_H
\endCD
\end{equation}
of Artin stacks, where the top horizontal morphism is a composition of the map $[\frac{B}{B}]\to [\frac{T_H}{T_H}]$, induced by
$\pr_B$ (see Section~\re{abscartan}(c)) with the canonical projection $[\frac{T_H}{T_H}]\to T_H$, and morphism $[p_H]$  \label{a:[ph]} is induced by the inclusion $B\hra H$.

\smallskip

(b) Note that Artin stack $[\frac{B}{B}]$ is smooth of dimension zero, while morphism $[p_H]$ is proper and small.
Therefore for every pair of local systems $\C{F}_1,\C{F}_2$ on $[\frac{B}{B}]$, the
push-forwards $\ov{\C{F}}_1:=[p_H]_*\C{F}_1$  and $\ov{\C{F}}_2:=[p_H]_*\C{F}_2$ are perverse
sheaves on $[\frac{H}{H}]$, and the restriction map
$\Hom(\ov{\C{F}},\ov{\C{F}}')\to\Hom(\ov{\C{F}}_1|_U,\ov{\C{F}}_2|_U)$
is an isomorphism for every open dense substack $U\subseteq[\frac{H}{H}]$.

\smallskip

(c) Let $\C{L}\in D_c^b(T_H,\qlbar)$ be a local system. Since $\nu_{T_H}\circ w=\nu_{T_H}$ for each $w\in W_H$, we
have a canonical isomorphism
\[
a_{w,\C{L}}:(\nu_{T_H})_*\C{L}\isom(\nu_{T_H})_*(w_*\C{L}).
\]
Moreover, for $w_1,w_2\in W_H$, the isomorphism $(w_1w_2)_*\C{L}\simeq (w_1)_*(w_2)_*\C{L}$ identifies
$a_{w_1w_2,\C{L}}$ with $a_{w_1,(w_2)_*\C{L}}\circ a_{w_2,\C{L}}$.

\smallskip

(d) Consider local system $\C{F}_{\C{L}}:=\pr_B^*(\C{L})$  \label{a:fl}  on $[\frac{B}{B}]$, and set $\ov{\C{F}}_{\C{L}}:=[p_H]_*\C{F}_{\C{L}}$.  \label{a:flbar} Notice that over the  regular semisimple locus $c^{\rss}_H$ the
diagram \form{spr} is Cartesian. Therefore it follows from
proper base change that the isomorphism $a_{w,\C{L}}$
from part~(c) induces an isomorphism
\[
a^{\rss}_{w,\C{L}}:\ov{\C{F}}_{\C{L}}|_{[\frac{H^{\rss}}{H}]}\isom\ov{\C{F}}_{w_*\C{L}}|_{[\frac{H^{\rss}}{H}]}.
\]
%Since the diagram Section~\re{spr}  is $H^{ad}$ equivariant, this isomorphism commute with the action of $H^{ad}$.

\smallskip

(e) It now follows from part~(b) that the isomorphism $a^{\rss}_{w,\C{L}}$ from part~(d) uniquely extends
to an isomorphism $\ov{\C{F}}_{\C{L}}\isom\ov{\C{F}}_{w_*\C{L}}$, which we will denote again by
$a_{w,\C{L}}$. \label{a:awl}
\end{Emp}

\begin{Emp} \label{E:spr}
{\bf Springer fibers.}

\smallskip

(a) For a Borel subgroup $B\subseteq H$, we set $\ov{\Fl}_{H,B}:=H/B$. \label{a:ovflhb}  Note that if $B'\subseteq H$ is another Borel subgroup, then there exists
$g\in H$ such that $B'=gBg^{-1}$, and the map $hB\mapsto hBg^{-1}=hg^{-1}B'$ defines an isomorphism $\ov{\Fl}_{H,B}\isom \ov{\Fl}_{H,B'}$,
independent of the choice $g$.

We set $\ov{\Fl}_H:=\lim_B \ov{\Fl}_{H,B}$  \label{a:ovflh} and call it the {\em flag variety}  \label{a:flvar} of $H$. By definition, we have a
canonical isomorphism $\ov{\Fl}_{H}\isom \ov{\Fl}_{H,B}$ for every Borel subgroup $B\subseteq H$.

\smallskip

(b) The group $H$ acts naturally on each $\ov{\Fl}_{H,B}$, and that each isomorphism $\ov{\Fl}_{H,B}\isom \ov{\Fl}_{H,B'}$ is $H$-equivariant.
Therefore the group $H$ naturally acts on $\ov{\Fl}_H$ such that the identification $\ov{\Fl}_{H}\isom \ov{\Fl}_{H,B}$ from part~(a) is $H$-equivariant.

\smallskip

(c) For every element $\gm\in H$, we denote by $\ov{\Fl}_{\gm}\subseteq\ov{\Fl}_H$  \label{a:ovflgm} the scheme of fixed points of
$\gm$, called the {\em Springer fiber}.  \label{a:sprfib} Then for every Borel subgroup $B\subseteq H$, we have a Cartesian diagram
\[
\begin{CD}
\ov{\Fl}_{\gm}  @>>> \ov{\Fl}_H @>>> \on{pt}\\
@V\red_{\gm}VV  @V\red_{\gm}VV @VV\eta_{\gm}V\\
[\frac{B}{B}] @>>>[\frac{H}{B}] @>>>[\frac{H}{H}],
\end{CD}
\]
where $\eta_{\gm}$ is the map corresponding to $\gm$, and $\red_{\gm}$  \label{a:redgm} is the map $hB\mapsto [h^{-1}\gm h]$.

\smallskip

(d) For a local system $\C{L}$ as in Section~\re{whaction}(c), let $\C{F}_{\C{L}}$ be as in Section~\re{whaction}(d), and denote the pullback $\red_{\gm}^*(\C{F}_{\C{L}})\in D_c^b(\ov{\Fl}_{\gm},\qlbar)$ again by $\C{F}_{\C{L}}$.  \label{a:fl2}

Notice that it follows from proper base change applying to the Cartesian diagram of part~(c) that the fiber of $\ov{\C{F}}_{\C{L}}$ at $[\gm]\in[\frac{H}{H}]$ is naturally isomorphic to $H^*(\ov{\Fl}_{\gm}, \C{F}_{\C{L}})$. Therefore the isomorphism $a_{w,\C{L}}$ of Section~\re{whaction}(e) induces an isomorphism
\[
a_{w,\C{L}}: H^*(\ov{\Fl}_{\gm}, \C{F}_{\C{L}})\isom H^*(\ov{\Fl}_{\gm}, \C{F}_{w_*\C{L}}). \label{a:awl2}
\]

%(f) As in (a), all the isomorphisms satisfy cocycle condition
%$a_{\C{L},w_1w_2}=a_{(w_2)^{-1})^*\C{L},w_1}\circ a_{\C{L},w_2}$
%for all $w_1,w_2\in W_H$.
\end{Emp}

\subsection{Characters of Deligne-Lusztig representations}
Let $H$ be a connected reductive group over $\fqbar$, defined over a finite field $\fq$.

\begin{Emp} \label{E:fincor}
{\bf Maximal subtori over finite fields.}

\smallskip

(a) Following
Deligne-Lusztig (\cite{DL}) we consider pairs $(T,B)$,
where $T\subseteq H$ is a maximal torus defined over $\fq$, and
$B\supseteq T$ is a Borel subgroup of $H$ defined over
$\fqbar$. Then ${}^{\si}B\supseteq T$ is another Borel
subgroup of $H$, and we let $w_{B}$ \label{a:wb} be the relative
position $w_{B,{}^{\si}B}\in W_H$ (see Section~\re{abscartan}(d)).

Deligne--Lusztig showed that the correspondence
$(T,B)\mapsto w_{B}$ is a bijection between
conjugacy classes of pairs $(B,T)$ as above and elements of $W_H$.

\smallskip

(b) Recall that the correspondence
$B\mapsto\varphi_{T,B}$ gives a bijection between
Borel subgroups $B\subseteq H$ containing $T$ and
admissible  isomorphisms $T\isom T_H$ (see Section~\re{admis}(b)).
For every  admissible  isomorphisms $\varphi:T\isom T_H$, we
define $B_{\varphi}\supseteq T$  \label{a:bvarphi}  to be the unique Borel
subgroup such that $\varphi_{T,B_{\varphi}}=\varphi$, and
set $w_{\varphi}:=w_{B_{\varphi}}$.  \label{a:wvarphi}

\smallskip

(c) Using the identity
\[
{}^{\si}\varphi_{T,B}=\varphi_{T,{}^{\si}B}=
w_{B,{}^{\si}B}^{-1}\circ \varphi_{T,B}=
w_{B}^{-1}\circ\varphi_{T,B}
\]
(see
Section~\re{admis}(b)) for each pair $(T,B)$ as in part~(a), we get
that ${}^{\si}\varphi=w_{\varphi}^{-1}\circ \varphi$ for
each pair $(T,\varphi)$ as in part~(b).

\smallskip

(d) Let $X_*(T):=\Hom_{\fqbar}(\B{G}_m,T)$  \label{a:xt} and
$X_*(T_H):=\Hom_{\fqbar}(\B{G}_m,T_H)$ be the groups of
cocharacters defined over $\fqbar$. By part~(c), we have an equality
\[
\si\circ\varphi\circ\si^{-1}=w_{\varphi}^{-1}\circ
\varphi:X_*(T)\isom X_*(T_H),
\]
hence an equality
\[
w_{\varphi}\circ\si=\varphi\circ\si\circ\varphi^{-1}:X_*(T_H)\isom
X_*(T_H).
\]
In particular, a torus $T$ is unisotropic if and only if
$X_*(T)^{\si}=\{0\}$ if and only if $
X_*(T_H)^{w_{\varphi}\circ\si}=\{0\}$.
\end{Emp}

\begin{Emp} \label{E:DL}
{\bf Notation.} (a) Let $T\subseteq H$ be a maximal torus defined over $\fq$, let
${\theta}:T(\fq)\to\qlbar\m$   be a character, and let
$\varphi:T\isom T_H$  \label{a:varphi} be an admissible isomorphism (see
Section~\re{admis}(a)).

\smallskip

(b) Recall that character $\theta$ gives rise to the one-dimensional local
system $\C{L}_{\theta}$  \label{a:ltheta} on $T$. Then isomorphism $\varphi$ gives
rise to  local systems
$\C{L}_{\theta,\varphi}:=\varphi_*(\C{L}_{\theta})$  \label{a:lthetavarphi} on $T_H$ and $\C{F}_{\theta,\varphi}:=\C{F}_{\C{L}_{\theta,\varphi}}$ on $[\frac{B}{B}]$,  \label{a:fthetavarphi} hence to the perverse sheaf $\ov{\C{F}}_{\theta,\varphi}:=\ov{\C{F}}_{\C{L}_{\theta,\varphi}}$  \label{a:ovfthetavarphi} on $[\frac{H}{H}]$ (see Section~\re{whaction}(d)).

\smallskip

(c) Note that local system $\C{L}_{\theta}$ is equipped with an isomorphism $\si_*\C{L}_{\theta}\isom \C{L}_{\theta}$. Therefore using Section~\re{fincor}(c) we have
a natural isomorphism
\[
\si_*\C{L}_{\theta,\varphi}=\si_*\varphi_*\C{L}_{\theta}=({}^{\si}\varphi)_*\si_*\C{L}_{\theta}\isom ({}^{\si}\varphi)_*\C{L}_{\theta}=\C{L}_{\theta,{}^{\si}\varphi}=\C{L}_{\theta,w_{\varphi}^{-1}\circ\varphi},
\]
which  induces isomorphisms
$\si_*\C{F}_{\theta,\varphi}\isom
\C{F}_{\theta,w_{\varphi}^{-1}\circ\varphi}$ and $\si_*\ov{\C{F}}_{\theta,\varphi}\isom
\ov{\C{F}}_{\theta,w_{\varphi}^{-1}\circ\varphi}$.

\smallskip

(d) Similarly, for every $w\in W_H$ we have an  isomorphism
$w_*\C{L}_{\theta,\varphi}\isom
\C{L}_{\theta,w\circ\varphi}$, hence by Section~\re{whaction}(e) it
induces an isomorphism
\[
a_{w,\theta,\varphi}=a_{w,\C{L}_{\theta,\varphi}}:\ov{\C{F}}_{\theta,\varphi}\isom \ov{\C{F}}_{\theta,w\circ\varphi}.  \label{a:awthetavarphi}
\]

\smallskip

(e) Composing isomorphisms of parts (c) and (d),  one get an isomorphism
 \[
a_{\theta,\varphi}:\si_*\ov{\C{F}}_{\theta,\varphi}\isom
\ov{\C{F}}_{\theta,w_{\varphi}^{-1}\circ\varphi}\isom \ov{\C{F}}_{\theta,\varphi}.  \label{a:athetavarphi}
\]

\smallskip

(f) As in Section~\re{spr}(d), for every $\gm\in H(\fq)$, we denote the pullback $\red_{\gm}^*\C{F}_{\theta,\varphi}$ on $\ov{\Fl}_{\gm}$ by
$\C{F}_{\theta,\varphi}$. Recall that the fiber of $\ov{\C{F}}_{\theta,\varphi}$ at point $[\gm]\in[\frac{H}{H}]$ is isomorphic to
$H^*(\ov{\Fl}_{\gm},\C{F}_{\theta,\varphi})$. Therefore, taking the fiber at $[\gm]$ of the isomorphism  $a_{\theta,\varphi}$ of part~(e), we get an  isomorphism
\[
a_{\theta,\varphi,\gm}:H^*(\ov{\Fl}_{\gm},\C{F}_{\theta,\varphi})\isom
H^*(\ov{\Fl}_{\gm},\C{F}_{\theta,\varphi}).  \label{a:athetavarphigm}
\]
\end{Emp}

\begin{Emp} \label{E:geom}
{\bf Remark.} Let $\Fr_q:[\frac{H}{H}]\to[\frac{H}{H}]$  \label{a:Frq}  be the
geometric Frobenius morphism over $\fq$. Then we have a
canonical isomorphism $\si_*\ov{\C{F}}_{\theta,\varphi}\isom
\Fr_q^*\ov{\C{F}}_{\theta,\varphi}$. Therefore the isomorphism $a_{\theta,\varphi}$ of Section~\re{DL}(e) can be viewed as
$\Fr^*_q\ov{\C{F}}_{\theta,\varphi}\isom \ov{\C{F}}_{\theta,\varphi}$.
\end{Emp}

The following important result of Lusztig (see \cite[Propositions~8.15 and 9.2]{Lu} or \cite[Corollary~2.3.2]{La}) is crucial for our
work.

\begin{Thm}  \label{T:lus}
Assume that there exists a character $\theta_0:T(\fq)\to\qlbar\m$ in general position, and
let $R_T^{\theta}$ be the virtual Deligne--Lusztig representation of $H(\fq)$,
corresponding to a pair $(T,\theta)$. Then for every $\gm\in H(\fq)$ we have an equality
\begin{equation} \label{Eq;lus}
\Tr(R_T^{\theta}(\gm))=\Tr(a_{\theta,\varphi,\gm}, H^*(\ov{\Fl}_{\gm},\C{F}_{\theta,\varphi})):=\sum_i (-1)^i \Tr(a_{\theta,\varphi,\gm}, H^i(\ov{\Fl}_{\gm},\C{F}_{\theta,\varphi})).
\end{equation}
\end{Thm}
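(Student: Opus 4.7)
The plan is to reinterpret the right-hand side of the stated identity sheaf-theoretically, and then identify the resulting class function on $H(\fq)$ with $\Tr R_T^{\theta}$. First, by proper base change applied to the Cartesian square in \re{spr}(c), the stalk of $\ov{\C{F}}_{\theta,\varphi}$ at $[\gm]\in[\frac{H}{H}]$ is canonically $H^*(\ov{\Fl}_{\gm},\C{F}_{\theta,\varphi})$, and the Weil isomorphism $a_{\theta,\varphi}$ of \re{DL}(e) restricts on this stalk to $a_{\theta,\varphi,\gm}$. Hence the right-hand side of the identity in \rt{lus} is exactly the value at $\gm$ of the characteristic function of the Weil perverse sheaf $\ov{\C{F}}_{\theta,\varphi}$ under the sheaf-function dictionary, so the theorem amounts to identifying this characteristic function with $\Tr R_T^{\theta}$.

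I would then establish this identification in two stages. Over the regular semisimple locus, the diagram \form{spr} is Cartesian and $\nu_{T_H}:T_H^{\rss}\to c_H^{\rss}$ is a $W_H$-torsor, so $\ov{\C{F}}_{\theta,\varphi}|_{[\frac{H^{\rss}}{H}]}$ is the pushforward of $\C{L}_{\theta,\varphi}$ along a finite \'etale $W_H$-cover. For a regular semisimple $\gm\in H(\fq)$ which (after $H(\fq)$-conjugation) lies in a maximal $\fq$-rational torus $T'\subset H$, computing Frobenius traces via Lang's isogeny and summing over the $W_H$-fiber yields a sum of the form $\sum_w\theta(\varphi^{-1}(w\circ\varphi_{T',B'}(\gm)))$, which is precisely the Deligne--Lusztig character formula for $R_T^{\theta}(\gm)$ on the regular semisimple locus.

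To extend the identity to an arbitrary $\gm\in H(\fq)$, I would combine the Jordan decomposition $\gm=\gm_s\gm_u$ with the Deligne--Lusztig reduction formula, which expresses $R_T^{\theta}(\gm)$ as a sum over the set of $h\in H(\fq)/T(\fq)$ with $h^{-1}\gm_s h\in T(\fq)$ of products of $\theta(h^{-1}\gm_s h)$ with Green functions of $Z_H(\gm_s)$ evaluated at $\gm_u$. On the geometric side, the Springer fiber $\ov{\Fl}_{\gm}$ decomposes as a disjoint union indexed by the same set, of Springer fibers $\ov{\Fl}_{\gm_u}^{Z_H(\gm_s)}$ in the centralizer, with $\C{F}_{\theta,\varphi}$ restricting to the analogous data on each piece. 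The Springer-theoretic identification of Green functions with alternating sums of Frobenius traces on the cohomology of unipotent Springer fibers, with the appropriate twisted local-system coefficients, then matches the two sides summand by summand.

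The principal obstacle is this last identification, namely the expression of Green functions as traces of Frobenius on cohomology of Springer fibers of unipotent elements in centralizers, with coefficients in the pullback of $\C{L}_{\theta,\varphi}$. This is a theorem of Springer--Kazhdan--Lusztig built upon Springer's correspondence and requires nontrivial input. The smallness of $[p_H]$ noted in \re{whaction}(b), which ensures that $\ov{\C{F}}_{\theta,\varphi}$ is the middle extension of its restriction to the regular semisimple locus, together with the general-position hypothesis on $\theta$ (so that the parabolic-induction formalism of Lusztig's character sheaves isolates the single summand computing $R_T^\theta$), are the two inputs that make this reduction go through.
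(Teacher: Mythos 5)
The paper offers no proof of \rt{lus} to compare against: the theorem is imported as a black box from Lusztig \cite[Prop 8.15 and 9.2]{Lu} (see also \cite[Cor 2.3.2]{La}), and is used in Section 5 exactly in that form. Your sketch is, in outline, the standard argument given in those references: read the right-hand side as the characteristic function of the Weil perverse sheaf $\ov{\C{F}}_{\theta,\varphi}$ via proper base change (this is \re{DL}(f) together with \re{geom}), verify the identity directly on the regular semisimple locus using the $W_H$-torsor $\nu^{\rss}_{T_H}:T^{\rss}_H\to c^{\rss}_H$ and Lang's isogeny, and reduce the general case to Green functions through the Deligne--Lusztig character formula at $\gm=\gm_s\gm_u$ matched with a decomposition of $\ov{\Fl}_{\gm}$ into Springer fibers of $Z_H(\gm_s)^0$. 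So the route is sound, but you should be clear that the step you flag as the ``principal obstacle'' is not an auxiliary input: the identification of Deligne--Lusztig Green functions with Frobenius-twisted traces on cohomology of unipotent Springer fibers is precisely the content of the results of \cite{Lu} that the paper cites, so your argument does not yield an independent proof of \rt{lus}; it reproduces the classical reduction and then defers to the same theorem the paper quotes.

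If you were to write this out, two points need genuine care. First, over $\fqbar$ the pieces of $\ov{\Fl}_{\gm}$ are indexed by $Z_H(\gm_s)^0$-orbits on the set of Borel subgroups containing $\gm_s$, not by the rational set $\{h\in H(\fq)/T(\fq)\,:\,h^{-1}\gm_s h\in T(\fq)\}$; only after isolating the Frobenius-stable pieces and applying Lang's theorem do the two indexings (and the stabilizer bookkeeping) match the sum in the character formula. Second, you must check that the particular normalization of $a_{\theta,\varphi,\gm}$ --- Frobenius composed with the isomorphism $a_{w_{\varphi},\theta,\varphi}$ of \re{DL}(d) for the specific element $w_{\varphi}=w_{B_{\varphi},{}^{\si}B_{\varphi}}$ --- reproduces exactly the twisted characters $\theta\circ\varphi^{-1}\circ w$ occurring in the Deligne--Lusztig formula, with no sign or cocycle discrepancy; this compatibility, together with the role of the general-position hypothesis and the smallness of $[p_H]$ (\re{whaction}(b)), is exactly where the nontrivial work in Lusztig's and Laumon's treatments lies.
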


\section{Affine Weyl groups, and homology of the affine Springer fibers}

\subsection{The extended affine Weyl group} \label{S:aff}

\begin{Emp} \label{E:saffsetup}
{\bf Set up.}

\smallskip

(a) Let $K:=k((t))$ be the field of Laurent power series over $k$, and denote by
$\C{O}=\C{O}_K=k[[t]]$ the ring of integers.

\smallskip

(b) Let $G$ be a split reductive connected group over $K$, and let
$G^{\sc}$  \label{a:gsc} be the simply connected covering of the derived group $G^{\der}$  \label{a:gder} of $G$.

\smallskip

(c) %By Iwahori (resp. parahoric) subgroup of $G$ we will mean the corresponding group scheme over $\C{O}$.
For a split torus $T$ over $K$, we denote by $T_{\C{O}}$  \label{a:to}  the corresponding torus
over $\C{O}$ and by $\ov{T}$  \label{a:ovt} the special fiber of $T_{\C{O}}$, also referred as the {\em reduction} of $T$.

\smallskip

(d) Let $LG$ be the loop group of $G$, and let $\T:=L^+(T_{\C{O}})$  \label{a:boldt} be the arc-group.
We denote by $\I$ and $\P$ the Iwahori and the parahoric subgroup schemes of $LG$, respectively.
% and $\P:=L^+(P)$ be the arc-groups
%of $T_{\C{O}}$ and the Iwahori subgroup $I$, and the parahoric subgroup $P$, respectively. We call $\I$ (res. $\P$) the Iwahori (resp. parahoric) subgroup of $LG$, respectively.
\end{Emp}

\begin{Emp} \label{E:affweyl}
{\bf Construction.} Generalizing the construction of Section~\re{abscartan}(a), we are going to define
the extended affine Weyl group  $\wt{W}_G$  of $G$, its subgroup $\La_G\subseteq\wt{W}_G$, the set of simple affine refections $\wt{S}\subseteq \wt{W}_G$,  \label{a:wtwg} the affine Weyl group $\wt{W}\subseteq \wt{W}_G$,  the decomposition $\wt{W}_G\simeq\wt{W}\rtimes\Om_G$,  \label{a:omg} and the torus $\ov{T}_G$  \label{a:ovtg} over $k$.

\smallskip

(a) Consider the set of pairs $(T,\I)$, there $T\subseteq G$ is a
maximal split torus over $K$ and $\I\supseteq \T$ an
Iwahori subgroup of $LG$. We set $\wt{W}_{G,T}:=N_{LG}(LT)/\T=N_{LG}(\T)/\T$,  \label{a:wtwgt} $\La_{G,T}:=LT/\T=N_{LG}(\T)/\T\subseteq\wt{W}_{G,T}$,  \label{a:lagt}
let $\ov{T}_{G,T}$  \label{a:ovtgt} be a torus over $k$ defined to be the reduction of $T$, set $\Om_{G,\I}:=N_{LG}(\I)/\I$,  \label{a:omgi}
and let $\wt{S}_{T,\I}\subseteq \wt{W}_{G,T}$ \label{a:wtsgti}  be the set of simple affine reflections, corresponding to $\I$.

\smallskip

(b) For each two such pairs $(T,\I)$ and $(T',\I')$ as in part~(a),
there exists a unique isomorphism of tori $\phi:T\isom T' $ such that
$\phi$ is the map $t\mapsto gtg^{-1}$ for some $g\in L(G^{\sc})$
such that $g\I g^{-1}=\I'$. Again, isomorphism $\phi$ induces isomorphisms
$\wt{W}_{G,T}\isom \wt{W}_{G,T'}$, $\La_{G,T}\isom \La_{G,T'}$, $\ov{T}_{G,T}\isom \ov{T}_{G,T'}$, $\Om_{G,\I}\isom \Om_{G,\I'}$ and a bijection
$\wt{S}_{T,\I}\isom\wt{S}_{T',\I'}$.

\smallskip

(c) By analogy to the finite-dimensional case, we set $\wt{W}_G:=\lim_{T,\I}\wt{W}_{G,T}$, $\La_G:=\lim_{T,\I}\La_{G,T}$,
$\Om_G:=\lim_{T,\I}\Om_{G,\I}$, $\wt{S}:=\lim_{T,I}\wt{S}_{T,\I}$, and $\ov{T}_G:=\lim_{T,\I}\ov{T}_{G,T}$.

\smallskip

(d) We denote by $\wt{W}\subseteq\wt{W}_G$  \label{a:wtw} the subgroup, generated by $\wt{S}$. It is easy to see that
the subgroup $\wt{W}\subseteq\wt{W}_G$ is normal, that the canonical morphism $\wt{W}\rtimes\Om_G\to \wt{W}_G$
is an isomorphism, and that we have a canonical isomorphism $\wt{W}\simeq \wt{W}_{G^{\sc}}$.

\smallskip

(e) For every proper subset $J\subsetneq\wt{S}$, we denote by $W_J\subseteq\wt{W}$  \label{a:wj} the subgroup, generated by $J$.
\end{Emp}

\begin{Emp} \label{E:prop}
{\bf Properties.}

\smallskip

(a) For each pair $(T,\I)$ as in Section~\re{affweyl}(a), we have a canonical
isomorphisms $\varphi_{T,\I}:\ov{T}\isom\ov{T}_G$,  \label{a:varphiti}  $\wt{W}_{G,T}\isom \wt{W}_{G}$, $\La_{G,T}\isom \La_{G}$ and a bijection $\wt{S}_{T,\I}\isom \wt{S}$.

\smallskip

(b) We denote by $\ov{W}=\ov{W}_G:=\wt{W}_G/\La_G$  \label{a:w} the quotient group and
by $\pi_G:\wt{W}_G\to \ov{W}_G$  \label{a:pig} the natural projection. The group
$\ov{W}$ naturally and faithfully acts on $\ov{T}_G$. For each pair $(T,\I)$ as in Section~\re{affweyl}(a),
the isomorphisms of part~(a) induce an isomorphism $\ov{W}\isom N_{LG}(LT)/LT\simeq N_G(T)/T$.

\smallskip

(c) We claim that there is a natural isomorphism of groups $\La_G\simeq X_*(\ov{T}_G)$, where $X_*(-)$  \label{a:x*}
denotes the group of cocharacters (over an algebraic closure). Namely, for every pair $(T,\I)$ as in Section~\re{affweyl}(a), we have
an isomorphism $X_*(T)\isom LT/\T$ of groups, which maps $\la\in X_*(T)$ to the class of $\la(t)\in T(k((t)))=LT(k)$.
Moreover, this isomorphism together with those of part~(a) gives rise to the composition
\[
\La_G\isom \La_{G,T}=LT/\T\isom X_*(T)\isom X_*(\ov{T})\isom X_*(\ov{T}_G),
\]
which is independent of the choice of $(T,\I)$.

\smallskip

(d) For every pair $(T,\I)$ as in Section~\re{affweyl}(a), the composition $\T\hra \I\to \I/\I^{+}$, where $\I^+\subseteq \I$ is the pro-unipotent radical,  \label{a:i+} induces an isomorphism $\ov{T}\isom \I/\I^{+}$ of tori over $k$. Moreover, the composite isomorphism  $\ov{T}_G\overset{\varphi_{T,\I}^{-1}}{\lra}\ov{T}\isom \I/\I^{+}$ is independent of $T$.
We denote by $\pr_{\I}$ the composition $\I\to\I/\I^+\isom \ov{T}_G$.  \label{a:pri}

\smallskip

(e) For every two Iwahori subgroups $\I,\I'$ of $G$, we can
define their relative position $\wt{w}_{\I,\I'}\in \wt{W}$  \label{a:wtwii'}  to be
the image of an element $g\in LG^{\sc}$ such that $g\I g^{-1}=\I'$ under the
composition $LG^{\sc}\to \I\bs LG^{\sc}/\I\isom\wt{W}$. Also we denote by $\ov{w}_{\I,\I'}\in \ov{W}$ the
projection of $\wt{w}_{\I,\I'}\in\wt{W}$ to $\ov{W}$. As in the finite-dimensional case, for
every three Iwahori subgroups $\I,\I',\I''$ containing $\T$,  we have an equality
$\wt{w}_{\I,\I''}=\wt{w}_{\I,\I'}\cdot\wt{w}_{\I',\I''}$.
\end{Emp}

\begin{Emp} \label{E:canonical}
{\bf A canonical isomorphism.}

\smallskip

Consider triple $(T,\I,B)$, where $(T,\I)$ is an Section~\re{affweyl}(a), and $B\supseteq T$ is a Borel subgroup of $G$. This triple gives rise to an isomorphism of groups
\[
\phi: W_G\overset{\re{weylprop}(a)}{\lra} W_{G,T}=N_G(T)/T\overset{\re{prop}(b)}{\lra}\ov{W}_G
\]
and to a $\phi$-equivariant isomorphism of groups
\[
\psi: X_*(T_G) \overset{\varphi^{-1}_{T,B}}{\lra} X_*(T)\isom X_*(\ov{T})\overset{\varphi_{T,\I}}{\lra}X_*(\ov{T}_G) \overset{\re{prop}(c)}{\lra}\La_G,
\]
where  $X_*(T)\isom X_*(\ov{T})$ is a canonical isomorphism. Moreover, $\psi$ gives rise to a $\phi$-equivariant isomorphism of groups algebras
$L[X_*(T_G)]\isom L[\La_G]$ for every field $L$, hence to an isomorphism of algebras
\begin{equation} \label{Eq:can}
L[X_*(T_G)]^{W_G}\isom L[\La_G]^{\ov{W}_G}.
\end{equation}

Furthermore, isomorphism \form{can} is independent of the choice of $(T,\I,B)$.
\end{Emp}

\begin{Emp}  \label{E:admisaff}
{\bf Admissible isomorphisms}.

\smallskip

(a) Let $T\subseteq G$ be a split
maximal torus over $K$. We say that an isomorphism
$\varphi:\ov{T}\isom \ov{T}_G$ is {\em admissible},  \label{a:admisom2} if there
exists an Iwahori subgroup $\I\supseteq\T$ of $G$ such that
$\varphi=\varphi_{T,\I}$ (see Section~\re{prop}(a)).

\smallskip

(b) For every two Iwahori subgroups $\I,\I'\supseteq\T$, we
have a formula $\varphi_{T,\I'}=\ov{w}_{\I,\I'}^{-1}\circ
\varphi_{T,\I}$. In particular, every two admissible isomorphisms
$\varphi:\ov{T}\isom \ov{T}_G$ are $\ov{W}$-conjugate.
\end{Emp}

\begin{Emp} \label{E:parahoric}
{\bf Parahoric subgroups.}

\smallskip

(a) For each parahoric subgroup $\P$
of $LG$, let $\P^+\subseteq \P$  \label{a:p+} be the pro-unipotent radical.
Then $M_{\P}:=\P/\P^+$  \label{a:mp} is a connected reductive group over $k$.

\smallskip

(b) For every pair $(T,\I)$ as in Section~\re{affweyl}(a) satisfying $\I\subseteq\P$, the quotient $\ov{B}_{\I,\P}:=\I/\P^+$  \label{a:ovbip} is a Borel subgroup of $M_{\P}$, and the composition
\[
\T\hra\I\to\I/\P^{+}=\ov{B}_{\I,\P}
\]
induces an embedding $\ov{T}\hra \ov{B}_{\I,\P}$, identifying $\ov{T}$ with a maximal torus of  $\ov{B}_{\I,\P}$.

\smallskip

(c) We claim that there is a natural isomorphism $\ov{T}_G\isom T_{M_{\P}}$ and a natural embedding $W_{M_\P}\hra \wt{W}_G$.
Namely, for every pair $(T,\I)$ as in part~(b), the projection $\P\to M_{\P}$ induces isomorphisms
\begin{equation} \label{Eq:abscartan}
\ov{T}_G\overset{\re{prop}(d)}{\lra}\I/\I^{+}\isom \ov{B}_{\I,\P}/R_u(\ov{B}_{\I,\P})\overset{\re{weylprop}(b)}{\lra}T_{M_{\P}}
\end{equation}
and
\[
N_{\P}(\T)/\T\isom N_{M_{\P}}(\ov{T})/\ov{T}\overset{\re{weylprop}(a)}{\lra}W_{M_{\P}}.
\]
Moreover, both isomorphism \form{abscartan} and the composition
\[
W_{M_{\P}}\isom N_{\P}(\T)/\T\hra N_{LG}(\T)/\T\overset{\re{prop}(a)}{\lra}\wt{W}_G
\]
are independent of the choice of $(T,\I)$.

By construction, the embedding $W_{M_\P}\hra \wt{W}_G$ maps $S_{M_{\P}}$ into $\wt{S}$.

(d) We denote by $J_{\P}\subseteq\wt{S}$  \label{a:jp} the image of the embedding $S_{M_{\P}}\hra\wt{S}$ from part~(c) and call $J_{\P}$ the {\em type} of $\P$. Then the embedding $W_{M_{\P}}\hra\wt{W}_G$ induces an isomorphism $W_{M_{\P}}\isom W_{J_{\P}}$.
%Note that for two parabolic subgroups $\P_1$ and
%$\P_2$ are conjugate is and only if $S_{\P_1}=S_{\P_2}$.

\smallskip

(e) For every Iwahori subgroup $\I\subseteq LG$, the correspondence $\P\mapsto J_{\P}$ induces
a bijection between parahoric subgroups $\P\supseteq\I$ and proper subsets $J\subsetneq\wt{S}$. The inverse map we denote
by $J\mapsto\P_{J,\I}$.

\smallskip

%(f) For every subset $J\subsetneq\wt{S}$ and two pairs $(T,\I)$ and $(T',\I')$ as in Section~\re{affweyl}, we have an equality
%$W_{\P_{J,\I}}=W_{\P_{J,\I'}}\subseteq\wt{W}_G$, and we denote this subgroup simply by $W_J$.
%\end{Emp}

%\begin{Emp} \label{E:compat}
%{\bf Compatibility.}

\smallskip

(f) Let $\I,\I'$ be two Iwahori subgroups of $LG$  contained in
$\P$. Then $\ov{B}_{\I,\P}$ and $\ov{B}_{\I',\P}$ are
two Borel subgroups of $M_{\P}$ (see part~(b)), and its relative position
$\wt{w}_{\I,\I'}\in\wt{W}_{G}$ equals the image of $w_{\ov{B}_{\I,\P},\ov{B}_{\I',\P}}\in
W_{M_{\P}}$ under the embedding of part~(c).

Indeed, choose $g\in \P$ such that $g\I g^{-1}=\I'$.
Then the image $\ov{g}\in M_{\P}$ of $g$ satisfies
$\ov{g}\ov{B}_{\I,\P}\ov{g}^{-1}=\ov{B}_{\I',\P}$, so the assertion follows from
definitions of relative positions (see Sections~\re{abscartan}(d) and \re{affweyl}(e)).
\end{Emp}

\begin{Emp} \label{E:affine roots}
{\bf Affine roots.} To every $G$ as above, one can associate the set of roots $\ov{\Phi}=\ov{\Phi}_G$ of $G$, the set of affine roots $\wt{\Phi}=\wt{\Phi}_G$ and a subset of positive affine roots $\wt{\Phi}^+\subseteq\wt{\Phi}$:

\smallskip

(a) To every pair $(T,\I)$ as in Section~\re{affweyl}(a) we denote by $\ov{\Phi}_T\subseteq X^*(\ov{T}_G)$ the image of the set of roots
$\Phi(G,T)\subseteq X^*(T)$ under the canonical isomorphism $X^*(T)\simeq X^*(\ov{T})\simeq X^*(\ov{T}_G)$.

The torus $T$ acts on the Lie algebra $\frak{g}=\Lie G$, which induces a decomposition $\frak{g}=\bigoplus_{\ov{\al}\in \{0\}\cup\Phi_T}\frak{g}_{\ov{\al}}$.
%$\frak{I}:=\Lie\I$ decomposes as $\frak{I}=\bigoplus_{\al}\frak{I}_{\al}$, where each $\frak{I}_{\al}\subseteq \frak{G}_{\al}$ is a $\C{O}$-module.
Next, we denote by $\wt{\Phi}_{T}$ be the set of pairs $\al=(\ov{\al},\C{L})$, where $\ov{\al}\in\ov{\Phi}_T$ and $\C{L}\subseteq \frak{g}_{\ov{\al}}$ is an $\C{O}$-lattice. Finally, we denote by $\wt{\Phi}^+_{T,\I}\subseteq \wt{\Phi}_{T}$ be the set of affine roots $\al=(\ov{\al},\C{L})$ such that $\C{L}$ is contained in $\frak{I}:=\Lie\I$.

\smallskip

(b) Note that  if $(T',\I')$ is another pair as in Section~\re{affweyl}(a), then the canonical isomorphism $T\isom T'$ from Section~\re{affweyl}(b) induces bijections $\ov{\Phi}_T\isom \ov{\Phi}_{T'}$, $\wt{\Phi}_T\isom \wt{\Phi}_{T'}$ and $\wt{\Phi}^+_{T,\I}\isom \wt{\Phi}^+_{T',\I'}$.

Thus, as in Section~\re{affweyl}(b), we set  $\ov{\Phi}:=\lim_{(T,\I)}\ov{\Phi}_{T}$, $\wt{\Phi}:=\lim_{(T,\I)}\wt{\Phi}_{T}$ and
$\wt{\Phi}^+:=\lim_{(T,\I)}\wt{\Phi}^+_{T,\I}$. For $\al\in\wt{\Phi}$ we write $\al>0$ (resp. $\al<0$) if
$\al\in\wt{\Phi}^+$ (resp. $\al\notin\wt{\Phi}^+$).
\end{Emp}

\begin{Emp} \label{E:splitting}
{\bf Splitting.}
Let $\P\subseteq LG$ be a hyperspecial parahoric subgroup.
\smallskip

(a) We claim that a choice of $\P$ gives rise to a subset $\ov{\Phi}^+\subseteq\ov{\Phi}$ of {\em positive} roots and an identification $\wt{\Phi}\simeq \ov{\Phi}\times\B{Z}$ such that $\wt{\Phi}^+$ corresponds to pairs $(\al,r)\in \ov{\Phi}\times\B{Z}$ such that either $r>0$ or ($r=0$ and $\al\in \ov{\Phi}^+$).

\smallskip

Namely, let $(T,\I)$ be as in Section~\re{affine roots}(a) such that $\I\subseteq\P$. Since $\P$ is hyperspecial, the canonical isomorphism
$X^*(\ov{T}_G)\simeq X^*(\ov{T})$ induces a bijection between $\ov{\Phi}_T$ and the set of roots $\Phi(M_{\P},\ov{T})$ of $M_{\P}$ with respect to $\ov{T}$. Then a subset $\ov{\Phi}^+\subseteq\ov{\Phi}$ corresponds under this bijection to a subset $\Phi^+(M_{\P},\ov{T},\ov{B}_{\I,\P})$ of positive roots $\Phi(M_{\P},\ov{T})$ relative to the Borel subgroup $\ov{B}_{\I,\P}=\I/\P^+\subseteq M_{\P}$.

Moreover, the bijection   $\wt{\Phi}_G\simeq \ov{\Phi}\times\B{Z}$ is characterised by the condition that $(\ov{\al},\C{L})\in\wt{\Phi}_T$
corresponds to a pair $(\ov{\al},r)\in \ov{\Phi}_T\times\B{Z}$, where $r$ is the smallest integer such that $t^r\C{L}\subseteq\Lie\P$.
\smallskip

(b) A choice of $\P$ defines a section of the projection $\pi_G:\wt{W}_G\to \ov{W}$, thus gives rise to decompositions $\wt{W}_G\simeq \La_G\rtimes\ov{W}$ and $\wt{W}\simeq \La\rtimes\ov{W}$, where we set $\La:=\La_{G^{\sc}}$. Namely, for each pair $(T,\I)$ as in part~(a), notice that the composition
\begin{equation*}
\zeta: N_{\P}(\T)/\T\hra N_{LG}(\T)/\T= N_{LG}(LT)/\T\to N_{LG}(LT)/LT\overset{\re{prop}(b)}{\underset{\sim}{\lra}}\ov{W}
\end{equation*}
is an isomorphism, and the composition
\[
s:\ov{W}\overset{\zeta^{-1}}{\underset{\sim}{\lra}}  N_{\P}(\T)/\T\hra N_{LG}(\T)/\T \overset{\re{prop}(a)}{\underset{\sim}{\lra}}\wt{W}_G
\]
is a section of $\pi_G$. Moreover, section $s$ is independent of the choice of $(T,\I)$.
\end{Emp}

The following variant of a notion of \cite{BV} plays a central role in \rt{inj} below.

\begin{Emp} \label{E:mreg}
{\bf $m$-regularity.}

\smallskip

(a) For every elements $w\in\wt{W}$ and $\ov{\al}\in\ov{\Phi}$, we denote by $|\lan\ov{\al},w\ran|$ the number of all affine roots
$\al\in \wt{\Phi}$ such that $\al=(\ov{\al},\C{L})$ and we have
\[
\text{ either }(\al>0\text{ and }w^{-1}(\al)<0)\text{ or }(\al<0\text{ and }w^{-1}(\al)>0).
\]

\smallskip

(b) Notice that the expression $|\lan\ov{\al},w\ran|$ coincides with the absolute value of the usual pairing $\lan\ov{\al},w\ran$
when $w=\la\in \La\subseteq\wt{W}$.

\smallskip

(c) Let $m\in\B{Z}_{\geq 0}$. We say that an element $w\in\wt{W}$ is {\em $m$-regular}  \label{a:mregular} if for every $\ov{\al}\in\ov{\Phi}$ we have $|\lan\ov{\al},w\ran|\geq m$.

\smallskip

(d) By definition, for every elements $w\in\wt{W}$ and $\ov{\al}\in\ov{\Phi}$, we have an equality
\[
|\lan\ov{\al},w\ran|=|\lan w^{-1}(\ov{\al}),w^{-1}\ran|.
\]
Therefore an element $w$ is $m$-regular if and only if element $w^{-1}$ is $m$-regular.
\end{Emp}

\begin{Emp} \label{E:mreggeom}
{\bf Geometric description.} The expression $|\lan\ov{\al},w\ran|$ from Section~\re{mreg}(a) has the following geometric description:

\smallskip

Choose a pair $(T,\I)$ as in Section~\re{affine roots}(a), then we have an equality $\Ad g(\frak{I})=\frak{I}$ for every $g\in\T$,
 thus we can form an $\C{O}$-lattice $\Ad w(\frak{I})\subseteq\fg$, and we set $\frak{I}_{\ov{\al}}:= \frak{I}\cap\frak{g}_{\ov{\al}}$ and $\frak{I}_{w,\ov{\al}}:= \Ad w(\frak{I})\cap\frak{g}_{\ov{\al}}$.

Then $\frak{I}_{\ov{\al}}$ and $\frak{I}_{w,\ov{\al}}$ be two $\C{O}$-lattices in a one-dimensional vector $K$-space $\frak{g}_{\ov{\al}}$,
and the expression $|\lan\ov{\al},w\ran|$ is equal to  $\dim_k(\frak{I}_{\ov{\al}}/\frak{I}_{w,\ov{\al}})$, if  $\frak{I}_{w,\ov{\al}}\subseteq \frak{I}_{\ov{\al}}$, and is equal to $\dim_k(\frak{I}_{w,\ov{\al}}/\frak{I}_{\ov{\al}})$, if  $\frak{I}_{w,\ov{\al}}\supseteq \frak{I}_{\ov{\al}}$.
\end{Emp}

\begin{Emp} \label{E:remmreg}
{\bf $m$-regularity in the sense of \cite{BV}.}

\smallskip

(a) Assume that we are given a parahoric subgroup $\P\subseteq LG$ thus a splitting $\wt{W}\simeq\La\rtimes \ov{W}$ (see Section~\re{splitting}(b)). In this case, we have another (more elementary) notion of $m$-regularity, used in \cite{BV}.
Namely, an element $\la u\in\wt{W}$ with $\la\in\La$ and $u\in \ov{W}$ is called $m$-regular in the sense of \cite[Introduction]{BV} if for every
$\ov{\al}\in \ov{\Phi}$, we have an inequality $|\lan\ov{\al},\la\ran|\geq m$.

\smallskip

(b) Clearly, the notion of $m$-regularity introduced in Section~\re{mreg} is not equivalent to that of \cite[Introduction]{BV}.
Moreover, the notion of \cite{BV} depends on a choice of parahoric subgroup $\P\subseteq LG$, and thus less canonical and does not satisfy a crucial property of Section~\re{affweyl2}(d) we need.

\smallskip

(c) In the situation of part~(a), for every $\la\in\La, u\in \ov{W}$ and
$\ov{\al}\in \ov{\Phi}$, we have an inequality
\[
|\lan\ov{\al},\la\ran|-1\leq |\lan\ov{\al},\la u\ran|\leq |\lan\ov{\al},\la\ran|+1.
\]
Therefore in an element $w\in\wt{W}$ is $m$-regular in the sense of Section~\re{mreg}, then it is $(m-1)$-regular in the sense of \cite[Introduction]{BV}, and if it is $m$-regular in the sense of \cite[Introduction]{BV}, then it is $(m-1)$-regular in the sense of
Section~\re{mreg}. Moreover, by Section~\re{mreg}(b), the two notions coincide when $w=\la\in\La$.

%(d) Using the description of (a) one get that if $w$ is $m$-regular, then for every $s\in\wt{S}$ the element $sw$ is $(m-1)$-regular.
\end{Emp}

%defines decompositions $\wt{W}_G\isom \La_G\rtimes W$,
%$\wt{W}_G\isom \La\rtimes W$ and

%\begin{Emp} \label{E:splitting}
%{\bf Splitting.} (a) Notice that if $\P$ is a hyperspecial parahoric
%subgroup of $LG$, then the natural composition
%$W_{\C{P}}\hra\wh{G}\to W$ is an isomorphism. Thus we have
%(non-canonical) $\Gm_k$-equivariant  isomorphism
%$W\rtimes\La\isom W_{P}\rtimes\La\isom\wt{W}_G$.

%(b) The choice of hyperspecial $\P$ also give us
%(non-caninical) isomorphisms $\wt{T}_{G}\isom T_G$ and $W\isom
%W$.
%\end{Emp}

\subsection{Affine Springer fibers and $\wt{W}_G$-action.}

Suppose that we are in the situation of Section \ref{S:aff}.

\begin{Emp} \label{E:affflvar}
{\bf Affine flag variety.}

\smallskip

(a) For every Iwahori subgroup schemes $\I$ of $LG$, we set
$\Fl_{G,\I}:=LG/\I$. For every pair of Iwahori subgroups $\I,\I'$ of $LG$ there exists $h\in L(G^{\sc})$ such that
$h\I'h^{-1}=\I$. Then the map $g\I\mapsto g\I h=gh\I'$ defines an $LG$-equivariant isomorphism $\Fl_{G,\I}\isom \Fl_{G,\I'}$, independent of
the choice of $h$.

Thus we can form a projective limit $\Fl_G:=\lim_\I\Fl_{G,\I}$ and call it the {\em affine flag variety} of $G$.
In particular, for every Iwahori subgroup $\I$ of $LG$, we have a canonical $LG$-equivariant isomorphism $\Fl_G\isom LG/\I$.

\smallskip

(b) Notice that for every Iwahori subgroup $\I$ of $LG$, the maps $h\mapsto (g\I\mapsto hg\I)$ and $u\mapsto (g\I\mapsto g\I u^{-1}=gu^{-1}\I)$
define commuting actions of $LG$ and $\Om_{G,\I}$ in $\Fl_{G,\I}$. Furthermore,  these actions are compatible with isomorphisms
$\Fl_{G,\I}\isom \Fl_{G,\I'}$ and $\Om_{G,\I}\isom\Om_{G,\I'}$, thus induce an action of $LG\times\Om_{G}$ on $\Fl_{G}$.

\smallskip

(c) More generally, to every proper subset $J\subsetneq \wt{S}$, and every Iwahori subgroup $\I$ of $LG$,  we associate an ind-scheme $\Fl_{G,J,\I}:=LG/\P_{J,\I}$, where parahoric subgroup $\P_{J,\I}$ was defined in Section~\re{parahoric}(e).
As in part~(a), for every two Iwahori subgroups $\I,\I'$ of $LG$ we have a canonical isomorphism $\Fl_{G,J,\I}\isom \Fl_{G,J,\I'}$, and
we denote by $\Fl_{G,J}$ the limit $\Fl_{G,J}:=\lim_\I\Fl_{G,J,\I}$. We have a natural $LG$-equivariant
projection $\pi_J:\Fl_G\to\Fl_{G,J}$.
\end{Emp}

\begin{Emp} \label{E:affsprfib}
{\bf Affine Springer fibers.}

\smallskip

(a) For every element $\gm\in LG(k)=G(K)$, we denote by  $\Fl_{G,\gm}$ (resp. $\Fl_{G,J,\gm}$) the closed  sub-ind-scheme of $\Fl_G$ (resp. $\Fl_{G,J}$) of $\gm$-fixed points. Explicitly, for every Iwahori subgroup $\I$, the affine Springer fiber  $\Fl_{G,J,\gm}$ consists of all $g\P_{J,\I}\in LG/\P_{J,\I}$ such that $g^{-1}\gm g\in\P_{J,\I}$. We denote the
natural projection $\Fl_{G,\gm}\to\Fl_{G,J,\gm}$ by $\pi_J$.

\smallskip

(b) We have a natural projection map $\red_{\gm}:\Fl_{G,\gm}\to
\ov{T}_G$, which sends $g\I$ to $\pr_\I(g^{-1}\gm g)$.  For each local system
$\C{L}$ on $\ov{T}_G$, we set $\C{F}_{\C{L}}:=\red_{\gm}^*(\C{L})$.

\smallskip

(c) For every $g\in LG$, the (left) action of $g$ on $\Fl_{G,J}$ induces an isomorphism $l_g:\Fl_{G,J,\gm}\isom\Fl_{G,J,g\gm g^{-1}}$.
Moreover, the projection  $\red_{\gm}:\Fl_{G,\gm}\to\ov{T}_G$ of part~(b) decomposes as a composition
$\red_{g\gm g^{-1}}\circ l_g:\Fl_{G,\gm}\to\Fl_{G,g\gm g^{-1}}\to\ov{T}_G$.

\smallskip

(d) Notice that since the actions of $LG$ and $\Om_G$ on $\Fl_G$ commute,  for every $u\in\Om_G$ the automorphism $u:\Fl_G\to\Fl_G$ preserves
$\Fl_{G,\gm}$. Moreover, the following diagram is commutative
\begin{equation} \label{Eq:CDredgm}
\begin{CD}
\Fl_{G,\gm} @>\red_{\gm}>> \ov{T}_G\\
@VuVV @VV\ov{u}V\\
\Fl_{G,\gm} @>\red_{\gm}>> \ov{T}_G.
\end{CD}
\end{equation}
Indeed, this  follows from the identity $\pr_\I(ug^{-1}\gm gu^{-1})=\ov{u}(\pr_{\I}(g^{-1}\gm g))$.

\smallskip

(e) For each $i\in\B{N}$ and  every local system $\C{L}$ on $\ov{T}_G$ we can consider the $i$-th homology group
$H_i(\Fl_{G,\gm},\C{F}_{\C{L}})$. Then for every element $g\in LG$ the isomorphisms $l_g$ of part~(c) induce isomorphisms
\[
l_g:H_i(\Fl_{G,\gm},\C{F}_{\C{L}})\isom H_i(\Fl_{G,g\gm g^{-1}},\C{F}_{\C{L}}).
\]

\smallskip

(f) Let $G_{\gm}\subseteq G$ be the centralizer of $\gm$. Then each subscheme $\Fl_{G,\gm}\subseteq \Fl_G$ is $LG_{\gm}$-invariant, and the projection $\red_{\gm}$ is $LG_{\gm}$-equivariant with respect to the trivial action of
$LG_{\gm}$ on $\ov{T}_G$ (see part~(c)). Therefore the local system $\C{F}_{\C{L}}$ from part~(b) is $LG_{\gm}$-equivariant, and the induced action of $LG_{\gm}$ on $H_i(\Fl_{G,\gm},\C{F}_{\C{L}})$ is given by isomorphisms $l_g$ from part~(e).
\end{Emp}

\begin{Emp} \label{E:triv}
{\bf Remark.} Note that the image of the projection $\red_{\gm}:\Fl_{G,\gm}\to\ov{T}_G$ is a single $\ov{W}$-orbit. In particular, $\im\red_{\gm}\subseteq \ov{T}_G$ is finite, thus there exists a
(non-canonical) isomorphism $\C{L}|_{\im\red_{\gm}}\simeq \qlbar^{\rk\C{L}}$, hence $\C{F}_{\C{L}}\simeq \qlbar^{\rk\C{L}}$.
\end{Emp}

\begin{Emp} \label{E:regss}
{\bf Main particular case.}
We will be mostly interested in the case, when $\gm\in LG(k)\simeq G(K)$ is a  regular semisimple  element of $G$.
Then, arguing as in \cite{KL}, each reduced ind-scheme $(\Fl_{G, J,\gm})_{\red}$ is actually a scheme, locally of finite type over $k$.
\end{Emp}

 \begin{Emp} \label{E:wsaction}
{\bf The $W_J$-action and $\Om_G$-action on $H_i(\Fl_{G,\gm},\C{F}_{\C{L}})$}.

\smallskip

For every proper subset $J\subsetneq\wt{S}$ and an Iwahori subgroup $\I\subseteq LG$, we set $\P_J:=\P_{J,\I}$, $M_J:=M_{\P_J}$, and
$\ov{B}_J:=\I/\P_{J}^+\subseteq M_J$.

\smallskip

(a) Note that we have the following natural Cartesian diagram
\begin{equation} \label{Eq:SGaff}
\CD
\Fl_{G,\gm}   @>\wt{\red}_{\gm}>>  [\frac{\I}{\I}] @>>>  [\frac{\ov{B}_J}{\ov{B}_J}] @>\pr_{\ov{B}_J}>> \ov{T}_G\\
@V\pi_J VV      @VV[\pi_{\P_J}]V       @VV[p_{M_J}]V \\
\Fl_{G,J,\gm}  @>\wt{\red}_{J,\gm}>> [\frac{\P_{J}}{\P_{J}}] @>>>  [\frac{M_J}{M_J}].
\endCD
\end{equation}

\smallskip

(b) Recall that we have a natural isomorphism $W_J\simeq W_{M_J}$ (see Section~\re{parahoric}(d)). Using the Cartesian diagram of part~(a) and proper base change one gets that for every $w\in W_J\simeq W_{M_J}$ the isomorphism $a_{w,\C{L}}$ from Section~\re{whaction}(e) induces an isomorphism
\[
a_{J,w,\C{L}}:(\pi_J)_*\C{F}_{\C{L}}\isom (\pi_J)_*\C{F}_{\ov{w}_*\C{L}}
\]
on $\Fl_{G,J,\gm}$.

\smallskip

(c) Using identifications $H_i(\Fl_{G,\gm},\C{F}_{\C{L}})=H_i(\Fl_{G,J,\gm},(\pi_J)_*\C{F}_{\C{L}})$, the isomorphisms of part~(b) induce isomorphisms
\[
a_{J,w,\C{L}}:H_i(\Fl_{G,\gm},\C{F}_{\C{L}})\isom H_i(\Fl_{G,\gm},\C{F}_{\ov{w}_*\C{L}}) \text{ for each }w\in W_J.
\]
Moreover, these isomorphisms are independent of the choice of $\I$.

\smallskip

(d) Using Section~\re{whaction}(f), the isomorphisms of part~(c) satisfy
\[
a_{J,w_1w_2,\C{L}}=a_{J,w_1,\ov{w_2}_*\C{L}}\circ a_{J,w_2,\C{L}}\text{  for all }w_1,w_2\in W_J.
\]

\smallskip

(e) Using commutative diagram \form{CDredgm}, for every $u\in\Om_G$, one get an isomorphism $u_*(\C{F}_{\C{L}})\simeq \C{F}_{\ov{u}_*(\C{L})}$ of local systems on $\Fl_{G,\gm}$, hence an isomorphism
\[
a_{\Om,u,\C{L}}:H_i(\Fl_{G,\gm},\C{F}_{\C{L}})\isom H_i(\Fl_{G,\gm},\C{F}_{\ov{u}_*\C{L}}).
\]
\smallskip

(f) By construction, the isomorphisms of part~(e) satisfy
\[
a_{\Om,u_1u_2,\C{L}}=a_{\Om,u_1,\ov{u_2}_*\C{L}}\circ a_{\Om,u_2,\C{L}}\text{  for all }u_1,u_2\in\Om_G.
\]

\end{Emp}

\begin{Emp} \label{E:truncation}
{\bf Truncated version.}

\smallskip

(a) Since projection $\pi_J$ is proper, for every closed subscheme $Y\subseteq\Fl_G$ the image $Y_J:=\pi_J(Y)\subseteq\Fl_{G,J}$ is closed as well. Hence we have a commutative diagram
\begin{equation} \label{Eq:Y}
\CD
Y_{\gm}   @>>>   \Fl_{G,\gm}\\
@V\pi^Y_{J,\gm} VV       @VV\pi_{J,\gm}V \\
Y_{J,\gm}   @>>>    \Fl_{G,J,\gm},
\endCD
\end{equation}
where we put $Y_{\gm}:=Y\cap \Fl_{G,\gm}$ and $Y_{J,\gm}:=Y_J\cap \Fl_{G,J,\gm}$.

\smallskip

(b) Assume furthermore that $Y=\pi_J^{-1}(Y_J)$. Then the diagram \form{Y} is Cartesian, so repeating the argument of Section~\re{wsaction}, for every  $w\in W_J$ we get a natural isomorphism
\[
a^Y_{J,w,\C{L}}:H_i(Y_{\gm},\C{F}_{\C{L}})\isom H_i(Y_{\gm},\C{F}_{\ov{w}_*\C{L}}),
\]
lifting the isomorphism $a_{J,w,\C{L}}$ from Section~\re{wsaction}(c).

\smallskip

(c) In particular, in the situation of part~(b), the isomorphism $a_{J,w,\C{L}}$ from Section~\re{wsaction}(c) induces an isomorphism
\[
H'_i(Y_{\gm},\C{F}_{\C{L}})\isom H'_i(Y_{\gm},\C{F}_{\ov{w}_*\C{L}}),
\]
where  $H'_i(Y_{\gm},\C{F}_{\C{L}})$ denotes the image of the canonical map $H_i(Y_{\gm},\C{F}_{\C{L}})\to H_i(\Fl_{G,\gm},\C{F}_{\C{L}})$.
\end{Emp}

\begin{Prop} \label{P:whaction} The isomorphisms of Sections~\re{wsaction}(c),(e) and \re{affsprfib}(e) have the following properties:

\smallskip

\noindent (a) For proper subsets $J_1\subseteq J_2\subsetneq \wt{S}$ and an element $w\in
W_{J_1}\subseteq W_{J_2}$, the isomorphisms  $a_{J_1,w,\C{L}}$  and
$a_{J_2,w,\C{L}}$ coincide.

\smallskip

\noindent (b) For a proper subset $J\subsetneq \wt{S}$ and elements $w\in W_J$ and $u\in\Om_G$, the element $uwu^{-1}$ belongs to $W_{u(J)}$, and we have an equality of isomorphisms
\[
a_{u(J),uwu^{-1},\C{L}}=a_{\Om,u,\ov{w}_*(\ov{u}^{-1})_*\C{L}}\circ a_{J,w,(\ov{u}^{-1})_*\C{L}}\circ a_{\Om,u^{-1},\C{L}}.
\]

\smallskip

\noindent(c) There exists a unique collection of isomorphisms
\[
a_{w,\C{L}}:H_i(\Fl_{G,\gm},\C{F}_{\C{L}})\isom H_i(\Fl_{G,\gm},\C{F}_{\ov{w}_*\C{L}})
\]
parameterized by $w\in\wt{W}_G$ such that

\smallskip

(i) $a_{w,\C{L}}=a_{J,w,\C{L}}$ for each $w\in W_J$;

\smallskip

 (ii) $a_{w,\C{L}}=a_{\Om,w,\C{L}}$  for each $w\in\Om_G$;

\smallskip

(iii) $a_{w_1w_2,\C{L}}=a_{J,w_1,\ov{w}_{2*}\C{L}}\circ a_{w_2,\C{L}}$ for all ${w}_1,{w}_2\in \wt{W}_G$.

\smallskip

\noindent(d) For every $g\in LG$, set $\gm':=g\gm g^{-1}$. Then for every $w\in\wt{W}_G$ the following diagram is commutative
\[
\begin{CD}
H_i(\Fl_{G,\gm},\C{F}_{\C{L}}) @>a_{w,\C{L}}>> H_i(\Fl_{G,\gm},\C{F}_{\ov{w}_*\C{L}})\\
@Vl_g VV @VVl_gV\\
H_i(\Fl_{G,\gm'},\C{F}_{\C{L}}) @>a_{w,\C{L}}>> H_i(\Fl_{G,\gm'},\C{F}_{\ov{w}_*\C{L}}).
\end{CD}
\]
\end{Prop}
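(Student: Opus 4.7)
The plan is to verify the four parts in order: (a) and (b) establish the compatibility between the various locally defined actions, (c) uses these to assemble a well-defined $\wt{W}_G$-action on homology, and (d) reads off $LG$-equivariance from the construction.

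For part (a), when $J_1\subset J_2$ the parahoric $\P_{J_1}$ is contained in $\P_{J_2}$ with $\P_{J_2}^{+}\subset \P_{J_1}^{+}$, so the image $P_{J_1,J_2}:=\P_{J_1}/\P_{J_2}^{+}$ is a parabolic subgroup of $M_{J_2}$ whose Levi quotient is $M_{J_1}$. The Grothendieck--Springer diagram \form{spr} for $M_{J_2}$ restricted to $[\frac{P_{J_1,J_2}}{P_{J_1,J_2}}]\subset[\frac{M_{J_2}}{M_{J_2}}]$ factors through the corresponding diagram for $M_{J_1}$; this is the classical compatibility of Springer sheaves with Levi restriction. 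Inserting this into the Cartesian square \form{SGaff} and applying proper base change shows that for $w\in W_{J_1}\subset W_{J_2}$ the isomorphism $a_{J_2,w,\C{L}}$ coincides with $a_{J_1,w,\C{L}}$.

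For part (b), conjugation by $u\in\Om_G$ sends Iwahori subgroups to Iwahori subgroups and parahorics of type $J$ to parahorics of type $u(J)$, so $uW_Ju^{-1}=W_{u(J)}$; the automorphism of $\Fl_{G,\gm}$ induced by $u$ intertwines the Cartesian square \form{SGaff} for $J$ with that for $u(J)$, and the equivariance square \form{CDredgm} matches $\C{F}_{\C{L}}$ with $\C{F}_{\ov{u}_{*}\C{L}}$. The naturality of proper base change under these identifications gives the desired compatibility between $a_{J,w,\C{L}}$ and $a_{u(J),uwu^{-1},\C{L}}$ twisted by $a_{\Om,u,\C{L}}$.

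For part (c), uniqueness is immediate by induction on a reduced expression using property (iii). For existence, fix a hyperspecial parahoric to obtain the decomposition $\wt{W}_G=\wt{W}\rtimes\Om_G$ of \re{splitting}(b), with $\wt{W}$ generated by $\wt{S}_G$. Write $w=u\cdot s_{i_1}\cdots s_{i_k}$ with $u\in\Om_G$ and $s_{i_j}\in\wt{S}_G\subset W_{\{s_{i_j}\}}$, and define $a_{w,\C{L}}$ by the corresponding composition of the isomorphisms from (i) and (ii). Independence of the reduced expression reduces to checking the defining relations of $\wt{W}_G$: each braid relation involves a pair $s,s'\in\wt{S}_G$, and when $\{s,s'\}\subsetneq\wt{S}_G$ the relation holds inside the finite Coxeter group $W_{\{s,s'\}}$ by \re{wsaction}(d) together with part (a) applied to $\{s_{i_j}\}\subset\{s,s'\}$; the remaining possibility forces $G$ to have semisimple rank at most one, where the relations are trivial. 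The commutation of the $\Om_G$-factor with the affine Weyl generators is furnished by (b). Finally, part (d) follows because the left translation $l_g$ commutes with all projections $\pi_J$ and reduction morphisms $\wt{\red}_\gm$, so the Cartesian square \form{SGaff} for $\gm'=g\gm g^{-1}$ pulls back via $l_g$ to that for $\gm$; naturality of proper base change then makes $l_g$ intertwine each $a_{J,w,\C{L}}$ and each $a_{\Om,u,\C{L}}$, and the construction in (c) extends this to all of $\wt{W}_G$. The main obstacle is part (c): one must ensure that every relation in $\wt{W}_G$ actually takes place inside a single finite subgroup $W_J$ with $J\subsetneq\wt{S}_G$ or inside $\Om_G$, which is precisely the point of the case analysis above.
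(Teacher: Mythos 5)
Your proposal follows essentially the same route as the paper: (a) and (b) by compatibility of the Cartesian diagrams \form{SGaff} under base change and under $\Om_G$-conjugation, (c) by assembling the action from the presentation of $\wt{W}_G$ by the generators $\wt{S}_G$ and $\Om_G$, checking each finite braid relation inside the finite parabolic $W_{\{s,s'\}}$ via \re{wsaction}(d) and part (a), and (d) by naturality of $l_g$. The only nuance is that the correct dichotomy in (c) is $m_{s,s'}<\infty$ (where $W_{\{s,s'\}}$ is finite and the parahoric $\P_{\{s,s'\},\I}$ exists) versus $m_{s,s'}=\infty$ (no relation to check), rather than $\{s,s'\}\subsetneq\wt{S}_G$ versus not — for a semisimple $G$ with several almost simple factors a proper pair can generate an infinite dihedral group, but then there is no relation to verify, so your argument is unaffected.
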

\begin{proof}
(a) follows from definitions using Cartesian diagram
\[
\begin{CD}
\Fl_{G,\gm} @>>> \Fl_{G,J_1,\gm} @>>> \Fl_{G,J_2,\gm}\\
@VVV @VVV @VVV\\
[\frac{\I}{\I}] @>>> [\frac{\P_{J_1}}{\P_{J_1}}] @>>> [\frac{\P_{J_2}}{\P_{J_2}}].
\end{CD}
\]

\smallskip

(b) is straightforward.

\smallskip

(c) Formally follows from a combination of parts (a),(b) and the assertions of  Section~\re{wsaction}(d),(f):

Recall that the affine Weyl group $\wt{W}$ is generated by $\wt{S}$ subject to relations $(s_1s_2)^{m_{s_1,s_2}}=1$ for each $s_1,s_2\in\wt{S}$. Therefore  there exists at most one collection of isomorphisms $\{a_{w,\C{L}}\}_{w\in\wt{W}}$ satisfying condition (iii) and such that $a_{s,\C{L}}=a_{\{s\},s,\C{L}}$ for each $s\in\wt{S}$. Next, combining part~(a) with
the assertion of Section~\re{wsaction}(d) we see that such a collection
indeed exists and satisfies property (i). Finally, the existence of collection of isomorphisms $\{a_{w,\C{L}}\}_{w\in\wt{W}}$ satisfying (i)-(iii) follows from decomposition $\wt{W}_G=\wt{W}\ltimes \Om_G$, part~(b) and property of  Section~\re{wsaction}(f).
\smallskip

(d) is straightforward.
\end{proof}

\rp{whaction} immediately implies the following corollary.

\begin{Cor} \label{C:whaction}
(a) Each $H_i(\Fl_{G,\gm},\C{F}_{\C{L}})$ is a equipped with an action of the group $\La_G$,
commuting with the $LG_{\gm}$-action of Section~\re{affsprfib}(f).

\smallskip

(b) Moreover, if $\C{L}$ is an $\ov{W}$-equivariant local system, then each $H_i(\Fl_{G,\gm},\C{F}_{\C{L}})$
is equipped with an action of $\wt{W}_G$, commuting with the $LG_{\gm}$-action of Section~\re{affsprfib}(f).
\end{Cor}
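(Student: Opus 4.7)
The proof should be an essentially immediate deduction from \rp{whaction}, so the plan is to explicitly extract the corollary from parts (c) and (d) of the proposition and check that nothing more is needed.

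For part (a), the key observation is that $\La_G \subset \wt W_G$ lies in the kernel of the projection $\pi_G : \wt W_G \to W$, so for every $\la \in \La_G$ we have $\ov{\la} = 1 \in W$, and hence $\ov{\la}_*\C{L} = \C{L}$ canonically. Applying \rp{whaction}(c), I would define the action of $\la$ on $H_i(\Fl_{G,\gm},\C{F}_{\C{L}})$ to be $a_{\la,\C{L}}$, which is now an automorphism of this single vector space. Multiplicativity $a_{\la_1\la_2,\C{L}} = a_{\la_1,\C{L}} \circ a_{\la_2,\C{L}}$ is the content of condition (iii) of \rp{whaction}(c) specialized to $w_1,w_2 \in \La_G$. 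Commutation with the $LG_\gm$-action is obtained by specializing \rp{whaction}(d) to $g \in LG_\gm$, in which case $\gm' = g\gm g^{-1} = \gm$ so $l_g$ becomes an endomorphism and the diagram becomes the desired commutation square.

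For part (b), the $W$-equivariance of $\C{L}$ means that we are given a compatible system of isomorphisms $\eta_w : \ov{w}_*\C{L} \isom \C{L}$ for $w \in W$, satisfying the cocycle condition $\eta_{w_1 w_2} = \eta_{w_1} \circ (\ov{w}_1)_*(\eta_{w_2})$. For every $\wt w \in \wt W_G$, the image $\ov{\wt w} \in W$ gives an isomorphism $\eta_{\ov{\wt w}}$ of local systems on $\ov T_G$, which pulls back to an isomorphism $\C{F}_{\ov{\wt w}_*\C{L}} \isom \C{F}_\C{L}$ on $\Fl_{G,\gm}$. I then define the action of $\wt w$ on $H_i(\Fl_{G,\gm},\C{F}_\C{L})$ as the composition of $a_{\wt w,\C{L}}$ with the isomorphism induced by $\eta_{\ov{\wt w}}$. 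Multiplicativity is a routine combination of condition (iii) of \rp{whaction}(c) with the cocycle identity for the $\eta_w$'s; commutation with $LG_\gm$ is again \rp{whaction}(d).

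The only ``obstacle'' worth flagging is a bookkeeping one, not a mathematical one: making sure that in part (b) the chosen $W$-equivariance structure is used consistently so that the resulting $\wt W_G$-action is a genuine group action and not just a projective one. This reduces to verifying that the isomorphism $\ov{w_1 w_2}_*\C{L} \cong (\ov{w}_1)_*(\ov{w}_2)_*\C{L}$ used in \re{whaction}(c) to identify $a_{w_1 w_2,\C{L}}$ with $a_{w_1,(\ov{w}_2)_*\C{L}}\circ a_{w_2,\C{L}}$ matches the identification implicit in the cocycle condition for the $\eta_w$'s; this is formal. Once this compatibility is recorded, both (a) and (b) follow with no further input.
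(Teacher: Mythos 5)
Your deduction is correct and is exactly what the paper intends: it offers no separate proof, stating only that \rp{whaction} "immediately implies" the corollary, and the route you spell out — specializing \rp{whaction}(c)(iii) to $\La_G$ (where $\ov{\la}=1$), twisting $a_{\wt w,\C{L}}$ by the $W$-equivariance isomorphisms $\ov{w}_*\C{L}\isom\C{L}$ for (b), and using \rp{whaction}(d) with $g\in LG_{\gm}$ for the commutation — is that implicit argument, including the cocycle-compatibility check you flag.
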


\begin{Emp} \label{E:variant}
{\bf Notation.} Let $i:G^{\sc}\to G$ be the natural map.

\smallskip

(a) We set $\Fl:=\Fl_{G^{\sc}}$. Notice that we have a natural $LG^{\sc}$-equivariant embedding
$\eta:\Fl\hra\Fl_G$. Indeed, every Iwahori subgroup $\I\subseteq LG$ gives
rise to the Iwahori subgroup $\I^{\sc}:=i^{-1}(\I)\subseteq LG^{\sc}$ and we define $\eta$ to be the map
$g\I^{\sc}\mapsto i(g)\I$.

\smallskip

(b) For every element $\gm\in LG$, we set $\Fl_{\gm}:=\Fl\cap\Fl_{G,\gm}\subseteq\Fl_G$. Explicitly, for every Iwahori subgroup $\I$ of $LG$, the  affine Springer fiber $\Fl_{\gm}$ consists of all $g\I^{\sc}\in LG^{\sc}/\I^{\sc}$ such that $g^{-1}\gm g\in\I$.

\smallskip

(c) For every $u\in\Om_G$, consider the composition $\eta_u:\Fl\overset{\eta}{\lra}\Fl_G\overset{u}{\lra}\Fl_G$.
Then the induced maps
\[
\bigsqcup_{u\in\Om_G}\eta_u:\bigsqcup_{u\in\Om_G}\Fl\to \Fl_G\text{ and  }\bigsqcup_{u\in\Om_G}\eta_u:\bigsqcup_{u\in\Om_G}\Fl_{\gm}\to \Fl_{G,\gm}
\]
are universal homeomorphisms (see \rl{sprtjd}(b) below).
\end{Emp}

The proof of the following assertion is given in Section~\re{pflind} below.

\begin{Lem} \label{L:ind}
(a) For every $i\in\B{Z}$ and $w\in \wt{W}$ the isomorphism $a_{w,\C{L}}$ from \rp{whaction}(c) restricts to an isomorphism
$a_{w,\C{L}}:H_i(\Fl_{\gm},\C{F}_{\C{L}})\isom H_i(\Fl_{\gm},\C{F}_{\ov{w}_*(\C{L})})$.

\smallskip

(b) The subspace $H_i(\Fl_{\gm},\C{F}_{\C{L}})\subseteq H_i(\Fl_{G,\gm},\C{F}_{\C{L}})$ is
$\La$-invariant, and have a natural isomorphism of $\La_G$-representations
\[
H_i(\Fl_{G,\gm},\C{F}_{\C{L}})\simeq\ind_{\La}^{\La_G} H_i(\Fl_{\gm},\C{F}_{\C{L}}).
\]

\smallskip

(c) Moreover, if $\C{L}$  is $\ov{W}$-equivariant, then the subspace $H_i(\Fl_{\gm},\C{F}_{\C{L}})\subseteq H_i(\Fl_{G,\gm},\C{F}_{\C{L}})$ is
$\wt{W}$-invariant, and we have a natural isomorphism of $\wt{W}_G$-representations
\[
H_i(\Fl_{G,\gm},\C{F}_{\C{L}})\simeq\ind_{\wt{W}}^{\wt{W}_G} H_i(\Fl_{\gm},\C{F}_{\C{L}}).
\]
\end{Lem}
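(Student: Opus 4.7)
The plan is to use the universal homeomorphism $\sqcup_{u \in \Om_G} \iota_u: \sqcup_{u \in \Om_G} \Fl_\gm \to \Fl_{G,\gm}$ of \re{variant}(c) as the main structural input. Since \'etale homology is invariant under universal homeomorphisms, this produces a canonical direct sum decomposition
\[
H_i(\Fl_{G,\gm}, \C{F}_{\C{L}}) \cong \bigoplus_{u \in \Om_G} H_i(\Fl_\gm, \iota_u^* \C{F}_{\C{L}}).
\]
Writing $\iota_u = u \circ \iota$ and invoking the commutative square \form{CDredgm}, each summand identifies canonically with $H_i(\Fl_\gm, \C{F}_{(\ov{u}^{-1})_*\C{L}})$. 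In particular, the $u=1$ summand furnishes the canonical embedding $H_i(\Fl_\gm, \C{F}_{\C{L}}) \hookrightarrow H_i(\Fl_{G,\gm}, \C{F}_{\C{L}})$ required in (b) and (c).

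For (a), I would argue by \rp{whaction}(c)(iii) that since $\wt{W}$ is generated by the simple affine reflections $\wt{S}_G$, it suffices to treat $w = s \in \wt{S}_G$. For such $s$, the isomorphism $a_{s,\C{L}} = a_{\{s\},s,\C{L}}$ arises from the Cartesian square \form{SGaff} with $J = \{s\}$. Pulling back along $\iota$ yields an analogous square for $\Fl_\gm$ with the parahoric $i^{-1}(\P_{\{s\},\I}) \subset LG^{\sc}$, whose reductive quotient differs from $M_{\P_{\{s\}}}$ only by a central isogeny and hence has the same finite Weyl group $W_{\{s\}}$. Proper base change applied to this pullback square then produces the desired restriction of $a_{s,\C{L}}$ to $H_i(\Fl_\gm, \C{F}_{\C{L}})$, which is what we need.

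For (b), part (a) already guarantees that $\La \subset \wt{W}$ preserves the $u=1$ summand. The induction formula reduces to bookkeeping: under the canonical identification $\wt{W}_G/\wt{W} \isom \La_G/\La \isom \Om_G$, translation by $\mu \in \La_G$ permutes the components $\iota_u(\Fl_\gm)$ along the projection $\La_G \twoheadrightarrow \Om_G$ while acting within each component through its residue in $\La$. This is exactly the structure of $\ind_\La^{\La_G} H_i(\Fl_\gm, \C{F}_{\C{L}})$. For (c), the $W$-equivariance of $\C{L}$ provides canonical trivializations of the twists $\iota_u^* \C{F}_{\C{L}} \cong \C{F}_{\C{L}}$ across the summands, and combined with the decomposition $\wt{W}_G = \wt{W} \rtimes \Om_G$ this upgrades the $\La_G$-induction to the claimed $\wt{W}_G$-induction.

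The step I expect to be the main obstacle is the compatibility verification in the second paragraph: namely that the Lusztig $\wt{W}$-action on $H_i(\Fl_\gm, \C{F}_{\C{L}})$ built intrinsically from $G^{\sc}$-parahorics agrees, under $\iota_*$, with the restriction of the $\wt{W}_G$-action on $H_i(\Fl_{G,\gm}, \C{F}_{\C{L}})$. The content is that the Iwahori--parahoric combinatorics for $G$ and $G^{\sc}$ coincide on the level of finite Deligne--Lusztig data, but tracking the canonical identifications through the four corners of the Cartesian squares and the base-change isomorphisms is the technically delicate point.
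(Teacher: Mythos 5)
Your strategy matches the paper's underlying argument. In the text the lemma is proved in \re{pflind} by specializing the Appendix~B machinery to $s=1$: part (a) is quoted from \rco{equiv}(b), part (c) from (a) together with \rp{ind}, and those results are in turn proved from exactly the ingredients you propose --- the universal homeomorphism $\sqcup_{u\in\Om_G}\iota_u$ of \rl{sprtjd}(b) (i.e.\ \re{variant}(c)), the compatibility of the parahoric descent isomorphisms with $\iota_*$ for simple affine reflections (\rl{simref}), and Frobenius reciprocity for the induced-module identification. The only genuine divergence is part (b): you recognize the $\La_G$-module directly as $\ind_{\La}^{\La_G}$ by Mackey-type bookkeeping on the $\Om_G$-indexed summands, whereas the paper first proves (c) and then deduces (b) by applying it to the $W$-equivariant local system $\C{L}^{\st}:=\oplus_{\ov{w}\in W}\ov{w}_*\C{L}$ and passing to the summand $\C{L}$. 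Both work; yours avoids the auxiliary equivariant enhancement, the paper's avoids redoing the bookkeeping.

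Concerning the step you single out as the main obstacle: it is indeed the crux of the paper's route, and it is exactly what \rl{simref}(b) and \rco{equiv}(b) (read at $s=1$) establish, via an explicit commutative diagram comparing the $\I^{\sc}$- and $\I$-reduction maps; so the verification does go through. Note, however, that for the lemma as stated you can sidestep any comparison with an intrinsically defined $G^{\sc}$-action: by \re{variant}(c) the image $\iota(\Fl)$ (and each of its $\Om_G$-translates) is open and closed in $\Fl_G$, hence is $\pi_J$-saturated for every $J\subsetneq\wt{S}_G$, since the fibers of $\pi_J$ are the connected flag varieties $\P_{J,\I}/\I$; therefore \re{truncation}(b),(c) shows directly that each $a_{J,w,\C{L}}$ preserves the image of $H_i(\Fl_{\gm},\C{F}_{\C{L}})$ in $H_i(\Fl_{G,\gm},\C{F}_{\C{L}})$, and injectivity of $\iota_*$ (coming from the universal homeomorphism decomposition) together with generation of $\wt{W}$ by $\wt{S}_G$ and \rp{whaction}(c)(iii) converts this into statement (a). With (a) available for every translate, your induced-module recognition then only needs that $\wt{W}$ preserves each summand while $\Om_G$ permutes them according to $\wt{W}_G/\wt{W}\cong\La_G/\La\cong\Om_G$, which is precisely how \rp{ind} concludes via $\wt{\iota}_*(w\otimes x)=(\iota_w)_*(x)$.
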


\begin{Emp} \label{E:functoriality}
{\bf Functoriality.}

\smallskip

(a) Every isomorphism $\phi:G\isom G'$ of split reductive groups over $K$ induces an isomorphism $\phi_*:\Fl_G\isom \Fl_{G'}$ of affine flag varieties, an isomorphism $\phi_*:\ov{T}_G\isom \ov{T}_{G'}$ of tori, an isomorphism $\wt{W}_G\isom\wt{W}_{G'}$ of extended affine Weyl groups,  etc.

For example,
if $T\subseteq G$ is a maximal split tori and $\I$ is an Iwahori subgroup scheme of $LG$, containing $\T$, then
$T':=\phi(T)\subseteq G$ is a maximal split tori and $\I':=\phi(\I)$ is an Iwahori subgroup scheme of $LG'$ containing $\T':=L^+(T')$, hence
$\phi$ induces an isomorphism $\phi_*:\Fl_G\simeq LG/\I\isom LG'/\I'\simeq \Fl_{G'}$. Moreover, the latter isomorphism
is independent of the choice of $(T,\I)$.

\smallskip

(b) The isomorphism $\phi_*:\Fl_G\isom \Fl_{G'}$ is $\phi$-equivariant, therefore it induces an isomorphism $\phi_*:\Fl_{G,\gm}\isom \Fl_{G',\phi(\gm)}$ of affine Springer fibers for every $\gm\in LG$.

\smallskip

(c) A local system $\C{L}$ on $\ov{T}_G$ gives rise to a local system $\C{L}':=\phi_*(\C{L})$ on $\ov{T}_{G'}$.
Since the isomorphism $\phi_*:\ov{T}_G\isom \ov{T}_{G'}$ is $\ov{W}_G\simeq \ov{W}_{G'}$-equivariant, we conclude that $\C{L}'$ is
$\ov{W}_{G'}$-equivariant, if $\C{L}$ is $\ov{W}_{G}$-equivariant.

\smallskip

(d) An isomorphism $\phi_*:\Fl_{G,\gm}\isom \Fl_{G',\phi(\gm)}$ from part~(b) induces an  isomorphism
\[
\phi_*:H_i(\Fl_{G,\gm},\C{F}_{\C{L}}) \isom H_i(\Fl_{G',\phi(\gm)},\C{F}_{\C{L}'})
\]
for every $\gm\in LG$ and $i\in\B{N}$.

\smallskip

(e) For every $g\in LG$ and $w\in\wt{W}_G$, the following diagram is commutative
\[
\begin{CD}
H_i(\Fl_{G,\gm},\C{F}_{\C{L}}) @>a_{w,\C{L}}>> H_i(\Fl_{G,\gm},\C{F}_{\ov{w}_*\C{L}})\\
@V\phi_* VV @VV\phi_*V\\
H_i(\Fl_{G',\phi(\gm)},\C{F}_{\C{L}'}) @>a_{w,\C{L}'}>> H_i(\Fl_{G',\phi(\gm)},\C{F}_{\ov{w}_*\C{L}'}).
\end{CD}
\]

\smallskip

(f) Furthermore, if the local system $\C{L}$ is $\ov{W}_G$-equivariant, then it follows from part~(e) that the isomorphism of part~(d) is $\phi_*:\wt{W}_G\isom\wt{W}_{G'}$-equivariant, and thus induces an isomorphism
\[
\phi_*:H_j(\wt{W}_G, H_i(\Fl_{G,\gm},\C{F}_{\C{L}}))\isom H_j(\wt{W}_{G'}, H_i(\Fl_{G',\phi(\gm)},\C{F}_{\C{L}'}))
\]
for all $j\in\B{N}$.
\end{Emp}

\begin{Emp} \label{E:inaut}
{\bf Inner automorphisms.}
Let $\phi:G\isom G$ be an inner isomorphism $\Ad_g$ for some $g\in G^{\ad}(K)=L(G^{\ad})(k)$.

\smallskip

(a) Consider projection $LG^{\ad}\to LG^{\ad}/LG^{\sc}\simeq\Om_G$, let $\om_g\in \Om_{G^{\ad}}$ be the image of $g$, and
let $\ov{w}_g\in \ov{W}$ be the image of $\om_g$ under the composition $\Om_G\hra\wt{W}_G\to \ov{W}$. Unwinding the definitions,
the induced morphism $\phi_*:\ov{T}_G\isom \ov{T}_G$ equals $\ov{w}_g$.

\smallskip

(b) It follows from part~(a) for every $\ov{W}$-equivariant local system $\C{L}$ on $\ov{T}_G$, we have a natural isomorphism
$\phi_*(\C{L})\simeq \C{L}$.

\smallskip

(c)  Assume that $g\in LG^{\ad}_{\gm}$, that is, $\Ad_g(\gm)=\gm$. Then, combining part~(b) and Section~\re{functoriality}(f), we see that
for every $\ov{W}$-equivariant local system $\C{L}$ on $\ov{T}_G$, the automorphism $\phi=\Ad_g$ of $G$ induces an automorphism $\phi_*=(\Ad_g)_*$
of $H_j(\wt{W}_G, H_i(\Fl_{G,\gm},\C{F}_\C{L}))$. Moreover, the correspondence $g\mapsto (\Ad_g)_*$ induces the action of $LG^{\ad}_{\gm}$ on $H_j(\wt{W}_G, H_i(\Fl_{G,\gm},\C{F}_\C{L}))$.
%
%(e) Each $\eta_{\gm}$ it induces an isomorphism
%
%\[
%H_j(\wt{W},H_i(\Fl_{\gm'},\C{L}^{\st}_{\theta}))\isom
%H_j(\wt{W},H_i(\Fl_{\gm},\C{L}^{\st}_{\theta})).
%\]
%
%In particular, the centralizer $G_{\gm}(F^{\nr})$ acts on each
%$H_j(\wt{W},H_i(\Fl_{\gm},\C{L}^{\st}_{\theta}))$.
\end{Emp}

%\begin{Lem} \label{L:commute}
%Each $\eta_g$ commutes with the action of $\wt{W}$.
%\end{Lem}

%\begin{proof}
% Indeed, it suffice to show that  commutes with each $W_S$. But this follows
%from the diagram
%\begin{equation} \label{Eq:1}
%\CD
%\Fl_{\gm'}   @>\Int g^{-1}>>    \Fl_{\gm} @>red'_{\gm,S}>>[(L_S)^{ad}\bs Spr(L_S)]\\
%@VV\pi_SV       @VV\pi_SV       @V[p_{L_S}]VV\\
%\Fl_{S,\gm'}   @>\Int g^{-1}>>    \Fl_{S,\gm} @>red_{\gm,S}>>[(L_S)^{ad}\bs L_S],\\
%\endCD
%\end{equation}
%identities $red'_{\gm,S}\circ Int
%g^{-1}=red'_{\gm',S}:\Fl_{\gm'}  \to [(L_S)^{ad}\bs Spr(L_S)]$
%and $red_{\gm,S}\circ Int g^{-1}=red_{\gm',S}:\Fl_{S,\gm'}
%\to [(L_S)^{ad}\bs L_S]$ and definition of the $W_S$-action (see
%Section~\re{wsaction}).
%\end{proof}

%\begin{Cor}
%Each $\eta_{\gm}$ it induces an isomorphism
%
%\[
%H_j(\wt{W},H_i(\Fl_{\gm'},\C{L}^{\st}_{\theta}))\isom
%H_j(\wt{W},H_i(\Fl_{\gm},\C{L}^{\st}_{\theta})).
%\]
%
%In particular, the centralizer $G_{\gm}(F^{\nr})$ acts on each
%$H_j(\wt{W},H_i(\Fl_{\gm},\C{L}^{\st}_{\theta}))$.
%\end{Cor}

\subsection{Compatibility of actions}

\begin{Emp} \label{E:pi0}
{\bf Kottwitz isomorphism.} For every torus $S$ over $K$, we have a natural isomorphism
$\pi_0(LS)\simeq X_*(S)_{\Gm_K}$, where $(-)_{\Gm_K}$ denotes coinvariants, functorial in $S$. Moreover, it is characterized
by the condition that a cocharacter $\la\in X_*(S)$ corresponds to the connected component of $\la(t)\in S(K)=LS(k)$ when $S$ is split. Indeed, the functor $S\mapsto \pi_0(LS)$ satisfies the conditions of \cite[Section~2.1]{Ko}. Therefore it is canonically isomorphic to
$S\mapsto X_*(S)_{\Gm_K}$ by \cite[Lemma~2.2]{Ko}.
\end{Emp}

%(b) Explicitly the isomorphism $\phi:
%X_*(S)_{\Gm}\isom\pi_0(\un{S})$ is as follows (DO WE NEED IT?)  Choose a finite
%separable extension $F'/F$ splitting $S$, and let $\pi'$ be an
%uniformizer of $F'$. Then the composition
%\[
%X_*(S)\overset{\ev_{\pi'}}{\lra} S(F')\overset{N_{F'/F}}{\lra}
%S(F)=\un{S}(k)\overset{\pr}{\lra}\pi_0(\un{S})
%\]
%is independent of $F'$, factors through $X_*(S)_{\Gm}$ and
%$\varphi:X_*(S)_{\Gm}\to \pi_0(\un{S})$ is the induced map.
%\end{Emp}

\begin{Emp} \label{E:can}
{\bf The canonical map.} Let $S\subseteq G$ be a maximal torus over $K$.

\smallskip

(a) We denote by $\pr_{S}$ the composition
\[
\qlbar[\La_G]^{\ov{W}_G}\isom\qlbar[X_*(T_G)]^{W_G}\hra \qlbar[X_*(S)]\overset{\pr}{\lra}
\qlbar[X_*(S)_{\Gm_K}]\isom \qlbar[\pi_0(LS)],
\]
where

\smallskip

\quad\quad$\bullet$ the first isomorphism is the inverse of isomorphism \form{can} from Section~\re{canonical};

\smallskip

\quad\quad$\bullet$ the second map was defined in Section~\re{admis}(c);

\smallskip

\quad\quad$\bullet$ and the last map is the Kottwitz isomorphism from Section~\re{pi0}.

\smallskip

(b) We denote by $\pr'_{S}$ the composition $\qlbar[\La_G]^{\ov{W}_G}\overset{\pr_{S}}{\lra}\qlbar[\pi_0(LS)]\overset{\iota}{\to}\qlbar[\pi_0(LS)]$, where $\iota$ induced by the inverse map $\iota:LS\isom LS:g\mapsto g^{-1}$.

\smallskip
(c) Notice that both $\pr_{S}$ and $\pr'_{S}$ make $\qlbar[\pi_0(LS)]$ a finite $\qlbar[\La_G]^{\ov{W}_G}$-algebra.
\end{Emp}

\begin{Emp} \label{E:setup comp}
{\bf Set up.} Fix an element $\gm\in G(K)=LG(k)$, and let $\C{L}$ be a local system on $\ov{T}_G$.

\smallskip

(a) By Section~\re{affsprfib}(f), each homology group $H_i(\Fl_{G,\gm},\C{F}_{\C{L}})$ is equipped with an action of the group
$LG_{\gm}$. Moreover, since the action any connected algebraic group induces a trivial action on the homology, the action of $LG_{\gm}$ on $H_i(\Fl_{G,\gm},\C{F}_{\C{L}})$ factors through the action of the group of connected components $\pi_0(LG_{\gm})$.

\smallskip

(b) Using  \rco{whaction}(a) each $H_i(\Fl_{G,\gm},\C{F}_{\C{L}})$ is therefore
equipped with an action of the group $\La_G\times\pi_0(LG_{\gm})$. Furthermore, it is even
equipped with an action of the group $\wt{W}_G\times\pi_0(LG_{\gm})$, if the local system $\C{L}$ is $\ov{W}_G$-equivariant (see \rco{whaction}(b)).

\smallskip

(c) Assume now that $\gm\in G^{\rss}(K)$. Then $G^0_{\gm}:=(G_{\gm})^0\subseteq G$ is a maximal torus over $K$, so the construction of Section~\re{can} applies, and we set $LG^0_{\gm}:= L(G^0_{\gm})$, $\pr_{\gm}=\pr_{G,\gm}:=\pr_{G^0_{\gm}}$ and $\pr'_{\gm}=\pr'_{G,\gm}:=\pr'_{G^0_{\gm}}$. Note that $\pi_0(LG^0_{\gm})$ is a subgroup of $\pi_0(LG_{\gm})$.
\end{Emp}

The following assertion is a crucial technical result of the paper
and will be proved in Appendix B.

\begin{Thm} \label{T:action}
For every $i\in \B{Z}$, there exists a finite
filtration $\{F^j H_i(\Fl_{G,\gm},\C{F}_{\C{L}})\}_j$ of $H_i(\Fl_{G,\gm},\C{F}_{\C{L}})$, stable under
the action of $\La_G\times\pi_0(LG^0_{\gm})$,
such that the action of a subalgebra $\qlbar[\La_G]^{\ov{W}_G}\subseteq \qlbar[\La_G]$ on each graded piece
$\gr^j H_i(\Fl_{G,\gm},\C{F}_{\C{L}})$ is induced from the action of $\qlbar[\pi_0(L{G}_{\gm})]$ via homomorphism $\pr'_{\gm}$. Furthermore, this filtration can be chosen to be stable under the action of $\wt{W}_G\times\pi_0(LG^0_{\gm})$, if $\C{L}$ is $\ov{W}_G$-equivariant.
\end{Thm}

Our strategy will be to deduce the result from an analogous result for Lie algebras shown by Yun \cite{Yun} (see \rt{yun} and Remark~\re{sign}).

\section{Finiteness of homology of affine Springer fibers}

\subsection{Finiteness of homology}

Let $L$ be an algebraically closed field.

\begin{Lem} \label{L:fingen}
Let $X$ be a scheme locally of finite type over $L$ equipped with an action of a group $\La$
such that there exists a closed subscheme $Y\subseteq X$ of finite type
over $L$ such that $\La(Y)=X$.

If $\qlbar[\La]$ is Noetherian, then  for every $\La$-equivariant
object $\C{F}\in D^b_c(X,\qlbar)$ and every $i\in\B{Z}$ the
$\qlbar[\La]$-module $H_i(X,\C{F})$ is finitely generated.
\end{Lem}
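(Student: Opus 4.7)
The plan is to find a finite-dimensional subspace $V \subset H_i(X,\C{F})$ whose $\qlbar[\La]$-span equals all of $H_i(X,\C{F})$; finite generation over $\qlbar[\La]$ will then follow at once from finite-dimensionality of $V$.

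First, I would choose a finite-type open neighborhood $W$ of $Y$ by exploiting quasi-compactness of $Y$: cover $Y$ by finitely many affine opens of $X$ and let $W$ be their union. Since $\La W \supset \La Y = X$, the translates $\{\la W\}_{\la \in \La}$ form an open cover of $X$ by finite-type opens. Any finite-type open $U \subset X$ is quasi-compact, hence contained in $U_S := \bigcup_{\la \in S} \la W$ for some finite $S \subset \La$, so $\{U_S\}_S$ is cofinal among the finite-type opens indexing $H_i(X,\C{F})$, giving
\[
H_i(X,\C{F}) = \varinjlim_{S \subset \La \text{ finite}} H_i(U_S, \C{F}|_{U_S}),
\]
a direct limit of finite-dimensional vector spaces.

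Next, I would set $V := \text{image}(H_i(W, \C{F}|_W) \to H_i(X, \C{F}))$. By $\La$-equivariance of $\C{F}$, the subspace $\la V$ coincides with the image of $H_i(\la W, \C{F}|_{\la W}) \to H_i(X, \C{F})$, so $\sum_{\la \in S} \la V$ is contained in the image of $H_i(U_S) \to H_i(X)$, yielding the inclusion $\qlbar[\La] \cdot V \subset H_i(X,\C{F})$. The crucial point is the opposite inclusion: that the image of $H_i(U_S) \to H_i(X)$ is already contained in $\qlbar[\La] \cdot V$. I would establish this via the \v{C}ech--Mayer--Vietoris spectral sequence for the open cover $U_S = \bigcup_{\la \in S} \la W$, whose $E_1$-page consists of sums $\bigoplus_{|S'|=p+1} H_q(U_{S'}, \C{F}|_{U_{S'}})$ with $U_{S'} := \bigcap_{\la \in S'} \la W$; since each such intersection is an open subscheme of $\la W$ for any $\la \in S'$, the pushforward of $H_q(U_{S'})$ into $H_i(X,\C{F})$ lies in $\la V$.

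The main obstacle will be controlling the Mayer--Vietoris boundary contributions: classes in $H_i(U_S)$ arising from $E_\infty^{p, q}$ with $p \geq 1$ need not a priori lie in $\sum_{\la \in S} \la V$, so one must argue they vanish in $H_i(X,\C{F})$ modulo $\qlbar[\La] \cdot V$ after pushforward. This would be handled by an inductive argument on $|S|$, exploiting that enlarging $S$ produces new intersections allowing such boundary classes to be rewritten in terms of the $\la V$'s in the direct limit. Once the claim $H_i(X,\C{F}) = \qlbar[\La] \cdot V$ is established, finite generation over $\qlbar[\La]$ is immediate from $\dim_{\qlbar} V < \infty$.
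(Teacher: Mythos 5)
Your reduction to the claim $H_i(X,\C{F})=\qlbar[\La]\cdot V$, with $V=\im\bigl(H_i(W,\C{F}|_W)\to H_i(X,\C{F})\bigr)$, is where the proof actually lives, and that claim is not established. The cofinality argument and the inclusion $\qlbar[\La]\cdot V\subset H_i(X,\C{F})$ are fine, but the reverse inclusion is exactly the content of the lemma, and the Mayer--Vietoris justification you offer does not cohere for the problematic terms: a class of $H_i(U_S)$ whose leading term sits in $E^{p,q}_\infty$ with $p\geq 1$ has $q=i-p<i$, and the pushforward of $H_q(U_{S'},\C{F})$ lands in $H_q(X,\C{F})$, not in $H_i(X,\C{F})$, so the assertion that it ``lies in $\la V$'' (with $V\subset H_i$) is meaningless precisely for those terms; only the $p=0$ edge column, which was never the issue, is covered. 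The proposed fix --- an induction on $|S|$ in which ``enlarging $S$ produces new intersections allowing such boundary classes to be rewritten'' --- names no mechanism: enlarging $S$ only adds more translates of $W$ of the same kind, and nothing forces a higher-filtration class to become a sum of degree-$i$ pushforwards from single translates, even after passing to the colimit. Note also that your argument never uses the hypothesis that $\qlbar[\La]$ is Noetherian; if it worked it would prove the lemma for an arbitrary group ring, which should be taken as a warning that the real difficulty has been skipped rather than solved. Keep in mind too that $\C{F}$ is an arbitrary $\La$-equivariant object of $D^b_c$, not just a local system, so one cannot expect classes of $H_i(X,\C{F})$ to be visible in degree $i$ on a single window $\la W$.

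The paper takes a different route which explains where Noetherianity enters. For a $\La$-invariant open $j:U\hra X$ with closed complement $\iota:X\sm U\hra X$, the triangle $\iota_*\iota^!\C{F}\to\C{F}\to j_*j^*\C{F}$ gives an exact sequence of $\qlbar[\La]$-modules, and since $\qlbar[\La]$ is Noetherian, finite generation for the outer terms implies it for the middle; combined with Noetherian induction on $Y$ this lets one shrink $X$ to suitable $\La$-invariant opens. One first removes the pairwise intersections of irreducible components, then enlarges $Y$ to a union of connected components, and only in that final, essentially ``disjoint union of translates'' situation does one conclude that $H_i(X,\C{F})$ is generated by the finite-dimensional subspace $H_i(Y,\C{F})$ --- i.e.\ the analogue of your claim is proved only after the dévissage, not for the original $X$ and $W$. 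To repair your proposal you would either have to prove your generation claim for arbitrary $X$, $W$ and equivariant $\C{F}$ (which I do not believe can be done by the soft covering argument sketched), or reintroduce a dévissage of this kind, at which point you have reproduced the paper's proof.
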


\begin{proof}
Notice that for every distinguished triangle
\[
\C{F}'\to\C{F}\to\C{F}''\to
\]
in $D^b_c(X,\qlbar)$ we have an exact sequence of
$\qlbar[\La]$-modules
\[
H_i(X,\C{F}'')\to H_i(X,\C{F})\to H_i(X,\C{F}').
\]

Since $\qlbar[\La]$ is Noetherian, the assertions
for $\C{F}'$ and $\C{F}''$ implies that for  $\C{F}$. Applying
this to the distinquished triangle
\[
\iota_*\iota^!\C{F}\to\C{F}\to j_*j^*\C{F}\to,
\]
where $j:U\hra X$ is a  $\La$-invariant open
embedding, and $\iota:X\sm U\hra X$ is the corresponding closed
embedding, we get that the assertion for $(X,\C{F})$ follows, if we
show assertions for $(U,j^*\C{F})$ and $(X\sm U,\iota^!\C{F})$.

\smallskip

Since we can always assume that the assertion for $(X\sm
U,\iota^!\C{F})$ holds by the Noetherian induction on $Y$, in the
course of the proof we can replace $X$ by its open non-empty
$\La$-equivariant subset $U$ and $Y$ by $Y\cap U$.

Let $\{X_{\al}\}_{\al}$ be the set of all irreducible components
of $X$. Then the union $\bigcup_{\al\neq\beta}(X_{\al}\cap X_{\beta})\subseteq X$ is closed.
Indeed, it suffices to show that for every open subscheme $V\subseteq X$ of finite type over $L$, the intersection
$V\cap(\bigcup_{\al\neq\beta}(X_{\al}\cap X_{\beta}))\subseteq V$ is closed.
It suffices to show that the set of the $X_{\al}$'s such that $V\cap X_{\al}\neq\emptyset$ is finite.
But this follows from the fact that every $V\cap X_{\al}$ is either empty or an irreducible component
of $V$ and the fact that $V$ has finitely many irreducible components.

Replacing $X$ by its open subscheme $X\sm\bigcup_{\al\neq\beta}(X_{\al}\cap X_{\beta})$,
we can assume that each connected component $X_{\al}$ of $X$ irreducible. Moreover,
enlarging $Y$ one can assume that $Y$ is a union of connected
components of $X$. But then $H_i(X,\C{F})$ is generated as a
$\La$-module by its finite dimensional subspace $H_i(Y,\C{F})$.
Therefore it is finitely generated.
\end{proof}

\begin{Emp} \label{E:filtr}
{\bf Notation.} Let $X$ be either a scheme locally of finite type
over $L$ or an {\em ind-scheme} over $L$, that is, an inductive limit $\on{colim}_{\al} X_{\al}$ of schemes of finite type over
$L$ such that all transition maps are closed embeddings.

\smallskip

(a) By a {\em filtration} on $X$, we mean an increasing sequence of closed subschemes
$X_1\subseteq X_2\subseteq \ldots\subseteq X$ of $X$ such that $X=\bigcup_i X_i$.

\smallskip

(b) Assume that $X$ is equipped with an action of a group $\La$,
equipped with a filtration $\{\La_i\}_i$ (see Section~\re{fil}(a)).

We say that a filtration
$\{X_i\}_i$ on $X$ is {\em compatible} with $\{\La_i\}_i$, if
we have an inclusion
$\La_i(X_j)\subseteq X_{i+j}$ for each $i,j\in\B{N}$.

Moreover, we say that a filtration $\{X_i\}_i$ is {\em finitely generated}
over $\{\La_i\}_i$, if each $X_i$ is of finite type over $L$, and there
exists $N\in\B{N}$ such that for every $i>N$, we have an equality
$X_i=\bigcup_{j=1}^N \La_{i-j}(X_j)$.

\smallskip

(c) As in section Section~\re{fil}(d), filtration $\{\La_i\}_i$ gives rize to a filtration on
the group algebra $A:=\qlbar[\La]$, and we denote by
$R(A)=\bigoplus_i A_i$ the corresponding Rees algebra.

For every
$\La$-equivariant object $\C{F}\in D^b_c(X,\qlbar)$ and every
$j\in\B{Z}$, the graded vector space
\[
R(H_j(X,\C{F})):=\bigoplus_i H_j(X_i,\C{F})
\]
is a graded $ R(A)$-module: an element $\la\in\La_{i}$ induced a closed embedding
$\la:X_{i'}\hra X_{i+i'}:x\mapsto \la(x)$, hence  induces a morphism
$\la_*:H_j(X_i,\C{F})\to H_j(X_{i+i'},\C{F})$, dual to the
pull-back $\la^*:H^j(X_{i+i'},\C{F})\to H_j(X_i,\C{F})$.

\smallskip

(d) For every $i$, we denote by $H'_j(X_i,\C{F})\subseteq H_j(X,\C{F})$
the image of the natural map $H_j(X_i,\C{F})\to H_j(X,\C{F})$, and
set
\[
R(H'_j(X,\C{F})):=\bigoplus_i H'_j(X_i,\C{F}).
\]
Then the natural surjective maps $H_j(X_i,\C{F})\to H'_j(X_i,\C{F})$ induce a surjective homomorphism
$R(H_j(X,\C{F}))\to R(H'_j(X,\C{F}))$ of graded $R(A)$-modules.
\end{Emp}

\begin{Lem} \label{L:rees}
In the situation of Section~\re{filtr}, assume that $\La$ acts
on the set of irreducible components of $X$ with finite stabilizers, that
$ R(A)$ is Noetherian and that filtration $\{X_i\}_i$ is finitely generated
over $\{\La_i\}_i$.

Then for every $\La$-equivariant object $\C{F}\in
D^b_c(X,\qlbar)$ and every $j\in\B{Z}$ the $ R(A)$-modules
$R(H_j(X,\C{F}))$ and $R(H'_j(X,\C{F}))$ are finitely generated.
\end{Lem}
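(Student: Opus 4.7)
The plan is to adapt the d\'evissage of \rl{fingen} to the graded setting. The main new point relative to \rl{fingen} is to establish strict compatibility between the homological filtration on $H_j(X,\C{F})$ and the filtration on $A=\qlbar[\La]$, so that the Rees module $R(H'_j)$ is generated in finitely many degrees over $R(A)$.

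First, a distinguished triangle $\C{F}'\to\C{F}\to\C{F}''\to$ of $\La$-equivariant complexes gives, on each $X_i$, a long exact sequence on homology compatible with the maps $\la_*:H_*(X_{i'},\C{F})\to H_*(X_{i+i'},\C{F})$ for $\la\in\La_i$, and so assembles into long exact sequences of graded $R(A)$-modules $R(H_*(X,\C{F}'))\to R(H_*(X,\C{F}))\to R(H_*(X,\C{F}''))$ (and similarly for $R(H'_*)$). Noetherianness of $R(A)$ preserves finite generation under extensions, so truncating via $\tau^{\le}$ reduces us to $\C{F}$ a constructible sheaf. For a $\La$-invariant open $j:U\hookrightarrow X$ with closed complement $\iota:Z\hookrightarrow X$, the open--closed triangle, together with the observation that the induced filtrations $\{U\cap X_i\}_i$ and $\{Z\cap X_i\}_i$ remain compatible and finitely generated over $\{\La_i\}_i$, reduce the claim to $(U,j^*\C{F})$ and $(Z,\iota^!\C{F})$ separately.

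Running this Noetherian induction and removing the lower-dimensional subscheme $\bigcup_{\alpha\ne\beta}X_\alpha\cap X_\beta$ as in \rl{fingen}, I reduce to the case where $X$ is a disjoint union of irreducible components permuted by $\La$ with finite stabilizers and $\C{F}$ is a $\La$-equivariant local system. The relation $X=\La(X_N)$ (with $X_N$ of finite type) forces $X$ to have only finitely many $\La$-orbits of components, so decomposing $R(H_j)$ accordingly I further reduce to $X=\bigsqcup_{\la\in\La/\La_0}\la(Z)$ for an irreducible $Z$ with finite stabilizer $\La_0$. Writing $Y_i^\la:=\la^{-1}(X_i\cap\la Z)\subset Z$, the hypothesis $X_i=\bigcup_{k\le N}\La_{i-k}(X_k)$ translates into explicit covers $Y_i^\nu=\bigcup_{k\le N,\,\nu\mu^{-1}\in\La_{i-k}}Y_k^\mu$ inside $Z$. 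Using these covers and the disjointness of the components I would deduce for $i>N$ the strict equality $H'_j(X_i,\C{F})=\sum_{\la\in\La_1}\la_*H'_j(X_{i-1},\C{F})$; iteration then gives generation of $R(H'_j)$ over $R(A)$ by $\bigoplus_{k\le N}H'_j(X_k,\C{F})$, and an analogous argument controlling also the kernels $K_i:=\Ker(H_j(X_i,\C{F})\to H_j(X,\C{F}))$ yields the statement for $R(H_j)$.

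The main technical obstacle I anticipate is establishing this strict equality $H'_j(X_i,\C{F})=\sum_{\la\in\La_1}\la_*H'_j(X_{i-1},\C{F})$ from the closed cover $X_i=\bigcup_{\la\in\La_1}\la(X_{i-1})$. The spectral sequence of such a cover generally produces a nonzero cokernel in $H_j(X_i,\C{F})$ controlled by homology of the overlaps $\la(X_{i-1})\cap\la'(X_{i-1})$, and one must argue that the image of this cokernel in the ambient $H_j(X,\C{F})$ vanishes. The disjoint-orbit reduction confines the overlaps to lie within a single component, where they can be absorbed into the $R(A)$-action in lower degrees via the $\La_0$-action, but cleanly verifying this absorption---particularly when $\La_0$ is nontrivial---will be the delicate heart of the argument.
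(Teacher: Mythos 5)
Your d\'evissage (distinguished triangles, open--closed decomposition, Noetherian induction, reduction to irreducible pairwise disjoint components) is exactly the paper's, but the step you yourself flag as ``the delicate heart'' is precisely where your route breaks down, and the paper closes it by a different, more elementary geometric trick that your proposal misses. You try to prove, for $i>N$, a strict equality $H'_j(X_i,\C{F})=\sum_{\la\in\La_1}\la_*H'_j(X_{i-1},\C{F})$ directly from the closed cover $X_i=\bigcup_{\la}\la(X_{i-1})$; as you note, the Mayer--Vietoris/spectral-sequence of a closed cover only gives joint surjectivity of the $\la_*$ up to terms coming from the overlaps, and your suggestion to ``absorb'' these via the $\La_0$-action is not an argument --- nothing in your reduction forces the overlaps $\la(X_{i-1})\cap\la'(X_{i-1})$ to be unions of components, so the obstruction is genuinely present and your proof is not complete. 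The paper instead performs a further $\La$-equivariant shrinking of $X$: whenever some component $X_{\al}$ meets some $X_i$ ($1\le i\le N$) properly (a ``bad pair''), it removes the closed subset $\La(X_{\al}\cap X_i)$ --- and this is exactly where the finite-stabilizer hypothesis is used, to guarantee that this $\La$-orbit is a locally finite, hence closed, union. After finitely many such removals every $X_i$ with $i\le N$ (hence, since $\La$ permutes components, every $X_i=\bigcup_{k\le N}\La_{i-k}(X_k)$) is a union of connected components; then $H_j(X_i,\C{F})$ is the direct sum of the homologies of those components, each a direct summand of some $\la_*H_j(X_k,\C{F})$ with $k\le N$, so $R(H_j(X,\C{F}))$ is generated by $\bigoplus_{k\le N}H_j(X_k,\C{F})$ with no overlap analysis at all. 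In your proposal the finite-stabilizer hypothesis is only invoked to get finitely many orbits of components (which already follows from $X=\La(X_N)$), so the hypothesis is never used where it is actually needed.

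Two smaller points. First, there is no need to chase the kernels $K_i$ or to treat $R(H'_j)$ separately: $R(H'_j(X,\C{F}))$ is by construction a quotient of $R(H_j(X,\C{F}))$, so finite generation of the latter immediately gives the former; conversely, your plan to run the triangle d\'evissage for $R(H'_*)$ is itself problematic, since $H'_j$ is an image and does not sit in long exact sequences. Second, the reduction to $\C{F}$ a local system and to a single $\La$-orbit of components is harmless but unnecessary once the ``bad pair'' removal is available.
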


\begin{proof}
Since $R(H'_j(X,\C{F}))$ is a factor module of $R(H_j(X,\C{F}))$, it suffices to show the assertion for
$R(H_j(X,\C{F}))$.

\smallskip

Fix $N\in\B{N}$ such that for every $i>N$ we have
$X_i=\bigcup_{j=1}^N \La_{i-j}(X_j)$. In particular, we have $X=\La(X_N)$, thus we are in
the situation of \rl{fingen} with $Y=X_N$.

\smallskip

Now our argument is very similar to that of \rl{fingen}: Notice
that for every distinguished triangle
\[
\C{F}'\to\C{F}\to\C{F}''\to
\]
in $D^b_c(X,\qlbar)$ we have an exact sequence of
$ R(A)$-modules
\[
R(H_j(X,\C{F}''))\to R(H_j(X,\C{F}))\to R(H_j(X,\C{F}')).
\]

Since $ R(A)$ is a Noetherian ring, the assertion
for $\C{F}'$ and $\C{F}''$ implies that for  $\C{F}$. Thus, arguing
as in \rl{fingen}, we can replace $X$ by its open
$\La$-equivariant subscheme, thus assuming that each connected
component $X_{\al}$ of $X$ irreducible. In this case, $\La$ acts
on the set of connected components of $X$ with finite stabilizers.

\smallskip

Next we reduce the assertion to the case when each $X_i$ is a
union of (possible empty set) of connected components of $X$:
Since $X_i=\bigcup_{j=1}^N \La_{i-j}(X_j)$ for each $i>N$, it is
enough to show the assertion only for $i=1,\ldots,N$. In other
words, we want to get to the situation that for every $X_{\al}$
and every $i=1,\ldots, N$ the intersection $X_{\al}\cap X_i$ is
either $X_{\al}$ or empty.

Notice that for every $i=1,\ldots,N$, the intersection $X_{\al}\cap X_i$ is non-empty only for
finitely many $X_{\al}$'s. Therefore there exist only finitely many pairs $(X_{\al},X_i)$
such that $\emptyset\neq X_{\al}\cap X_i\subsetneq X_{\al}$, and we will call such a pair
{\em bad}.

Assume that a bad pair $(X_{\al},X_i)$ exists. Since the stabilizer of
$X_{\al}\subseteq X$ in $\La$ is finite, the union $\La(X_{\al}\cap X_i)\subseteq X$ is closed.
Thus, replacing $X$ by an open $\La$-equivariant subset $X\sm
\La(X_{\al}\cap X_i)$, we decrease the number of bad pairs by at least one. This process will stop after finitely
many steps.

\smallskip

By the shown above, we can assume that each $X_i$ is a union of
(possible empty set) of connected components of $X$. Since
$X_i=\bigcup_{j=1}^N \La_{i-j}(X_j)$ for each $i>N$, we deduce that
the $ R(A)$-module $ R(H_j(X,\C{F}))$ is generated by its finite
dimensional subspace $\bigoplus_{i=1}^N H_j(X_j,\C{F})$. Therefore it
is finitely generated.
\end{proof}

\subsection{Combinatorics of the affine Weyl groups}

In this subsection we assume that $G$ is a split, semisimple and simply connected group over $K$, fix a pair $(T,\I)$ as in Section~\re{affweyl}(a), and set $\wt{W}:=\wt{W}_G$ and $\ov{W}:=\ov{W}_G$.

\begin{Emp} \label{E:bruhatlength}
{\bf Bruhat decomposition and length function.}

\smallskip

(a) Recall that the inclusion $N_{ LG}(LT)\hra LG$ induces a bijection
\[
\wt{W}\simeq N_{ LG}(LT)/ LT\isom \I\bs LG/\I,
\]
which depends on $\I$, but is independent of $T$.

\smallskip

(b) For every element $g\in LG$, we denote by $w(g)\in\wt{W}$ the unique
element such that $g\in \I w(g)\I$, and let $l(g)=l_\I(g)$ be the length of $w(g)$.

\smallskip

(c) Consider the embedding $\eta:\La=X_*(T_G)\simeq X_*(T)\hra  LT\subseteq LG$, induced by the embedding $\la\mapsto \la(t):X_*(T)\to LT$.
Note that for every $\la\in\La\subseteq \wt{W}$ the length $l(\la)$ coincides with $l_\I(\eta(\la))$ (see part~(b)).

\smallskip

(d) For shortness, we will usually omit $\eta$, that is, write $\la\in LG$ instead of $\eta(\la)$.
\end{Emp}

\begin{Emp} \label{E:bruhat}
{\bf Properties}.

\smallskip

(a) Recall that for every $w\in\wt{W}$ and every simple affine reflection $s=s_{\al}$,
we have $\I w\I\cdot \I s\I=\I ws\I$, if $ws>w$, and $\I w\I\cdot\I s\I=\I ws\I\cup \I w\I$, is $ws<w$.

\smallskip

%(b) By part~(a), for every $g\in\I w\I$ and $h\in \I s\I$, we have
%$l(gh)=l(g)+1$, if $w(\al)>0$, and $l(gh)\in\{l(g),l(g)-1\}$, otherwise.
%Moreover, equality $l(gh)=l(g)$ is equivalent to the equality $w(gh)=w(g)$.
%
%\smallskip

(b) Using part~(a) and induction, we see that for every $w,w'\in\wt{W}$,
we have an inclusion
\[
\I w\I\cdot \I w'\I\subseteq\bigcup_{w''\leq w'}\I ww''\I.
\]
Moreover, we have
$\I w\I\cdot \I w'\I=\I ww'\I$ if and only if $l(ww')=l(w)+l(w')$.
Similarly, we have an inclusion
\[
\I w\I\cdot \I w'\I\subseteq\bigcup_{w''\leq w}\I w''w'\I.
\]

\smallskip

(c) By part~(b), for every $g,h\in LG$ we have $l(gh)\leq l(g)+l(h)$. Moreover, we have
$l(gh)=l(g)+l(h)$ if and only if $\I g\I\cdot \I h\I=\I gh\I$, in which case we have $w(gh)= w(g)w(h)$.

\smallskip

(d) It is a standard fact that for every $\la\in\La$, we have an equality
\begin{equation} \label{Eq:lla}
l(\la)=\sum_{\ov{\al}\in\ov{\Phi}}\max\{\langle\ov{\al},\la\rangle,0\}.
\end{equation}
In particular, we have $l(w(\la))=l(\la)$ for every $w\in \ov{W}$.
\end{Emp}

\begin{Emp} \label{E:lengthfun}
{\bf Length functions.} Let $T'\subseteq G$ be a (not necessarily split) torus.

\smallskip

(a) Assume that $T'$ is maximal, and let $\varphi:T'\isom T_G$ be an admissible isomorphism (over $\ov{K}$).
Then $\varphi$ induces an isomorphism $X_*(T')\isom X_*(T_{G})\simeq\La$. Hence the length function on $\La\subseteq\wt{W}$ induces a length
function on $X_*(T')$. Moreover, since $\varphi$ is unique up to a $\ov{W}$-action, while the length function on $\La$ is $\ov{W}$-invariant
(see Section~\re{bruhat}(d)), the length function $l_{T'}(\cdot)$ on  $X_*(T')$ is independence of $\varphi$.

\smallskip

(b) For an arbitrary torus $T'$, choose a maximal torus $T'_1\supseteq T'$ of $G$. Then we have $X_*(T'_1)\supseteq X_*(T')$, so
the construction of part~(a) defines a length function $l_{T'}:=l_{T'_1}|_{X_*(T')}$ on $X_*(T')$. We claim that
the length function $l_{T'}$ is independent of the maximal torus $T'_1$. Indeed, if $T'_2\supseteq T'$ is
another maximal torus, then there exists  $g\in Z_G(T')$ such that $gT'_1g^{-1}=T'_2$, so the assertion follows from part~(a).

\smallskip

(c) It follows from part~(b) that for every torus $T''\supseteq T'$ of $G$, we have an equality
\[
l_{T'}=l_{T''}|_{X_*(T')}.
\]

\smallskip

(d) Assume now that $T'$ is split over $K$. Then we have a natural embedding $X_*(T')\hra LT'\subseteq LG:\la\mapsto \la(t)$.
Hence for every Iwahori subgroup $\I\subseteq LG$ can also consider the length function $l_\I$ on $X_*(T')$.
\end{Emp}

\begin{Lem} \label{L:lengthfun}
In the situation of Section~\re{lengthfun}(d), assume that $L^+(T')\subseteq\I$. Then the length function $l_{T'}$ of
Section~\re{lengthfun}(b) equals $l_\I$ of Section~\re{lengthfun}(d).
\end{Lem}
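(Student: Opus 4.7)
The plan is to compute both sides of the asserted equality directly in terms of the affine flag variety, and then match them against the length formula \form{lla}. Fix a maximal split torus $T'_1\subset G$ containing $T'$. By \re{lengthfun}(b,c) the function $l_{T'}$ is the restriction of $l_{T'_1}$ to $X_*(T')$, and by the definition \re{lengthfun}(a) combined with \form{lla} we obtain
\begin{equation*}
l_{T'}(\la)=l_{T'_1}(\la)=\sum_{\al\in\Phi(G,T'_1)}\max\{\lan\al,\la\ran,0\}\qquad\text{for every }\la\in X_*(T'),
\end{equation*}
where we have used that any admissible isomorphism $T'_1\isom T_G$ matches the two root systems and transports the length function.

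It remains to obtain the same expression for $l_\I(\la(t))$. The Schubert cell $\I\la(t)\I/\I\subset LG/\I$ has dimension $l_\I(\la(t))$, equal to the $k$-codimension of the pro-subgroup scheme $\I\cap \la(t)\I\la(t)^{-1}$ inside $\I$; on the level of Lie algebras this is $\dim_k\bigl(\Lie(\I)/(\Lie(\I)\cap \Ad(\la(t))\Lie(\I))\bigr)$. Because $L^+(T')\subset\I$, the group scheme $\I$ is stable under $T'$-conjugation, so its Lie algebra splits as a direct sum of $T'$-weight $\C{O}$-sublattices
\begin{equation*}
\Lie(\I)=\bigoplus_{\chi\in X^*(T')}\Lie(\I)_\chi,
\end{equation*}
each $\Lie(\I)_\chi$ being an $\C{O}$-lattice in the $\chi$-weight subspace $\Lie(G)_\chi$ of $\Lie(G)$. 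Its rank over $\C{O}$ equals $\dim_K\Lie(G)_\chi$, which in terms of the maximal torus $T'_1$ is the cardinality of $\{\al\in\Phi(G,T'_1)\cup\{0\}:\al|_{T'}=\chi\}$.

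For $\la\in X_*(T')$, the element $\la(t)\in T'(K)$ acts on each $\Lie(\I)_\chi$ by the scalar $t^{\lan\chi,\la\ran}$; hence
\begin{equation*}
\Lie(\I)_\chi\cap \Ad(\la(t))\Lie(\I)_\chi=t^{\max\{\lan\chi,\la\ran,0\}}\cdot\Lie(\I)_\chi,
\end{equation*}
whose $k$-codimension in $\Lie(\I)_\chi$ is $\max\{\lan\chi,\la\ran,0\}\cdot\rk_\C{O}\Lie(\I)_\chi$. Summing over $\chi$ (the $\chi=0$ summand contributes zero since $\Ad(\la(t))$ is trivial on the Cartan direction), we conclude
\begin{equation*}
l_\I(\la(t))=\sum_{\chi\in X^*(T')\sm\{0\}}\max\{\lan\chi,\la\ran,0\}\cdot|\{\al\in\Phi(G,T'_1):\al|_{T'}=\chi\}|=\sum_{\al\in\Phi(G,T'_1)}\max\{\lan\al,\la\ran,0\},
\end{equation*}
which matches the expression for $l_{T'}(\la)$ obtained above.

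The main potential obstacle is the careful justification of the dimension computation in the pro-algebraic setting: one must confirm that $l_\I(g)$ really is the $k$-codimension of $\I\cap g\I g^{-1}$ in $\I$, and carry out the lattice comparison intrinsically at the level of weight sublattices. Note in particular that the lattice $\Lie(\I)_\chi$ need not split as a direct sum of rank-one sublattices indexed by the roots of $T'_1$ restricting to $\chi$, so the argument must rest on the fact that $\Ad(\la(t))$ acts on the entire $\chi$-weight space by a single scalar, which is exactly what allows the intersection formula to depend only on $\rk_\C{O}\Lie(\I)_\chi$.
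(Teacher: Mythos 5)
Your argument is correct, but it is a genuinely different route from the paper's. The paper reduces to the case of a maximal split torus: it observes (via \re{lengthfun}(c)) that it suffices to find a maximal split torus $T\supset T'$ with $L^+(T)\subset\I$, and produces one by noting that $H:=Z_G(T')$ is split connected, that $\I\cap LH$ is an Iwahori subgroup of $LH$, and then taking any maximal split $T\subset H$ adapted to it; the maximal-torus case is then immediate from \re{bruhatlength}(b),(c). You instead compute $l_\I(\la(t))$ directly as the dimension of the Schubert cell $\I\la(t)\I/\I$, i.e.\ the $k$-codimension of $\Lie\I\cap\Ad(\la(t))\Lie\I$ in $\Lie\I$, using the $X^*(T')$-weight decomposition of the lattice $\Lie\I$ (note the stability you need is under $L^+(T')$-conjugation, not $T'(K)$-conjugation, and for $\chi=0$ the relevant point is that $\la(t)$ is central in $Z_G(T')$, so $\Ad(\la(t))$ is trivial on all of $\fg_0=\Lie Z_G(T')$, not only on a Cartan), and you match the result with \form{lla} after grouping the roots of $(G,T'_1)$ by their restriction to $T'$. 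This is sound, and it has the advantage of not needing any compatibility between $T'_1$ and $\I$; its cost is that it imports two standard facts the paper never sets up, namely that $l_\I(g)$ equals $\dim\I g\I/\I$ and that this dimension is the Lie-algebra codimension of $\I\cap g\I g^{-1}$ in $\I$ (harmless under the paper's hypotheses, but worth stating). One structural caveat: the paper's proof produces as a byproduct the existence of a maximal split torus $T\supset T'$ with $L^+(T)\subset\I$, which is exactly what is invoked later in the proof of \rl{good pos}(b) ("was shown during the course of \rl{lengthfun}"); your proof does not yield this, so substituting it for the paper's would leave that later lemma without justification.
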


\begin{proof}
First, if $T'\subseteq G$ is a maximal split torus, is clear (compare Section~\re{bruhatlength}(b)). Next, using observation
Section~\re{lengthfun}(c) it suffices to show that there exists a maximal split torus $T\supseteq T'$ of $G$ such that $L^+(T)\subseteq\I$.
But this is clear: since $T'$ is split, the centralizer $H:=Z_{G}(T')$ a split connected group,
and $\I_H:=\I\cap LH$ is an Iwahori subgroup of $LH$. Then any maximal split torus $T\subseteq H$ such that $L^+(T)\subseteq\I_H$ satisfies the required property.
\end{proof}

The following assertion plays a central role in the proof of Theorem \ref{T:fingen} below.

\begin{Lem} \label{L:bound}
Let $\La''\subseteq \La$ be a subgroup. For every finite subset
$A\subseteq\wt{W}$ there exists $r\in\B{N}$ such that such that for
every $g\in \La''\cdot \I A\I$ we have either $l(g)\leq r$ or
there exists $\la\in\La''$ such that $\la\neq 1$ and
$l(g)=l(\la)+l(\la^{-1}g)$.
\end{Lem}

The proof of \rl{bound} is based on the following claim.

\begin{Cl} \label{C:fin}
Let $\La'\subseteq \La$ be a sub-semigroup such that for every root $\ov{\al}\in\ov{\Phi}$
we have either $\langle\ov{\al},\La'\rangle\geq 0$ or
$\langle\ov{\al},\La'\rangle\leq 0$.

Then for every $w\in\wt{W}$ there
exists $\la_0\in\La'$ such that for every $g\in \I w\I$ and
$\la\in\La'$ we have $l(\la\la_0g)=l(\la)+l(\la_0g)$.
\end{Cl}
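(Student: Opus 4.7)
The plan is to reduce, via \re{bruhat}(d), the identity $l(\la\la_0 g) = l(\la) + l(\la_0 g)$ to the Iwahori cell equality $\I\la\I\cdot\I\la_0 g\I = \I\la\la_0 g\I$, and to choose $\la_0\in\La'$ sufficiently generic and large that this equality holds uniformly in $\la\in\La'$ and $g\in\I w\I$. Fixing a hyperspecial parahoric to get $\wt W = \La\rtimes W$ (\re{splitting}(b)), the length of $\mu v\in\wt W$ admits a formula generalizing \form{lla}: it is a sum over $\al\in\Phi^+$ whose $\al$-term differs from $|\langle\al,\mu\rangle|$ by a $v$-dependent correction $\pm[v^{-1}\al<0]$ (with the sign of $\langle\al,\mu\rangle$). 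A direct term-by-term comparison then yields the key additivity criterion: for $\la\in\La$ and $\wt w = \mu v$, if for every root $\al$ the integers $\langle\al,\la\rangle$ and $\langle\al,\mu\rangle$ have matching weak signs (both $\geq 0$ or both $\leq 0$), then $l(\la\wt w) = l(\la) + l(\wt w)$.

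Construction of $\la_0$: first, for each root $\al$ with $\langle\al,\La'\rangle\not\subset\{0\}$ pick $\la_\al\in\La'$ with $\langle\al,\la_\al\rangle\neq 0$, and set $\la_1 := \sum_\al\la_\al\in\La'$. Since $\La'$ is a sub-semigroup and each $\langle\al,\cdot\rangle$ has a consistent sign on $\La'$, the partial sums do not cancel, so $\langle\al,\la_1\rangle\neq 0$ whenever possible. Second, using finiteness of the Bruhat interval $\{w''\le w\}\subset\wt W$, set $M := \max_{w''\le w,\ \al\in\Phi}|\langle\al,\mu''\rangle|$, where $w''=\mu''v''$ is the decomposition under $\wt W = \La\rtimes W$, and take $\la_0 := N\la_1\in\La'$ with $N$ so large that $|\langle\al,\la_0\rangle| > M$ whenever $\langle\al,\la_1\rangle\neq 0$.

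To verify the claim, apply \re{bruhat}(c): $\la_0 g \in \I\la_0\I\cdot\I w\I \subset\bigcup_{w''\le w}\I\la_0 w''\I$, so $\la_0 g \in \I\la_0 w''\I$ for a single $w''=\mu''v''\le w$. For $\la\in\La'$, the element $\la\cdot\la_0 w'' = (\la+\la_0+\mu'')v''$ has translation part $\la_0+\mu''$; the additivity criterion then requires matching signs of $\langle\al,\la\rangle$ and $\langle\al,\la_0+\mu''\rangle$ for each $\al$. For $\al$ with $\langle\al,\la_1\rangle\neq 0$, the inequality $|\langle\al,\la_0\rangle|>M$ forces the sign of $\langle\al,\la_0+\mu''\rangle$ to equal that of $\langle\al,\la_0\rangle$, which in turn coincides with the weak sign of $\langle\al,\la\rangle$ since both $\la,\la_0\in\La'$; for the remaining roots $\langle\al,\la\rangle=0$ and matching is automatic. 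The criterion yields $l(\la\cdot\la_0 w'') = l(\la) + l(\la_0 w'')$, which via \re{bruhat}(d) translates to $l(\la\la_0 g) = l(\la)+l(\la_0 g)$.

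The main subtlety I anticipate is the $v''$-dependent $\pm 1$ correction in the affine length formula: it prevents the naive identity $l(\la_0 w'') = l(\la_0) + l(w'')$ from holding in general (since $\mu''$ can have arbitrary signs on roots where $\la_0$ is constrained to a fixed sign), and it is precisely why $\la_0$ must be strictly larger than the $\mu''$'s in absolute value on the relevant roots rather than merely of the right sign; once this quantitative margin is in place, the sign-matching verification against a general $\la\in\La'$ is essentially combinatorial.
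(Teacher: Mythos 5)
Your route is genuinely different from the paper's. The paper argues by induction on $l(w)$: it first proves additivity inside $\La'$ via \form{lla}, then a sign-stabilization statement for finitely many affine roots, and in the inductive step peels off a simple affine reflection $s$ with $ws<w$, comparing $l(\la\la_0 g)$ with $l(\la\la_0 g')$ through the three-case analysis of \re{bruhat}(b); it never invokes an explicit affine length formula. You instead make one global choice of $\la_0$ (a large multiple of a ``generic'' element of $\La'$), use \re{bruhat}(c) to place $\la_0 g$ in a single cell $\I\la_0 w''\I$ with $w''\leq w$, and thereby reduce the whole statement to a length-additivity identity inside $\wt{W}$, which you check root by root via the Iwahori--Matsumoto formula attached to the splitting $\wt{W}\cong\La\rtimes W$ of \re{splitting}(b). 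The reduction from $g\in\I w\I$ to the finite Bruhat interval $\{w''\leq w\}$, and the passage back through $\I\la\I\cdot\I\la_0w''\I=\I\la\la_0w''\I$, are correct uses of \re{bruhat}(c),(d), and the resulting argument is shorter and more quantitative than the paper's induction.

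The one genuine defect is your ``key additivity criterion'' as stated: weak sign matching of $\langle\al,\la\rangle$ and $\langle\al,\mu\rangle$ for all roots does \emph{not} imply $l(\la\cdot(\mu v))=l(\la)+l(\mu v)$. The failure occurs precisely at roots where the $v$-dependent $\pm 1$ correction is active and $\langle\al,\mu\rangle=0$ while $\langle\al,\la\rangle\neq 0$: then $0$ weakly matches either sign, yet the correction terms for $\mu v$ and for $\la\mu v$ differ. Concretely, for $SL_2$ with $\mu=0$, $v=s_\al$, and $\la=\pm\al^\vee$ of the appropriate sign one has $l(\la s_\al)=l(\la)-1$, not $l(\la)+l(s_\al)$. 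The criterion you actually need (and actually use) is: for every root $\al$, either $\langle\al,\la\rangle=0$, or $\langle\al,\mu\rangle\neq0$ and has the same sign as $\langle\al,\la\rangle$; under this hypothesis $|\langle\al,\la+\mu\rangle|=|\langle\al,\la\rangle|+|\langle\al,\mu\rangle|$ and the correction terms for $\la+\mu$ and for $\mu$ agree at every root, so additivity does hold. Your construction of $\la_0$ delivers exactly this stronger hypothesis for $\mu=\la_0+\mu''$: on roots not vanishing identically on $\La'$ the margin $|\langle\al,\la_0\rangle|>M$ forces $\langle\al,\la_0+\mu''\rangle\neq 0$ with the correct sign, while on the remaining roots $\langle\al,\la\rangle=0$ for all $\la\in\La'$. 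So the proof goes through essentially verbatim once the intermediate lemma is restated correctly; as written, the lemma you lean on is false.
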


\begin{proof}
To expose the structure of the argument we  divide it into steps.

\smallskip

\noindent{\bf Step 1.} For every $\la,\la'\in\La'$, we have
$l(\la\la')=l(\la)+l(\la')$. Indeed, the assertion follows from equality \form{lla}
and our assumption on  $\La'$.

\smallskip

\noindent{\bf Step 2.} For every affine root
$\wt{\al}\in\wt{\Phi}$ there exists
$\la_0\in\La'$ such that for every $\la\in\La'$ we have
$\la\la_0(\wt{\al})>0$ if and only if $\la_0(\wt{\al})>0$.

Indeed, for every affine root $\wt{\al}=(\al,k)$ and $\la\in\La$ we have
$\la(\wt{\al})=(\al,k-\langle\al,\la\rangle)$. So the assertion follows from our assumption
on $\La'$.

\smallskip

\noindent{\bf Step 3.} Assume that elements $w\in\wt{W}$ and $\la_0\in\La'$ satisfy the property that for every $\la\in\La'$ we have  $l(\la\la_0w)=l(\la)+l(\la_0w)$. Then for every $\la,\la_0'\in\La'$, we have
\[
l(\la\la'_0\la_0w)=l(\la\la'_0)+l(\la_0w)=l(\la)+l(\la'_0)+l(\la_0w)=l(\la)+l(\la'_0\la_0w),
\]
where the middle equality follows from Step 1, while the first and the last equalities follow by assumption.

\smallskip

\noindent{\bf Step 4.} For every finite subset $A\subseteq\wt{W}$, there
exists $\la_0\in\La'$ such that for every $\la\in\La'$ and $w\in A$, we have an equality
$l(\la\la_0w)=l(\la)+l(\la_0w)$.

\begin{proof}
Using Step 3, it is enough to show the assertion in the case when $A=\{w\}$ consists of one
element. We are going to prove the assertion by induction
on $l(w)$. If $l(w)=0$, then, by Step 1, every $\la_0\in\La'$ satisfies
the property.

\smallskip

Assume now that $l(w)>0$ and choose a simple affine reflection
$s=s_{\al}$ such that $w':=ws<w$. Hence, by the induction
hypothesis, there exists $\la'_0\in\La'$ such that
\begin{equation} \label{Eq:la'}
\text{for every }\la\in\La',\text{ we have }l(\la\la'_0 w')=l(\la)+l(\la'_0 w').
\end{equation}

Next, by Step 2, there exists $\la''_0\in\La'$ such that
\begin{equation} \label{Eq:la''}
\text{for every }\la\in\La'\text{ we have }
\la\la''_0(\la'_0w'(\al))>0\text{ if and only if }
\la''_0(\la'_0w'(\al))>0.
\end{equation}

We claim that $\la_0:=\la''_0\la'_0$ satisfies the required property. Using formula \form{la'} and Step 3, it suffices to show that for every $\la\in\La'$ we have an equality
\[
l(\la\la_0w)-l(\la\la_0w')=l(\la_0w)-l(\la_0w'),
\]
or, equivalently that $(\la\la_0w')(\al)>0$ if and only if  $(\la_0w')(\al)>0$. But this follows from formula \form{la''}.
\end{proof}

\noindent{\bf Step 5.} We claim that element $\la_0\in\La'$, satisfying property of Step 4 for a finite subset $A:=\wt{W}^{\leq w}\subseteq\wt{W}$, satisfies property of the claim.

\smallskip

Indeed, by   Section~\re{bruhat}(b), for every $g\in\I w\I$ there exists $w'\in \wt{W}^{\leq w}$ such that
$\la_0g\in \I\la w'\I$. Then, by our assumption, for every element $\la\in \La'$, we have $l(\la\la_0w')=l(\la)+l(\la_0w')$.
Thus, we have $\la\la_0g\in\I\la\I\cdot\I\la w'\I=\I\la\la_0w'\I$ (by Section~\re{bruhat}(b)), therefore
\[
l(\la\la_0g)=l(\la\la_0w')=l(\la)+l(\la_0w')=l(\la)+l(\la_0g),
\]
as claimed.
\end{proof}

\begin{Cor} \label{C:bound}
For every finitely generated semigroup $\La'\subseteq\La$ satisfying
the assumption of \rcl{fin} and a finite set $A\subseteq\wt{W}$, there exists $r\in\B{N}$
 such that for every $g\in \La'\cdot \I A\I$ we have
either $l(g)\leq r$ or there exists $\la\in\La'$ such that
$\la\neq 1$, $\la^{-1}g\in\La'\cdot \I A\I$ and
$l(g)=l(\la)+l(\la^{-1}g)$.
\end{Cor}
\begin{proof}
We are going to prove the assertion by induction on the minimal number of generators
$\la_1,\ldots,\la_n$ of the semigroup $\La'$. If $n=0$, the assertion is clear.

Note that it is enough to show the assertion in the case when $A=\{w\}$ consists of one
element. In this case, by \rcl{fin}, there exists $\la_0\in\La'$ such that for every $g'\in \I w\I$ and
$\la\in\La'$, we have $l(\la\la_0g')=l(\la)+l(\la_0g')$.

Write $\la_0$ as $\prod_{i=1}^n \la_i^{m_i}$.  For every $i=1,\ldots,n$ we denote by $\La'_i\subseteq\La'$
the sub-semigroup generated by $\{\la_j\}_{j\neq i}$.
Then  the assertion follows for each
$\La'_i$ by induction hypothesis, and we claim that the assertion for $\La'$ follows from that for the $\La'_i$'s.

\smallskip

More precisely, let $r_i$ be the constant associated to the sub-semigroup $\La'_i$ and set
$A_i:=\{w'w\,|\, w'\leq\la_i^{m_i}\}$, and we claim that constant $r:=\max_{i=1}^n r_i$ satisfies the property
for $\La'$ and $\{w\}$.

Indeed, this follows from the decomposition
\begin{equation*} \label{Eq:dec}
\La'=(\La'-\{1\})\la_0\cup(\bigcup_{i=1}^n\bigcup_{j=1}^{m_i}\La'_i\la_i^j),
\end{equation*}
our choice of $\la_0$ and inclusion
\[
\la_i^j\cdot \I w\I\subseteq\bigcup_{w'\leq\la_i^j}\I w'w\I\subseteq \bigcup_{w'\leq\la_i^{m_i}}\I w'w\I=\I A_i \I
\] (which uses Section~\re{bruhat}(b) and observation that $\la_i^j\leq\la_i^{m_i}$ for all $j\leq m_i$).
\end{proof}

Now we are ready to prove \rl{bound}.

\begin{proof}[Proof of \rl{bound}]
Consider the vector space $V:=\La\otimes_{\B{Z}}\B{R}$ over $\B{R}$. Then the hyperplanes
$\{H_{\ov{\al}}:=\ker\ov{\al}\}_{\ov{\al}\in\ov{\Phi}}$ decompose $V$ as a union of faces. For each
closure of a face $F$, we set $\La'_F:=\La''\cap F$. Then by
Gordan's lemma (see, for example, \cite[Lemma~3.4, page~154]{Ew}), each $\La'_F$ is a finitely generated semigroup satisfying
the assumption of \rcl{fin}.
Since $\La''=\bigcup_F \La'_F$, the assertion follows from \rco{bound}.
\end{proof}

\begin{Emp} \label{E:filtr0}
{\bf Length filtrations.}

\smallskip

(a) For $n\in\B{N}$, we set $\wt{W}^{\leq
n}=\{w\in\wt{W}\colon l(w)\leq n\}$. Since $l(ww')\leq l(w)+l(w')$, we conclude that
$\{\wt{W}^{\leq n}\}_n$ is a filtration of a monoid $\wt{W}$ (see Section~\re{fil}(a)).

\smallskip

(b) For every subgroup $\Dt\subseteq
\wt{W}$, we consider the induced filtration $\{\Dt^{\leq n}\}_n$ of $\Dt$, where
$\Dt^{\leq n}:=\Dt\cap \wt{W}^{\leq n}$.
\end{Emp}

\begin{Cor} \label{C:fg}
For every subgroup $\Dt\subseteq \La$, the length filtration $\{\Dt^{\leq n}\}_n$
is finitely generated. Therefore the Rees algebra $R(\qlbar[\Dt])$ is a finitely generated $\qlbar$-algebra, hence a Noetherian ring.
\end{Cor}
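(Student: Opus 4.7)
The plan is to use the explicit formula \form{lla} for the length on $\La$, which shows that $l$ is piecewise linear with respect to the Weyl chamber decomposition of $V_\B{R}:=\La\otimes_\B{Z}\B{R}$, and then combine this with Gordan's lemma as in the proof of \rco{bound}. First, I would verify that each $\Dt^{\leq n}$ is finite. Since $G$ is semisimple, roots span $V_\B{R}^*$, and because for each root $\al$ the element $-\al$ is also a root, formula \form{lla} gives $|\langle\al,\la\rangle|\leq l(\la)$ for every $\la\in\La$. Hence $\{\la\in\La\,|\,l(\la)\leq n\}$ is bounded in $V_\B{R}$, and since $\La$ is a lattice, $\Dt^{\leq n}$ is finite.

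Next, I would prove that the filtration $\{\Dt^{\leq n}\}_n$ is finitely generated in the sense of \re{filtr}(b), that is, there exists $N\in\B{N}$ such that $\Dt^{\leq n}=\bigcup_{j=1}^N\Dt^{\leq n-j}\cdot\Dt^{\leq j}$ for all $n>N$. Decompose $V_\B{R}$ into its finite collection of closed Weyl chambers $\{C_\si\}_\si$ determined by the signs of $\langle\al,\cdot\rangle$; on each $C_\si$ the function $l$ is linear (equal to $\sum_{\al:\langle\al,C_\si\rangle\geq 0}\langle\al,\cdot\rangle$), so $l(\la+\mu)=l(\la)+l(\mu)$ whenever $\la,\mu\in C_\si$. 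By Gordan's lemma, each sub-semigroup $\Dt\cap C_\si$ is finitely generated, say by $\la_1^\si,\ldots,\la_{n_\si}^\si$; set $N:=\max_{\si,i}l(\la_i^\si)$. For $n>N$ and $\la\in\Dt^{\leq n}$, if $l(\la)\leq N$, write $\la=0\cdot\la$ with $0\in\Dt^{\leq n-N}$. If $l(\la)>N$, then $\la\in \Dt\cap C_\si$ for some $\si$, so $\la=\sum_im_i\la_i^\si$ with $m_i\in\B{Z}_{\geq 0}$ and $l(\la)=\sum_im_il(\la_i^\si)$; since $l(\la)>N\geq l(\la_i^\si)$, we must have $\sum_im_i\geq 2$, hence we can pick $i_0$ with $m_{i_0}\geq 1$, take $\nu:=\la_{i_0}^\si$ (of length $j:=l(\nu)\leq N$) and $\mu:=\la-\nu\in\Dt\cap C_\si$, which satisfies $l(\mu)=l(\la)-j\leq n-j$ by linearity of $l$ on $C_\si$.

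Finally, for the Rees algebra statement, the filtration $A_n:=\qlbar\cdot\Dt^{\leq n}$ on $A:=\qlbar[\Dt]$ is compatible with the algebra structure and satisfies $A_n=\sum_{j=1}^N A_{n-j}A_j$ for $n>N$ by the previous step. This implies that $R(A)=\bigoplus_n A_n$ is generated as a $\qlbar$-algebra by the finite-dimensional subspace $\bigoplus_{j=0}^N A_j$ (finite-dimensional because each $\Dt^{\leq j}$ is finite). Therefore $R(\qlbar[\Dt])$ is a finitely generated commutative $\qlbar$-algebra, and hence Noetherian by Hilbert's basis theorem.

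I do not anticipate a real obstacle here: once the piecewise linearity of $l$ via \form{lla} is exploited on each Weyl chamber, Gordan's lemma takes care of everything and the remaining computation is straightforward. The only subtle point is making sure the factorization $\la=\nu\mu$ with $l(\nu)+l(\mu)=l(\la)$ works — this is exactly where linearity of $l$ on each closed chamber is essential, which is why the Weyl chamber decomposition (as opposed to an arbitrary cone decomposition) is the right object to use.
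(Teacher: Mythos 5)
Your argument is correct, and it reaches the conclusion by a more self-contained route than the paper. The paper obtains the first assertion as an immediate consequence of the general statement \rco{bound} (applied with $A=\{1\}$), whose proof runs through \rcl{fin} and \rcl{bound1} and hence through Iwahori double cosets and the Bruhat order; the second assertion is then quoted from \re{fil}(d). You instead specialize from the start to elements of $\La$, where the length is given by the explicit formula \form{lla}, exploit that $l$ is linear (hence additive) on each closed Weyl chamber, and combine this with Gordan's lemma applied to $\Dt\cap C_{\si}$ to get the generation property directly, reproving \re{fil}(d) by hand at the end. The ingredients — the decomposition of $\La\otimes_{\B{Z}}\B{R}$ cut out by root hyperplanes, Gordan's lemma, and additivity of $l$ on sign-coherent cones — are exactly those underlying the paper's proof of \rco{bound}, so what your version buys is independence from the Iwahori/Bruhat machinery, which the paper needs anyway for other purposes (e.g. \rl{good} and \rcl{bound1}); the cost is redoing the cone argument in this easier abelian special case. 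Two small points: your finiteness check for $\Dt^{\leq n}$ correctly uses that $G$ is semisimple in this subsection, so the roots span and $|\langle\al,\la\rangle|\leq l(\la)$; and in the factorization step you should choose the generator $\la_{i_0}^{\si}$ with $m_{i_0}\geq 1$ \emph{and} $\la_{i_0}^{\si}\neq 0$ (equivalently $l(\la_{i_0}^{\si})\geq 1$, again by semisimplicity), which exists because $l(\la)>0$; otherwise $j=0$ would fall outside the allowed range $1\leq j\leq N$. This is a one-line fix, not a gap.
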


\begin{proof}
The first assertion follows from \rl{bound} for $A=\{1\}$. Since $\Dt$ is commutative, the second assertion follows from the first one
and Section~\re{fil}(e).
\end{proof}

\begin{Emp} \label{E:winv}
{\bf Example.}
Since filtration $\{\La^{\leq n}\}_n$ is $\ov{W}$-invariant (see Section~\re{bruhat}(d)), it induces
a filtration on $\qlbar[\La]^{\ov{W}}$, and the corresponding Rees algebra satisfies $R(\qlbar[\La]^{\ov{W}})\simeq R(\qlbar[\La])^{\ov{W}}$.
Since $R(\qlbar[\La])$ is a finitely generated $\qlbar$-algebra (by \rco{fg}), we therefore conclude that $R(\qlbar[\La]^{\ov{W}})$ is a finitely generated
$\qlbar$-algebra as well, hence a Noetherian ring.
\end{Emp}

\begin{Emp} \label{E:notdem}
{\bf Notation.}

\smallskip

(a) For $u\in\wt{W}$, we set $\wt{W}^{\leq u}:=\{u'\in\wt{W}\,|\,u'\leq u\}$. For $u,v\in\wt{W}$, we denote by
$\wt{W}^{\leq u}\cdot \wt{W}^{\leq v}\subseteq\wt{W}$ the set of products $\{u'v'\,|\,u'\in \wt{W}^{\leq u}, v'\in \wt{W}^{\leq v}\}$.

\smallskip

(b) Let $\wt{S}$ be the set of simple affine reflections of $G$ (see Section~\re{affweyl}). For every proper subset $J\subsetneq\wt{S}$, let
$W_J\subseteq\wt{W}$ be the subgroup generated by $J$.
\end{Emp}

The following lemma is well-known to specialists.

\begin{Lem} \label{L:dem}
(a) For every $u,v\in\wt{W}$ there exists a unique element $w:=u*v$ of $\wt{W}$ (called the {\em Demazure product}) such
that the product $\wt{W}^{\leq u}\cdot \wt{W}^{\leq v}\subseteq\wt{W}$ equals $\wt{W}^{\leq w}$.
Moreover, $u*v=uv'=u*v'$ for some $v'\leq v$ with $l(uv')=l(u)+l(v')$.

In particular, we have $l(u*v)\leq l(u)+l(v)$, and equality $u*v=uv$ holds if and only if $l(uv)=l(u)+l(v)$. Similarly,
$u*v=u'v=u'*v$ for some $u'\leq u$ with $l(u'v)=l(u')+l(v)$.

\smallskip

(b) For every non-empty subset $J\subsetneq\wt{S}$, let $w_J\in
W_J\subseteq\wt{W}$ be the longest element. Then for every
$w\in\wt{W}$ the Demazure product $w*w_J$ (resp. $w_J*w$) is the longest element of the
coset $wW_J$ (resp. $W_Jw$).

\smallskip

(c) For every $w\in \wt{W}$ and $\la\in\La$, we have inequality
\[
l(w*w_J*\la*w_J)\leq l(w*w_J)+l(\la).
\]
\end{Lem}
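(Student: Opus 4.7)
The plan is to prove (a), (b), (c) in sequence, with (a) providing the workhorse and (c) requiring additional input from the translation structure of $\la$. For (a), I would define $u*v$ by induction on $l(v)$: set $u*1:=u$, $u*s:=us$ (resp.\ $u$) for a simple affine reflection $s$ with $l(us)=l(u)+1$ (resp.\ $l(us)=l(u)-1$), and $u*(v's):=(u*v')*s$ when $l(v's)=l(v')+1$. Independence from the reduced decomposition of $v$ is standard via braid relations. The identity $\wt{W}^{\leq u}\cdot\wt{W}^{\leq v}=\wt{W}^{\leq u*v}$ is then established by induction on $l(v)$, using $\wt{W}^{\leq v's}=\wt{W}^{\leq v'}\cdot\{1,s\}$ (the Bruhat-order shadow of the Iwahori identity from \re{bruhat}(a)). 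The factorization $u*v=uv'$ with $v'\leq v$ and $l(uv')=l(u)+l(v')$ is visible from the recursion, and the symmetric $u*v=u'v$ follows by running the induction on $l(u)$ from the other side. For (b), any element of $wW_J$ has the form $wu$ with $u\in W_J=\wt{W}^{\leq w_J}$, hence lies in $\wt{W}^{\leq w}\cdot\wt{W}^{\leq w_J}=\wt{W}^{\leq w*w_J}$ by (a); conversely, the factorization $w*w_J=w\cdot v'$ with $v'\in W_J$ from (a) shows $w*w_J\in wW_J$, making it the longest element. The case $w_J*w$ is symmetric.

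For (c), the plan is a two-step reduction. Let $w_{\min}$ denote the Bruhat-minimum of $wW_J$; by (b), $w*w_J=w_{\min}\cdot w_J$ with $l(w*w_J)=l(w_{\min})+l(w_J)$. Associativity of $*$ (a direct consequence of the iterated Bruhat-interval description in (a)) together with the absorbing identity $u*w_J=w_J$ for $u\in W_J$ (which follows from $\wt{W}^{\leq u}\cdot W_J=W_J$) yields
\[
w*w_J*\la*w_J \;=\; w_{\min}*(w_J*\la*w_J).
\]
Applying (a) to the right-hand side bounds $l(w*w_J*\la*w_J)\leq l(w_{\min})+l(w_J*\la*w_J)$, which together with $l(w_{\min})+l(w_J)=l(w*w_J)$ reduces (c) to the key inequality $l(w_J*\la*w_J)\leq l(w_J)+l(\la)$.

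To prove this key bound, applying (a) twice yields $w_J*\la*w_J=u\la v'$ for some $u,v'\in W_J$. Here the translation nature of $\la$ is decisive: writing $\la=t_\mu$ and using the semidirect-product relation $u\cdot t_\mu=t_{u(\mu)}\cdot u$ in $\wt{W}=\La\rtimes W$ yields $u\la v'=t_{u(\mu)}\cdot(uv')$ with $uv'\in W_J$. The $W$-invariance of translation lengths, recorded in \re{bruhat}(e) via formula \form{lla}, gives $l(t_{u(\mu)})=l(\la)$, and the triangle inequality then produces $l(u\la v')\leq l(t_{u(\mu)})+l(uv')\leq l(\la)+l(w_J)$, completing the proof of the key inequality and hence of (c). The main obstacle is precisely this reduction in (c): the combination of associativity and the absorbing property $u*w_J=w_J$ for $u\in W_J$ is what cleanly isolates the finite Weyl-group factors, after which the translation structure of $\la$ sharpens the naive bound $l(w_J*\la*w_J)\leq 2l(w_J)+l(\la)$ (given by two blind applications of (a)) to the required $l(w_J)+l(\la)$.
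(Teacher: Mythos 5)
Your proof is correct and follows essentially the same route as the paper: (a) by induction on $l(v)$ through simple reflections, (b) directly from the factorization in (a), and (c) by reducing, via subadditivity and a length-additive factorization of $w*w_J$, to the key bound $l(w_J*\la*w_J)\leq l(w_J)+l(\la)$, which both you and the paper prove by writing $w_J*\la*w_J$ as (element of $W_J$)$\cdot$(conjugated translation) and invoking the $W$-invariance of translation lengths from \form{lla}. The only cosmetic point is that for $u\in W_J\subset\wt{W}$ (which need not lie in the finite Weyl group under a splitting $\wt{W}\cong\La\rtimes W$) the relation should read $u\,t_\mu\,u^{-1}=t_{\ov{u}(\mu)}$ with $\ov{u}$ the image of $u$ in $W$, which changes nothing in the length estimate.
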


\begin{proof}
(a) Argue as in Section~\re{bruhat}(a),(b) or see \cite[Lemma~1]{He}.

%We prove both assertions by induction on $l(v)$. If $l(v)=0$, then $v=1$, so there is nothing to prove. If $l(v)=1$, then $v=s$ is
%a simple affine reflection, then the product $\wt{W}^{\leq u}\cdot\wt{W}^{\leq s}$ equals
%$\wt{W}^{\leq us}$, if $us>u$, and $\wt{W}^{\leq u}$, if $us<u$.
%\smallskip
%Let $l(v)>1$ and choose a simple reflection $s$ such that
%$vs<v$, then the product $\wt{W}^{\leq u}\cdot\wt{W}^{\leq v}$ equals
%$\wt{W}^{\leq u}\cdot\wt{W}^{\leq vs}\cdot\wt{W}^{\leq s}$, by the
%case of a simple affine reflection.
%ince $l(vs)<l(v)$, we conclude by the induction hypothesis
%that the product $\wt{W}^{\leq u}\cdot\wt{W}^{\leq vs}\cdot\wt{W}^{\leq s}$ has the form
%$\wt{W}^{\leq w'}\cdot\wt{W}^{\leq s}$ (by the induction hypothesis), hence $\wt{W}^{\leq w}$
%(by the case $v=s$).
%The associativity of $*$ follows from the associativity of the usual product in $\wt{W}$.
%Moreover, by the induction hypothesis, $w'=uv'=u*v'$ for some $v'\leq vs$ such
%that $l(uv')=l(u)+l(v')$, hence $w$ is either $w'=uv'$ (if $w's<w$) or $uv's$ (if $uv's>uv'$).
%In the second case, we have $v's>v'$, thus $l(v's)=l(v')+1$, hence
%\[
%l(uv's)=l(uv')+1=l(u)+l(v')+1=l(u)+l(v's).
%\]

\smallskip

(b) By part~(a), we have $w*w_J=wu$ for some $u\leq w_J$. Then $u\in
W_J$, thus $w*w_J\in wW_J$. Next, for every $u\in W_J$ we
have $u\leq w_J$, hence $wu\leq w*w_J$ by the definition of $*$.
Thus $w*w_J$ is the longest element of $wW_J$, as claimed.

\smallskip

(c) By part~(a), there exist elements $u,u'\leq w_J$ such that
$w_J*\la=u'\la$ and $(w_J*\la)*w_J=u'\la u$.
Thus
\[
w_J*\la*w_J=u'\la u=u'u(u^{-1}\la u).
\]
Note that $u,u'\in W_J$, thus $u'u\in W_J$, hence $l(u'u)\leq l(w_J)$.
Since  $l(u^{-1}\la u)=l(\la)$
(by formula \form{lla}), we thus conclude that
\begin{equation} \label{Eq:ineq1}
l(w_J*\la*w_J)\leq l(uu')+l(u^{-1}\la u)\leq l(w_J)+l(\la).
\end{equation}
Using part~(a) again, there exists $w'\in\wt{W}$ such that we have $w*w_J=w'*w_J$ and
$l(w*w_J)=l(w')+l(w_J)$. Then we have an inequality
\[
l(w*w_J*\la*w_J)=l(w'*w_J*\la*w_J)\leq l(w')+l(w_J*\la*w_J)
\]
and equality
\[
l(w*w_J)+l(\la)=l(w')+l(w_J)+l(\la).
\]
Now the assertion follows from inequality \form{ineq1}.
\end{proof}

\begin{Not} \label{N:k-reg}
%(a) Let $A_0\subseteq \La\otimes_{\B{Z}}\B{R}$ be the fundamental alcove, that is, $A_0$ is the set of $x\in\La\otimes_{\B{Z}}\B{R}$ such that $\wt{\al}(x)>0$ for all positive affine roots $\wt{\al}$ of $(G,T)$. Notice that this notion depends on our choice of $\I$.

%For $m\in\B{N}$, we say that $w\in\wt{W}$ is {\em $m$-regular},
%if for every root $\al\in\Phi(G,T)$ we have $|\langle \al,w(A_0)\rangle|> m$, that is,
%$|\langle \al,w(x)\rangle|> m$ for all $x\in A_0$.

%(c) We say that an closed $I$-invariant subscheme $\C{P}\subseteq
%\Fl_G$ is {\em $m$-regular}, if $\C{P}=\cup_{i=1}^l\Fl^{\leq
%w_i}$ and each $w_i$ is $m$-regular.
For every proper subset $J\subsetneq\wt{S}$, we say that an element
$w\in\wt{W}$ is {\em right} (resp. {\em left}) $J$-longest, if $w$ is the longest element of the
coset $wW_J\subseteq\wt{W}$ (resp. $W_Jw\subseteq\wt{W}$). Notice that by \rl{dem}(b) this happens if and only if
$w*w_J=w$ (resp. $w_J*w=w$).
%(b) Using (a) and (d) for every $m\in\B{N}$ and $J\subsetneq\wt{W}$ there exists an element $\la\in\La\subseteq\wt{W}$, which is
%left $J$-longest and $m$-regular.
\end{Not}

%\begin{Emp} \label{E:k-reg}
%{\bf Remarks.}
%(a) Note that if $w\in\wt{W}$ is $m$-regular, then coweight
%$w(0)\in w(\ov{A_0})\in\La$ satisfies $|\langle \al,w(0)\rangle|\geq m$ for all $\al\in\Phi(G,T)$, that is,
%$w$ is $m$-regular in the sense of \cite[Introduction]{BV}.

%(b) Conversely, if $w$ is $m$-regular in the sense of \cite[Introduction]{BV}, then $w(0)\in w(\ov{A_0})\in\La$ satisfies $|\langle \al,w(0)\rangle|\geq m$ for all $\al\in\Phi(G,T)$, thus $w\in\wt{W}$ is $(m-1)$-regular in the sense of \rn{k-reg}.

%(c) By definition, $w$ is $m$-regular if and only for every $\al\in\Phi(G,T)$ we have either
%$w^{-1}(\al)-m>0$ or $w^{-1}(\al)+m<0$. Thus this happens if and only if we have either
%$w^{-1}(-\al,m)<0$ or $w^{-1}(\al,m)<0$, that is, either $s_{\al,m}w<w$ or $s_{-\al,m}w<w$.

%(d) An element $w\in \wt{W}$ is left $J$-longest if and only if $s_{\al}w<w$ for every $s_{\al}\in J$, that is,
%$w^{-1}(\al)<0$ for every simple affine root $\al$ such that $s_{\al}\in J$.

%(e) Using (a) and (d) for every $m\in\B{N}$ and $J\subsetneq\wt{W}$ there exists an element $\la\in\La\subseteq\wt{W}$, which is
%left $J$-longest and $m$-regular.

%and $w\leq_R w'$, that is $w$ is smaller than $w'$ with respect to
%right weak order (see \cite[Def 3.1.1]{BB}) than  $w\in\wt{W}$ is
%right $m$-regular (see \cite[Prop 3.1.3]{BB}). On the other hand
%this is false for the left weak order. This explains our
%terminology.
%\end{Emp}

\begin{Lem} \label{L:k-reg}

%(a) If $w\in\wt{W}$ is right $m$-regular, then it is $m$-regular
For each elements $w,w'\in\wt{W}$  we have $w*w'\geq w$ and $w'*w\geq w$. Moreover, if an element
$w\in\wt{W}$ is  $m$-regular (see Section~\re{mreg}), then elements $w*w'$ and $w'*w$ are
$m$-regular as well.
\end{Lem}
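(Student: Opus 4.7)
The inequality $w \leq w*w'$ is immediate from Lemma~\ref{L:dem}(a): writing $w = w\cdot 1 \in \wt{W}^{\leq w}\cdot\wt{W}^{\leq w'} = \wt{W}^{\leq w*w'}$, we get $w\leq w*w'$, and the symmetric argument gives $w\leq w'*w$. The defining property of $*$ in Lemma~\ref{L:dem}(a) also yields associativity at once (both groupings of $\wt{W}^{\leq u}\cdot\wt{W}^{\leq v}\cdot\wt{W}^{\leq z}$ compute the same subset of $\wt{W}$) and, by taking inverses of both sides, the identity $(u*v)^{-1} = v^{-1}*u^{-1}$. Combined with \re{mreg}(d), this reduces the $m$-regularity claim for $w'*w$ to the one for $w*w'$.

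By associativity and induction on $l(w')$, applied to any reduced expression $w' = s_1 s_2 \cdots s_k$, the $m$-regularity of $w*w'$ reduces further to the single-step statement: if $w$ is $m$-regular and $s$ is a simple affine reflection, then $w*s$ is $m$-regular. Fix a pair $(T,\I)$ as in \re{affweyl}(a), and let $\al_s\in\wt{\Phi}$ be the simple affine root attached to $s$. If $ws<w$ then $w*s = w$ and there is nothing to check. Otherwise $w*s = ws$, and the standard Coxeter inversion-set identity gives
\[
\on{Inv}(ws) = \on{Inv}(w)\sqcup\{w(\al_s)\}, \qquad \on{Inv}(w) := \{\al\in\wt{\Phi}\,|\, \al>0,\ w^{-1}(\al)<0\}.
\]

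Next, the quantity $|\langle\ov{\beta},w\rangle|$ from \re{mreg}(a) can be rewritten as $|\on{Inv}(w)_{\ov{\beta}}| + |\on{Inv}(w)_{-\ov{\beta}}|$, where $\on{Inv}(w)_{\ov{\gamma}}$ denotes the subset of $\on{Inv}(w)$ consisting of elements whose finite part equals $\ov{\gamma}$; the two summands correspond to the two disjunctive conditions in \re{mreg}(a), related by $\al\mapsto -\al$. Plugging in the formula for $\on{Inv}(ws)$ yields
\[
|\langle\ov{\beta},ws\rangle| \;=\; |\langle\ov{\beta},w\rangle| + \bo{1}_{\ov{\beta} = \pm\ov{w(\al_s)}} \;\geq\; |\langle\ov{\beta},w\rangle| \;\geq\; m,
\]
so $ws$ is $m$-regular, completing the inductive step.

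No serious obstacle is anticipated; the argument amounts to routine Coxeter bookkeeping once the rewriting of $|\langle\ov{\al},w\rangle|$ in terms of $\on{Inv}(w)$ is in place. The substantive mechanism is just the pointwise inequality $|\langle\ov{\beta},ws\rangle| \geq |\langle\ov{\beta},w\rangle|$ for every $\ov{\beta}$, which carries $m$-regularity through each one-sided $*$-multiplication by a simple reflection and, by induction, through an arbitrary Demazure product.
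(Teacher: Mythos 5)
Your proof is correct and takes essentially the same route as the paper's: reduce the claim for $w'*w$ to the one for $w*w'$ via $(u*v)^{-1}=v^{-1}*u^{-1}$ and \re{mreg}(d), induct on $l(w')$ down to a simple affine reflection $s$, and in the case $ws>w$ check that each count $|\lan\ov{\beta},\cdot\ran|$ does not decrease. Your identity $\on{Inv}(ws)=\on{Inv}(w)\sqcup\{w(\al_s)\}$ is just a sharper form of the paper's observation that $\al>0$ and $w^{-1}(\al)<0$ imply $(ws)^{-1}(\al)=sw^{-1}(\al)<0$.
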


\begin{proof}
%(a) If $w=w_1\mu w_2$ is  $m$-regular, where $w_1,w_2\in W$
%and $\mu$ is dominant, then $w(0)=w_1(\mu)\in w(\ov{A_0})$.
%Therefore for every root $\al$, we have
%$|\langle\mu,w_1^{-1}(\al)\rangle|=|\langle w_1(\mu),\alpha|\geq
%k$.
Since the map $w\mapsto w^{-1}$ preserves $m$-regularity (see Section~\re{mreg}(d)) and Bruhat ordering, and therefore we have
$(w*w')^{-1}=(w')^{-1}*w^{-1}$, it suffices to show the assertion for $w*w'$.

By induction on $l(w')$, it is enough to show the assertion in the
case when $l(w')=1$, thus $w'=s$ is a simple affine reflection. In this
case, we see that $w*s=\max \{w,ws\}\geq w$. If $w*s=w$, there is nothing to prove, while if $ws>w$, then
for every $\al\in\wt{\Phi}$ such that $\al>0$ and $w^{-1}(\al)<0$, we have  $(ws)^{-1}(\al)=sw^{-1}(\al)<0$,
from which the $m$-regularity assertion follows.
%Next, $w\in\wt{W}$ is  $m$-regular means that for every $\al\in\Phi(G,T)$ we have either
%$s_{\al,m}w<w$ or $s_{-\al,m}w<w$. Therefore the assertion follows from the observation that if $ws>w$ and $tw<w$
%for some reflection $t$, then $tws<ws$ (use, for example, \cite[Prop 5.9]{Hu}).
%From this the assertion follows (see \cite[Prop 3.1.3]{BB})
\end{proof}

\subsection{Application to  homology of affine Springer fibers} \hfill

\smallskip

Let $G$ be a split connected reductive group over $K$, and let $\gm\in G(K)$ be a  regular semisimple element.

\begin{Emp} \label{E:cent}
{\bf Set up.}

\smallskip

(a) Denote by $S_{G,\gm}\subseteq G_{\gm}$ the maximal split torus, and set $\La_{G,\gm}:=X_*(S_{G,\gm})$. Consider an embedding
\[
\La_{G,\gm}\hra L(S_{G,\gm})\subseteq LG_{\gm}:\mu\mapsto\mu(t).
\]
Then the group $\La_{G,\gm}$ acts on $\Fl_{G,\gm}$. Moreover, arguing as in \cite{KL}, there exists a closed subscheme of finite type $Y\subseteq \Fl_{G,\gm}$
such that $\Fl_{G,\gm}=\La_{G,\gm}(Y)$.

\smallskip

(b) Let $\C{L}$ be a local system on $\ov{T}_G$, and let
$\C{F}_{\C{L}}=\red_{\gm}^*(\C{L})$ be the corresponding local system on $\Fl_{G,\gm}$ (see Section~\re{affsprfib}(b)).
Recall that each homology group $H_i(\Fl_{G,\gm},\C{F}_{\C{L}})$  has a
structure of a $\qlbar[\La_G]$-module, while each $H_i(\Fl_{\gm},\C{F}_{\C{L}})$ has a structure of a $\qlbar[\La]$-module (see \rco{whaction}(a) and \rl{ind}(b)).

\smallskip

(c) Recall that the group $LG_{\gm}$ acts on $\Fl_{G,\gm}$, and the induced action on each $H_i(\Fl_{G,\gm},\C{F}_{\C{L}})$
factors through $\pi_0(LG_{\gm})$. Furthermore, the subgroup  $LG^{\sc}_{\gm}:=L(G^{\sc}_{\gm})\subseteq LG_{\gm}$ stabilizes
$\Fl_{\gm}=\Fl_{G^{\sc},\gm}\subseteq\Fl_{G,\gm}$, and hence $\pi_0(LG^{\sc}_{\gm})$ acts on
$H_i(\Fl_{\gm},\C{F}_{\C{L}})$.

%(d) Furthermore,  $\pi_0(LG^{\sc}_{\gm})\simeq X_*(G^{\sc}_{\gm})_{\Gm}$ is finitely
%generated abelian group, so its group algebra $\qlbar[\pi_0(LG^{\sc}_{\gm})]$
%is finitely generated over $\qlbar$, hence Noetherian.
\end{Emp}

\begin{Prop} \label{P:fingen}
(a) Each $H_i(\Fl_{G,\gm},\C{F}_{\C{L}})$ is a finitely generated $\qlbar[\La_G]$-module.

\smallskip

(b) Each $H_i(\Fl_{\gm},\C{F}_{\C{L}})$ is a finitely generated $\qlbar[\La]$-module.
\end{Prop}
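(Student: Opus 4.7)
The plan is to prove (a) first and then derive (b) from it. For (a), the strategy is to upgrade the coarse finite generation coming from the centralizer-lattice action to finite generation over $\qlbar[\La_G]$ by invoking the group version of Yun's theorem (\rt{action}).

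First I would apply \rl{fingen} to $X=\Fl_{G,\gm}$ equipped with the action of the finitely generated free abelian group $\La_{G,\gm}=X_*(S_{G,\gm})$, which by \re{cent}(a) acts with finite-type quotient. Since $\qlbar[\La_{G,\gm}]$ is Noetherian and $\C{F}_{\C{L}}$ is $LG_\gm$-equivariant (because $\red_\gm$ is $LG_\gm$-equivariant with trivial action on $\ov{T}_G$), and a fortiori $\La_{G,\gm}$-equivariant, this yields that $H_i(\Fl_{G,\gm},\C{F}_{\C{L}})$ is finitely generated over $\qlbar[\La_{G,\gm}]$. As this action factors through $\pi_0(LG_\gm)$, and $\qlbar[\pi_0(LG_\gm)]$ is Noetherian since $\pi_0(LG_\gm)\cong X_*(G_\gm)_\Gm$ is a finitely generated abelian group by Kottwitz (\re{pi0}), the homology is also finitely generated over $\qlbar[\pi_0(LG_\gm)]$.

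Next I would invoke \rt{action} to obtain a finite $\wt{W}_G\times\pi_0(LG_\gm)$-equivariant filtration whose graded pieces carry a $\qlbar[\La_G]^W$-action factoring through $\pr_\gm:\qlbar[\La_G]^W\to\qlbar[\pi_0(LG_\gm)]$. The key algebraic observation is that $\qlbar[\pi_0(LG_\gm)]$ is module-finite over the image $\pr_\gm(\qlbar[\La_G]^W)$: indeed, the composition $\La_G=X_*(T_G)\isom X_*(G_\gm)\twoheadrightarrow X_*(G_\gm)_\Gm\cong\pi_0(LG_\gm)$ (the first isomorphism induced by $\varphi^{-1}$) is surjective, so $\qlbar[\La_G]\twoheadrightarrow\qlbar[\pi_0(LG_\gm)]$ is surjective, while $\qlbar[\La_G]$ is module-finite over the finite-group invariants $\qlbar[\La_G]^W$. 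Thus each graded piece, being a subquotient of the finitely generated $\qlbar[\pi_0(LG_\gm)]$-module $H_i(\Fl_{G,\gm},\C{F}_{\C{L}})$, is finitely generated over $\qlbar[\pi_0(LG_\gm)]$, hence over $\pr_\gm(\qlbar[\La_G]^W)$, hence over $\qlbar[\La_G]^W$ via $\pr_\gm$. Since the filtration is finite and $\qlbar[\La_G]^W$ is Noetherian, $H_i(\Fl_{G,\gm},\C{F}_{\C{L}})$ is finitely generated over $\qlbar[\La_G]^W$, and hence a fortiori over $\qlbar[\La_G]$, establishing (a).

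For (b) I would use the induction isomorphism $H_i(\Fl_{G,\gm},\C{F}_{\C{L}})\cong\ind_\La^{\La_G}H_i(\Fl_\gm,\C{F}_{\C{L}})$ of \rl{ind}(b). Setting $M:=H_i(\Fl_\gm,\C{F}_{\C{L}})$ and $N:=\ind_\La^{\La_G}M$, part (a) ensures that $N$ is finitely generated over $\qlbar[\La_G]$. Using that $\qlbar[\La_G]$ is free over $\qlbar[\La]$ on a set-theoretic section of $\La_G\to\La_G/\La$, expanding finitely many $\qlbar[\La_G]$-generators of $N$ in this basis produces finitely many elements of $M$ whose $\qlbar[\La]$-span $M'$ satisfies $\ind_\La^{\La_G}M'=N$ by flatness of induction; the direct sum decomposition of $N$ into $(\La_G/\La)$-many copies of $M$ (respectively of $M'$) then forces $M'=M$, proving (b). The main obstacle in this whole proof is \rt{action} itself, whose proof is the content of Appendix B; granted that result, the proposition reduces to a clean assembly of Noetherian and finite-extension arguments.
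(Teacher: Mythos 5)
Your proof is correct and takes essentially the same route as the paper: for (a) you combine \rl{fingen} with the observation of \re{cent}(a) to get finite generation over $\qlbar[\La_{G,\gm}]$, hence over $\qlbar[\pi_0(LG_{\gm})]$, and then use the filtration of \rt{action} together with the module-finiteness of $\qlbar[\pi_0(LG_{\gm})]$ over $\qlbar[\La_G]^{W}$ via the surjection $\pi_{\varphi}$ and the finiteness of $\qlbar[\La_G]$ over $\qlbar[\La_G]^{W}$, exactly as in the paper. For (b) the paper argues more briefly, deducing that $H_i(\Fl_{\gm},\C{F}_{\C{L}})$ is a Noetherian $\qlbar[\La]$-module from (a) and the freeness (hence faithful flatness) of $\qlbar[\La_G]$ over $\qlbar[\La]$, whereas you expand generators in a coset basis; both verifications rest on the same induction isomorphism of \rl{ind}(b) and are interchangeable.
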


\begin{proof}
(a) Let $\{F^j H_i(\Fl_{G,\gm},\C{F}_{\C{L}})\}_j$ be the
$\La_G\times\pi_0(LG^0_{\gm})$-invariant filtration from \rt{action}. It is enough to show that each graded piece $\gr^j
H_i(\Fl_{G,\gm},\C{F}_{\C{L}})$ is a finitely generated
$\qlbar[\La_G]$-module. We will show that each
$\gr^j{H}_i(\Fl_{G,\gm},\C{F}_{\C{L}})$ is a finitely generated
$\qlbar[\La_G]^{\ov{W}}$-module.

\smallskip

By \rt{action}, the action of $\qlbar[\La_G]^{\ov{W}}$ on
$\gr^j{H}_i(\Fl_{G,\gm},\C{F}_{\C{L}})$ factors through homomorphism
$\pr'_{\gm}:\qlbar[\La_G]^{\ov{W}}\to \qlbar[\pi_0(LG^0_{\gm})]$, making
$\qlbar[\pi_0(LG^0_{\gm})]$ a finite
$\qlbar[\La_G]^{\ov{W}}$-algebra. Therefore it is enough to show that each
$\gr^j{H}_i(\Fl_{G,\gm},\C{F}_{\C{L}})$ is a finitely generated
$\qlbar[\pi_0(LG^0_{\gm})]$-module.

\smallskip

Since the action of $\La_{G,\gm}$ on $\gr^j{H}_i(\Fl_{G,\gm},\C{F}_{\C{L}})$
factors through the homomorphism $\La_{G,\gm}\to \pi_0(LG^0_{\gm})$, it suffices to show that
each $\gr^j{H}_i(\Fl_{G,\gm},\C{F}_{\C{L}})$ is a
finitely generated $\qlbar[\La_{G,\gm}]$-module. Using \rl{fingen} and the observation of Section~\re{cent}(a),
we conclude that $H_i(\Fl_{G,\gm},\C{F}_{\C{L}})$ is a finitely generated
$\qlbar[\La_{G,\gm}]$-module. Thus the assertion follows from the
fact that $\qlbar[\La_{G,\gm}]$ is a Noetherian ring.

\smallskip

(b) Recall that the $\qlbar[\La_G]$-module $H_i(\Fl_{G,\gm},\C{F}_{\C{L}})$ is isomorphic
to the tensor product $\qlbar[\La_G]\otimes_{\qlbar[\La]}H_i(\Fl_{\gm},\C{F}_{\C{L}})$ (see \rl{ind}(b)).
Since $H_i(\Fl_{G,\gm},\C{F}_{\C{L}})$ is a Noetherian $\qlbar[\La_G]$-module, by part~(a), while $\qlbar[\La_G]$ is a free (hence flat) $\qlbar[\La]$-module, the $\qlbar[\La]$-module $H_i(\Fl_{\gm},\C{F}_{\C{L}})$ is Noetherian, thus finitely generated.
\end{proof}

\begin{Emp}
{\bf Remark.} The statement that homologies of affine Springer fibers are finitely generated over $\wt{W}_G$ appears also as Conjecture 3.6 in \cite{Lu3}. It is also mentioned in {\em loc. cit.} that the statement should follow from the result of \cite{Yun}.
\end{Emp}

\begin{Emp} \label{E:lengthfilt}
{\bf Length filtrations.}

\smallskip

(a) Let $S_{\gm}\subseteq G^{\sc}_{\gm}$ be the maximal split torus.
We equip $\La_{\gm}:=X_*(S_{\gm})$ and $X_*(G^{\sc}_{\gm})$ with the canonical  length functions (see Section~\re{lengthfun}(b)), and let $\{\La_{\gm}^{\leq n}\}_n$ and  $\{X_*(G^{\sc}_{\gm})^{\leq n}\}_n$ be the corresponding filtrations.

\smallskip

(b) Define $\{\pi_0(LG^{\sc}_{\gm})^{\leq n}\}_n$ to be the filtration on $\pi_0(LG^{\sc}_{\gm})$ defined as
the image of $\{X_*(G^{\sc}_{\gm})^{\leq n}\}_n$ under the surjection $X_*(G^{\sc}_{\gm})\to X_*(G^{\sc}_{\gm})_{\Gm_K}\simeq\pi_0(LG^{\sc}_{\gm})$ (see Section~\re{pi0}).
\end{Emp}

\begin{Emp} \label{E:filhom}
{\bf Filtrations on affine flag varieties.} Fix an Iwahori subgroup $\I\subseteq LG$.

\smallskip

(a) As in Section~\re{variant}, we denote by $\I^{\sc}$ the corresponding Iwahori subgroup of $LG^{\sc}$,
and set $\Fl=\Fl_{G^{\sc}}$ and $\wt{W}=\wt{W}_{G^{\sc}}$.

\smallskip

(b) Recall (see Section~\re{bruhatlength}(a)) that the correspondence  $w\mapsto \Fl_{\I}^w:=\I^{\sc} w\I^{\sc}$ gives
a canonical bijection between elements of $\wt{W}$ and $\I^{\sc}$-orbits in $\Fl=\Fl_{\I}$.

\smallskip

(c) For every element $w\in\wt{W}$, we denote by $\Fl^{\leq w}=\Fl^{\leq w}_{\I}\subseteq \Fl$ be the closure of $\Fl_{\I}^w\subseteq \Fl$.
%Then $\Fl^{\leq w}$ has a stratification $\{\Fl^u_{\I}\}_{u\leq w}$.

\smallskip

(d) Using Section~\re{bruhatlength}, for every proper subset $J\subsetneq\wt{S}$, an element $w$ is right $J$-longest if and only if $\pi_J^{-1}(\pi_J(\Fl^{\leq w}))=\Fl^{\leq w}$, where $\pi_J$ is the projection $\Fl\to\Fl_J$, and element $w$ is left $J$-longest if and only if $\Fl^{\leq w}\subseteq\Fl$ is
$\P_{J,\I}$-invariant.

\smallskip

(e) For each $n\in\B{N}$, we denote by  $\Fl^{\leq n}=\Fl_{\I}^{\leq n}$ the union
$\bigcup_{w\in\wt{W}^{\leq n}} \Fl_{\I}^{\leq w}$. Then $\{\Fl^{\leq n}\}_n$ is a filtration of $\Fl$ (compare Section~\re{filtr}(a)).

\smallskip

(f) For all $n,m\in\B{N}$ and $J,J'\subsetneq \wt{S}$, we denote by
\[
\Fl^{\leq n;J_r;J'_l;m_{\reg}}=\Fl_{\I}^{\leq n;J_r;J'_l;m_{\reg}}\subseteq\Fl^{\leq n}
\]
the union $\bigcup_{w}\Fl_{\I}^{\leq w}$, taken over $w\in\wt{W}^{\leq n}$ such that $w$ is right $J$-longest, left $J'$-longest and $m$-regular (see Section~\re{mreg} and \rn{k-reg}).

\smallskip

(g) We claim that $\{\Fl^{\leq n;J_r;J'_l;m_{\reg}}\}_n$ is also a filtration on $\Fl$. Equivalently, we claim that for every $w\in\wt{W}$ there exists $w'\geq w$ such that $w'$ is right $J$-longest, left $J'$-longest and $m$-regular. But it follows from Lemmas~\ref{L:k-reg} and \ref{L:dem}(b) that for every $m$-regular element $\la\in\La$, the element $w':=w_{J'}*\la*w*w_J$   satisfies all these properties.
\end{Emp}

\begin{Emp} \label{E:dep}
{\bf Dependence on $\I$}. Assume that we are in the situation of Section~\re{filhom}.

\smallskip

(a) Let $\I'\subseteq LG$ be another Iwahori subgroup, and let $h\in LG^{\sc}$ be such that $\I'=h\I h^{-1}$.
Then for every $w\in\wt{W}$ we have an equality  $\Fl^w_{\I'}=h\Fl_{\I}^w\subseteq \Fl$, hence $\Fl^{\leq w}_{\I'}=h\Fl_{\I}^{\leq w}\subseteq \Fl$.

\smallskip

(b) In the situation of part~(a) assume further that $\I'\subseteq \P_{J';\I}$ for some subset $J'\subsetneq\wt{S}$ and that element $w\in\wt{W}$ is  left $J'$-longest. Then $h\in\P^{\sc}_{J';\I}$, hence by a combination of part~(a) and Section~\re{filhom}(d), we have $\Fl^{\leq w}_{\I'}=h\Fl_{\I}^{\leq w}=\Fl_{\I}^{\leq w}\subseteq \Fl$.

\smallskip

(c) By part~(b), the filtration $\{\Fl^{\leq n;J_r;J'_l;m_{\reg}}\}_n$ does not change, if the Iwahori subgroup $\I$ is replaced by  $\I'\subseteq \P_{J';\I}$.
\end{Emp}

\begin{Emp} \label{E:filhom1}
{\bf Filtrations on affine Springer fibers and their homology.}

\smallskip

(a) For a closed subscheme $Y\subseteq \Fl$, we denote by
$Y_{\gm}:=Y\cap \Fl_{\gm}$ the corresponding closed subscheme of
$\Fl$, and denote by $H'_i(Y_{\gm},\C{F}_{\C{L}})$ the image of the
natural map
\[
H_i(Y_{\gm},\C{F}_{\C{L}})\to H_i(\Fl_{\gm},\C{F}_{\C{L}}).
\]

\smallskip

(b) By Section~\re{filhom}, both  $\{\Fl^{\leq
n}_{\gm}\}_n$ and $\{\Fl^{\leq n;J_r;J'_l;m_{\reg}}_{\gm}\}_n$ are filtrations of
$\Fl_{\gm}$, thus both $\{H'_i(\Fl^{\leq
n}_{\gm},\C{F}_{\C{L}})\}_n$ and $\{H'_i(\Fl^{\leq
n;J_r;J'_l;m_{\reg}}_{\gm},\C{F}_{\C{L}})\}_n$ are filtrations of
$H_i(\Fl_{\gm},\C{F}_{\C{L}})$.
\end{Emp}

\begin{Emp} \label{E:goodpos}
{\bf Good position.} We say that element $\gm\in LG$ is in {\em good position} with a parahoric subgroup $\P\subseteq LG$, if the maximal split subtorus $S_{G,\gm}\subseteq G_{\gm}$ satisfies $L^+(S_{G,\gm})\subseteq \P$.
\end{Emp}

%The goal of this subsection is to show that filtrations of Section~\re{filhom1}(b) are compatible with the length filtration
%$\La$. Moreover, these filtrations are finitely generated, if
%$\gm$ is in {\em good position with $\I$}.

\begin{Lem} \label{L:good pos}
Let $\gm$ be in good position with $\P$. Then

\smallskip

(a) There exists an Iwahori subgroup $\I\subseteq\P$ of $LG$ such that
$\gm$ is in good position with $\I$.

\smallskip

(b) There exists a maximal split torus $T\supseteq S_{G,\gm}$ such that  $L^+(T)\subseteq \P$.
\end{Lem}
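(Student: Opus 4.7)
The plan is to deduce both statements from Bruhat--Tits theory applied to the Levi subgroup $H := Z_G(S_{G,\gm})\subset G$. Since $S_{G,\gm}$ is a split torus of the split reductive group $G$, the group $H$ is a connected reductive subgroup of maximal rank containing $S_{G,\gm}$ in its center. In particular, a torus $T\subset H$ is a maximal split torus of $H$ if and only if $T$ is a maximal split torus of $G$ containing $S_{G,\gm}$, so producing $T$ as in (b) reduces to a problem inside $H$.

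For part (b), I would translate the good-position hypothesis into the language of the Bruhat--Tits building $\C{B}(G,K)$. The condition $L^+(S_{G,\gm})\subset\P$ says exactly that the facet $F_\P$ of $\C{B}(G,K)$ corresponding to $\P$ is pointwise fixed under the action of $S_{G,\gm}(K)$ by translations. Via the standard embedding $\C{B}(H,K)\hookrightarrow\C{B}(G,K)$, which identifies $\C{B}(H,K)$ with the $S_{G,\gm}$-fixed locus, the facet $F_\P$ thus lies inside $\C{B}(H,K)$ and corresponds to a parahoric subgroup $\P_H\subset LH$; in particular $\P_H=\P\cap LH$. By the classical fact that every parahoric of a reductive group contains the arc group of a maximal split torus, one can choose a maximal split torus $T\subset H$ with $L^+(T)\subset\P_H\subset\P$. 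By the first paragraph, this $T$ is a maximal split torus of $G$ containing $S_{G,\gm}$, establishing (b).

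For part (a), I use (b) to produce such a $T$ and then find an Iwahori $\I\subset\P$ with $L^+(T)\subset\I$; the inclusions $L^+(S_{G,\gm})\subset L^+(T)\subset\I$ then give the conclusion. Concretely, by \re{parahoric}(b) the image $\ov T$ of $L^+(T)$ in $M_\P=\P/\P^+$ is a maximal torus; pick any Borel subgroup $\ov B\subset M_\P$ with $\ov T\subset\ov B$ and take $\I\subset\P$ to be its preimage under $\P\to M_\P$. By \re{parahoric}(e) this $\I$ is an Iwahori subgroup of $LG$ contained in $\P$, and $L^+(T)\subset\I$ by construction.

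The main non-trivial input is the identification of $\C{B}(H,K)$ with the $S_{G,\gm}$-fixed locus of $\C{B}(G,K)$, together with the parahoric compatibility $\P_H=\P\cap LH$ under the good-position hypothesis. These are standard results of Bruhat--Tits theory for Levi subgroups (cf.\ Landvogt, or Prasad--Raghunathan), but their invocation is the essential step; once granted, the rest of the argument is an application of the parahoric--facet dictionary already used elsewhere in the paper (compare the proof of \rl{lengthfun}, which produces a maximal split torus inside an Iwahori in the same spirit).
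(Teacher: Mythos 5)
Your proposal is correct and is essentially the paper's own argument: the paper proves (b) by the same passage to the centralizer $H=Z_G(S_{G,\gm})$ and the intersection $\P\cap LH$ (this is done in the course of the proof of \rl{lengthfun}, which the paper's proof of (b) simply cites), and it proves (a) directly by lifting a Borel subgroup of $M_{\P}$ containing the image $\ov{S}_{G,\gm}$, which is your step with $\ov{T}$ replaced by $\ov{S}_{G,\gm}$ (so the detour through (b) is harmless but unnecessary). The only imprecision is in your building-theoretic translation: the good-position hypothesis says the facet of $\P$ is fixed by the bounded subgroup $L^+(S_{G,\gm})$, i.e.\ $S_{G,\gm}(\C{O})$, not by all of $S_{G,\gm}(K)$, whose cocharacter translations fix no facet; with that correction the identification of the fixed locus with the building of $H$ and the parahoric compatibility $\P_H=\P\cap LH$ are indeed the standard facts the paper treats as clear.
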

\begin{proof}
(a) The composition $L^+(S_{G,\gm})\hra\P\to\P/\P^+=M_{\P}$ induces an embedding $\ov{S}_{G,\gm}\hra M_{\P}$. Choose a Borel subgroup $B_{\P}\subseteq M_{\P}$, containing $\ov{S}_{G,\gm}$. Then the preimage $\I\subseteq\P$ of  $B_{\P}\subseteq M_{\P}$ is an Iwahori subgroup such that $L^+(S_{G,\gm})\subseteq \P$.

\smallskip

(b) Was shown during the course of the proof of \rl{lengthfun}.
\end{proof}

\begin{Lem} \label{L:good}
Assume that $\gm$ is in good position with $\I$. Then the filtration $\{\Fl^{\leq n}_{\gm}\}_n$ of $\Fl_{\gm}$ is compatible with and finitely generated over the filtration $\{\La_{\gm}^{\leq n}\}_n$ of $\La_{\gm}$.
\end{Lem}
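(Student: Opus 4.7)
The plan is to verify the two conditions of \re{filtr}(b)---compatibility and finite generation---the latter being the main content. Finite type of each $\Fl^{\leq n}_\gm$ is automatic, as $\Fl^{\leq n}$ is a finite union of Bruhat cells and $\Fl^{\leq n}_\gm$ is closed therein.

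For compatibility, the good-position hypothesis $L^+(S_{G,\gm})\subset\I$ together with the inclusion $i(S_\gm)\subset S_{G,\gm}$ (via $i\colon G^{\sc}\to G$) gives $L^+(S_\gm)\subset\I^{\sc}$; so by \rl{lengthfun} the canonical length $l_{S_\gm}$ on $\La_\gm=X_*(S_\gm)$ coincides with $l_{\I^{\sc}}$. For $\la\in\La_\gm^{\leq i}$ and $g\I^{\sc}\in\Fl^{\leq j}_\gm$, the relations $\iota(\la)\in\I^{\sc}w\I^{\sc}$, $g\in\I^{\sc}w'\I^{\sc}$ with $l(w)\leq i$, $l(w')\leq j$ combine by submultiplicativity of length (\re{bruhat}(c)--(d)) to yield $l(\iota(\la)g)\leq i+j$; since $\iota(\la)$ centralizes $\gm$, the translate $\la\cdot(g\I^{\sc})$ remains in $\Fl^{\leq i+j}_\gm$.

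Finite generation rests on \rco{bound}. By the Kazhdan--Lusztig finiteness of \cite{KL} applied to $G^{\sc}$ (the analog of \re{cent}(a)), choose a closed subscheme $Y\subset\Fl_\gm$ of finite type with $\La_\gm\cdot Y=\Fl_\gm$; let $A_0\subset\wt{W}$ be the finite set of Bruhat strata meeting $Y$, and $N_0\geq\max_{w\in A_0}l(w)$. Via an embedding $\La_\gm\hookrightarrow\La$ coming from a maximal torus $T\supset S_\gm$ of $G^{\sc}$, \rco{bound} yields a constant $r$ such that any $g\in\iota(\La_\gm)\cdot\I^{\sc}A_0\I^{\sc}$ admits an iterated factorization $g=\iota(\la)g_*$ with $\la:=\la_1+\cdots+\la_k\in\La_\gm$ (using abelianness of $\La_\gm$ so that $\iota(\la_1)\cdots\iota(\la_k)=\iota(\la)$), $l(g_*)\leq r$, and $l(g)=\sum_i l(\la_i)+l(g_*)$. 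Given $x\in\Fl^{\leq n}_\gm$, lifting along the surjection $\La_\gm\times Y\to\Fl_\gm$ produces such a $g$ with $g\I^{\sc}=x$ and $l(g)=l(x)\leq n$. Setting $y:=g_*\I^{\sc}\in\Fl^{\leq r}_\gm$ (in $\Fl_\gm$ because $\iota(\la)$ centralizes $\gm$) and using submultiplicativity $l(\la)\leq l(g)-l(g_*)$, the decomposition $x=\la\cdot y$ realizes the required form whenever $l(g_*)\geq 1$, with $j:=l(g_*)\leq r$.

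The main obstacle is the edge case $l(g_*)=0$, which forces $y=e_0$ and can only occur when $\gm\in\I$; here one only obtains $l(\la)\leq n$ instead of the needed $l(\la)\leq n-1$. Its resolution is a Gordan's-lemma argument restoring additivity of length: decomposing $\La_\gm\otimes\B{R}$ by the root-hyperplane arrangement, each intersection $\La_\gm\cap\ov{C}$ with a closed Weyl chamber $\ov{C}$ is a finitely generated semigroup with generators $\nu_1^C,\ldots,\nu_{s_C}^C$, and $l$ restricts to a linear---hence additive---function on $\ov{C}$. Thus for $\la\in\La_\gm\cap\ov{C}$ with $l(\la)>M:=\max_{C,i}l(\nu_i^C)$, writing $\la=\sum_i a_i\nu_i^C$ with $a_i\geq 0$ and choosing $\nu:=\nu_i^C$ with $a_i\geq 1$ yields $l(\la-\nu)=l(\la)-l(\nu)$, so $x=\iota(\la)e_0=(\la-\nu)\cdot\iota(\nu)e_0$ realizes the decomposition with $j:=l(\nu)\leq M$. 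Taking $N:=\max(r,N_0,M)$---finite because there are finitely many chambers each with finitely many generators---then secures the finite generation for all $n>N$.
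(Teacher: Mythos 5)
Your proof is correct, and its skeleton is the same as the paper's: compatibility via \rl{lengthfun} (using the good-position hypothesis to identify the canonical length on $\La_{\gm}$ with $l_{\I^{\sc}}$) together with submultiplicativity of length, and finite generation via the Kazhdan--Lusztig finiteness of \re{cent}(a) combined with \rco{bound}. The genuine difference is that the paper declares the finite-generation step to ``follow immediately'' from \rco{bound}, while you observe that the naive iteration of the dichotomy in \rco{bound} can terminate with $g_*\in\I^{\sc}$ and no slack, i.e.\ at a point $x=\iota(\la)\I^{\sc}$ with $l(\la)$ exactly equal to $n$ (which forces $\gm\in\I$), where the definition in \re{filtr}(b) still requires a decomposition with some $1\le j\le N$; the trivial splitting permitted by the statement of \rco{bound} does not provide this. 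Your chamber-wise Gordan argument, exploiting that $l$ is linear on each closed chamber of $\La_{\gm}\otimes\B{R}$ so that splitting off a monoid generator is length-additive, restores the required bounded nontrivial piece; this is exactly the mechanism already inside the proofs of \rcl{fin} and \rco{bound}, and the same patch could alternatively be quoted from \rco{fg} applied to the subgroup $\La_{\gm}\subset\La$ (after enlarging $N$). The only micro-point to make explicit in your patch is that the generator $\nu$ you split off must be nonzero (hence $l(\nu)\ge 1$, so $j\ge 1$), which is automatic since $l(\la)>0$ forces some generator with positive length to occur with positive coefficient.
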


\begin{proof}
First we claim that filtration $\{\Fl^{\leq n}_{\gm}\}_n$ is
compatible with filtration $\{\La^{\leq n}_{\gm}\}_n$ on
$\La_{\gm}$. For this we have to show that for every two elements
$\la\in\La_{\gm}$ and $g\in LG^{\sc}$ we have $l_{\I}(\la g)\leq l(\la)+l_{\I}(g)$. But this immediately follows from a combination of Section~\re{bruhat}(c) and \rl{lengthfun}.

It now follows from Section~\re{cent}(a) that we have $\La_{\gm}(\Fl_{\gm}^{\leq n})=\Fl_{\gm}$ for some $n\in\B{N}$.
Now the fact that $\{\Fl^{\leq n}_{\gm}\}_n$ is finitely generated over $\{\La^{\leq n}_{\gm}\}_n$ follows immediately from
\rl{bound} applied to the subgroup $\La':=\La_{\gm}$ and  subset $A:=\wt{W}^{\leq n}$.
\end{proof}

\begin{Lem} \label{L:image}
Let $w_1,\ldots,w_n\in\wt{W}$. Then for every $w\in\wt{W}$ the isomorphism $a_{w,\C{L}}$ from \rl{ind}(a) satisfies
\begin{equation} \label{Eq:dem}
a_{w,\C{L}}(H'_i(\cup_j \Fl_{\gm}^{\leq w_j},\C{F}_{\C{L}}))\subseteq
H'_i(\cup_j \Fl_{\gm}^{\leq w_j*w^{-1}},\C{F}_{\ov{w}_*(\C{L})}).
\end{equation}
\end{Lem}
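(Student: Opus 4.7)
I will prove the lemma by induction on $l(w)$, with the crucial input being the case of simple affine reflections. Since $G^{\sc}$ is simply connected we have $\Om_{G^{\sc}} = 1$, so $\wt{W} = \wt{W}_{G^{\sc}}$ is generated by $\wt{S}$. If $l(w) = 0$ then $w = 1$ and the assertion is trivial. If $l(w) > 0$, I would write $w = w's$ with $s \in \wt{S}$ and $l(w') = l(w)-1$; by \rp{whaction}(c)(iii) we have $a_{w,\C{L}} = a_{w', \ov{s}_*\C{L}} \circ a_{s, \C{L}}$, so the inductive step reduces to combining the simple-reflection case with the inductive hypothesis for $w'$ (applied with starting data $v'_j := w_j * s$ and local system $\ov{s}_*\C{L}$).

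The heart of the argument is the following sub-claim: for arbitrary $v_1, \ldots, v_n \in \wt{W}$ and $s \in \wt{S}$,
\[
a_{s,\C{L}}(H'_i(\cup_j \Fl^{\leq v_j}_\gm, \C{F}_\C{L})) \subset H'_i(\cup_j \Fl^{\leq v_j * s}_\gm, \C{F}_{\ov{s}_*\C{L}}).
\]
To prove it, set $Y := \cup_j \Fl^{\leq v_j * s}$. Since $(v_j * s) * s = v_j * s$, each $v_j * s$ is right $\{s\}$-longest, and thus $Y = \pi_{\{s\}}^{-1}(\pi_{\{s\}}(Y))$ by \re{filhom}(d). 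Hence \re{truncation}(c) applies with $J = \{s\}$ and $w = s \in W_J$, and yields that $a_{s,\C{L}}$ induces an isomorphism $H'_i(Y_\gm, \C{F}_\C{L}) \isom H'_i(Y_\gm, \C{F}_{\ov{s}_*\C{L}})$. Since $v_j \leq v_j * s$ (by \rl{k-reg}), the inclusion $\cup_j \Fl^{\leq v_j} \subset Y$ gives $H'_i(\cup_j \Fl^{\leq v_j}_\gm, \C{F}_\C{L}) \subset H'_i(Y_\gm, \C{F}_\C{L})$, and applying $a_{s,\C{L}}$ to both sides proves the sub-claim.

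Closing the induction requires one routine identification: by associativity of the Demazure product and the equality $s * w'^{-1} = s w'^{-1} = w^{-1}$ (valid since $l(s w'^{-1}) = 1 + l(w') = l(w) = l(w^{-1})$), we obtain $(w_j * s) * w'^{-1} = w_j * w^{-1}$; similarly $\ov{w'}_* \ov{s}_* \C{L} = \ov{w}_* \C{L}$ because $\wt{W} \to W$ is a homomorphism. Substituting these identities into the output of the sub-claim followed by the inductive hypothesis yields exactly the desired inclusion.

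The main obstacle is verifying the condition $\pi_{\{s\}}^{-1}(\pi_{\{s\}}(Y)) = Y$ for $Y = \cup_j \Fl^{\leq v_j * s}$ --- which is exactly why the statement must enlarge $\cup_j \Fl^{\leq v_j}$ to $\cup_j \Fl^{\leq v_j * s}$ before \re{truncation}(c) can be invoked. Everything else is bookkeeping with the composition rule from \rp{whaction} and elementary properties of Demazure products from \rl{dem} and \rl{k-reg}.
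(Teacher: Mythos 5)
Your proof is correct and follows essentially the same route as the paper: induction on $l(w)$, with the heart of the matter being the simple-reflection case handled via $Y=\cup_j\Fl^{\leq v_j*s}$, the identity $Y=\pi_{\{s\}}^{-1}(\pi_{\{s\}}(Y))$, and the isomorphism from \re{truncation}(c), together with $v_j\leq v_j*s$ from \rl{k-reg}. The only (immaterial) difference is that you peel a simple reflection off the right of $w$ and apply $a_{s,\C{L}}$ first, while the paper writes $w=sw'$ with $sw<w$ and applies $a_{w',\C{L}}$ first; the bookkeeping with the Demazure product ($s*w'^{-1}=w^{-1}$ versus $w'^{-1}*s=w^{-1}$) works out identically in both versions.
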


\begin{proof}
We are going to show the assertion by induction on $l(w)$.

\smallskip

Let $w=s$ be a simple affine reflection. Then the closed subscheme
$Y:=\bigcup_j \Fl^{\leq w_j*s}$ of $\Fl$ satisfies
$Y=\pi_s^{-1}(\pi_s(Y))$ (see Section~\re{filhom}(b) and \rl{dem}(b)), thus it follows from Section~\re{truncation}(c)
that we have an equality
\[
a_{s,\C{L}}(H'_i(Y_{\gm},\C{F}_{\C{L}}))=H'_i(Y_{\gm},\C{F}_{\ov{w}_*(\C{L})}).
\]
Since $w_j\leq w_j*s$ (see \rl{k-reg}), we have $H'_i(\bigcup_j \Fl_{\gm}^{\leq
w_j},\C{F}_{\C{L}})\subseteq H'_i(Y_{\gm},\C{F}_{\C{L}})$, and
inclusion \form{dem} follows in this case.

\smallskip

For an arbitrary $w$ choose $s\in\wt{S}$ such that $w':=sw<w$.
Then $w=sw'$, and $w^{-1}=w'^{-1}*s$, therefore inclusion
\form{dem} for $w$ follows from that for $w'$ and $s$.
\end{proof}

\begin{Cor} \label{C:filt}
Filtrations  $\{H'_i(\Fl^{\leq n}_{\gm},\C{F}_{\C{L}})\}_n$,  %$\{H'_i(\Fl^{\leq n;J'_l}_{\gm},\C{F}_{\C{L}})\}_n$
and $\{H'_i(\Fl^{\leq n;J_r;J'_l;m_{\reg}}_{\gm},\C{F}_{\C{L}})\}_n$ of $H_i(\Fl_{\gm},\C{F}_{\C{L}})$ are
compatible with the length filtration $\{\La^{\leq n}\}_n$ of $\La$.
\end{Cor}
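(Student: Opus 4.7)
The plan is to derive the corollary from \rl{image}. Since the image of $\la \in \La$ in $W$ is trivial, $\ov{\la}_*\C{L} = \C{L}$, and applying \rl{image} with $w = \la$ gives
\[
\la \cdot H'_i\bigl(\cup_j \Fl_\gm^{\leq w_j}, \C{F}_\C{L}\bigr) \subset H'_i\bigl(\cup_j \Fl_\gm^{\leq w_j * \la^{-1}}, \C{F}_\C{L}\bigr).
\]
The task thus reduces to showing that whenever $\{w_j\}$ is the indexing set cutting out $\Fl^{\leq n}$ (respectively $\Fl^{\leq n;J_r;J'_l;m_{\reg}}$), each element $w_j * \la^{-1}$ is dominated in the Bruhat order by some element of the indexing set for $\Fl^{\leq n+i}$ (respectively $\Fl^{\leq n+i;J_r;J'_l;m_{\reg}}$), where $i = l(\la)$.

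For $\{\Fl^{\leq n}\}_n$ this is immediate from \rl{dem}(a): $l(w_j * \la^{-1}) \leq l(w_j) + l(\la) \leq n + i$, so $w_j * \la^{-1} \in \wt{W}^{\leq n+i}$. For the second filtration, the plan is to replace $w_j * \la^{-1}$ by $w' := w_j * \la^{-1} * w_J$. One checks that $w' \geq w_j * \la^{-1}$ (by \rl{k-reg}); that $w'$ is right $J$-longest (from $w_J * w_J = w_J$) and left $J'$-longest (using $w_{J'} * w_j = w_j$, so that $w_{J'} * w' = w'$); and that $w'$ inherits $m$-regularity from $w_j$ by iterated application of \rl{k-reg}. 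Crucially, using $w_j * w_J = w_j$, the length bound
\[
l(w') = l\bigl(w_j * w_J * \la^{-1} * w_J\bigr) \leq l(w_j * w_J) + l(\la^{-1}) = l(w_j) + l(\la) \leq n + i
\]
follows from \rl{dem}(c). Consequently $\Fl^{\leq w_j * \la^{-1}} \subset \Fl^{\leq w'} \subset \Fl^{\leq n+i;J_r;J'_l;m_{\reg}}$, which combined with the displayed inclusion above yields the required compatibility.

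The main subtle point is exactly this length estimate: the naive subadditive bound $l(w_j * \la^{-1} * w_J) \leq l(w_j) + l(\la) + l(w_J)$ would introduce an extraneous $l(w_J)$ and destroy the filtration degree. What saves us is \rl{dem}(c), which exploits the structural fact that $w_j$ already absorbs $w_J$ on the right; the remaining conditions (the two longest properties and $m$-regularity) then reduce to straightforward formal manipulations of Demazure products via the lemmas of the preceding subsection.
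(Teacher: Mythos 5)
Your proposal is correct and follows essentially the same route as the paper: reduce via Lemma \ref{L:image} to a combinatorial statement about Demazure products, handle the plain length filtration by \rl{dem}(a), and for the decorated filtration pass to the element $w_j*\la^{-1}*w_J$, verifying the two longest-properties, $m$-regularity via \rl{k-reg}, and the crucial length bound via \rl{dem}(c) using that $w_j$ already absorbs $w_J$ on the right. The only cosmetic difference is that you work directly with $\la^{-1}$ where the paper first invokes $l(w^{-1})=l(w)$ to switch to $\la$.
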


\begin{proof}
By \rl{image}, to show the assertion for $\{H'_i(\Fl^{\leq
n}_{\gm},\C{F}_{\C{L}})\}_n$, it suffices to show an inclusion
\[
\bigcup_{w\in\wt{W}^{\leq n}}\Fl^{\leq w*\la^{-1}}\subseteq \Fl^{\leq
n+l(\la)}
\]
for every $n\in\B{N}$ and $\la\in\La$. Since $l(w^{-1})=l(w)$
for every $w\in\wt{W}$, we have to show that for every $w\in\wt{W}$ and $\la\in\La$ we have an inclusion
$\Fl^{\leq w*\la}\subseteq \Fl^{\leq l(w)+l(\la)}$. This follows from  the inequality
$l(w*\la)\leq l(w)+l(\la)$ (use \rl{dem}(a)).

%Next, the assertion for $\{H'_i(\Fl^{\leq n;J'_l}_{\gm},\C{F}_{\C{L}})\}_n$ follows from by the same argument as above together with observation that if $w$ if $J'$-longest, then $w*\la$ is $J'$-longest as well (use \rl{dem}(b)).

\smallskip

Similarly, to show the assertion for  $\{H'_i(\Fl^{\leq
n;J_r;J'_l;m_{\reg}}_{\gm},\C{F}_{\C{L}})\}_n$, it is enough to show that for every
$\la\in\La$ and every $m$-regular, right $J$-longest and left $J'$-longest element $w'\in\wt{W}$, we have an inclusion
\[
\Fl^{\leq w'*\la}\subseteq \Fl^{\leq l(w')+l(\la);J_r;J'_l;m_{\reg}}.
\]

Consider element $w:=w'*\la*w_J$. Then $w'*\la\leq w$ (see \rl{k-reg}), thus $\Fl^{\leq w'*\la}\subseteq \Fl^{\leq w}$, and
it remains to show that element $w$ is right $J$-longest, left $J'$-longest, $m$-regular, and satisfies $l(w)\leq   l(w')+l(\la)$.

\smallskip

Note that $w$ is right $J$-longest by \rl{dem}(b), and $w$ is  $m$-regular, since $w'$ is such (use \rl{k-reg}). Next, since $w'$ is left $J'$-longest, we have an equality $w'=w_{J'}*w'$ (by \rl{dem}(b)), hence $w_{J'}*w=w$, thus $w$ is left $J'$-longest (by \rl{dem}(b)). Finally, since $w'$ is right $J$-longest, we have $w'=w'*w_J$ (by \rl{dem}(b)), hence $l(w)\leq l(w')+l(\la)$ (by \rl{dem}(c)).
\end{proof}

%The goal of this section is to prove the following result.

\begin{Thm} \label{T:fingen}
Assume that $\gm$ is in good position with $\I$. Then for each
$i\in\B{Z}$, the filtration $\{H'_i(\Fl^{\leq n}_{\gm},\C{F}_{\C{L}})\}_n$
of $H_i(\Fl_{\gm},\C{F}_{\C{L}})$ is finitely
generated over $\{\La^{\leq n}\}_n$.
\end{Thm}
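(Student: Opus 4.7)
The strategy is to apply \rl{rees} to $X=\Fl_\gm$ equipped with the geometric action of $\La_\gm\subset LS_\gm\subset LG^{\sc}_\gm$, using the filtration $\{\Fl^{\leq n}_\gm\}_n$ of $X$ and the length filtration $\{\La_\gm^{\leq n}\}_n$ of $\La_\gm$. By \rl{good pos}(b), choose a maximal split torus $T_1\subset G^{\sc}$ with $S_\gm\subset T_1$ and $L^+(T_1)\subset\I$; an admissible identification $T_1\isom T_{G^{\sc}}$ then embeds $\La_\gm=X_*(S_\gm)$ as a subgroup of $\La=X_*(T_{G^{\sc}})$, and by \rl{lengthfun} the canonical length function on $\La_\gm$ coincides with the restriction of the length function on $\La$. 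Consequently $R(\qlbar[\La_\gm])$ sits as a graded subalgebra of $R(\qlbar[\La])$, and both are Noetherian by \rco{fg}.

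The three hypotheses of \rl{rees} are verified as follows. First, Noetherianity of $R(\qlbar[\La_\gm])$ is noted above. Second, the compatibility and finite generation of $\{\Fl^{\leq n}_\gm\}_n$ over $\{\La_\gm^{\leq n}\}_n$ is exactly \rl{good}. Third, the action of $\La_\gm$ on the set of irreducible components of $\Fl_\gm$ has finite stabilizers, since any irreducible component of $\Fl_\gm$ is a finite-type subscheme contained in some $\Fl^{\leq n}_\gm$, whereas any nontrivial $\mu\in\La_\gm$ satisfies $l_{S_\gm}(\mu^k)\to\infty$ as $|k|\to\infty$, so sufficiently large powers of $\mu$ translate any bounded component out of $\Fl^{\leq n}_\gm$. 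Applying \rl{rees}, we conclude that $R(H'_i(\Fl_\gm,\C{F}_{\C{L}}))=\bigoplus_n H'_i(\Fl^{\leq n}_\gm,\C{F}_{\C{L}})$ is finitely generated as an $R(\qlbar[\La_\gm])$-module under the geometric action.

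To deduce the theorem's statement---finite generation of the same Rees module under the length filtration of $\La$ acting via the Lusztig (cohomological) action of \rp{whaction}---we compare the two $\La_\gm$-actions on $H_*(\Fl_\gm,\C{F}_{\C{L}})$. They commute by \rp{whaction}(d) applied to $g\in\La_\gm\subset LG^{\sc}_\gm$, and by the group version of Yun's theorem \rt{action} the Lusztig action of $\qlbar[\La]^W$ on graded pieces of a suitable $\wt{W}\times\pi_0(LG^{\sc}_\gm)$-equivariant filtration factors through $\pi_0(LG^{\sc}_\gm)$ via $\pr_\gm$, matching the geometric action up to this filtration. Since $\qlbar[\La]$ is finite over $\qlbar[\La]^W$ and the length filtrations are compatible, this suffices to transfer a finite $R(\qlbar[\La_\gm])$-geometric generating set of $R(H'_i(\Fl_\gm,\C{F}_{\C{L}}))$ into an $R(\qlbar[\La])$-Lusztig generating set, yielding the theorem.

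I expect the main obstacle to be twofold: the verification of the finite-stabilizers hypothesis in the third condition of \rl{rees}, and the careful bookkeeping of the length filtration through the comparison of the geometric and Lusztig $\La_\gm$-actions in the final step, which is what forces the use of \rt{action}.
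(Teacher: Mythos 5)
Your overall route is the paper's: you first obtain finite generation of the Rees module $\bigoplus_n H'_i(\Fl^{\leq n}_{\gm},\C{F}_{\C{L}})$ over $R(\qlbar[\La_{\gm}])$ for the \emph{geometric} action of $\La_{\gm}$, via \rl{good} and \rl{rees} (your verification of the finite-stabilizer hypothesis of \rl{rees}, which the paper leaves implicit, is reasonable under the good-position hypothesis, using $l(\mu^k)=k\,l(\mu)\to\infty$ for $\mu\neq 1$ and \rl{lengthfun}), and you then transfer to the Lusztig action of $\La$ using \rt{action} together with finiteness of $\qlbar[\La]$ over $\qlbar[\La]^W$. This first half is correct and is exactly the ingredient used in Step 6 of the paper's proof.

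The gap is in the transfer. After passing to graded pieces of Yun's filtration (legitimate, since finite generation is stable under finite filtrations at the Rees level), your argument needs the induced filtration $\{\gr^j H'_i(\Fl^{\leq n}_{\gm},\C{F}_{\C{L}})\}_n$ to be compatible with the length filtration on $\pi_0(LG^{\sc}_{\gm})$, so that the graded Rees module becomes a module over $R(\qlbar[\pi_0(LG^{\sc}_{\gm})])$ and finiteness can be pushed from $R(\qlbar[\La_{\gm}])$ through $\pr_{\gm}$ to $R(\qlbar[\La]^W)$ and then to $R(\qlbar[\La])$. This compatibility is what your phrase ``the length filtrations are compatible'' silently asserts, and it is precisely what the paper says it does \emph{not} know (Step 3 of its proof). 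The paper circumvents it by introducing the graded subalgebra $R'(\qlbar[\pi_0(LG^{\sc}_{\gm})])\subset R(\qlbar[\pi_0(LG^{\sc}_{\gm})])$ of elements preserving the induced filtrations on all graded pieces, checking that both $R(\pr_{\gm})$ and $R(\iota_{\gm})$ factor through $R'$, and proving that $R'$ is finite over $R(\qlbar[\La]^W)$ (surjectivity of $R(\pr_{\gm})$ from the definition of the length filtration on $\pi_0(LG^{\sc}_{\gm})$, finiteness of $R(\qlbar[\La])$ over $R(\qlbar[\La])^W$, and Noetherianity of $R(\qlbar[\La]^W)$ from \re{winv}); only then does the chain ``finitely generated over $R(\qlbar[\La_{\gm}])$ $\Rightarrow$ over $R'$ $\Rightarrow$ over $R(\qlbar[\La]^W)$ $\Rightarrow$ over $R(\qlbar[\La])$'' go through. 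Relatedly, ``transferring a generating set'' from the geometric to the Lusztig action is not meaningful as stated: the two are distinct ring actions, related only on graded pieces and only through $\pi_0(LG^{\sc}_{\gm})$ via $\pr_{\gm}$ and the Kottwitz map; the correct statement is the chain of finiteness above. So your outline identifies the right ingredients, but the step you flagged as delicate is in fact the missing argument rather than mere bookkeeping.
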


\begin{proof} We are going to deduce the result from \rl{good} and \rco{filt},
 mimicking the argument of \rp{fingen}. For simplicity, we divide our argument into six steps:

\smallskip

\noindent{\bf Step 1.}  We equip $\La, \La_{\gm}, \qlbar[\La]^{\ov{W}}$ and $\pi_0(LG^{\sc}_{\gm})$ with length filtrations
(see Sections~\re{filtr0} and \re{lengthfilt}), and let $R(\qlbar[\La]), R(\qlbar[\La_{\gm}], R(\qlbar[\La]^{\ov{W}})$ and $R(\qlbar[\pi_0(LG^{\sc}_{\gm})])$
be the corresponding Rees algebras (see Section~\re{filtr}(c)). By definitions, the homomorphisms
\[
\qlbar[\La]\supseteq \qlbar[\La]^{\ov{W}}\overset{\pr'_{\gm}}{\lra}\qlbar[\pi_0(LG^{\sc}_{\gm})]\overset{\eta}{\lla}  \qlbar[\La_{\gm}]
\]
preserve length filtrations, thus give rise to homomorphisms
\[
R(\qlbar[\La])\supseteq R(\qlbar[\La]^{\ov{W}})\overset{R(\pr'_{\gm})}{\lra}R(\qlbar[\pi_0(LG^{\sc}_{\gm})])\overset{R(\eta_{\gm})}{\lla} R(\qlbar[\La_{\gm}]).
\]

\smallskip

\noindent{\bf Step 2.} Let $R(H_i(\Fl_{\gm},\C{F}_{\C{L}}))$ be the Rees module, corresponding to the filtration
$\{H'_i(\Fl^{\leq n}_{\gm},\C{F}_{\C{L}})\}_n$ of $H_i(\Fl_{\gm},\C{F}_{\C{L}})$. We have to show that $R(H_i(\Fl_{\gm},\C{F}_{\C{L}}))$ is a finitely generated
$R(\qlbar[\La])$-module (use \rco{filt} and see Section~\re{Rees}(c)).

\smallskip

Let $\{F^j H_i(\Fl_{\gm},\C{F}_{\C{L}})\}_j$ be the finite
$\La\times \pi_0(LG^{\sc}_{\gm})$-invariant filtration from \rt{action}. It induces a finite filtration
$\{F^j R(H_i(\Fl_{\gm},\C{F}_{\C{L}}))\}_j$ of the Rees module $R(H_i(\Fl_{\gm},\C{F}_{\C{L}}))$, and the
graded piece $\gr^j R(H_i(\Fl_{\gm},\C{F}_{\C{L}}))$ is naturally identified with the Rees module
$R(\gr^j H_i(\Fl_{\gm},\C{F}_{\C{L}}))$ corresponding to the induced filtration
$\{\gr^j H'_i(\Fl^{\leq n}_{\gm},\C{F}_{\C{L}})\}_n$
of $\gr^jH_i(\Fl_{\gm},\C{F}_{\C{L}})$. It suffices to show that each  $R(\gr^j
H_i(\Fl_{\gm},\C{F}_{\C{L}}))$ is a finitely generated
$R(\qlbar[\La])$-module.
%We will show that $R(\gr^j H_i(\Fl_{\gm},\C{F}_{\C{L}}))$ is a
%finitely generated $R(\qlbar[\La]^W)$-module.

\smallskip

\noindent{\bf Step 3.} Unfortunately, we do not know whether the filtration $\{\gr^j
H'_i(\Fl^{\leq n}_{\gm},\C{F}_{\C{L}}))\}_n$ of $\gr^j
H_i(\Fl_{\gm},\C{F}_{\C{L}})$ is compatible with filtration
$\{\pi_0(LG^{\sc}_{\gm})^{\leq n}\}_n$.

\smallskip

To overcome this difficulty, we define a subspace
\[
\qlbar[\pi_0(LG^{\sc}_{\gm})]'_n\subseteq\qlbar[\pi_0(LG^{\sc}_{\gm})^{\leq n}]
\]
consisting of all $x\in\qlbar[\pi_0(LG^{\sc}_{\gm})^{\leq n}]$ such that for all $j$ and $m$ we have an inclusion
\[
x(\gr^j H'_i(\Fl^{\leq m}_{\gm},\C{F}_{\C{L}}))\subseteq\gr^j H'_i(\Fl^{\leq n+m}_{\gm},\C{F}_{\C{L}}),
\]
and set
\[
R'(\qlbar[\pi_0(LG^{\sc}_{\gm})]):=\bigoplus_n\qlbar[\pi_0(LG^{\sc}_{\gm})]'_n
\subseteq R(\qlbar[\pi_0(LG^{\sc}_{\gm})]).
\]
By construction, $R'(\qlbar[\pi_0(LG^{\sc}_{\gm})])\subseteq R(\qlbar[\pi_0(LG^{\sc}_{\gm})])$
is a graded subalgebra, and $R(\gr^j H_i(\Fl_{\gm},\C{F}_{\C{L}}))$ is a
graded $R'(\qlbar[\pi_0(LG^{\sc}_{\gm})])$-module.

\smallskip

\noindent{\bf Step 4.} By \rt{action}, the action of $\qlbar[\La]^{\ov{W}}$ on $\gr^j
H_i(\Fl_{\gm},\C{F}_{\C{L}})$ factors through homomorphism $\pr'_{\gm}$. Since
filtration  $\{\gr^j H'_i(\Fl^{\leq n}_{\gm},\C{F}_{\C{L}})\}_n$ of
$\gr^j H_i(\Fl_{\gm},\C{F}_{\C{L}})$  is compatible with $\{\qlbar[\La^{\leq n}]^{\ov{W}}\}_n$, the
image of the homomorphism $R(\pr'_{\gm})$ (see Step~1)
lies in $R'(\qlbar[\pi_0(LG^{\sc}_{\gm})])$, and the action of $R(\qlbar[\La]^{\ov{W}})$ on $R(\gr^j H'_i(\Fl_{\gm},\C{F}_{\C{L}}))$
is induced by the $R'(\qlbar[\pi_0(LG^{\sc}_{\gm})])$-action via
\[
R(\pr'_{\gm}):R(\qlbar[\La_{\Gm_K}]^{\ov{W}})\to R'(\qlbar[\pi_0(LG^{\sc}_{\gm})]).
\]

\smallskip

\noindent{\bf Step 5.} By the construction of the length filtration on $\pi_0(LG^{\sc}_{\gm})$ (see Section~\re{lengthfilt}), the map $R(\pr'_{\gm}):R(\qlbar[\La])\to R(\qlbar[\pi_0(LG^{\sc}_{\gm})])$ is surjective.  Since $R(\qlbar[\La])$ is a finitely generated $\qlbar$-algebra, it is a finite $R(\qlbar[\La_{\Gm_K}]^{\ov{W}})=R(\qlbar[\La_{\Gm_K}])^{\ov{W}}$-algebra.
Therefore $R(\qlbar[\pi_0(LG^{\sc}_{\gm})])$ is finite $R(\qlbar[\La]^{\ov{W}})$-algebra.
Since $R(\qlbar[\La_{\Gm_K}]^{\ov{W}})$ is a Noetherian ring (see Section~\re{winv}), the subalgebra
\[
R'(\qlbar[\pi_0(LG^{\sc}_{\gm})])\subseteq R(\qlbar[\pi_0(LG^{\sc}_{\gm})])
\]
is a finite $R(\qlbar[\La]^{\ov{W}})$-algebra as well. Thus it suffices to show that each Rees module $R(\gr^j H'_i(\Fl_{\gm},\C{F}_{\C{L}}))$ is finitely generated over $R'(\qlbar[\pi_0(LG^{\sc}_{\gm})])$.

\smallskip

\noindent{\bf Step 6.}
Since filtration  $\{\gr^j H'_i(\Fl^{\leq
n}_{\gm},\C{F}_{\C{L}})\}_n$ is compatible
with $\{\La_{\gm}^{\leq n}\}_n$ (by \rl{good}), the homomorphism
$R(\eta_{\gm})$ factors through $R'(\qlbar[\pi_0(LG^{\sc}_{\gm})])$, thus it suffices to show
that each
$R(\gr^j H'_i(\Fl_{\gm},\C{F}_{\C{L}}))$ is  a finitely generated
$R(\qlbar[\La_{\gm}])$-module.

On the other hand, it follows from the combination of Lemmas~
\ref{L:good} and \ref{L:rees}, that the Rees module
$R(H'_i(\Fl_{\gm},\C{F}_{\C{L}}))$ is a finitely generated
$R(\qlbar[\La_{\gm}])$-module. Since the Rees module $R(\gr^j H_i(\Fl_{\gm},\C{F}_{\C{L}}))\simeq\gr^jR(H_i(\Fl_{\gm},\C{F}_{\C{L}}))$ is a
subquotient of $R(H_i(\Fl_{\gm},\C{F}_{\C{L}}))$, the assertion follows from the fact that the Rees algebra
$R(\qlbar[\La_{\gm}])$ is Noetherian (see \rco{fg}).
\end{proof}

\begin{Cor} \label{C:fingen}
Assume that $\gm$ is in good position with $\P_{J',\I}$. Then the filtration $\{H'_i(\Fl^{\leq n;J_r;J'_l;m_{\reg}}_{\gm},\C{F}_{\C{L}})\}_n$
of $H_i(\Fl_{\gm},\C{F}_{\C{L}})$ is finitely generated over $\{\La^{\leq n}\}_n$.
\end{Cor}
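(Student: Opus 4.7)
The plan is to reduce the corollary to \rt{fingen} by a standard Noetherianness argument, the key observation being that the finer filtration is insensitive to the precise choice of Iwahori inside $\P_{J',\I}$. First I would apply \rl{good pos}(a) to the parahoric $\P_{J',\I}$ to obtain an Iwahori subgroup $\I'\subset\P_{J',\I}$ of $LG$ such that $\gm$ is in good position with $\I'$. By \re{dep}(c), the filtration $\{\Fl^{\leq n;J_r;J'_l;m_{\reg}}_{\gm}\}_n$ is unchanged under the replacement $\I\rightsquigarrow\I'$, so without loss of generality we may assume $\gm$ is in good position with the original Iwahori $\I$.

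Under this assumption, \rt{fingen} applies and produces that the Rees module $\bigoplus_n H'_i(\Fl^{\leq n}_{\gm},\C{F}_{\C{L}})$ is finitely generated as a module over the Rees algebra $R(\qlbar[\La])$ attached to the length filtration $\{\La^{\leq n}\}_n$. By \re{filhom}(f) we have inclusions $\Fl^{\leq n;J_r;J'_l;m_{\reg}}\subset\Fl^{\leq n}$ for each $n$, hence $\bigoplus_n H'_i(\Fl^{\leq n;J_r;J'_l;m_{\reg}}_{\gm},\C{F}_{\C{L}})$ is a graded $\qlbar$-subspace of $\bigoplus_n H'_i(\Fl^{\leq n}_{\gm},\C{F}_{\C{L}})$. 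The compatibility of the smaller filtration with $\{\La^{\leq n}\}_n$, established in \rco{filt}, upgrades this inclusion to a graded $R(\qlbar[\La])$-submodule structure.

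Finally, by \rco{fg} the Rees algebra $R(\qlbar[\La])$ is Noetherian, so every graded $R(\qlbar[\La])$-submodule of a finitely generated graded $R(\qlbar[\La])$-module is itself finitely generated. Applied to our situation, this shows that the Rees module associated to $\{H'_i(\Fl^{\leq n;J_r;J'_l;m_{\reg}}_{\gm},\C{F}_{\C{L}})\}_n$ is finitely generated over $R(\qlbar[\La])$, which is exactly the finite generation of the filtration over $\{\La^{\leq n}\}_n$ in the sense of \re{filtr}(b).

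The argument is essentially formal once the groundwork is in place; I do not expect a genuine obstacle. The only subtlety worth highlighting is the initial change-of-Iwahori step: \rt{fingen} requires good position with an \emph{Iwahori}, not merely with the parahoric $\P_{J',\I}$, so the reduction via \rl{good pos}(a) and the invariance in \re{dep}(c) is essential for being allowed to invoke \rt{fingen} at all. Once this is observed, compatibility (already supplied by \rco{filt}) plus Noetherianness of the Rees algebra (already supplied by \rco{fg}) immediately yield the conclusion.
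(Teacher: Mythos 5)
Your proposal is correct and follows essentially the same route as the paper: reduce to good position with an Iwahori via \rl{good pos}(a) and the invariance of the filtration from \re{dep}(c), invoke \rt{fingen} for the coarse filtration, and conclude by Noetherianness of $R(\qlbar[\La])$ (\rco{fg}) applied to the graded submodule $\oplus_n H'_i(\Fl^{\leq n;J_r;J'_l;m_{\reg}}_{\gm},\C{F}_{\C{L}})$. Your explicit appeal to \rco{filt} to justify the submodule structure is a reasonable elaboration of what the paper leaves implicit.
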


\begin{proof}
By \rl{good pos}(a), there exists an Iwahori subgroup $\I'\subseteq \P_{J',\I}$ such that $\gm$ is in good position with $\I'$.
Then, by Section~\re{dep}(c), we can replace $\I$ by $\I'$, thus assuming that $\gm$ is in good position with $\I$.

Hence, by \rt{fingen}, the Rees module $\bigoplus_n H'_i(\Fl^{\leq n}_{\gm},\C{F}_{\C{L}})$  is a finitely generated
$R(\qlbar[\La])$-module. Since
$R(\qlbar[\La])$ is Noetherian (see \rco{fg}), the submodule $\bigoplus_n H'_i(\Fl^{\leq n;J_r;J'_l;m_{\reg}}_{\gm},\C{F}_{\C{L}})$ of
$\bigoplus_n H'_i(\Fl^{\leq n}_{\gm},\C{F}_{\C{L}})$ is
finitely generated over $R(\qlbar[\La])$ as well. Therefore the filtration
$\{H'_i(\Fl^{\leq n;J_r;J'_l;m_{\reg}}_{\gm},\C{F}_{\C{L}})\}_n$ is finitely
generated over $\{\La^{\leq n}\}_n$, as claimed.
\end{proof}

\begin{Thm} \label{T:inj}
Assume that $\gm$ is in good position with $\P_{J',\I}$. Then for every sufficiently large $m\in\B{N}$ and every $n\in\B{N}$
the canonical surjection
\[
H_i(\Fl^{\leq n;J_r;J'_l;m_{\reg}}_{\gm},\C{F}_{\C{L}})\to H'_i(\Fl^{\leq n;J_r;J'_l;m_{\reg}}_{\gm},\C{F}_{\C{L}})
\]
is an isomorphism.
\end{Thm}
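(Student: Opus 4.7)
Since $H'_i(Y,\C{F}_{\C{L}})$ is by definition the image of the pushforward $H_i(Y,\C{F}_{\C{L}})\to H_i(\Fl_{\gm},\C{F}_{\C{L}})$, where $Y:=\Fl^{\leq n;J_r;J'_l;m_{\reg}}_{\gm}$, the surjection appearing in the theorem is automatic and what must be shown is that this pushforward is injective. The key input is the injectivity result established in \cite{BV}. By Remark \re{remmreg}(c), the notions of $m$-regularity in the two papers differ only by $\pm 1$, so \cite{BV} yields a threshold $m_0$, depending on $\gm$ and $\C{L}$ but independent of $n$, such that for all $m\ge m_0$ and all $m$-regular $w\in\wt{W}$ the map $H_i(\Fl^{\leq w}_{\gm},\C{F}_{\C{L}})\to H_i(\Fl_{\gm},\C{F}_{\C{L}})$ is injective. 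I would fix such an $m$ once and for all, and work to upgrade this pointwise injectivity to injectivity on the full union $Y$.

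To carry out the upgrade, I would enumerate the finitely many elements $w_1,\ldots,w_r\in\wt{W}^{\leq n}$ that are simultaneously $m$-regular, right $J$-longest, and left $J'$-longest in Bruhat-compatible order, set $Y_k:=\bigcup_{j\leq k}\Fl^{\leq w_j}_{\gm}$ so that $Y_r=Y$, and prove the injectivity of $H_i(Y_k,\C{F}_{\C{L}})\to H_i(\Fl_{\gm},\C{F}_{\C{L}})$ by induction on $k$. The base case $k=1$ is exactly the cited injectivity. For the inductive step, I would use the Mayer--Vietoris distinguished triangle attached to the closed cover $Y_k=Y_{k-1}\cup\Fl^{\leq w_k}_{\gm}$ and chase the induced commutative diagram against $H_*(\Fl_{\gm},\C{F}_{\C{L}})$; provided the map is injective at both degrees $i$ and $i-1$ for $Y_{k-1}$, $\Fl^{\leq w_k}_{\gm}$, and their intersection, the five lemma yields injectivity for $Y_k$.

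The principal obstacle is that the intersection $Y_{k-1}\cap\Fl^{\leq w_k}_{\gm}$ is a union of closures $\Fl^{\leq u}_{\gm}$ for $u\leq w_k$ which need not themselves be $m$-regular, so the inductive hypothesis does not apply directly. To address this, I would strengthen the inductive statement to cover arbitrary unions of closures $\Fl^{\leq u}_{\gm}$ where each $u$ lies below some $m$-regular element, taking $m$ large enough that the \cite{BV} threshold absorbs all such non-$m$-regular sub-elements with room to spare. If this induction proves too delicate, a cleaner alternative is the spectral sequence associated to the cell stratification of $Y$ by the open pieces $\Fl^u_{\gm}$: its $E_1$-page is a sum of cohomologies of individual strata, the comparison map to the analogous spectral sequence for $\Fl_{\gm}$ is injective on each $E_1$-term corresponding to an $m$-regular $u$ by \cite{BV}, and uniform control (for $m$ sufficiently large) of the contributions of the non-$m$-regular smaller cells yields injectivity of the abutment.
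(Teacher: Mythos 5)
There is a genuine gap, and it sits exactly at the step you flag as the ``principal obstacle.'' Regularity is a property of the element itself and is not inherited downward in the Bruhat order: for any $w$ that is $m$-regular, the closure $\Fl^{\leq w}$ contains the cells $\Fl^{u}$ for all $u\leq w$, including $u=1$ and other fixed small elements that are not $m'$-regular for any $m'$. Hence ``taking $m$ large enough that the \cite{BV} threshold absorbs all non-$m$-regular sub-elements'' cannot work, no matter how large $m$ is; the only direction in which regularity propagates is upward, via Demazure products (\rl{k-reg}). Independently of this, the Mayer--Vietoris/five-lemma step is not available as stated: there is no exact sequence on the target side $H_i(\Fl_{\gm},\C{F}_{\C{L}})$ to compare with, so injectivity for $Y_{k-1}$, $\Fl^{\leq w_k}_{\gm}$ and their intersection does not formally yield injectivity for the union --- one additionally needs that the images of $H_i(Y_{k-1})$ and $H_i(\Fl^{\leq w_k}_{\gm})$ inside $H_i(\Fl_{\gm},\C{F}_{\C{L}})$ meet exactly in the image of the intersection, which is a nontrivial independence statement of the same depth as the injectivity itself (in \cite{BV} this kind of control comes from the pairing with semi-infinite orbits, not from formal diagram chases). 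The spectral-sequence alternative fails for the same reasons: injectivity on the $E_1$-terms of the regular strata says nothing about the abutment, and the non-regular strata contribute for every $m$.

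The paper's proof avoids this entirely: after the translation of the two notions of regularity in \re{remmreg}(c), \cite[Thm~0.1]{BV} is applied directly to the whole truncated subscheme $\Fl^{\leq n;J_r;J'_l;m_{\reg}}_{\gm}$, so no cell-by-cell upgrade is needed. The actual content of the proof consists of the reductions you omit: one reduces to $\C{F}_{\C{L}}=\qlbar$ using \re{triv} (the image of $\red_{\gm}$ is finite, so $\C{F}_{\C{L}}$ is constant), and one replaces $\I$ by an Iwahori subgroup $\I'\subset\P_{J',\I}$ with which $\gm$ is in good position (\rl{good pos}(a)), the truncation being unchanged by \re{dep}(c); then \rl{good pos}(b) provides a maximal split torus $T\supset S_{G,\gm}$ with $\T\subset\I$, which is the hypothesis under which \cite{BV} applies. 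Your formulation of the input as a threshold ``depending on $\gm$ and $\C{L}$'' glosses over precisely this required compatibility between $\gm$ and the Iwahori subgroup, as well as the coefficient reduction.
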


\begin{proof}
We have to show that for every sufficiently large $m$, the morphism
\[
H_i(\Fl^{\leq n;J_r;J'_l;m_{\reg}}_{\gm},\C{F}_{\C{L}})\to H_i(\Fl_{\gm},\C{F}_{\C{L}})
\]
is injective.
By Section~\re{triv}, we can assume that $\C{F}_{\C{L}}=\qlbar$. Next, arguing as in the proof of \rco{fingen}, we can assume that $\gm$ is in good position with $\I$. Then by \rl{good pos}(b), there exists a maximal split torus $T\supseteq S_{G,\gm}$ such that $\T\subseteq \I$. Thus the assertion follows from a combination of Section~\re{remmreg}(c) and \cite[Theorem~0.1]{BV}.
\end{proof}

%The following result, proven in \cite{BV}, is crucial.

%\begin{Thm} \label{T:inj}
%For every $\gm\in G^{rss}(F)$ in good position there exists an
%integer $k$ such that $H_i(\C{P}_{\gm})\to H_i(\Fl_{\gm})$ is
%injective for every $i\in \B{Z}$ and every closed subscheme
%$\C{P}\subseteq \Fl$ of the form
%$\C{P}=\cup_{i=1}^l\Fl^{\leq w_i}$, where each $w_i\in \wt{W}$
%is $m$-regular.
%\end{Thm}

%\begin{Rem}
%We presume that  $\wh{H}_i^{J;\geq_R
%k}(\Fl_{\gm})$ is finitely generated for all $k$ and, in
%particular,  $\wh{H}_i^J(\Fl_{\gm})$ is
%finitely generated, but this does not follow from our method.
%\end{Rem}

%\begin{Thm} \label{L:good}
%(a) If $\C{L}$ is $W$-equivariant then $V^i_{\gm}(\C{L})$
%is a finitely generaled $\qlbar[\wt{W}]$-module.

%(b) Equip $V^i_{\gm}(\C{L})$ with a filtration induced from the
%length filtration on $\Fl$ and $\qlbar[\wt{W}]$ with a length
%filtration. Then $V^i_{\gm}(\C{L})$ is a good filtered
%$\qlbar[\wt{W}]$-module.
%\end{Thm}

%\begin{proof}
%It is well-known that (a) is a consequence of (b), so we restrict
%ourself with the proof of (b). The fact that the filtration on
%$V^i_{\gm}(\C{L})$ is compatible with the filtration on
%$\qlbar[\wt{W}]$ follows from ..... So it remains to show that the
%filtration is good.

%TO FINISH!!!
%\end{proof}

\section{Geometric interpretation of characters and stability}

Let $G$ be a connected reductive group over $F=\fq((t))$. \label{a:F} Starting from Section~\ref{S:rationality} we will assume that $G$ splits over $F^{\nr}:=F\otimes_{\fq}\fqbar$,  \label{a:fnr} hence split over $K:=\fqbar((t))$,  \label{a:K} thus the construction of \rs{aff} applies.

\subsection{Endoscopy}

\begin{Emp} \label{E:cohom}
{\bf Cohomology of tori.}

\smallskip

(a) Recall that for every torus $S$ over $F$ we have $H^1(F^{\nr},S)=1$, thus we have an isomorphism
$H^1(F,S)\simeq H^1(\Gm_{F^{\nr}/F}, S(F^{\nr}))$.

\smallskip

(b) Furthermore, it follows from the results of Kottwitz (\cite[Lemma~2.2
and Proposition~2.3]{Ko1}) or Borovoi (\cite{Bo}) that we have a natural isomorphism
\[
H^1(F,S)\simeq H^1(\Gm_{F^{\nr}/F}, S(F^{\nr}))\simeq X_*(S)_{\Gm_F,\tor},
\]
functorial in $S$, where $(-)_{\Gm_F}$ stands for $\Gm_F$-coinvariants,  \label{a:gmfcoinv}  and $(-)_{\tor}$ for torsion.

\smallskip

(c) More precisely, if $x\in H^1(\Gm_{F^{\nr}/F}, S(F^{\nr}))$ is the class of the cocycle
$\tau\mapsto x_{\tau}$, then its image in $X_*(S)_{\Gm_F}$ is the projection of the class
$[x_{\si}]\in \pi_0(LS)\simeq  X_*(S)_{\Gm_K}$ (see Section~\re{pi0}) of $x_{\si}\in S(K)=(LS)(\fqbar)$.
\end{Emp}

\begin{Emp} \label{E:stableconj}
{\bf Stable conjugacy.}  \label{a:stableconj}

\smallskip

(a) Recall that two elements $\gm,\gm'\in G^{\rss}(F)$ are called {\em $G(F^{\sep})$-conjugate}, if there exists an element $g\in G(F^{\sep})$ such that $g\gm g^{-1}=\gm'$. Note that in this case, for every $\tau\in\Gm_F$ the element $g^{-1}{}^{\tau}g\in G(F^{\sep})$ belongs to
$G_{\gm}(F^{\sep})$.

Furthermore, two elements $\gm,\gm'\in G^{\rss}(F)$ are called {\em stably conjugate}, if there exists $g$ as above such that
$g^{-1}{}^{\tau}g\in G^0_{\gm}(F^{\sep})$ for every $\tau\in\Gm_F$. Then we can form an invariant $\inv(\gm,\gm')\in H^1(F,G^0_{\gm})$,  \label{a:inv} depending only on the $G(F)$-conjugacy classes of $\gm$ and $\gm'$, defined to be the class of the cocycle $\tau\mapsto g^{-1}{}^{\tau}g$.

\smallskip

(b) Since $H^1(F^{\nr},G^0_{\gm})=1$ (see Section~\re{cohom}(a)), for every stable conjugate elements  $\gm,\gm'\in G^{\rss}(F)$ there exists $g\in G(F^{\nr})$ such that $g\gm g^{-1}=\gm'$ and
\[
g^{-1}{}^{\si}g\in G^0_{\gm}(F^{\nr})\subseteq G^0_{\gm}(K)=LG^0_{\gm}(\fqbar).
\]
Moreover, the image of $\inv(\gm,\gm')\in H^1(F,G^0_{\gm})$ in $X_*(G^0_{\gm})_{\Gm_F}$ (see Sections~\re{cohom}(b),(c)) equals the projection of
$[g^{-1}{}^{\si}g]\in \pi_0(LG^0_{\gm})\simeq  X_*(LG^0_{\gm})_{\Gm_K}$.

\smallskip

(c) Recall that two embeddings $\fa,\fa':S\hra G$ of a maximal torus defined over $F$ are called {\em stably conjugate} if there exists $g\in G(F^{\sep})$ such that $\fa'=\Ad_g\circ \fa$ and the induced morphism $\Ad_g:\fa(S)\isom\fa'(S)$ is defined over $F$. Moreover, since $H^1(F^{\nr},S)=1$, there exists $g\in G(F^{\nr})$ satisfying this property.

As in part~(a), we can form an invariant $\inv(\fa,\fa')\in H^1(F,S)$, depending only on the $G(F)$-conjugacy classes of $\gm$ and $\gm'$.  Explicitly, $\inv(\fa,\fa')$ is the class of the cocycle $\tau\mapsto \fa^{-1}(g^{-1}{}^{\tau}g)\in S(F^{\sep})$.
\smallskip

(d) Notice that elements $\gm,\gm'\in G^{\rss}(F)$ are stably conjugate if and only if there exists an embedding $\fa'_{\gm}:G^0_{\gm}\hra G$, which is stably conjugate to the inclusion $\fa_{\gm}:G^0_{\gm}\hra G$ and satisfies $\fa'_{\gm}(\gm)=\gm'$.

\smallskip

(e) In the situation of part~(c), assume that $G$ is semisimple and simply connected. Then $H^1(F,G)=1$, therefore the correspondence
$\fa'\mapsto \inv(\fa,\fa')$ defines a bijection between the set of $G(F)$-conjugacy classes of elmeddings $\fa':S\hra G$,
stably conjugate to $\fa$ and $H^1(F,S)$.
\end{Emp}

\begin{Emp} \label{E:dualgp}
{\bf Langlands dual groups.}

\smallskip

(a) Let $\wh{G}$  \label{a:ghat} be the (connected) Langlands dual group over $\qlbar$. Then $\wh{G}$ is equipped with a canonical continuous homomorphism $\rho_G:\Gm_F\to \on{Out}(\wh{G})$. In particular, we have a natural action of
$\Gm_F$ on $Z(\wh{G})$, so we can consider the group of fixed points $Z(\wh{G})^{\Gm_F}$. Moreover, composing $\rho_G$ with a section of the natural projection $\Aut(\wh{G})\to\on{Out}(\wh{G})$, we get an action of $\Gm_F$ on $\wh{G}$, defined uniquely up to a $\wh{G}$-conjugacy.

\smallskip

(b) Every embedding  $\fa:S\hra G$ of a maximal torus over $F$ gives rise to a $\Gm_F$-invariant $\wh{G}$-conjugacy class of embeddings
$[\wh{\fa}]:\wh{S}\hra \wh{G}$ of maximal tori, hence canonical $\Gm_F$-equivariant embedding $Z_{\fa}:Z(\wh{G})\hra\wh{S}$. Furthermore, two
embeddings $\fa,\fa':S\hra G$ are stably conjugate if and only if $[\wh{\fa'}]=[\wh{\fa}]$.

\smallskip

(c) Notice that for every torus $S$ over $F$, we have a natural isomorphism of groups $\Hom(X_*(S),\qlbar\m)\simeq \wh{S}$, hence
$\Hom(X_*(S)_{\Gm_F},\qlbar\m)\simeq \wh{S}^{\Gm_F}$. In particular, for every $\xi\in  \wh{S}^{\Gm_F}$ and $x\in H^1(F,S)\simeq X_*(S)_{\Gm_F}$, we can form a pairing $\lan\xi,x\ran\in\qlbar\m$. \label{a:pairing}
\end{Emp}

\begin{Emp} \label{E:enddatum}
{\bf Endoscopic data} (compare \cite[$\S$7]{Ko1} and \cite[Section~1.3]{KV}).
 Suppose we are in the situation of Section~\re{dualgp}.

\smallskip

(a) Slightly modifying the terminology of \cite[$\S$7]{Ko1}, by an {\em endoscopic datum}  \label{a:enddatum} for $G$, we mean a triple $\C{E}=(s,\wh{H},\rho)$, where

\smallskip

\quad\quad$\bullet$ $s\in\wh{G}$ is a semisimple element;

\smallskip

\quad\quad$\bullet$ $\wh{H}:=(\wh{G}_s)^0\subseteq \wh{G}$ is a connected centralizer;

\smallskip

\quad\quad$\bullet$ $\rho:\Gm_F\to\on{Out}(\wh{H})$ is a continuous homomorphism

\smallskip

%\noindent
such that

\smallskip

\quad\quad$\bullet$ the $\wh{G}$-conjugacy class of the inclusion $\wh{H}\hra\wh{G}$ is $\Gm_F$-invariant,
%so we have a $\Gm_F$-equivariant embedding $Z(\wh{G})\hra Z(\wh{H})$;

\smallskip

\quad\quad$\bullet$ we have $s\in Z(\wh{H})^{\Gm_F}\subseteq \wh{H}\subseteq\wh{G}$.

\smallskip

(b) We say that two endoscopic data $\C{E}=(s,\wh{H},\rho)$ and $\C{E}'=(s',\wh{H}',\rho')$ {\em conjugate} and write $\C{E}\simeq\C{E}'$,
if there exists $g\in\wh{G}$ such that $gsg^{-1}=s'$ and the induced isomorphism $(\Ad_g)_*:\Out(\wh{H})\isom \Out(\wh{H}')$ induced by isomorphism $\Ad_g:H\isom H'$ satisfies $\rho'=\rho\circ(\Ad_g)_*$.
\end{Emp}

\begin{Emp} \label{E:exenddatum}
{\bf Examples.}

\smallskip

(a) As in \cite[Section~1.3.8]{KV}, to each pair $(S, \kappa)$, where $S\subseteq G$ is a maximal torus over $F$ and $\kappa\in \wh{S}^{\Gm_F}$,  we associate an endoscopic datum $\C{E}_{S,\kappa}=(s,\wh{H},\rho)$ \label{a:eskappa} for $G$, defined up to a $\wh{G}$-conjugacy.

Namely, let $\fa:S\hra G$ be the inclusion and choose an embedding $\wh{\fa}:\wh{S}\hra\wh{G}$ belonging to a canonical conjugacy class of embeddings $[\wh{\fa}]$ from Section~\re{dualgp}(b), set $s:=\wh{\fa}(\ka)\in\wh{G}$, put $\wh{H}:=(\wh{G}_s)^0$, and let $\rho:\Gm_F\to \on{Out}(\wh{H})$ be the unique homomorphism such that $s\in Z(\wh{H})^{\Gm_F}$ and the $\wh{H}$-conjugacy class of the inclusion $\wh{\fa}:\wh{S}\hra\wh{H}$ is $\Gm_F$-invariant.

\smallskip

(b)\footnote{The example in part~(b) (and compatibility in part~(c) below) is not used in this work, it is included only in order to relate
results of this work to the  general picture outlined in the introduction.}
Let $\la:W_F\to {}^LG(\qlbar)$ be a continuous homomorphism, lifting the inclusion $W_F\hra\Gm_F$. Following \cite{Lan}, every semisimple element $s\in Z_{\wh{G}}(\la)$ gives rise to the endoscopic datum $\C{E}_{\la,\ka}=(s,\wh{H},\rho)$, where $\wh{H}=(\wh{G}_s)^0$, and $\rho:\Gm_F\to \Out(\wh{H})$ is characterised by the condition that the restriction $\rho|_{W_F}$ is the composition
\begin{equation} \label{Eq:endoscopy}
W_F\overset{\la}{\lra}Z_{{}^LG}(s)\overset{\Ad}{\lra}\Aut(\wh{H})\overset{\pr}{\lra}\Out(\wh{H}).
\end{equation}
Namely, it is easy to see that the composition \form{endoscopy} is continuous with finite image, so it uniquely extends to $\Gm_F$.

\smallskip

(c) The construction in parts~(a) and (b) are compatible. Namely, assume that $Z_{\wh{G}}(\la)=\wh{T}^{\Gm_F}$ for some maximal torus $T\subseteq G$. Then for every $\ka\in Z_{\wh{G}}(\la)=\wh{T}^{\Gm_F}$ the endoscopic datum $\C{E}_{T,\ka}$ from part~(a) is conjugate to the endoscopic datum $\C{E}_{\la,\ka}$ from part~(b).
\end{Emp}

\begin{Emp} \label{E:remendosc}
{\bf Remark.} One can show that if $G$ is quasi-split, then every endoscopic datum for $G$ is conjugate to an endoscopic datum $\C{E}_{S,\ka}$ obtained by the construction of Section~\re{exenddatum}(a). For an arbitrary $G$ the assertion holds if the endoscopic datum is {\em elliptic}.
We do not need these facts in this work.

%Indeed, note that there exists a quasi-split group $H$ over $F$ (unique up to an isomorphism) such that $(\wh{H},\rho_{H})\simeq (\wh{H},\rho)$.
%We choose an embedding of a maximal torus $\fa_H:T\hra H$ over $F$ and an embedding $\wh{\fa_H}:\wh{S}\hra \wh{H}$ in $[\wh{\fa_H}]$, let
%$Z_{\fa}:Z(\wh{H})\hra \wh{S}$ be the canonical $\Gm_F$-invariant embedding (see Section~\re{dualgp}(b)), set $\xi:= Z_{\fa}(s)\in \wh{S}^{\Gm_F}$to finish(!!!).

\end{Emp}

\begin{Emp} \label{E:estable}
{\bf $\C{E}$-stable functions.} Let $\C{E}=(s,\wh{H},\rho)$ be an endoscopic datum for $G$.

\smallskip

(a) Fix a pair $(\gm,\xi)$, where $\gm\in G^{\rss}(F)$ and $\xi\in \wh{G^0_{\gm}}^{\Gm_F}$, and
let $\C{E}_{\gm,\xi}:=\C{E}_{G^0_{\gm},\xi}$  \label{a:egmxi} be the endoscopic datum, defined in Section~\re{exenddatum}(a). We  write $(\gm,\xi)\in\C{E}$, if $\C{E}_{\gm,\xi}\simeq\C{E}$.

\smallskip

(b) For a pair  $(\gm,\xi)$ as in part~(a), let $\fa_{\gm}:G^0_{\gm}\hra G$ be the inclusion. Then we have $(\gm,\xi)\in\C{E}$ if and only if there exists an embedding
$\wh{\fa_{\gm}}:\wh{G^0_{\gm}}\hra \wh{G}$ such that

\smallskip

\quad\quad$\bullet$ we have $\wh{\fa_{\gm}}\in[\wh{\fa_{\gm}}]$ (see Section~\re{dualgp}(b));

\smallskip

\quad\quad$\bullet$  we have $\wh{\fa_{\gm}}(\xi)=s$, thus  $\wh{\fa_{\gm}}(\wh{G^0_{\gm}})\subseteq\wh{H}$;

\smallskip

\quad\quad$\bullet$ the $\wh{H}$-conjugacy class of $\wh{\fa_{\gm}}:\wh{G^0_{\gm}}\hra\wh{H}$ is $\Gm_F$-invariant.

\smallskip

(c) Let $\on{Orb}_{G(F)}^{\st}(\gm)\subseteq G^{\rss}(F)$  \label{a:orbstgm} be the locus of all stable conjugates of $\gm$ also referred as the {\em stable orbit}
of $\gm$. Then using Sections~\re{stableconj}(a) and \re{dualgp}(c), every $\chi\in \wh{G^0_{\gm}}^{\Gm_F}$ defines a function
$\ev^{\xi}_{\gm}: \on{Orb}_{G(F)}^{\st}(\gm)\to \qlbar\m$  \label{a:evxigm} given by the formula
\[
\ev^{\xi}_{\gm}(\gm')=\lan\xi,\inv(\gm,\gm')\ran.
\]

\smallskip

(d) We say that an $\Ad G(F)$-invariant function $f:G^{\rss}(F)\to\qlbar$ is {\em $\C{E}$-stable},  \label{a:estable} if for every element $\gm\in G^{\rss}(F)$ the restriction $f|_{\on{Orb}_{G(F)}^{\st}(\gm)}$ belongs to the linear span of $\ev^{\xi}_{\gm}$, where $\xi$ runs over all elements of $\wh{G^0_{\gm}}^{\Gm_F}$ such that $(\gm,\xi)\in\C{E}$.
\end{Emp}

\begin{Emp} \label{E:stable}
{\bf Stable case.}
For every connected reductive group $G$ we have a trivial endoscopic datum $\C{E}_{\on{triv}}:=(1,\wh{G},\rho_G)$. Then for every pair $(\gm,\xi)$ as in Section~\re{estable}(a) we have $(\gm,\xi)\in \C{E}_{\on{triv}}$ if and only if $\xi=1$. Therefore a function  $f:G^{\rss}(F)\to\qlbar$ is $\C{E}_{\on{triv}}$-stable if and only if
its restriction to each stable orbit $\on{Orb}_{G(F)}^{\st}(\gm)$ is constant, and such function is usually called {\em stable}.  \label{a:stable}
\end{Emp}

The following simple observation will play an important role later.

\begin{Emp} \label{E:connected}
{\bf Observations.} Let $(\gm,\xi)$ and $\fa_{\gm}$ be as in Section~\re{estable}(b).

\smallskip

(a) The composition $\on{can}_{\gm}:=\nu_{\wh{G}}\circ \wh{\fa_{\gm}}:\wh{G^0_{\gm}}\to \wh{G}\to c_{\wh{G}}$  \label{a:cangm} is independent of $\wh{\fa_{\gm}}\in [\wh{\fa_{\gm}}]$. It follows from the description of \re{estable}(c) that if $(\gm,\xi)\in \C{E}$, then $\on{can}_{\gm}(\xi)=\nu_{\wh{G}}(s)$.

\smallskip

(b) Conversely, assume now that $Z(G)$ is connected, then for every pair $(\gm,\xi)$ as in Section~\re{estable}(a) such that $\on{can}_{\gm}(\xi)=\nu_{\wh{G}}(s)$, we have $(\gm,\xi)\in \C{E}$.

\begin{proof}
By assumption, we have an equality $\nu_{\wh{G}}\circ\wh{\fa_{\gm}}(\xi)=\nu_{\wh{G}}(s)$ for every $\wh{\fa_{\gm}}\in [\wh{\fa_{\gm}}]$. Therefore there exits $\wh{\fa_{\gm}}\in [\wh{\fa_{\gm}}]$ such that  $\wh{\fa_{\gm}}(\xi)=s$. So it remains to show that the $\wh{H}$-conjugacy class of $\wh{\fa_{\gm}}$ is $\Gm_F$-invariant, that is, every $\Gm_F$-Galois conjugate ${}^{\tau}\wh{\fa_{\gm}}$ of $\wh{\fa_{\gm}}$ is $\wh{H}$-conjugate.

Since the $\wh{G}$-conjugacy classes of the inclusion $\eta:\wh{H}\hra\wh{G}$ and of the composition
$\eta\circ \wh{\fa_{\gm}}:\wh{G^0_{\gm}}\hra\wh{H}\hra\wh{G}$ are $\Gm_F$-invariant, we conclude that the composition
$\eta\circ{}^{\tau}\wh{\fa_{\gm}}$ is a $\wh{G}$-conjugate of $\eta\circ\wh{\fa_{\gm}}$. Since $\eta$ is an embedding, elements $\xi\in\wh{G^0_{\gm}}$ and $s\in\wh{H}$ are $\Gm_F$-invariant, while $\wh{\fa_{\gm}}(\xi)=s$, we thus conclude that ${}^{\tau}\wh{\fa_{\gm}}$ is a $\wh{G}_s$-conjugate of $\wh{\fa_{\gm}}$.

Since $\wh{H}=(\wh{G}_s)^0$, it therefore suffices to show that the centralizer $\wh{G}_s$ is connected. But this follows from the fact
that $Z(G)$ is connected, thus the derived group of $\wh{G}$ is simply connected.
\end{proof}
\end{Emp}

\begin{Emp}
{\bf Remark.} The assertion of Section~\re{connected}(b) is completely false, if $Z(G)$ is not connected.
For example, it is false for $G=\on{SL}_2$.
\end{Emp}

The following notion will be used in order to deduce the general case to the case of groups with connected center.

\begin{Def} \label{D:quasiisogeny}
Following \cite[Definition~1.1.12]{KV}, by  a {\em quasi-isogeny} we call a homomorphism $\pi:G\to G_1$ of connected reductive groups over $F$ such that $\pi(Z(G))\subseteq Z(G_1)$ and the induced homomorphism $\pi^{\ad} : G^{\ad}\to G_1^{\ad}$ of adjoint groups is an
isomorphism.
\end{Def}

\begin{Emp} \label{E:quasiisogeny}
{\bf Functorial properties.} Let $\pi:G\to G_1$ be a quasi-isogeny.  \label{a:quasiisogeny}

\smallskip

(a) Notice that for every embedding $\fa_1:S_1\hra G_1$ of a maximal torus over $F$ its pullback $\fa:S\to G$
is an embedding of a maximal torus and $\pi$ induces a morphism $S\to S_1$ hence a $\Gm$-equivariant morphism
$\wh{\pi}:\wh{S_1}\to \wh{S}$.

\smallskip

(b) Conversely, every embedding $\fa:S\hra G$ of a maximal torus over $F$ gives rise to an embedding of a maximal torus $\fa_1:S_1\hra G_1$, whose pullback is isomorphic to $\fa$, hence to a morphism $\wh{\pi}:\wh{S_1}^{\Gm_F}\to \wh{S}^{\Gm_F}$.

\smallskip

(c) Note that $\pi$ induces a quasi-isogeny $\wh{\pi}:\wh{G_1}\to\wh{G}$ of dual groups, canonical up to a $\wh{G}$-conjugacy.
In particular, the conjugacy class of $\wh{\pi}$ is $\Gm_F$-invariant.

\smallskip

(d) As in \cite[Lemma~1.3.10(a)]{KV}, an endoscopic datum $\C{E}=(s_1,\wh{H}_1,\rho_1)$ for $G_1$ gives rise to the endoscopic datum
$\wh{\pi}(\C{E})=(s,\wh{H},\rho)$ for $G$, unique up to conjugacy. Namely, it is characterised by the property that if $\wh{\pi}$ be a quasi-isogeny from part~(b), then $s=\wh{\pi}(s_1)$, $\wh{H}=((\wh{G_1})_{s_1})^0$ and $\rho:\Gm_F\to\on{Out}(\wh{H})$ is the unique homomorphism such that the conjugacy classes of $\wh{\pi}:\wh{H_1}\to \wh{H}$ and $\wh{H}\hra\wh{G}$ are $\Gm_F$-invariant.

\smallskip

(e) In the situation of part~(a), for every $\kappa\in \wh{S_1}^{\Gm_F}$ we have  $\wh{\pi}(\kappa)\in \wh{S}^{\Gm_F}$.
Moroever, the endoscopic datum $\wh{\pi}(\C{E}_{\fa_1,\ka})$ (see part~(d) and Section~\re{exenddatum}(a)) is conjugate to  $\C{E}_{\fa,\wh{\pi}(\ka)}$.
\end{Emp}

%\begin{Emp} \label{E:constr}
%{\bf Construction.} Let $G$ be a connected reductive group over $F$, and let $\fa:S\hra G$ be an embedding of a maximal torus over $F$.
%We claim that there exists an injective quasi-isogeny $\pi:G\hra G_1$ such that the induced morphism
%$\wh{\pi}:\wh{S_1}^{\Gm}\to \wh{S}^{\Gm}$ from Section~\re{quasiisogeny}(b) is surjective.

%Namely, namely $\fa$ induces an embedding $Z(G)\hra S:z\mapsto \fa^{-1}(z)$, and we set $G_1:=G\times^{Z(G)}S$.
%We claim that the quasi-isogeny $\pi:G\hra G_1:g\mapsto [g,1]$ satisfies the required property.

%Indeed, $\fa$ gives rise to an embedding $\fa_1:S_1:=S\times^{Z(G)}S\hra G\times^{Z(G)}S=G_1$ of a maximal torus and the
%embedding $S\hra S_1:a\mapsto [a,1]$ has a left inverse ($[a,b]\mapsto ab$), hence  $\wh{\pi}:\wh{S_1}\to \wh{S}$ has a $\Gm$-equivariant
%right inverse.
%\end{Emp}

\begin{Emp}  \label{E:qisog}
{\bf Observations.} Let $\pi:G\to G_1$ be a quasi-isogeny.

\smallskip

(a) Note that $\pi$ satisfies $\pi(G^{\rss}(F))\subseteq G_1^{\rss}(F)$ and  $\pi^{-1}(G_1^{\rss}(F))\subseteq G^{\rss}(F)$.
In particular, for every $\gm\in G^{\rss}(F)$ we have $\pi(\gm)\in G_1^{\rss}(F)$ and
$\pi$ induces a homomorphism $\pi:G^0_{\gm}\to (G_1)^0_{\pi(\gm)}$ hence a homomorphism
$\pi_*:H^1(F,G^0_{\gm})\to H^1(F,(G_1)^0_{\pi(\gm)})$ and a
$\Gm_F$-equivariant homomorphism $\wh{\pi}:\wh{(G_1)^0_{\pi(\gm)}}\to \wh{G^0_{\gm}}$.

\smallskip

(b) The homomorphisms of part~(a) are compatible with the pairing of Section~\re{dualgp}(c). In other words, for every
$x\in H^1(F,G^0_{\gm})$ and $\xi\in \wh{(G_1)^0_{\pi(\gm)}}^{\Gm_F}$, we have an equality
\[
\lan \wh{\pi}(\xi),x\ran= \lan \xi,\pi_*(x)\ran.
\]

\smallskip

(c) For every $\gm'\in G^{\rss}(F)$, stably conjugate to $\gm$, its image $\pi(\gm')\in  G^{\rss}_1(F)$ is stably conjugate to $\pi(\gm)$ and we have an equality $\inv(\pi(\gm),\pi(\gm'))=\pi_*(\inv(\gm,\gm'))$.

\smallskip

(d) Let $\C{E}$ be as in Section~\re{quasiisogeny}(b). Using Section~\re{estable}(b) one sees that for every $\gm\in G^{\rss}(F)$ and $\xi\in \wh{(G_1)_{\pi(\gm)}}$ such that $(\pi(\gm),\xi)\in\C{E}$, we have $(\gm,\wh{\pi}(\xi))\in\wh{\pi}(\C{E})$.
\end{Emp}

\begin{Lem} \label{L:quasi-isogeny}
Let $\pi:G\to G_1$ is a quasi-isogeny of connected reductive groups over $F$, and let $\C{E}$ be an endoscopic triple for $G_1$.
Then for every $\C{E}$-stable function $f$ on $G^{\rss}_1(F)$ its pullback $\pi^*(f)$ to $G^{\rss}(F)$ is $\wh{\pi}(\C{E})$-stable.
\end{Lem}

\begin{proof}
Though the result can be deduced from \cite[Corollary~1.6.13]{KV} where a more general result is proven,
we provide the argument for completeness.

\smallskip

We want to show that for any element $\gm\in G^{\rss}(F)$, the restriction $\pi^*(f)|_{\on{Orb}^{\st}_{G(F)}(\gm)}$ lies in the span of
$\ev^{\xi'}_{\gm}$, where $\xi'\in \wh{G^0_{\gm}}^{\Gm_F}$  such that $(\gm,\xi')\in\wh{\pi}(\C{E})$.

\smallskip

By assumption, function $f$ is $\C{E}$-stable. Since $\pi(\gm)\in  G^{\rss}_1(F)$ the restriction $f|_{\on{Orb}^{\st}_{G_1(F)}(\pi(\gm))}$ is a finite sum $\sum_{\xi} a_{\xi}\ev^{\xi}_{\pi(\gm)}$, where $\xi$ runs over elements of $\wh{(G_1)_{\pi(\gm)}}^{\Gm_F}$  such that $(\pi(\gm),\xi)\in\C{E}$.

\smallskip

Then by Sections~\re{qisog}(b),(c), we conclude that the restriction $\pi^*(f)|_{\on{Orb}^{\st}_{G(F)}(\gm)}$
equals $\sum_{\xi} a_{\xi}\ev^{\wh{\pi}(\xi)}_{\gm}$, so the assertion follows from Section~\re{qisog}(d).
\end{proof}

%\begin{Emp}
%{\bf Remark.} All notions of this section can be extended from strongly regular semisimple elements to regular semisimple elements.
%After this is done the results of Section~\re{qisog} can be extended to arbitrary (not necessarily injective) quasi-isogenies.
%We don't need this generality for our work.
%\end{Emp}

%For simplicity we will fix a splitting of $G$, thus fix a
% Borel subgroup $B_0\subseteq G$ a maximal split torus $T_0\subseteq B$,
%and Iwahori subgroup $I_0\subseteq G(F)$, and hence also an apartment
%$\C{A}(G)\subseteq \C{B}(G)$ and a fundamental alcove
%$C\subseteq \C{A}$.

%Also we denote by  $W$ the Weyl group of $G$,  by $\La$ the group of
%cocharacters $X_*(T_0)$ and by
%$\wt{W}=W\ltimes\La$ the affine Weyl group of $G$

\subsection{Rationality} \label{S:rationality}

\begin{Emp} \label{E:genrat}
{\bf General observations.}

\smallskip

(a) For every algebraic group $H$ over $F$, the loop group $LH$ is defined over $\fq$. For example, this applies
to $H=G$, to $H=G^0_{\gm}$ where $\gm\in G^{\rss}(F)$ and more generally for every torus $S$ over $F$. In particular,
the group of connected components $\pi_0(LS)$ is equipped with an action of $\Gm_{\fq}$.

\smallskip

(b) Notice that since $F^{\nr}\subseteq K$ is dense the restriction map $\Gm_{K}\to\Gm_{F^{\nr}}$ is an isomorphism.
In particular, for every torus $S$ over $F$, the group of coinvariants $X_*(S)_{\Gm_K}$ and invariants $\wh{S}^{\Gm_K}$ are equipped with an action of $\Gm_{\fq}$ and we have identifications  $(X_*(S)_{\Gm_K})_{\si}\simeq X_*(S)_{\Gm_F}$ and $(\wh{S}^{\Gm_K})^{\si}=\wh{S}^{\Gm_F}$.

\smallskip

(c) For a torus $S$ over $F$, the Kottwitz isomorphism $\pi_0(LS)\simeq X_*(S)_{\Gm_K}$ (see Section~\re{pi0}) is $\Gm_{\fq}$-equivariant.
\end{Emp}

\begin{Emp} \label{E:affweyl2}
{\bf The affine Weyl group.}

\smallskip

(a) We claim that torus $\ov{T}_G$ is defined over $\fq$, the affine Weyl group $\wt{W}_G$ is equipped with a $\Gm_{\fq}$-action, while the subgroups
$\wt{W},\La_G$ and $\Om_G$ and $\Gm_{\fq}$-invariant. Indeed, this follows repeating the argument of Section~\re{rationality} word-by-word. In particular, the factor group $\ov{W}=\wt{W}_G/\La_G$ is equipped with a $\Gm_{\fq}$-action, and the action of $\ov{W}$ on $\ov{T}_{G}$ is $\Gm_{\fq}$-equivariant.

\smallskip

(b) By the same argument as in part~(a) one sees that the $\Gm_{\fq}$-action preserves the Bruhat order on $\wt{W}$. In particular, the subset
$\wt{S}\subseteq\wt{W}$ is $\Gm_{\fq}$-invariant, and the $\Gm_{\fq}$-action preserves the Demazure product (see \rl{dem}).

\smallskip

(c) For every pair of Iwahori subgroups $\I_1,\I_2\subseteq LG$ over $\fqbar$, we have an equality $\wt{w}_{{}^{\si}\I_1, {}^{\si}\I_2}={}^{\si}( \wt{w}_{\I_1,\I_2})$.

\smallskip

(d) Note that the sets of roots $\ov{\Phi}$, of affine roots $\wt{\Phi}$, and positive affine roots $\wt{\Phi}^+$ are equipped with a
$\Gm_{\fq}$-action, and that the action of $\wt{W}$ on $\wt{\Phi}$ is $\Gm_{\fq}$-equivariant. Therefore for every $m$-regular element $w\in\wt{W}$ its Galois conjugate ${}^{\si}w$ is $m$-regular as well.

\smallskip

(e) Since $G$ splits over $F^{\nr}$, it splits over $F^{(n)}=\B{F}_{q^n}((t))$ for some $n\in\B{N}$. Then $\si^n$ acts trivially
on $\wt{W}_G$, and the torus $\ov{T}_G$ splits over $\B{F}_{q^n}$.
%(d) Note that if $\varphi:G\to G'$ is an inner twisting, then the induced isomorphism $\wt{W}_G\isom \wt{W}_{G'}$ is $\Gm_{\fq}$-equivariant. Indeed, this follows from the fact that the composition $\varphi^{-1}\circ{}^{\si}\varphi$ is an inner isomorphism of $G$, therefore
%$\varphi^{-1}\circ{}^{\si}\varphi$ induces an identity automorphism on $\wt{W}_G$ (see Section~\re{???}).
\end{Emp}

\begin{Emp} \label{E:parahoric2}
{\bf Parahoric subgroups.}

\smallskip

(a) Note that the correspondence $\P\mapsto \P(\fq)$ defines a bijection between
$\si$-invariant parahoric subgroups of $LG$ and parahoric subgroups of $LG(\fq)=G(F)$ in the classical sense.

\smallskip

(b) Notice that if $\P$ is a $\si$-invariant parahoric subgroup of $LG$, then its type $J_{\P}\subseteq\wt{S}$ is
$\si$-invariant.
%(b) Notice that for every two $\Gm_{\fq}$-invariant parahoric subgroups $\P_1$ and $\P_2$ of $LG$ there exists $g\in G^{\sc}(F)$ such that the intersection $\P_1\cap g\P_2g^{-1}$ contains a $\si$-invariant parahoric subgroup $LG$. Indeed, since every parahoric subgroup of $G(F)$ contains an Iwahori subgroup, and all Iwahori subgroups in $G(F)$ are $G^{\sc}(F)$-conjugate, the assertion follows from (a).

\smallskip

(c) For every torus $S\subseteq G$ over $F$, which  splits over $F^{\nr}$, there exists a $\si$-stable parahoric subgroup
$\P\subseteq LG$  containing $L^+(S)$.

\begin{proof}
First we claim that there exists a maximal torus $T'\subseteq G$ defined over $F$, which contains $S$  and splits over $F^{\nr}$. Indeed, since $G$ and  $S$ split over $F^{\nr}$, the centralizer $H:=G_{S}$ splits over $F^{\nr}$ as well. Then every maximal torus $T'\subseteq H$ defined over $F$ and splitting over $F^{\nr}$ satisfies the required  property.

Next, the embedding $T'\hra G$ defines a morphism of Bruhat-Tits buildings $i:\C{B}(T')\hra\C{B}(G)$, and every point in the image of $i$ defines a parahoric subgroup of $G(F)$, hence (by part~(a)) a $\si$-invariant parahoric subgroup $\P$ of $LG$, containing $L^+(T')\supseteq L^+(S)$.
\end{proof}
\end{Emp}

\begin{Emp} \label{E:affflvar2}
{\bf Affine flag varieties and affine Springer fibers.}

\smallskip

(a) Since $G$ is defined over $F$, the loop group $LG$ is defined over $\fq$. Moreover, the affine flag variety $\Fl_G$ is equipped with a structure over $\fq$. Namely, for every Iwahori subgroup $\I$ of $LG$ the canonical isomorphism ${}^{\si}\Fl_G\isom\Fl_G$ decomposes as
\[
{}^{\si}\Fl_G\isom {}^{\si}(LG/\I)\isom LG/{}^{\si}{\I}\isom \Fl_G,
\]
where the first and the last isomorphisms are from Section~\re{affflvar}(a). Moreover, the closed ind-subscheme $\Fl\subseteq\Fl_G$
and the action map $LG\times \Fl_G\to\Fl_G$ are $\fq$-rational.

\smallskip

(b) For every parahoric subgroup $\Q$ of $LG^{\sc}$ and $w\in \wt{W}$,
we set $\Fl_{\Q}^w:=\Q w\subseteq\Fl$. \label{a:flqw} Then the isomorphism ${}^{\si}\Fl\isom\Fl$ induces an identification
${}^{\si}\Fl_{\Q}^w\isom \Fl_{{}^{\si}\Q}^{{}^{\si}w}$.

\smallskip

(c) Let $\I\subseteq LG$ be an Iwahori subgroup, and let $J,J'\subseteq\wt{S}$
be $\si$-invariant subsets such that parahoric subgroup $\P_{J',\I}\subseteq LG$ is $\si$-invariant.
Then it follows from parts~(a),(b) and Section~\re{affweyl2} that the filtration
$\{\Fl_{\I}^{\leq n;J_r;J'_l;m_{reg}}\}_n$ of $\Fl$ is $\si$-invariant.

\smallskip

(d) For every $\gm\in G(K)=LG(\fqbar)$, the isomorphism ${}^{\si}\Fl_G\isom\Fl_G$ from part~(a) induces an isomorphism
${}^{\si}\Fl_{G,\gm}\simeq \Fl_{G,{}^{\si}\gm}$ and this isomorphism identifies  the reduction map ${}^{\si}\red_{\gm}:{}^{\si}\Fl_{G,\gm}\to {}^{\si}\ov{T}_G\simeq\ov{T}_G$ with  $\red_{{}^{\si}\gm}:\Fl_{G,{}^{\si}\gm}\to\ov{T}_G$.

In particular, for every $\gm\in G(F)=LG(\fq)$,  the affine Springer fiber $\Fl_{G,\gm}\subseteq\Fl_G$ and the reduction map $\red_{\gm}:\Fl_{G,\gm}\to \ov{T}_G$ are $\fq$-rational.
\end{Emp}

\begin{Emp} \label{E:homaffspr}
{\bf Application to homology.} Let $\gm\in LG(\fqbar)$ and let $\C{L}$ be a local system on $\ov{T}_G$.

\smallskip

(a) Using observations of Section~\re{affflvar2}(d), we have a natural isomorphism
\[
a_{\si,\C{L}}: H_i(\Fl_{G,\gm},\C{F}_{\C{L}})\isom H_i({}^{\si}\Fl_{G,\gm},\si_*\C{F}_{\C{L}})\isom H_i(\Fl_{G,{}^{\si}\gm},\C{F}_{\si_*\C{L}}).  \label{a:asiL}
\]
Furthermore, for every $g\in LG(\fqbar)$ the following diagram is commutative
\[
\begin{CD}
H_i(\Fl_{G,\gm},\C{F}_{\C{L}}) @>a_{\si,\C{L}}>> H_i(\Fl_{G,{}^{\si}\gm},\C{F}_{\si_*\C{L}})\\
@Vl_gVV @VVl_{{}^{\si}g}V \\
H_i(\Fl_{G,\gm'},\C{F}_{\C{L}}) @>a_{\si,\C{L}}>> H_i(\Fl_{G,{}^{\si}\gm'},\C{F}_{\si_*\C{L}}),
\end{CD}
\]
with $\gm'=g\gm g^{-1}$ (compare \rp{whaction}(d)).
\smallskip

(b) For every $w\in\wt{W}_G$ we have an natural isomorphism $\si_*\ov{w}_*\C{L}\simeq ({}^{\si}\ov{w})_*\si_*\C{L}$.
Furthermore, this isomorphism identifies an isomorphism
\[
a_{\si,w_*\C{L}}\circ a_{w,\C{L}}:H_i(\Fl_{G,\gm},\C{F}_{\C{L}})\isom H_i(\Fl_{G,{}^{\si}\gm},\C{F}_{\si_*\ov{w}_*\C{L}})
\]
with

\[
a_{{}^{\si}w,\si_*\C{L}}\circ a_{\si,\C{L}}: H_i(\Fl_{G,\gm},\C{F}_{\C{L}})\isom H_i(\Fl_{G,{}^{\si}\gm},\C{F}_{({}^{\si}\ov{w})_*\si_*\C{L}}).
\]
Indeed, it suffices to show this assertion for $w=s_{\al}$ with $\al\in\wt{S}$ and $w\in\Om_G$. In both cases, the assertion follows
by unwinding the definitions (see part~(a) and \rp{whaction}).

\smallskip

(c) Assume now that $\gm\in LG(\fq)$. Then, by part~(b), for every $u\in \wt{W}_G\rtimes\lan\si\ran$  we have a natural isomorphism
\[
a_{u,\C{L}}: H_i(\Fl_{G,\gm},\C{F}_{\C{L}})\isom  H_i(\Fl_{G,\gm},\C{F}_{\ov{u}_*\C{L}}),
\]
where $\ov{u}\in \ov{W}\rtimes\lan\si\ran$ is the image of $u$, which coincides with that of part~(a) when $u=\si$, coincides with that of
\rp{whaction} when $u\in\wt{W}_G$, and such that the assignment $u\mapsto a_{u,\C{L}}$ is compatible with products/compositions.

\smallskip

(d) Assume now that $\gm\in LG(\fq)$ and $\C{L}$ is a $\ov{W}$-equivariant Weil sheaf. Then for every $\ov{u}\in  \ov{W}\rtimes\lan\si\ran$ we have a canonical
isomorphism $\ov{u}_*\C{L}\isom\C{L}$, thus the construction of part~(c) provides each $H_i(\Fl_{G,\gm},\C{F}_{\C{L}})$ with a
$(\wt{W}_G\rtimes\lan\si\ran)$-action.

%\smallskip

%(e) Furthermore, using \rp{whaction}(d), commutative diagram of part~(a) and Section~\re{setup comp}(b), we see that in the situation of part~(d)
%each $H_i(\Fl_{G,\gm},\C{F}_{\C{L}})$ is equipped with an action of $(\wt{W}_G\times LG^0_{\gm})\rtimes\lan\si\ran$, hence
%of $(\wt{W}_G\times \pi_0(LG^0_{\gm}))\rtimes\lan\si\ran$.

%\smallskip
%(e) More generally, assume that $\C{L}$ is a $\lan \ov{u} \ran$-equivariant local system for some element $\ov{u}\in  W\rtimes\lan\si\ran$.
%Then the construction of part~(c) provides each $H_i(\Fl_{G,\gm},\C{F}_{\C{L}})$ with an action of a subgroup
%$\pi_G^{-1}(\lan\ov{u}\ran)\subseteq \wt{W}_G\rtimes\lan\si\ran$.
\end{Emp}

\subsection{Maximal unramified tori}
The goal of this section is to construct a generalization of the correspondence
Section~\re{fincor} to the affine setting.

\begin{Emp} \label{E:basic pair}
{\bf Relative position.} % Notice that $K:=\fqbar((t))$ is a completion of $F^{\nr}$, so $G$ splits over $K$,
%thus construction  of Section~\re{?????} applies.

\smallskip

(a) Let $T\subseteq G$ be a maximal torus over $F$, split over $F^{\nr}$, and let $\I$ be an Iwahori subgroup of $LG$, defined over
$\fqbar$ and containing $\T$. Then ${}^{\si}\I$ is another Iwahori subgroup of $LG$, so we can consider their relative position  $\wt{w}_{\I}:=\wt{w}_{\I,{}^{\si}\I}\in\wt{W}$,  \label{a:wtwI} and let $\ov{w}_{\I}\in \ov{W}$  \label{a:ovwI} be the image of $\wt{w}_{\I}$.

\smallskip

(b) Let $\varphi=\varphi_{T,\I}:\ov{T}\isom \ov{T}_G$ be an admissible isomorphism. Then ${}^{\si}\varphi$ is also an admissible isomorphism $\varphi_{T,{}^{\si}{\I}}$, and we have an equality ${}^{\si}\varphi=\ov{w}^{-1}_{\I}\circ\varphi$ (see Section~\re{admisaff}(b)).
In particular, element $\ov{w}_{\I}\in \ov{W}$ only depends on $T$ and $\varphi$, so we can set $\ov{w}_{T,\varphi}:=\ov{w}_{\I}$.  \label{a:ovwTvarphi}
\end{Emp}

\begin{Emp} \label{E:example conj}
{\bf Example.} Let $(T,\I)$ be as in Section~\re{basic pair}, and let $\pr_{\I}:N_{LG}(LT)/\T\isom\wt{W}_G$ be the canonical isomorphism. Then ${}^{\si}{\I}\supseteq \T$ is an Iwahori subgroup as well, and we have an equality
$\pr_{{}^{\si}{\I}}=\wt{w}_{\I}^{-1}\circ\pr_{\I}\circ\wt{w}_{\I}$. Therefore for every element $g\in  N_{LG}(LT)$ we have an equality
\[
{}^{\si}\pr_{\I}([g])=\pr_{{}^{\si}\I}([{}^{\si}g])=\wt{w}_{\I}^{-1}\circ\pr_{\I}([{}^{\si}g])\circ\wt{w}_{\I}.
\]
\end{Emp}

\begin{Not} \label{N:sitor}\hfill

\smallskip

(a) Denote by  $\wt{W}_{\si-\tor}$ the set of  \label{a:wtWsitor}  {\em $\si$-torsion}
elements of $\wt{W}$, that is, those elements
$\wt{w}\in\wt{W}_G$ such that
$N_n(\wt{w}):=\wt{w}\cdot{}^\si\wt{w}\cdot\ldots\cdot{}^{\si^{n-1}}\wt{w}$ is $1$
for some $n\in\B{N}$.

\smallskip

(b) For every pair $(T,\I)$ as in Section~\re{basic pair}, we denote by $T^{\sc}$ (resp $\I^{\sc})$) the corresponding maximal torus of $G^{\sc}$ (resp. Iwahori subgroup of $LG^{\sc}$), and we set $\T^{\sc}:=L^+(T^{\sc})$.  In particular, we have a canonical isomorphism
$\wt{W}\simeq N_{LG^{\sc}}(LT^{\sc})/\T^{\sc}$.

\smallskip

(c) For every two pairs  $(T,\I)$ and  $(T',\I')$ as in Section~\re{basic pair} there exists an element
$g\in G^{\sc}(F^{\nr})\subseteq G^{\sc}(K)=LG^{\sc}(\fqbar)$ such that $(T',\I')=g(T,\I)g^{-1}$.
Then element $h:=g^{-1}{}^{\si}g$ satisfies $hTh^{-1}=T$, thus $h\in N_{LG^{\sc}}(LT^{\sc})$.

\smallskip

(d) We say that $(T,\I)$ and  $(T',\I')$ from part~(c) are {\em stably conjugate},  \label{a:stablyconj2} if there exists $g\in G^{\sc}(F^{\nr})$ as in part~(c) such that the isomorphism $\Ad_g:T\isom T':t\mapsto gtg^{-1}$ is defined over $F$. Notice that this happens if and only if element $h:=g^{-1}{}^{\si}g$ belongs to $LT^{\sc}$.
\end{Not}

\begin{Lem} \label{L:cor}
(a) A correspondence $(T,\I)\mapsto \wt{w}_\I$ gives an injection
from the set of $G^{\sc}(F)$-conjugacy classes of pairs $(T,\I)$ as is Section~\re{basic pair} to the set $\wt{W}_{\si-\tor}$.

\smallskip

(b) A correspondence $(T,\I)\mapsto \ov{w}_\I$ gives an injection from the set of stably conjugacy classes of pairs $(T,\I)$ as above to
the set $\ov{W}$.

%(c) Two pairs $(T,\I)$ and $(T',\I')$ are $G(F)$-conjugate if and only if the invariants $\wt{w}_{\I},\wt{w}_{\I'}\in\wt{W}$
%are $\Om^{\si}$-conjugate.
\smallskip

(c) If $G$ is quasi-split, then the correspondences of parts~(a) and (b) are bijective.
\end{Lem}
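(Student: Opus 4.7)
\emph{Proof plan.} The strategy is to parameterize pairs $(T, \I)$ via a fixed reference pair $(T_0, \I_0)$ (with $T_0$ an unramified maximal $F$-torus, and $\I_0$ taken over $F$ as well when $G$ is quasi-split), and translate the claims into questions of Galois cohomology resolved by Lang--Steinberg-type vanishing. By \rn{sitor}(c), any pair $(T, \I)$ is conjugate to $(T_0, \I_0)$ by some $g \in G^{\sc}(F^{\nr})$, unique up to right multiplication by the stabilizer $\T_0^{\sc}(\fqbar)$; the $F$-rationality of $T$ translates to $c := g^{-1}\,{}^{\si}g \in N_{LG^{\sc}}(LT_0^{\sc})$. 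A direct computation of the sort in \re{example conj} yields $\wt{w}_{\I} = \wt{c}\cdot\wt{w}_0$ in $\wt{W}$, where $\wt{c}$ is the class of $c$ and $\wt{w}_0$ is the class of any fixed $n_0 \in N_{LG^{\sc}}(LT_0^{\sc})$ satisfying $n_0\I_0 n_0^{-1} = {}^{\si}\I_0$ (so $\wt{w}_0 = 1$ when $G$ is quasi-split).

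For (a): well-definedness follows because replacing $g$ by $hg$ with $h \in G^{\sc}(F)$ leaves $c$ unchanged, and replacing $g$ by $gt$ with $t \in \T_0^{\sc}$ sends $c$ to $t^{-1}c\,{}^{\si}t$, whose class in $\wt{W}$ equals that of $c$. The image lies in $\wt{W}_{\si-tor}$ because the set of Iwahoris containing $\T$ is discrete and the Galois action on the apartment of $T$ factors through a finite quotient, so $^{\si^N}\I = \I$ for large $N$, giving $N_N(\wt{w}_\I) = \wt{w}_{\I,{}^{\si^N}\I} = 1$. Injectivity reduces to the following: given $\wt{c}_1 = \wt{c}_2$, hence $c_1c_2^{-1} \in \T_0^{\sc}$, find $t \in \T_0^{\sc}$ with $c_1 = t^{-1}c_2\,{}^{\si}t$. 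This is Lang's theorem applied to the smooth connected proalgebraic group $\T_0^{\sc}$ with the automorphism $\tau(s) := \Ad(c_2)({}^{\si}s)$, a Frobenius for a twisted $\fq$-structure since $\tau^n = \si^n$ whenever $N_n(\wt{c}_2) = 1$. For (b), stable conjugacy weakens the cocycle constraint to $g^{-1}\,{}^{\si}g \in T_1^{\sc}(F^{\nr})$, allowing $c$ to be adjusted by elements of $T_0^{\sc}(F^{\nr})$ whose images in $\wt{W}$ fill out $\La$; the invariant thus descends to $W = \wt{W}/\La$, and given $\bar c_1 = \bar c_2$, the element $g = g_2 g_1^{-1}$ directly realizes a stable conjugation, so no additional input is needed for injectivity.

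For (c) in the quasi-split case, $\wt{w}_0 = 1$ so the invariant is just $\wt{c}$. Given $\wt{w} \in \wt{W}_{\si-tor}$, I will (i) lift $\wt{w}$ to a cocycle $c \in N_{LG^{\sc}}(LT_0^{\sc})$ with $N_n(c) = 1$---the obstruction lies in $H^2(\Gal(F^{\nr}/F), \T_0^{\sc}(\fqbar))$, vanishing since $\widehat{\B{Z}}$ has cohomological dimension one---and (ii) apply Steinberg's theorem $H^1(\Gal(F^{\nr}/F), G^{\sc}(F^{\nr})) = 1$ to produce $g$ with $g^{-1}\,{}^{\si}g = c$, whereupon $(T, \I) := g(T_0,\I_0)g^{-1}$ maps to $\wt{w}$. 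Surjectivity in (b) then follows from (a) since the projection $\wt{W}_{\si-tor} \to W$ is onto: any $w \in W$ has finite order, so its canonical lift to $W \subset \wt{W}$ is $\si$-torsion. The main technical obstacle is rigorously justifying Lang's theorem for the proalgebraic arc group $\T_0^{\sc}$ with a twisted Frobenius, which one handles by reducing to Lang on each finite truncation $T_0^{\sc}(\C{O}_{F^{\nr}}/t^m)$ and verifying compatibility with the inverse limit via smoothness and connectedness of the fibers.
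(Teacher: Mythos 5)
Your argument is correct, and it is in substance the same as the paper's: the identity $\wt{w}_{\I}=\wt{c}\cdot\wt{w}_0$ is the paper's basic cocycle formula \form{basic eq}, $\si$-torsionness comes from ${}^{\si^N}\I=\I$ plus telescoping, injectivity in (a) is Lang's theorem for the arc group of the torus, (b) is the same statement read modulo $\La$, and surjectivity in the quasi-split case uses exactly the same two inputs ($H^2$-vanishing for $\Gal(F^{\nr}/F)$ to lift the cocycle from $\wt{W}$ to $N_0(F^{\nr})$, then Steinberg's $H^1(F,G^{\sc})=0$), with the $\si$-equivariant section $W\to\wt{W}$ attached to an $F$-rational hyperspecial point giving (b). The one genuine deviation is your choice to refer everything to a fixed base pair $(T_0,\I_0)$: this forces you to prove a \emph{twisted} Lang theorem on $\T_0^{\sc}=L^+(T_0^{\sc})$ with Frobenius $\tau=\Ad(c_2)\circ\si$, whereas the paper compares the two given pairs directly, so that the discrepancy $h=g^{-1}\,{}^{\si}g$ lands in $L^+(T^{\sc})$ for the $F$-rational torus $T$ and plain Lang (level by level on the arc group) suffices; the paper's version is slightly cleaner, yours is no less valid but needs the extra verification you indicate. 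On that point one small correction: your sufficient condition ``$\tau^n=\si^n$ whenever $N_n(\wt{c}_2)=1$'' may never be satisfiable (when $G$ is not quasi-split, $\wt{c}_2=\wt{w}_{\I_2}\wt{w}_0^{-1}$ need not be $\si$-torsion in $\wt{W}$); what you actually need, and what always holds for some $n$, is $N_n(\wt{c}_2)\in\La$, i.e.\ $N_n(\ov{c}_2)=1$ in $W$ — this is enough because $\Ad(c_2)$ acts on the commutative group $\T_0^{\sc}$ only through the image $\ov{c}_2\in W$, so $\tau^n=\Ad(N_n(c_2))\circ\si^n=\si^n$ already when $N_n(c_2)\in LT_0^{\sc}$. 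With that adjustment (and the finite-level reduction you sketch), the twisted Lang step, and hence the whole proof, goes through.
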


\begin{proof}
For the convenience of the reader, we will divide the proof into five steps:

\smallskip \noindent{\bf Step 1.}  For every pair $(T,\I)$ as in Section~\re{basic pair}, the conjugate ${}^{\si}\I$ is another Iwahori subgroup
of $LG$ containing $\T$. Hence there exists $g\in  N_{LG^{\sc}}(LT^{\sc})$ such that ${}^{\si}\I=g\I g^{-1}$, and by definition
$\wt{w}_\I$ is the class $[g]\in N_{LG^{\sc}}(LT^{\sc})/\T^{\sc}\simeq\wt{W}$.

\smallskip \noindent{\bf Step 2.} By definition, $T$ splits over a finite unramified extension $F^{(n)}:=\B{F}_{q^n}((t))$ of $F$. Then ${}^{\si^n}\I=\I$. Since each
${}^{\si^i}\I\supseteq \T$ is an Iwahori subgroup of $LG$, we deduce from Section~\re{prop}(e) that
\[
N_n(\wt{w}_{\I})=\wt{w}_{\I}\cdot{}^{\si}\wt{w}_{\I}\cdot \ldots\cdot {}^{\si^{n-1}}\wt{w}_{\I}=\wt{w}_{\I,{}^{\si}\I}\cdot \wt{w}_{{}^{\si}\I,{}^{\si^2}\I}\cdot
\ldots\cdot\wt{w}_{{}^{\si^{n-1}}\I, {}^{\si^{n}}\I}=\wt{w}_{\I,{}^{\si^n}\I}=1,
\]
thus $\wt{w}_{\I}$ is $\si$-torsion.

\smallskip \noindent{\bf Step 3.} Let $(T,\I)$ and $(T',\I')$ be two pairs as in Section~\ref{N:sitor}(c), choose element $g\in G^{\sc}(F^{\nr})$ such that
$(T',\I')=g(T,\I)g^{-1}$, and set $h:=g^{-1}{}^{\si}g$. Then we have $h\in N_{LG^{\sc}}(LT^{\sc})$, and we have an equality
\begin{equation} \label{Eq:basic eq}
\wt{w}_{\I'}=[h]\cdot \wt{w}_{\I}.
\end{equation}
Indeed, choose element $g_0\in G^{\sc}(F^{\nr})$ such that
$(T,{}^{\si}\I)=g_0(T,\I)g_0^{-1}$. Then
\[
\wt{w}_{\I',{}^{\si}\I'}=\wt{w}_{g\I g^{-1},({}^{\si} g)({}^{\si}\I)({}^{\si} g)^{-1}}=\wt{w}_{\I,h({}^{\si}\I)h^{-1}}=
\wt{w}_{\I,(hg_0)\I(hg_0)^{-1}},
\]
and it follows from Step 1 that
\[
\wt{w}_{\I',{}^{\si}\I'}=[hg_0]=[h]\cdot[g_0]=[h]\cdot \wt{w}_{\I,{}^{\si}\I}.
\]

\smallskip \noindent{\bf Step 4.} Assertions (a) and (b) easily follows from equality~\form{basic eq}: Indeed, if pairs $(T,\I)$ and $(T',\I')$ are $G^{\sc}(F)$-conjugate,
then we can assume that $g\in G^{\sc}(F)$, thus $h=1$, hence $\wt{w}_{\I'}=\wt{w}_{\I}$. Conversely, if $\wt{w}_{\I'}=\wt{w}_{\I}$, then $[h]=1$, thus $h\in L^+(T^{\sc})$. Then, by Lang's theorem, there exists an element $t\in L^+(T^{\sc})$ such that $h=t^{-1}{}^{\si}t$. Then $g^{-1}{}^{\si}g=t^{-1}{}^{\si}t$, hence $gt^{-1}\in G^{\sc}(F)$, so pair
\[
(T',\I')=g(T,\I)g^{-1}=gt^{-1}(T,\I)(gt^{-1})^{-1}
\]
is $G^{\sc}(F)$-conjugate to $(\T,I)$.

Next, $\ov{w}_\I=\ov{w}_{\I'}$ if and only if
$h=g^{-1}g^{\si}\in LT^{\sc}$, or, what is the same, if and only if pairs $(T,\I)$ and $(T',\I')$ are  stably conjugate.

\smallskip \noindent{\bf Step 5.} Assume now that $G$ is quasi-split, thus there exists a pair
$(T_0,\I_0)$ as in Section~\re{basic pair}, where $\I_0$ is defined over $\fq$. We set $N_0:=N_G(T_0)$.

To show that the correspondence of part~(a) is surjective, we need to show that  every $\si$-torsion element
$\wt{w}\in\wt{W}$ comes from some pair $(T,\I)$. First we claim that element
$\wt{w}$ lifts to a $\si$-torsion element $h_{\wt{w}}\in
N_0(F^{\nr})$. Indeed, choose $m$ such that $N_m(\wt{w})=1$ and
 $\si^m$ acts trivially on $\wt{W}$. Then $\wt{w}$ gives
rise to a $1$-cocycle
\[
c_{\wt{w}}\in Z^1(F^{(m)}/F,\wt{W}_G)\subseteq Z^1(F^{\nr}/F,\wt{W}_G)
\]
such that
$c_{\wt{w}}(\si)=\wt{w}$. Since $\Gm_{F^{\nr}/F}\simeq \wh{\B{Z}}$,
we have $H^2(F^{\nr}/F,A)=0$ for every
$\Gm_{F^{\nr}/F}$-module $A$. Hence by \cite[Corollary, page~54]{Se}, $1$-cocycle $c_{\wt{w}}$ lifts to a  $1$-cocycle $\wt{c}_{\wt{w}}\in
Z^1(F^{\nr}/F,N_0(F^{\nr}))$.

\smallskip

Since $G^{\sc}$ is semisimple and simply connected, we get that
\[
H^1(F^{\nr}/F,G^{\sc}(F^{\nr}))=H^1(F,G^{\sc})=0.
\]
Thus there exists $g\in G^{\sc}(F^{\nr})$ such that $\wt{c}_{\wt{w}}(\si)=g^{-1}{}^{\si}g$. Then
$(T,\I):=g(T_0,\I_0)g^{-1}$ is the pair satisfying
$\wt{w}_I=\wt{w}$.

\smallskip

Finally, since $G$ is quasi-split and split over $F^{\nr}$, group $G(F)$ has a hyperspecial parahoric subgroup.
This subgroup gives rise to a $\si$-invariant parahoric subgroup $L^+G\subseteq LG$, hence (by Section~\re{splitting}(b)) to a $\si$-equivariant section
$s:\ov{W}\to \wt{W}$ of the projection $\pi_{G^{\sc}}:\wt{W}\to \ov{W}$. Therefore every $\ov{w}\in \ov{W}$ has a $\si$-torsion
lift $s(\ov{w})\in \wt{W}$. Thus the correspondence of part~(b) is surjective as well.
\end{proof}

\begin{Emp} \label{E:comp}
{\bf Compatibility.} The constructions of Sections~\re{fincor} and \re{basic pair} are
compatible: Let $T\subseteq G$ be a maximal
unramified torus over $F$ such that $\T=L^+(T)$ is contained in a parahoric
subgroup $\P$ of $LG$. Then $\ov{T}$ is a maximal torus of
$M_{\P}$ over $\fq$ (compare Section~\re{parahoric}(a)), while the finite Weyl group $W_{M_{\P}}$ is naturally a
subgroup of $\wt{W}_{G}$.

Moreover, any Borel subgroup
$\ov{B}\supseteq\ov{T}$ of $M_{\P}$ (defined over $\fqbar$) gives rise to
the unique Iwahori subgroup $\I$ of $LG$ over $\fqbar$  such that
$\T\subseteq \I\subseteq \P$ and $\I/\P^+=\ov{B}$.

We claim that element $w_{\ov{T},\ov{B}}\in W_{M_{\P}}\subseteq\wt{W}$
equals $\wt{w}_{\I}$. Indeed, since
$w_{\ov{B}}=w_{\ov{B},{}^{\si}\ov{B}}$, while
$\wt{w}_\I=\wt{w}_{\I,{}^{\si}\I}$, the assertion follows from
the equality $w_{\ov{B},{}^{\si}\ov{B}}=\wt{w}_{\I,{}^{\si}\I}$ from
Section~\re{parahoric}(f).
 \end{Emp}

\begin{Not} \label{N:conj}
Consider two pairs $(T,\varphi)$ and $(T',\varphi')$, where $T,T'\subseteq G$
are maximal unramified tori and $\varphi:\ov{T}\isom \ov{T}_G$ and $\varphi':\ov{T}'\isom\ov{T}_G$
are admissible isomorphisms (see Section~\re{admisaff}(a)).

\smallskip

(a) We say that $(T,\varphi)$ and $(T',\varphi')$  are {\em conjugate} (resp. {\em $G^{\sc}(F)$-conjugate}), if there exists $g\in G(F)$ (resp. $g\in G^{\sc}(F)$) such that we have $gTg^{-1}=T'$ and $\varphi=\varphi'\circ\Int g:\ov{T}\isom\ov{T}_G$.

\smallskip

(b) We say that $(T,\varphi)$ and $(T',\varphi')$  are {\em stably conjugate},  \label{a:stablyconj3} if there exists an element $g\in G^{\sc}(F^{\nr})$ such that $gTg^{-1}=T', \varphi=\varphi'\circ\Ad_g:\ov{T}\isom\ov{T}_G$, and the inner isomorphism $\Ad_g:T\isom T'$ is defined over $F$.\footnote{As in Section~\re{stableconj}(b), this happens if and only if  there exists $g\in G(F^{\on{sep}})$ satisfying this property.}
\end{Not}

%\begin{Emp}
%{\bf Remark.} Using the fact $H^1(F^{\nr},T)=1$ for every torus $T$, one can show that $(T,\varphi)$ and $(T',\varphi')$ are stably conjugate if and only if there exists $g\in G(F^{\on{sep}})$ such that $gTg^{-1}=T', \varphi=\varphi'\circ\Int g:\ov{T}\isom\ov{T}_G$, and the inner isomorphism $\Ad_g:T\isom T'$ is defined over $F$.
%\end{Emp}

\begin{Cor} \label{C:corresp}
(a) The correspondence $(T,\varphi_{T,\I})\mapsto [\wt{w}_\I]$  \label{a:[wtwI]}
defines an injection from the set of $G^{\sc}(F)$-conjugacy classes of
pairs $(T,\varphi)$ as in Section~\ref{N:conj} to the set $\La_{\si-conj}\bs\wt{W}_{\si-\tor}$ of
$\si$-conjugacy classes of $\La$ acting on $\wt{W}_{\si-\tor}$.

\smallskip

(b) The correspondence $(T,\varphi_{T,\I})\mapsto \ov{w}_\I$
defines an injection from the set of stably conjugacy classes of
pairs $(T,\varphi)$ as in Section~\ref{N:conj} to the set $\ov{W}$.

\smallskip

%(c) Two pairs $(T,\varphi_{T,\I})$ and $(T',\varphi_{T,\I'})$ are $G(F)$-conjugate if and only if
%classes $[\wt{w}_\I]$ and $[\wt{w}_{\I'}]$ are $\Om_G^{\si}$-conjugate.

(c) If $G$ is quasi-split, then correspondences of parts (a) and (b) are bijective.
\end{Cor}

\begin{proof}
(a) Note that  if $\I',\I\supseteq\T$  are two Iwahori subgroups of $LG$
defined over $\fqbar$, then we have an equality
\begin{equation} \label{Eq:forminv}
\wt{w}_{\I'}=\wt{w}_{\I',{}^{\si}\I'}=\wt{w}_{\I',\I}\cdot\wt{w}_{\I,{}^{\si}\I}\cdot\wt{w}_{{}^{\si}\I,{}^{\si}\I'}=
\wt{w}_{\I,\I'}^{-1}\cdot\wt{w}_\I\cdot{}^{\si}\wt{w}_{\I,\I'}
\end{equation}
(compare Section~\re{affweyl}(e)). If in addition $\varphi_{T,\I}=\varphi_{T,\I'}$, then we have
$\ov{w}_{\I,\I'}=1$ (see Section~\re{admisaff}(b)), or, what is the same,
$\wt{w}_{\I,\I'}\in\La$. Thus the correspondence $(T,\varphi_{T,\I})\mapsto
[\wt{w}_\I] \in \La_{\si-\on{conj}}\bs\wt{W}_{\si-\tor}$ is well-defined.

\smallskip

Next, let $(T,\I)$ and $(T',\I')$ be two pairs as in Section~\re{basic pair} such that
$(T,\varphi_{T,\I})$ and $(T',\varphi_{T,\I'})$ are $G^{\sc}(F)$-conjugate. Then there exists
an Iwahori subgroup $\I''\supseteq \T$ such that $(T,\I'')$ and $(T',\I')$ are $G^{\sc}(F)$-conjugate and  $\varphi_{T,\I''}=\varphi_{T,\I}$.
Then, by the proven above, we have $[\wt{w}_{\I}]=[\wt{w}_{\I''}]$, and  $\wt{w}_{\I''}=\wt{w}_{\I'}$ by
\rl{cor}(a). Thus, $[\wt{w}_\I]=[\wt{w}_{\I'}]$, as claimed.

\smallskip

Conversely, assume that $(T,\I)$ and $(T',\I')$ be such that $[\wt{w}_\I]=[\wt{w}_{\I'}]$.
Then there exists $\la\in\La$ such that $\wt{w}_{\I'}=\la^{-1}\wt{w}_\I{}^{\si}\la$.
Choose an Iwahori subgroup $\I''\supseteq\T$ such that $\wt{w}_{\I,\I''}=\la$. Then, arguing as above, we see that
$\wt{w}_{\I''}=\la^{-1}\wt{w}_\I{}^{\si}\la$, thus $\wt{w}_{\I''}=\wt{w}_{\I'}$. So, by \rl{cor}(a),
the pairs   $(T,\I'')$ and $(T',\I')$ are $G^{\sc}(F)$-conjugate, thus pairs $(T,\varphi_{T,\I})=(T,\varphi_{T,\I''})$ and
$(T,\varphi_{T,\I'})$ are $G^{\sc}(F)$-conjugate.

\smallskip

(b)-(c)  Observe that $(T,\varphi_{T,\I})$ is stably conjugate   to $(T',\varphi_{T,\I'})$ if and only if there exists
an Iwahori subgroup $\I''\supseteq\T$ such that $\varphi_{T,\I''}=\varphi_{T,\I}$ and $(T,\I'')$ is stably conjugate to $(T',\I')$. Now, as in part~(a), all assertions follow from the corresponding assertions of \rl{cor}.
\end{proof}

\begin{Not} \label{N:twisting}
Choose a minimal $n\in\B{N}$ such that $\si^n$ acts trivially on $\wt{W}_G$, and let $\lan\si\ran_n$
be the cyclic group of order $n$, generated by $\si$.

\smallskip

(a) Note that an element $w\in\wt{W}$ is $\si$-torsion if and only if the element
\[
w\si\in \wt{W}\si\subseteq \wt{W}\rtimes\lan\si\ran_n
\]
is torsion. Moreover, for every $\la\in\La$ we have an equality
$\la^{-1}(w\si)\la=(\la^{-1}\cdot w\cdot {}^{\si}\la)\si$.

\smallskip

(b) For every $(T,\I)$ as in Section~\re{basic pair}, we set $\wt{u}_{\I}:=\wt{w}_{\I}\si\in\wt{W}\si$.  \label{a:wtuI}

\smallskip

(c) We denote by $(\wt{W}\si)_{\on{tor}}$ the set of elements of $\wt{W}\si$, which are torsion
as elements of $\wt{W}\rtimes\lan\si\ran_n$. Using observation of part~(a), \rco{corresp} implies that the correspondence $(T,\varphi_{T,\I})\mapsto [\wt{u}_\I]$ defines an injection from the set of $G^{\sc}(F)$-conjugacy classes of
pairs $(T,\varphi)$ as in Section~\ref{N:conj} to the set of
conjugacy classes $\La_{\on{conj}}\bs(\wt{W}\si)_{\on{tor}}$.
\end{Not}

\begin{Not} \label{N:stconj}
Let $(T,\varphi)$ be as in Section~\ref{N:conj}, and let $n$ be as in Section~\ref{N:twisting}.

\smallskip

(a) Denote by
$\ov{w}_{T,\varphi}\in\ov{W}$  the image of $(T,\varphi)$
under the correspondence of \rco{corresp}(b), and set
$\ov{u}_{T,\varphi}:=\ov{w}_{T,\varphi}\si\in \ov{W}\si\subseteq \ov{W}\rtimes\lan\si\ran_n$.  \label{a:ovuTvarphi} Notice that the notation
is compatible with that of Section~\re{basic pair}(b).

\smallskip

(b) Denote by $[\wt{u}_{T,\varphi}]\in \La_{\on{conj}}\bs(\wt{W}\si)_{\on{tor}}$  \label{a:wtuTvarphi} the image of $(T,\varphi)$
under the correspondence of \rco{corresp}(a).

\smallskip

(c) For each embedding $\fa:T\hra G$, stably conjugate to the
inclusion $T\hra G$ (see Section~\re{stableconj}), we set $T_{\fa}:= \fa(T)$  \label{a:Tfa} and
$\varphi_{\fa}:=\varphi\circ \fa^{-1}:\ov{T}_{\fa}\isom \ov{T}_G$. \label{a:varphifa}
Then $\varphi_{\fa}$ is an admissible isomorphism, pair $(T_{\fa},\varphi_{\fa})$ is stably conjugate to $(T,\varphi)$,
and we set
\[
[\wt{u}_{\fa,\varphi}]:=[\wt{u}_{T_{\fa},\varphi_{\fa}}]\in
\La_{\on{conj}}\bs(\wt{W}\si)_{\on{tor}}. \label{a:wtufavarphi}
\]
\end{Not}

\begin{Emp} \label{E:remstconj}
{\bf Remarks.} Suppose that we are in the situation of Section~\ref{N:stconj}.

\smallskip

(a)  The map $\fa\mapsto (T_{\fa},\varphi_{\fa})$ gives a bijection between the set of
embeddings $\fa:T\hra G$, stably conjugate to the inclusion $T\hra G$, and the set of pairs $(T',\varphi')$, stably conjugate to $(T,\varphi)$. Moreover, the embeddings $\fa$ and $\fa'$ are $G^{\sc}(F)$-conjugate (resp. conjugate) if and only if the pairs
$(T_{\fa},\varphi_{\fa})$ and $(T_{\fa'},\varphi_{\fa'})$ are $G^{\sc}(F)$-conjugate (resp. conjugate).

\smallskip

(b) Since ${}^{\si}\varphi=\ov{w}_{T,\varphi}\circ\varphi$ (see Section~\re{basic pair}(b)), $\varphi$ induces an isomorphism $X_*(T)\isom X_*(\ov{T})\isom X_*(\ov{T}_G)\simeq \La_G$, which interchange the action of $\si$ on $X_*(T)$ with the action of $\ov{u}_{T,\varphi}$ on $\La_G$, so induces an isomorphism $X_*(T)_{\Gm_F}\isom(\La_G)_{\ov{u}_{T,\varphi}}$. Composing it with the isomorphism
$H^1(F,T)\isom X_*(T)_{\Gm_F,\tor}$ (see Section~\re{cohom}(b)), we get an isomorphism $H^1(F,T)\isom (\La_G)_{\ov{u}_{T,\varphi},\tor}$,
hence a homomorphism
\[
\La_{\ov{u}_{T,\varphi},\tor}\to (\La_G)_{\ov{u}_{T,\varphi},\tor}\isom H^1(F,T).
\]

\smallskip

(c) Let $T^{\sc}\subseteq G^{\sc}$ be the pullback of $T\subseteq G$. Then $T^{\sc}\subseteq G^{\sc}$ is a maximal elliptic torus, and $\varphi$ induces an admissible isomorphism $\varphi:\ov{T}^{\sc}\isom \ov{T}_{G^{\sc}}$ and we have equalities  $\ov{w}_{T^{\sc},\varphi}=\ov{w}_{T,\varphi}\in\ov{W}_G=\ov{G}_{G^{\sc}}$, thus $\ov{u}_{T^{\sc},\varphi}=\ov{u}_{T,\varphi}$. Therefore applying the observation of the part~(b) to $T^{\sc}\subseteq G^{\sc}$ instead of $T\subseteq G$, we get an isomorphism $H^1(F,T^{\sc})\isom \La_{\ov{u}_{T,\varphi},\tor}$.

\smallskip

(d) Let $\pi_{G}:\wt{W}_G\rtimes\lan \si\ran_n\to \ov{W}\rtimes\lan \si\ran_n$ be the natural projection.
Since for every $\wt{u}\in \pi_{G}^{-1}(\ov{u}_{T,\varphi})$ and $\mu\in\La_G$ we have an identity
$\mu^{-1}\wt{u}\mu=(\mu^{-1}{}^{\ov{u}_{T,\varphi}}\mu)\wt{u}$, the map $([\la],[\wt{u}])\mapsto[\la\wt{u}]$ gives an action of
the group $(\La_G)_{\ov{u}_{T,\varphi}}$ on $(\La_G)_{\on{conj}}\bs\pi_{G}^{-1}(\ov{u}_{T,\varphi})$.

\smallskip

Applying this observation to $T^{\sc}\subseteq G^{\sc}$ instead of $T\subseteq G$, we get the action of the group $\La_{\ov{u}_{T,\varphi}}$ on $\La_{\on{conj}}\bs\pi_{G^{\sc}}^{-1}(\ov{u}_{T,\varphi})$. Moreover, by a straightforward calculation one checks that for every pair of elements $[\wt{u}],[\wt{u}']\in \La_{\on{conj}}\bs\pi_{G^{\sc}}^{-1}(\ov{u}_{T,\varphi})$ there exists a unique element $\inv([\wt{u}],[\wt{u}'])\in \La_{\ov{u}_{T,\varphi}}$ such that $\inv([\wt{u}],[\wt{u}'])\cdot[\wt{u}]=[\wt{u}']$.

\smallskip

Furthermore, we have
$\inv([\wt{u}],[\wt{u}'])\in (\La_{\ov{u}_{T,\varphi}})_{\tor}$, if $[\wt{u}],[\wt{u}']\in \La_{\on{conj}}\bs\pi_{G^{\sc}}^{-1}(\ov{u}_{T,\varphi})_{\tor}$.
\end{Emp}

\begin{Lem} \label{L:stconj}
In the notation of Section~\ref{N:stconj},

\smallskip

(a) The map $\fa\mapsto[\wt{u}_{\fa,\varphi}]$ gives a bijection between the set of $G^{\sc}(F)$-conjugacy
classes of embedding $\fa:T\hra G$, stably conjugate to the
inclusion $T\hra G$, and elements of $\La_{\on{conj}}\bs\pi_{G^{\sc}}^{-1}(\ov{u}_{T,\varphi})_{\on{tor}}\subseteq
\La_{\on{conj}}\bs(\wt{W}\si)_{\on{tor}}$.

\smallskip

(b) For a pair of embeddings $\fa,\fa':T\hra G$ as in part~(a), the relative position $\inv(\fa,\fa')\in H^1(F,T)$ (see Section~\re{stableconj}(c))
is the image of the relative position $\inv([\wt{u}_{\fa,\varphi}],[\wt{u}_{\fa',\varphi}])\in (\La_{\ov{u}_{T,\varphi}})_{\tor}$ (see Section~\re{remstconj}(d)) under the map of Section~\re{remstconj}(b).

\smallskip

(c) Two embeddings $\fa,\fa':T\hra G$ as in part~(a) are $G(F)$-conjugate if and only if there exists an element $\mu\in \La_G$ such that $[\wt{u}_{\fa',\varphi}]=\mu^{-1}[\wt{u}_{\fa,\varphi}]\mu$.
\end{Lem}

\begin{proof}
(a) Each $(T_{\fa},\varphi_{\fa})$ is stably conjugate to $(T,\varphi)$,
therefore we have an equality $\ov{w}_{T_{\fa},\varphi_{\fa}}=\ov{w}_{T,\varphi}$ (by \rco{corresp}(b)), thus   $\ov{u}_{T_{\fa},\varphi_{\fa}}=\ov{u}_{T,\varphi}$,
hence $[\wt{u}_{\fa,\varphi}]\in \La_{\on{conj}}\bs\pi_{G^{\sc}}^{-1}(\ov{u}_{T,\phi})_{\on{tor}}$.

Next, observe that two embeddings $\fa,\fa':T\hra G$ are conjugate if
and only if $(T_{\fa},\varphi_{\fa})$ and $(T_{\fa'},\varphi_{\fa'})$ are conjugate. Thus it follows from \rco{corresp} that the correspondence is well defined and injective.

\smallskip

Finally, to show the surjectivity, we notice that $\La_{\on{conj}}\bs\pi_{G^{\sc}}^{-1}(\ov{u}_{T,\varphi})_{\on{tor}}$
is a principal homogeneous space with respect to the group $\La_{\ov{u}_{T,\varphi},\tor}$ (by Section~\re{remstconj}(d)), so the assertion follows by a combination of observations of Sections~\re{remstconj}(c), \re{stableconj}(e) and
part~(b) of the lemma, applied to $T^{\sc}\subseteq G^{\sc}$ instead of $T\subseteq G$.

\smallskip

(b) By assumption, there exists an element $g\in G^{\sc}(F^{\nr})$ such that $\Ad_g\circ\fa=\fa'$ and $h:=g^{-1}{}^{\si}g\in LT^{\sc}(\fqbar)$.
Then the relative position $\inv(\fa,\fa')$ is the projection of $[h]\in\pi_0(LT^{\sc})\simeq X_*(T^{\sc})$ under the map
\[
X_*(T^{\sc})\to X_*(T)_{\Gm_F}\simeq H^1(F,T)
\]
(see Sections~\re{cohom}(b),(c)), while the relative position $\inv([\wt{u}_{\fa,\varphi}],[\wt{u}_{\fa',\varphi}])$
is the image of $[h]$ under the map
\[
X_*(T^{\sc})\simeq \La\to \La_{\ov{u}_{T,\varphi}}
\]
(see equality \form{basic eq} and Sections~\re{remstconj}(b),(c)).

\smallskip

(c) By definition, $\fa'$ is $G(F)$-conjugate to $\fa$ if and only if $\inv(\fa,\fa')=1$. Combining part~(b) with observations of
Section~\re{remstconj}(c), this follows if and only if elements
$[\wt{u}_{\fa,\varphi}],[\wt{u}_{\fa',\varphi}]\in \La_{\on{conj}}\bs\pi_{G^{\sc}}^{-1}(\ov{u}_{T,\varphi})$ have the same image in
$(\La_G)_{\on{conj}}\bs\pi_{G}^{-1}(\ov{u}_{T,\varphi})$, that is, there exists an element $\mu\in \La_G$ such that  $[\wt{u}_{\fa',\varphi}]=\mu^{-1}[\wt{u}_{\fa,\varphi}]\mu$.
\end{proof}

\begin{Emp} \label{E:elliptic case}
{\bf Elliptic case.} Assume that maximal torus $T\subseteq G$ is elliptic, and let $Z_0\subseteq Z(G)^0$  \label{a:Z0} be the maximal split torus.
Then $\La_{G/Z_0}\simeq\La_G/X_*(Z_0)$, and the endomorphisms $\ov{u}_{T,\varphi}|_{\La_{G/Z_0}}$ and $\ov{u}_{T,\varphi}|_{\La}$ are elliptic in the sense of Section~\re{finell}(a) (use  Section~\re{example conj} and compare Section~\re{fincor}(d)).

\smallskip

(a) Since automorphism $\ov{u}_{T,\varphi}|_{\La}$ is elliptic, all elements of $\pi_{G^{\sc}}^{-1}(\ov{u}_{T,\varphi})\subseteq \wt{W}\si$
are torsion in $\wt{W}\rtimes\lan\si\ran_n$. More precisely, using \rl{finell}(b) one sees that the order of every $\wt{u}\in \pi_{G^{\sc}}^{-1}(\ov{u}_{T,\varphi})\subseteq \wt{W}\rtimes\lan\si\ran_n$ equals the order of $\ov{u}_{T,\varphi}\in \ov{W}\rtimes\lan\si\ran_n$. Thus \rl{stconj}(a) asserts that in this case the map $\fa\mapsto
[\wt{u}_{\fa,\varphi}]$ gives a bijection between the set of $G^{\sc}(F)$-conjugacy
classes of embedding $\fa:T\hra G$, stably conjugate to the
inclusion $\fa_0:T\hra G$, and the set $\La_{\on{conj}}\bs\pi_{G^{\sc}}^{-1}(\ov{u}_{T,\varphi})$.

\smallskip

(b) By \rl{stconj}(c), for every $\mu\in\La_{G/Z_0}/\La$ such that $\mu^{-1} \wt{u}_{T,\varphi}\mu\in \La\wt{u}_{T,\varphi}$ there exists
an embedding $\fa:T\hra G$, conjugate to $\fa_0$ such that  $[\wt{u}_{\fa,\varphi}]=[\mu^{-1} \wt{u}_{T,\varphi}\mu]$. Moreover, $\fa$ is unique up to a $G^{\sc}(F)$-conjugacy.

\smallskip

(c) Since $\ov{u}_{T,\varphi}|_{\La_{G/Z_0}}$ is elliptic, for every  $1\neq \mu\in\La_{G/Z_0}$ we have
$\mu^{-1}\wt{u}_{T,\varphi}\mu\neq\wt{u}_{\fa,\varphi}$. Therefore it follows from \rl{stconj} that for every embedding $\fa:T\hra G$, conjugate to $\fa_0$, there exists a unique $\mu\in\La_{G/Z_0}/\La$ such that $[\wt{u}_{\fa,\varphi}]=[\mu^{-1} \wt{u}_{T,\varphi}\mu]$.
\end{Emp}

\subsection{Main results}

%Let $G$ be a connected reductive group over $F=\fq((t))$ splitting over $F^{\nr}:=F\otimes_{\fq}\fqbar$. Then $G$ is split over
%$K:=\fqbar((t))$, so let
%$\gm\in G(F)$ be a  regular semisimple element,  let $T\subseteq G$
%is a maximal unramified elliptic torus of $G$, and let
%$\theta:\ov{T}(\fq)\to\qlbar\m$ be a character. The goal of this
%section is to give a geometric description of the character
%$f_{T,\theta}(\gm)$, of linear combination $f^{\st}_{T,\theta}(\gm)$
%and to deduce the stability of $f^{\st}_{T,\theta}=f^{\st}_{G,T,\theta}$. Moreover, for every inner twisting
%$\phi:G\isom G'$, we are going to show the equivalence of  $f^{\st}_{G,T,\theta}$ and
%$f^{\st}_{G',T',\theta}$, where $T'\subseteq G'$ is a maximal elliptic unramified torus, stably conjugate to $T$.

The goal of this section is to formulate a geometric description of characters of cuspidal Deligne--Lusztig representations and of their stable
and $\C{E}_{T,\ka}$-stable linear combinations.

\begin{Emp} \label{E:DLpadic}
{\bf  Deligne--Lusztig representations over local non-archimedean
fields}. Let $\fa:T\hra G$ be an embedding of a maximal unramified elliptic torus of $G$ over $F$,
and let $\theta:T(F)\to\qlbar\m$   be a tamely ramified character in general position (see parts~(c),(d) below).

\smallskip

(a) Note that there exists a unique $\si$-invariant parahoric subgroup $\P_{\fa}\subseteq LG$  \label{a:pfa} containing $\T_{\fa}=\fa(\T)$  \label{a:boldTfa} (see, for example,
\cite[Section ~2.1.3]{KV}). We set $M_{\fa}:=M_{\P_{\fa}}$  \label{a:Mfa} and $G_{\fa}:=\P_{\fa}(\fq)$.  \label{a:Gfa} Then $G_{\fa}\subseteq G(F)$ is a parahoric subgroup, $G^+_{\fa}:=\P_{\fa}^+(\fq)$ is a pro-unipotent radical, and $G_{\fa}/G^+_{\fa}\simeq M_{\fa}(\fq)$.

\smallskip

(b) Since $T$ is a torus over $F$, split over $K$, its reduction $\ov{T}$ (see Section~\re{saffsetup}) is a torus over $\fq$.
Then the embedding $\fa:\T\hra\P_{\fa}$ defines an embedding $\ov{\fa}:\ov{T}\hra M_{\fa}$  \label{a:ovTfa} of a maximal elliptic torus of $M_{\fa}$.

\smallskip

(c) Assume that $\theta$ is {\em tamely ramified},  \label{a:tamely ramified} that is, the restriction
$\theta|_{T(\C{O})}:T(\C{O})\to\qlbar\m$ factors through a character
$\ov{\theta}:\ov{T}(\fq)\to\qlbar\m$.  \label{a:ovtheta} Then $(\ov{T},\ov{\theta})$ gives rise to a Deligne--Lusztig virtual representation
$R^{\ov{\theta}}_{\ov{\fa}}:=R^{\ov{\theta}}_{\ov{\fa}(\ov{T})}$  \label{a:Rovthetaovfa} of $M_{\fa}(\fq)$.

\smallskip

(d) Assume moreover that $\theta$ is {\em in general position},  \label{a:general position}
that is, not fixed by any non-trivial element of the Weyl group $W_{G,\fa(T)}(F)$. Then $\ov{\theta}$ is in general position in the sense of
\cite[Definition~5.15]{DL}, thus $R^{\ov{\theta}}_{\ov{\fa}}$ is cuspidal and irreducible (up to a sign), because $\ov{\fa}(\ov{T})\subseteq M_{\fa}$ is elliptic.

\smallskip

(e) We consider subgroup $\wt{G}_{\fa}:=G_{\fa}\cdot Z(G)(F)\subseteq G(F)$  \label{a:wtGfa} and denote by $R^{\theta}_{\fa}$  \label{a:Rthetafa} the virtual representation of $\wt{G}_{\fa}$, whose restriction to $G_{\fa}$ is the inflation of $R^{\ov{\theta}}_{\ov{\fa}}$, and the restriction to $Z(G)(F)$ is $\theta|_{Z(G)(F)}$.

\smallskip

(f) Let $\pi_{\fa,\theta}$  \label{a:pifatheta} be the induced representation $\ind_{\wt{G}_{\fa}}^{G(F)}R_{\fa}^{\theta}$ of $G(F)$. Since $R^{\ov{\theta}}_{\ov{\fa}}$ is cuspidal and irreducible, we deduce from \cite[Proposition~6.6]{MP2}, that $\pi_{\fa,\theta}$ is an irreducible virtual cuspidal representation. In particular, its character $\chi_{\pi_{\fa,\theta}}$ is an invariant generalized function on $G(F)$ supported on elements, conjugate to $\wt{G}_{\fa}$. Moreover, the restriction $f_{\fa,\theta}$  \label{a:ffatheta} of $\chi_{\pi_{\fa,\theta}}$ to  $G^{\rss}(F)$ is a locally constant function. Notice that $f_{\fa,\theta}$ only depends on the conjugacy class of $\fa$.

\smallskip

(g) If $T\subseteq G$ is a maximal unramified elliptic torus and $\fa_0:T\hra G$ is an inclusion, we write $f_{T,\theta}$  \label{a:fTtheta} instead of $f_{\fa_0,\theta}$.
\end{Emp}

Our first main result gives a geometric interpretation of $f_{T,\theta}(\gm)$ for compact regular semisimple elements $\gm\in G^{\rss}(F)_c$.

\begin{Emp} \label{E:ltheta}
{\bf Notation.} Let $T$ be an unramified torus over $F$, and let
$\theta:T(F)\to\qlbar\m$ be a tamely ramified character, that is, its restriction
$\theta|_{T(\C{O})}:T(\C{O})\to\qlbar\m$ factors through a character
$\ov{\theta}:\ov{T}(\fq)\to\qlbar\m$.

\smallskip

(a) Character $\ov{\theta}:\ov{T}(\fq)\to\fqbar$ gives rise to a
one-dimensional local system $\C{L}_{\theta}$  \label{a:Ltheta} on $\ov{T}$,
equipped with a Weil structure.

\smallskip

(b) Then every admissible isomorphism $\varphi:\ov{T}\isom \ov{T}_G$ as in Section~\re{admisaff}(a) gives
rise to  local system $\C{L}_{\theta,\varphi}:=\varphi_*(\C{L}_{\theta})$  \label{a:Lthetavarphi} on $\ov{T}_G$.

\smallskip

(c) The group $\ov{W}\rtimes\lan\si\ran$ acts naturally  on
the set admissible isomorphisms by
the rule $w(\varphi):=w\circ\varphi$ and $\si(\varphi):={}^{\si}\varphi$.
Using the identity ${}^{\si}\varphi=\ov{w}_{T,\varphi}^{-1}\circ \varphi$ (see Section~\re{basic pair}(b)), the cyclic group
$\lan \ov{u}_{T,\varphi}\ran\subseteq \ov{W}\rtimes\lan\si\ran$ is the stabilizer of $\varphi$.

\smallskip

(d) As in Section~\re{DL}, we have natural isomorphisms
$\si_*(\C{L}_{\theta,\varphi})\simeq   \C{L}_{\theta,{}^{\si}\varphi}$ and $w_*(\C{L}_{\theta,\varphi})\simeq\C{L}_{\theta,w\circ \varphi}$.
Moreover, we have natural isomorphisms $u_*(\C{L}_{\theta,\varphi})\simeq \C{L}_{\theta,u(\varphi)}$ for $u\in \ov{W}\rtimes\lan\si\ran$  compatible with compositions.

\smallskip

(e) Consider local system $\C{F}_{\theta,\varphi}:=\red_{\gm}^*(\C{L}_{\theta,\varphi})$ of $\Fl_{G,\gm}$. Then it follows from part~(e) and Section~\re{homaffspr}(c) that
for every $\wt{u}\in \wt{W}_G\rtimes\lan\si\ran$ we have a natural isomorphism
\[
a_{\wt{u},\theta,\varphi}: H_i(\Fl_{G,\gm},\C{F}_{\theta,\varphi})\isom  H_i(\Fl_{G,\gm},\C{F}_{\theta,\ov{u}(\varphi)}).
\]
Moreover, these isomorphisms are compatible with compositions.

\smallskip

(f) Combining parts~(c)-(e), each  $H_i(\Fl_{G,\gm},\C{F}_{\theta,\varphi})$ is equipped with an action of $\pi_G^{-1}(\lan\ov{u}_{T,\varphi}\ran)\subseteq  \wt{W}_G\rtimes\lan\si\ran$. In particular, each $H_i(\Fl_{G,\gm},\C{F}_{\theta,\varphi})$ is equipped with an action of $\La_G$.
Furthermore, each $H_i(\Fl_{G,\gm},\C{F}_{\theta,\varphi})$ is a finitely generated $\qlbar[\La_G]$-module (by \rp{fingen}(a)).
\end{Emp}

\begin{Emp} \label{E:interpr}
{\bf Construction.}   Let $(T,\varphi)$ be as in Section~\ref{N:conj}, and let $\theta$ be as in Section~\re{ltheta}.

\smallskip

(a) Let $[\wt{u}_{T,\varphi}]\in\La_{\on{conj}}\bs \wt{W}\si$ be the class, corresponding to a pair $(T,\varphi)$ (see \rn{stconj}(b)), and fix a representative $\wt{u}_{T,\varphi}\in\wt{W}\si$  of $[\wt{u}_{T,\varphi}]$. Then element $\wt{u}_{T,\varphi}\in \pi_{G^{\sc}}^{-1}(\lan\ov{u}_{T,\varphi}\ran)\subseteq\wt{W}\ltimes\lan\si\ran$ is a lift of $\ov{u}_{T,\varphi}$, therefore we have a natural isomorphism $\pi_G^{-1}(\lan\ov{u}_{T,\varphi}\ran)\simeq\La_G\rtimes\lan\wt{u}_{T,\varphi}\ran$.

\smallskip

(b) By part~(a) and Section~\re{ltheta}(e), we see that each $H_i(\Fl_{G,\gm},\C{F}_{\theta,\varphi})$ (resp.  $H_i(\Fl_{\gm},\C{F}_{\theta,\varphi})$) is a $\qlbar[\La_{G}\rtimes\lan\wt{u}_{T,\varphi}\ran]$-module (resp. $\qlbar[\La\rtimes\lan\wt{u}_{T,\varphi}\ran]$-module), which is finitely generated as a $\qlbar[\La_G]$-module (resp. $\qlbar[\La]$-module).

\smallskip

(c) Let $Z_0\subseteq Z(G)$ be the maximal $F$-split torus, and set $\La_0:=X_*(Z_0)\subseteq\La_G$.
Then $\La_0\subseteq \Om_G$ naturally acts on $H_i(\Fl_{G,\gm},\C{F}_{\theta,\varphi})$ (compare \rp{whaction}).
Since $\La_G/\La_0\simeq\La_{G/Z_0}$, we conclude that the space of coinvariants $H_i(\Fl_{G,\gm},\C{F}_{\theta,\varphi})_{\La_0}$ is a
$\qlbar[\La_{G/Z_0}\rtimes\lan\wt{u}_{T,\varphi}\ran]$-module, which is finitely generated as a $\qlbar[\La_{G/Z_0}]$-module.

\smallskip

(d) Since maximal torus $T\subseteq G$ is elliptic, the element $\ov{u}_{T,\varphi}$ induces an elliptic automorphisms of $\La_{G/Z_0}$ and $\La$
(compare Sections~\re{fincor}(d) and \re{finell}(a)). Moreover, modules $H_i(\Fl_{G,\gm},\C{F}_{\theta,\varphi})_{\La_0}$ and  $H_i(\Fl_{\gm},\C{F}_{\theta,\varphi})$ are $\wt{u}_{T,\varphi}$-locally finite (see \rt{dl}(a) below).

\smallskip

(e) By parts~(b)-(d) and Section~\re{trace}(a) we can form the generalized traces
\[
\Tr_{\gen}(\wt{u}_{T,\varphi}, H_i(\Fl_{G,\gm},\C{F}_{\theta,\varphi})_{\La_0})\text{ and }
\Tr_{\gen}(\wt{u}_{T,\varphi}, H_i(\Fl_{\gm},\C{F}_{\theta,\varphi})).
\]
Therefore we can form the generalized trace
\[
\Tr_{\gen}(\wt{u}_{T,\varphi}, H_*(\Fl_{G,\gm},\C{F}_{\theta,\varphi})_{\La_0}):=\sum_i(-1)^i\Tr_{\gen}(\wt{u}_{\varphi}, H_i(\Fl_{G,\gm},\C{F}_{\theta,\varphi})_{\La_0})
\]
and similarly $\Tr_{\gen}(\wt{u}_{T,\varphi}, H_*(\Fl_{\gm},\C{F}_{\theta,\varphi}))$.

\smallskip

(f) For an embedding $\fa:T\hra G$, stably conjugate to the inclusion $\fa_0:T\hra G$, we fix a representative  $\wt{u}_{\fa,\varphi}\in\pi_{G^{\sc}}^{-1}(\ov{u}_{T,\varphi})$  of $[\wt{u}_{\fa,\varphi}]=[\wt{u}_{T_{\fa},\varphi_{\fa}}]\in \La_{\on{conj}}\bs\pi_{G^{\sc}}^{-1}(\ov{u}_{T,\varphi})$ (see \rl{stconj}(a) and \rn{stconj}(c)).
\end{Emp}

%let $\wt{w}_\I\in\wt{W}$ be the corresponding
%element, and set $u=u_\I:=\wt{w}_\I\circ\si\in\wt{W}\rtimes\Gm_{\fq}$. Since
%$\La$ is a normal subgroup of $\wt{W}\rtimes\Gm_{\fq}$, we can
%form a semidirect product $\Dt:=\La\rtimes\cycl$.
%Moreover, since $T$ is elliptic, the induced automorphism
%$u|_{\La}$ is elliptic of finite order (Explain???)

%(b) Using the equality
%${}^{\si}\varphi_{T,\I}=\varphi_{T,{}^{\si}\I}=\ov{w}_{\I,{}^{\si}\I}\circ\varphi_{T,\I}=
%\ov{w}_\I^{-1}\circ \varphi_{T,\I}$ (compare Section~\re{fincor}(c)),
%we see $H_i(\Fl_{\gm},\C{F}_{\theta,\I})$ is equipped
%with an action of $\Dt$.

%(c) Notice that  each $H_i(\Fl_{\gm},\C{F}_{\theta,\I})$
%is a finitely generated $u$-locally finite $\Dt$-module. Indeed, .....

%In particular, $H_i(\Fl_{\gm},\C{F}_{\theta,\I})$
%satisfies the assumption of \rp{trform}, so we can form a trace
%$\Tr_{\gen}(u,H_i(\Fl_{\gm},\C{F}_{\theta,\I}))$.
%\end{Emp}

%(d) Since $\ov{u}_{T,\varphi}$ is elliptic, the preimage $\pi_G^{-1}(\lan\ov{u}_{T,\varphi}\ran)$ decomposes as
%$\pi_G^{-1}(\lan\ov{u}_{T,\varphi}\ran)\simeq\La_G\rtimes\lan\wt{u}_{\varphi}\lan$ for every lift
%$\wt{u}_{\varphi}\in\wt{W}_G\rtimes\lan\si\ran$ of $\ov{u}_{T,\varphi}$.

\begin{Thm} \label{T:dl}
Let $T\subseteq G$ be a maximal unramified elliptic torus of $G$, let $\fa_0:T\hra G$ be the inclusion, let $\theta:T(F)\to\qlbar\m$ be a tamely ramified character in general position, let $\gm\in G(F)$ be a compact  regular semisimple element, and let $\varphi:\ov{T}\isom\ov{T}_G$ be an admissible isomorphism.

\smallskip

Then, in the notation of Section~\re{interpr},

\smallskip

\quad\quad(a)  modules $H_i(\Fl_{G,\gm},\C{F}_{\theta,\varphi})$ and  $H_i(\Fl_{\gm},\C{F}_{\theta,\varphi})$ are
$\wt{u}_{T,\varphi}$-locally finite;

\smallskip

\quad\quad (b) we have equalities
\begin{equation} \label{Eq:2}
f_{T,\theta}(\gm)=\sum_{\fa:T\hra G} \Tr_{\gen}(\wt{u}_{\fa,\varphi},H_*(\Fl_{\gm},\C{F}_{\theta,\varphi}))=     \Tr_{\gen}(\wt{u}_{T,\varphi},H_*(\Fl_{G,\gm},\C{F}_{\theta,\varphi})_{\La_0}),
\end{equation}
where

\smallskip
$\bullet$ $\fa$ runs over a set of representatives of $G^{\sc}(F)$-conjugacy classes of embeddings $\fa:T\hra G$, conjugate to $\fa_0$;

\smallskip

$\bullet$ the generalized trace $\Tr_{\gen}(\wt{u}_{\fa,\varphi},H_*(\Fl_{\gm},\C{F}_{\theta,\varphi}))$ exists by the observation of Section~\re{interpr}(e) applied to the pair $(T_{\fa},\varphi_{\fa})$ (see \rn{stconj}(c)).
\end{Thm}

\begin{Emp} \label{E:fst}
{\bf Notation.}
Let $T, \fa_0$ and $\theta$ be as in \rt{dl}.

\smallskip

(a) We set
\[
f^{\st}_{T,\theta}:=\sum_{\fa:T\hra G}f_{\fa,\theta},
\]
where the sum runs over a set of representatives of conjugacy classes of embeddings
$\fa:T\hra G$, which are stably conjugate to $\fa_0$.

\smallskip

(b) More generally, for every $\ka\in\wh{T}^{\Gm_F}$, we set
\[
f^{\ka}_{T,\theta}:=\sum_{\fa:T\hra G}\lan \ka,\inv(\fa_0,\fa)\ran f_{\fa,\theta},
\]
where the $\fa$'s are is as in part~(a), the invariant $\inv(\fa_0,\fa)\in H^1(F,T)$ is defined in
Section~\re{stableconj}(c), and the pairing is defined in Section~\re{dualgp}(c). Note that $f^{\ka}_{T,\theta}=f^{\st}_{T,\theta}$ when $\ka=1$.
\end{Emp}

Our second main result gives a geometric interpretation of $f^{\st}_{T,\theta}(\gm)$  and $f^{\ka}_{T,\theta}(\gm)$ for compact  regular semisimple elements $\gm$.

\begin{Emp} \label{E:notstable}
{\bf Notation.}

\smallskip

(a) Consider the local system
$\C{L}^{\st}_{\theta}:=\bigoplus_{\varphi}\C{L}_{\theta,\varphi}$ on
$\ov{T}_G$, where $\varphi$ runs over the set of all admissible
isomorphisms $\ov{T}\isom\ov{T}_G$. Then it follows from Section~\re{ltheta}(d) that for every $u\in \ov{W}\rtimes\lan\si\ran$ we have a canonical isomorphism
\[
u_*(\C{L}^{\st}_{\theta})=\bigoplus_{\varphi}u_*(\C{L}_{\theta,\varphi})\isom
\bigoplus_{\varphi}\C{L}_{\theta,u(\varphi)}=\C{L}^{\st},
\]
making $\C{L}^{\st}_{\theta}$ is a $\ov{W}$-equivariant Weil sheaf.

\smallskip

(b) Set $\C{F}^{\st}_{\theta}:=\red_{\gm}^*(\C{L}^{\st}_{\theta})=\oplus_{\varphi}\C{F}_{\theta,\varphi}$. By part~(a) and Section~\re{homaffspr}(d), each homology group $H_i(\Fl_{G,\gm},\C{F}^{\st}_{\theta})$ is  equipped with an action of
$\wt{W}_G\rtimes \lan\si\ran$. In particular, we have a $\lan\si\ran$-action on each vector space
$H_j(\wt{W}_G,H_i(\Fl_{G,\gm},\C{F}^{\st}_{\theta}))$.

\smallskip

(c) By \rp{fingen}, each $H_i(\Fl_{G,\gm},\C{F}^{\st}_{\theta})$ is a
finitely-generated $\La_G$-module, therefore the space
\[
H_j(\wt{W}_G,H_i(\Fl_{G,\gm},\C{F}^{\st}_{\theta}))=H_j(\La_G,H_i(\Fl_{G,\gm},\C{F}^{\st}_{\theta}))^{\ov{W}}
\]
is a finite-dimensional $\qlbar[\lan\si\ran]$-module. Moreover, $H_j(\wt{W}_G,H_i(\Fl_{G,\gm},\C{L}^{\st}_{\theta}))$ is non-zero only for finitely
many pairs $(i,j)$.

\smallskip

(d) By part~(c), we can form the virtual finite-dimensional $\qlbar[\lan\si\ran]$-module
\[
H_*(\wt{W}_G,H_*(\Fl_{G,\gm},\C{F}^{\st}_{\theta}))):=\sum_{i,j}(-1)^{i+j}H_j(\wt{W}_G,H_i(\Fl_{G,\gm},\C{F}^{\st}_{\theta}))),
\]
hence also its trace $\Tr(\si, H_*(\wt{W}_G,H_*(\Fl_{G,\gm},\C{F}^{\st}_{\theta})))\in\qlbar$.

\smallskip

\end{Emp}

\begin{Emp} \label{E:notkappa}
{\bf Notation.}

\smallskip

(a) As it was already mentioned in Section~\re{ltheta}(f), each $H_i(\Fl_{G,\gm},\C{F}_{\theta,\varphi})$ is equipped with an action of $\pi_G^{-1}(\lan\ov{u}_{T,\varphi}\ran)\subseteq  \wt{W}_G\rtimes\lan\si\ran$ and is finitely generated as a
$\qlbar[\La_G]$-module.

\smallskip

(b) Note the algebra of regular functions $\qlbar[\wh{T}^{\Gm}]$ is naturally identified with the group algebra $\qlbar[X_*(T)_{\Gm_F}]$
(see Section~\re{dualgp}(c)), hence with $\qlbar[(\La_G)_{\ov{u}_{T,\varphi}}]$ (see Sections~\re{remstconj}(b) and \re{ltheta}(c))). In particular, each $\ka\in \wh{T}^{\Gm}$ can be viewed as a $\ov{u}_{T,\varphi}$-invariant character
$\La_G\to\qlbar\m$.

\smallskip

(c) As it was already mentioned in Section~\re{interpr}(a), we have a natural isomorphism
$\pi_G^{-1}(\lan\ov{u}_{T,\varphi}\ran)\simeq \La_G\rtimes \lan \wt{u}_{T,\varphi}\ran$. Therefore character
$\ka:\La_G\to \qlbar\m$ from part~(b) uniquely extends to a character of $\pi_G^{-1}(\lan\ov{u}_{T,\varphi}\ran)$, trivial on $\wt{u}_{T,\varphi}$. We denote by $(\qlbar)_{\ka}$ the one-dimensional representation of $\pi_G^{-1}(\lan\ov{u}_{T,\varphi}\ran)$, corresponding to $\ka$.

\smallskip

(d) Consider the representation $H_i(\Fl_{G,\gm},\C{F}_{\theta,\varphi})_{\ka}:=H_i(\Fl_{G,\gm},\C{F}_{\theta,\varphi})\otimes_{\qlbar} (\qlbar)_{\ka}$ of $\pi_G^{-1}(\lan\ov{u}_{T,\varphi}\ran)$. It is a finitely generated $\qlbar[\La_G]$-module, so the space
\[
H_j(\La_G, H_i(\Fl_{G,\gm},\C{F}_{\theta,\varphi})_{\ka})
\]
is a finite-dimensional $\qlbar[\lan\ov{u}_{T,\varphi}\ran]$-module. Again it is non-zero only for finitely
many pairs $(i,j)$.

\smallskip

(e) By part~(d), we can form the virtual finite-dimensional $\qlbar[\lan\ov{u}_{T,\varphi}\ran]$-module
\[
H_*(\La_G,H_*(\Fl_{G,\gm},\C{F}_{\theta,\varphi})_{\ka}):=\sum_{i,j}(-1)^{i+j}H_j(\La_G,H_i(\Fl_{G,\gm},\C{F}_{\theta,\varphi})_{\ka}),
\]
hence also its trace $\Tr(\ov{u}_{T,\varphi}, H_*(\La_G,H_*(\Fl_{G,\gm},\C{F}_{\theta})_{\ka}))\in\qlbar$.

\end{Emp}

\begin{Thm} \label{T:equality}
(a) For every $T,\theta$ and $\gm$ as in \rt{dl}, we have an equality
\[
f^{\st}_{T,\theta}(\gm)=\Tr(\si, H_*(\wt{W}_G,H_*(\Fl_{G,\gm},\C{F}^{\st}_{\theta}))).
\]

\smallskip

(b) Furthermore, for every $\ka\in\wh{T}^{\Gm_F}$ and $\varphi$ as in \rt{dl}, we have an equality
\[
f^{\ka}_{T,\theta}(\gm)=\Tr(\ov{u}_{T,\varphi}, H_*(\La_G,H_*(\Fl_{G,\gm},\C{F}_{\theta})_{\ka})).
\]
\end{Thm}

As an application, we will prove stability properties of $f^{\st}_{T,\theta}$ and $f^{\ka}_{T,\theta}$, thus establishing
the equal characteristic version of \cite[Theorem 2.1.6(a)]{KV}.

\begin{Thm} \label{T:stab}
Let $T$ and $\theta$ be as in \rt{dl}.

\smallskip

(a) Then the function $f^{\st}_{T,\theta}$ is stable (see Section~\re{stable}). In other words, for every pair of stably conjugate elements $\gm,\gm'\in G^{\rss}(F)$, we have an equality
\[
f^{\st}_{T,\theta}(\gm)=f^{\st}_{T,\theta}(\gm').
\]

\smallskip

(b) More generally, for every $\ka\in\wh{T}^{\Gm_F}$, the function $f^{\ka}_{T,\theta}$ is $\C{E}_{T,\ka}$-stable
(see Sections~\re{exenddatum}(a) and \re{estable}(d)).
\end{Thm}

Finally, we prove a generalization of \rt{stab}(a) which takes into an account inner twistings.

\begin{Emp} \label{E:inner}
{\bf Inner twistings and stable equivalence.}

\smallskip

(a) Let $\phi:G\isom G'$ be an inner twisting (compare \cite[Section~1.1.1]{KV}).
Using theorem of Steinberg, replacing $\phi$ by its conjugate $\phi\circ \Ad_g$ with $g\in G(F^{\sep})$, we can (and will) assume that
$\phi$ is an isomorphism over $F^{\nr}$ such that $\phi^{-1}\circ {}^{\si}\phi$ is an inner automorphism of $G$.

\smallskip

(b) For every $g\in G$, we denote by $G^{\ad}_{g}\subseteq G^{\ad}$ the stabilizer of $g$. Generalizing Section~\re{stableconj}(a),(b), we call a pair of elements  $\gm\in G^{\rss}(F)$ and $\gm'\in G'^{\rss}(F)$
{\em stably conjugate}, if there exists an element $g\in G(F^{\nr})$ such that $\phi(g\gm g^{-1})=\gm'$ and
an element $h:=\Ad_{g^{-1}}\circ (\phi^{-1}\circ{}^{\si}\phi)\circ \Ad_{{}^{\si}g}\in G^{\ad}(F^{\nr})$ belongs to $(G_{\gm}^{\ad})^0(F^{\nr})$.

\smallskip

(c) Generalizing Section~\re{stableconj}(c), two embeddings $\fa:T\hra G$ and $\fa':T\hra G'$ of maximal tori defined over $F$ are called {\em stably conjugate}, if there exists an element $g\in G(F^{\nr})$ such that $\fa'=\phi\circ\Ad_g\circ \fa$ and the induced morphism
$\phi\circ\Ad_g:\fa(T)\isom \fa'(T)$ is defined over $F$.  Note that for every embedding $\fa:T\hra G$ of a maximal elliptic torus over $F$ there exists a stably conjugate embedding $\fa':T'\hra G'$ (see \cite[Lemma~1.5.3]{KV}).

\smallskip

(d) We say that invariant functions $f$ on $G^{\rss}(F)$ and $f'$ on  $G'^{\rss}(F)$ are {\em stably equivalent},
if for every pair  $\gm\in G^{\rss}(F)$ and $\gm'\in G'^{\rss}(F)$ of stably conjugate elements, we have an equality $f(\gm)=f'(\gm')$.
In particular, this implies that $f$ and $f'$ are stable.
\end{Emp}

Now we are ready to state  a particular case of the equal characteristic version of \cite[Theorem 2.1.6(b)]{KV}, generalizing \rt{stab}(a).

\begin{Thm} \label{T:equiv}
Let $\phi:G\isom G'$ be an inner twisting, let $\fa_0$ and $\theta$ be as in \rt{dl}, let $\fa'_0:T\hra G'$ be an embedding stably conjugate to $\fa$, let $T'\subseteq G'$ be the image of $\fa'$, and let $\theta':T'(F)\to\qlbar^{\times}$ be the composition
\[
\theta\circ\fa'^{-1}_0:T'(F)\isom T(F)\to\qlbar^{\times}.
\]

Then the invariant functions $f^{\st}_{T,\theta}$ and $f^{\st}_{T',\theta'}$ are stably equivalent. %In other words, for every pair of stably conjugate  regular semisimple elements $\gm\in G(F)$ and $\gm'\in G'(F)$, we have an equality $f^{\st}_{T,\theta}(\gm)=f^{\st}_{T',\theta'}(\gm')$.
\end{Thm}

\begin{Emp}
{\bf Remark.} It should not be difficult to prove the equal characteristic version of the general case of \cite[Theorem~2.1.6(b)]{KV}, thus providing a generalization of \rt{stab}(b), which takes into an account inner twistings.
\end{Emp}

\section{Completion of proofs}

\subsection{Proof of \rt{dl}}

\begin{Emp} \label{E:step1}
First, we assume that the assertion \rt{dl}(a) holds, and are going to show the right equality of formula \form{2}.

\smallskip

(a) Using \rl{ind} and isomorphism $\La_G/\La_0\simeq \La_{G/Z_0}$
%$H_i(\Fl_{G,\gm},\C{F}_{\theta,\varphi})\simeq \ind_{\La}^{\La_{G}} H_i(\Fl_{\gm},\C{F}_{\theta,\varphi})$
we conclude that
\begin{equation*} \label{Eq:ind}
H_i(\Fl_{G,\gm},\C{F}_{\theta,\varphi})_{\La_0}\simeq\ind_{\La}^{\La_{G/Z_0}} H_i(\Fl_{\gm},\C{F}_{\theta,\varphi}).
\end{equation*}
Thus, by \rl{induced}, we have an equality
\[
\Tr_{\gen}(\wt{u}_{T,\varphi},H_*(\Fl_{G,\gm},\C{F}_{\theta,\varphi})_{\La_0})=
\sum_{\mu} \Tr_{\gen}(\mu^{-1} \wt{u}_{T,\varphi}\mu,H_*(\Fl_{\gm},\C{F}_{\theta,\varphi})),
\]
where $\mu$ runs over a set of representatives in $\La_{G/Z_0}$ of classes in $\La_{G/Z_0}/\La$ such that $\mu^{-1}\wt{u}_{T,\varphi}\mu\in \La \wt{u}_{T,\varphi}$. Moreover, and each summand on the RHS
only depends on the class $\mu\La\in \La_{G/Z_0}/\La$.

\smallskip

(b) By Sections~\re{elliptic case}(b),(c), the sum in part~(a) can be written as
\[
\sum_{\fa:T\hra G} \Tr_{\gen}(\wt{u}_{\fa,\varphi},H_*(\Fl_{\gm},\C{F}_{\theta,\varphi})),
\]
where $\fa$ runs over a set of representatives of $G^{\sc}(F)$-conjugacy classes of embeddings
$T\hra G$, conjugate to $\fa_0$.
\end{Emp}

\begin{Emp} \label{E:formula}
Next, we are going to write an explicit formula for $f_{T,\theta}$.

\smallskip

(a) Let $P\subseteq G(F)$ be an open subset and let $\Theta:G(F)/P\to \qlbar$ be a function.
We say that the sum $\sum_{g\in G(F)/P} \Theta(g)$ {\em uniformly stabilizes}, if for every open compact subgroup $Q\subseteq G(F)$
the set

\[
C_Q:=\{QxP\in Q\bs G(F)/P\,|\,\sum_{g\in QxP/P} \Theta(g)\neq 0\}
\]
is finite. Notice that the set  $QxP/P\simeq Q/(Q\cap xPx^{-1})$ is finite.

In this case, the sum $\sum_{QxP\in C_Q}\sum_{g\in QxP/P} \Theta(g)$ is independent of $Q$ and we denote the resulting sum by
$\sum_{g\in G(F)/P} \Theta(g)$.

\smallskip

(b) For every embedding $\fa:T\hra G$, stably conjugate to $\fa_0:T\hra G$, we denote by $t_{\fa,\theta}$ the smooth function on $G(F)$ such that $t_{\fa,\theta}(g)=\Tr R^{\theta}_{\fa}(g)$ if $g\in\wt{G}_{\fa}$, and
$t_{\fa,\theta}(g)=0$, if $g\notin\wt{G}_{\fa}$. Since representation $R_{\ov{\fa}}^{\ov{\theta}}$ is a cuspidal representation of $M_{\fa}(\fq)$, it follows from the formula of the induced representation that we have an equality
\[
f_{\fa,\theta}(\gm)=\sum_{g\in G(F)/ \wt{G}_{\fa}} t_{\fa,\theta}(g^{-1}\gm g),
\]
where the sum of the right hand side stabilizes uniformly.
\end{Emp}

\begin{Emp} \label{E:step2}
Our next goal is to rewrite $f_{T,\theta}(\gm)$ as a sum of terms, parameterized by a set of representatives of $G^{\sc}(F)$-conjugacy classes of embeddings $\fa:T\hra G$, which are conjugate to $\fa_0$. Namely, we claim that

\smallskip

(i) for every  $\fa:T\hra G$, conjugate to $\fa_0$, the sum
$\sum_{g\in G^{\sc}(F)/ G^{\sc}_{\fa}} t_{\fa,\theta}(g^{-1}\gm g)$, where $G^{\sc}_{\fa}:=G^{\sc}(F)\cap G_{\fa}$
stabilizes uniformly. Moreover, the resulting sum only depends of the $G^{\sc}(F)$-conjugacy class of $\fa$;

\smallskip

(ii) we have an equality
\[
f_{T,\theta}(\gm)=\sum_{\fa:T\hra G}\sum_{g\in G^{\sc}(F)/ G^{\sc}_{\fa}} t_{\fa,\theta}(g^{-1}\gm g),
\]
where $\fa$ runs over a set of representatives of $G^{\sc}(F)$-conjugacy classes of embeddings $\fa:T\hra G$, conjugate to $\fa_0$.

\begin{proof}
(a) Choose $h\in G(F)$ such that $\fa=h\fa_0 h^{-1}$, then for every $g\in G^{\sc}(F)/ G^{\sc}_{\fa}$ we have an equality
$t_{\fa,\theta}(g^{-1}\gm g)=t_{\fa_0,\theta}(h^{-1}g^{-1}\gm gh)$. Moreover, the correspondence $g\mapsto gh$
induces a bijection
\[
G^{\sc}(F)/ G^{\sc}_{\fa}=G^{\sc}(F)\wt{G}_{\fa}/ \wt{G}_{\fa}=G^{\sc}(F)h\wt{G}_{\fa_0}h^{-1}/ h\wt{G}_{\fa_0}h^{-1}\isom G^{\sc}(F)h\wt{G}_{\fa_0}/\wt{G}_{\fa_0}.
\]

\smallskip

(b) By part~(a) the sum in (i) can be rewritten as
\[
\sum_{g\in G^{\sc}(F)h\wt{G}_{\fa_0}/ \wt{G}_{\fa_0}} t_{\fa_0,\theta}(g^{-1}\gm g),
\]
therefore it conversely uniformly, because the sum $\sum_{g\in G(F)/ \wt{G}_{\fa_0}} t_{\fa_0,\theta}(g^{-1}\gm g)$ does.
Next, the second assertion in (i) follows immediately from the first one.

\smallskip

(c) Note that for every $h\in G(F)$ we have an equality $G^{\sc}(F)h\wt{G}_{\fa_0}=G^{\sc}(F)h T(F)$ (see \cite{KV}), and
the map $h\mapsto h\fa h^{-1}$ defines a bijection between the double quotient $G^{\sc}(F)\bs G(F)/T(F)$ and the set of $G^{\sc}(F)$-conjugacy classes of embeddings $\fa:T\hra G$, which are $G(F)$-conjugate to $\fa_0$.
Therefore assertion (ii) follows from part~(b).
\end{proof}
\end{Emp}

\begin{Emp} \label{E:step3}
{\bf Reduction steps.}

\smallskip

(a) By \rl{ind} and Sections~\re{step1} and \re{step2}, it remains to show that for every embedding  $\fa:T\hra G$, which is conjugate to $\fa_0$, the module $H_*(\Fl_{\gm},\C{F}_{\theta,\varphi})$ is $\wt{u}_{\fa,\varphi}$-locally finite, and we have an equality
\begin{equation*} %\label{Eq:main}
 \Tr_{\gen}(\wt{u}_{\fa,\varphi},H_*(\Fl_{\gm},\C{F}_{\theta,\varphi}))=\sum_{g\in G^{\sc}(F)/G^{\sc}_{\fa}} t_{\fa,\theta}(g^{-1}\gm g).
\end{equation*}
Moreover, we can replace $\fa_0$ by $\fa$, thus assuming that $\fa=\fa_0$.

\smallskip

(b) By definition (see \rco{corresp} and Section~\re{interpr}(a)), there exists an Iwahori subgroup $\I_0\supseteq\T$ such that $\varphi_{T,\I_0}=\varphi$ and $\wt{u}_{\I_0}=\wt{u}_{T,\varphi}$. Thus it remains to show that
for every Iwahori subgroup $\I_0\supseteq\T$,

\smallskip

\quad\quad (i)  the module $H_i(\Fl_{\gm},\C{F}_{\theta,\varphi_{T,\I_0}})$ is $\wt{u}_{\I_0}$-locally finite, and

\smallskip

\quad\quad (ii) we have an equality
\begin{equation} \label{Eq:main}
\Tr_{\gen}(\wt{u}_{\I_0},H_*(\Fl_{\gm},\C{F}_{\theta,\varphi_{T,\I_0}}))=\sum_{[g]\in G^{\sc}(F)/G^{\sc}_{\fa_0}} t_{\fa_0,\theta}(g^{-1}\gm g).
\end{equation}
\smallskip

(c) It suffices to show the assertions of part~(b) for some Iwahori subgroup $\I_0\supseteq\T$.
Indeed, let $\I\supseteq\T$ is another such Iwahori subgroup. Since $\ov{w}_{\I_0,\I}\circ \varphi_{T,\I}=\varphi_{T,\I_0}$ (see Section~\re{admisaff}(b)) and  $\wt{u}_{\I}=\wt{w}_{\I_0,\I}^{-1}\wt{u}_{\I_0}\wt{w}_{\I_0,\I}$ (see \form{forminv}), it follows from \rp{whaction} that $\wt{w}_{\I_0,\I}$ induces an isomorphism $H_i(\Fl_{\gm},\C{F}_{\theta,\varphi_{T,\I_0}})\isom H_i(\Fl_{\gm},\C{F}_{\theta,\varphi_{T,\I}})$ and interchanges the action of $\wt{u}_{\I_0}$ on $H_i(\Fl_{\gm},\C{F}_{\theta,\varphi_{T,\I_0}})$ with the action of  $\wt{u}_{\I}$ on $H_i(\Fl_{\gm},\C{F}_{\theta,\varphi_{T,\I}})$. Therefore both assertions in part~(b) for $\I$ follow from that for $\I_0$.

\smallskip

(d) By part~(c), we can assume that $\I_0\subseteq\P_{\fa_0}$. Let $J=J_{\P_{\fa_0}}\subseteq \wt{S}$ be the type of $\P_{\fa_0}$.
Then $J$ is $\si$-invariant (see Section~\re{parahoric2}), and $\wt{u}_{\I_0}$ is contained in $W_J\si$ (see Section~\re{comp}).
\end{Emp}

\begin{Emp} \label{E:step4}
In this subsection we are going to show the assertion (i) of Section~\re{step3}(b).

\smallskip

(a) Let $S_{\gm}\subseteq G^0_{\gm}$ be the maximal subtorus which splits over $F^{\nr}$, and let $\Q\subseteq LG$ be a $\si$-stable parahoric subgroup containing $L^+(S_{\gm})$ (see Section~\re{parahoric2}(c)). Choose an Iwahori subgroup $\I\subseteq \Q$, and let $J'=J_{\Q}$ be the type of $\Q$, thus $\Q=\P_{J',\I}$.

\smallskip

(b) Choose $m$ sufficiently large to satisfy \rt{inj}, and equip $\Fl_{\gm}$ with a filtration
$\{\Fl_{\gm}^{(\leq n)}\}_n$, defined by $\Fl_{\gm}^{(\leq n)}:=\Fl_{\gm}^{\leq n;J_r;J'_l;m_{reg}}$ (see Section~\re{filhom}(f)). Then each $\Fl_{\gm}^{(\leq n)}$ is $\si$-invariant (by Section~\re{affflvar2}(d)), hence
$\{H'_i(\Fl_{\gm}^{(\leq n)}, \C{F}_{\theta,\varphi_{T,\I_0}})\}_n$ is a $\si$-invariant filtration of
$H_i(\Fl_{\gm},\C{F}_{\theta,\varphi_{T,\I_0}}))$ (see Section~\re{filhom1}(b)).

\smallskip

(c) Since $\wt{u}_{\I_0}\in W_J\si$ (see Section~\re{step3}(d)), it follows from \rl{image} that the filtration $\{H'_i(\Fl_{\gm}^{(\leq n)}, \C{F}_{\theta,\varphi_{T,\I_0}})\}_n$ is $\wt{u}_{\I_0}$-invariant. Since each $H'_i(\Fl_{\gm}^{(\leq n)}, \C{F}_{\theta,\varphi_{T,\I_0}})$ is finite-dimensional, we conclude that the module $H_i(\Fl_{\gm},\C{F}_{\theta,\varphi_{T,\I_0}}))$ is $\wt{u}_{\I_0}$-locally finite. This completes the proof of assertion (i) of Section~\re{step3}(b).
\end{Emp}

In the next two subsections we will make the left hand side of \form{main} more explicit.

\begin{Emp} \label{E:step5}
(a) We equip $\La$ with a length filtration $\La^{\leq n}$ (see Section~\re{filtr0}), which is finitely generated (by \rco{fg}), $\si$-invariant (see Section~\re{affweyl2}(b)) and $\ov{W}$-invariant (see Section~\re{bruhat}(d)). Thus it is $\ov{W}\rtimes\lan\si\ran$-invariant, thus $\wt{u}_{\I_0}$-invariant.

\smallskip

(b) By Section~\re{step4}(a), $\gm$ is in good position with $\Q=\P_{J',\I}$. Therefore, by Section~\re{step4}(c), $\{H'_i(\Fl_{\gm}^{(\leq n)}, \C{F}_{\theta,\varphi_{T,\I}})\}_n$ is a $\wt{u}_{\I_0}$-invariant filtration of
$H_i(\Fl_{\gm},\C{F}_{\theta,\varphi_{T,\I_0}}))$, which is finitely generated over $\{\La^{\leq n}\}_n$ (by \rco{fingen}).

\smallskip

(c) Using the definition of the generalized trace and \rt{inj}, the left hand side of \form{main} therefore equals $\Tr(\wt{u}_{\I_0},H_*(\Fl^{(\leq n)}_{\gm},\C{F}_{\theta,\varphi_{T,\I_0}}))$ for a sufficiently large $n$. Since $H_i$ is a dual of $H^i$, the left hand side of  \form{main} is therefore equal to $\Tr(\wt{u}_{\I_0}, H^*(\Fl^{(\leq n)}_{\gm},\C{F}_{\theta,\varphi_{T,\I_0}}))$.
\end{Emp}

\begin{Emp} \label{E:step6}
(a) Since ${}^{\si}\varphi_{T,\I_0}=\ov{w}_{\I_0}^{-1}\circ \varphi_{T,\I_0}$ (compare Sections~\re{DL}(c) and \re{geom}), we have a natural isomorphism
\[
\Fr_q^*\C{F}_{\theta,\varphi_{T,\I_0}}\simeq \si_*\C{F}_{\theta,\varphi_{T,\I_0}}\simeq\C{F}_{\theta,{}^{\si}\varphi_{T,\I_0}}=\C{F}_{\theta,\ov{w}_{\I_0}^{-1}\circ \varphi_{T,\I_0}}.
\]

\smallskip

(b) Set $\Fl^{(\leq n)}_J:=\pi_J(\Fl^{(\leq n)})\subseteq\Fl_J$, where $\pi_J:\Fl\to \Fl_J$ is the projection  from Section~\re{affflvar}(c). Then $\Fl^{(\leq n)}:=\pi^{-1}_J(\Fl_J^{(\leq n)})$ (see Section~\re{filhom}(d)), the endomorphism $\wt{u}_{\I_0}$ of
\[
H^*(\Fl^{(\leq n)}_{\gm},\C{F}_{\theta,\varphi_{T,\I_0}})= H^*(\Fl^{(\leq n)}_{J,\gm},(\pi_J)_*(\C{F}_{\theta,\varphi_{T,\I_0}}))
\]
decomposes as a composition of
\[
\Fr_q^*: H^*(\Fl^{(\leq n)}_{J,\gm},(\pi_J)_*(\C{F}_{\theta,\varphi_{T,\I_0}}))\to
H^*(\Fl^{(\leq n)}_{J,\gm},(\pi_J)_*(\C{F}_{\theta,\ov{w}_{\I_0}^{-1}\circ \varphi_{T,\I_0}})),
\]
induced by the isomorphism of part~(a), and the isomorphism
 \[
H^*(\Fl^{(\leq
n)}_{J,\gm},(\pi_J)_*(\C{F}_{\theta,\ov{w}_I^{-1}\circ\varphi_{T,\I_0}}))\isom
H^*(\Fl^{(\leq n)}_{J,\gm},(\pi_J)_*(\C{F}_{\theta,\varphi_{T,\I_0}})),
 \]
induced by the isomorphism
$\wt{w}_{\I_0}:(\pi_J)_*(\C{F}_{\theta,\ov{w}_I^{-1}\circ
\varphi_{T,\I_0}})\isom(\pi_J)_*(\C{F}_{\theta,\varphi_{T,\I_0}})$ (see
Sections~\re{DL}(d) and \re{whaction}(d)).

\smallskip

(c) Now it follows from Section~\re{step5}(c) and the Grothendieck's Lefschetz trace formula,
that the left hand side of \form{main} equals the sum
\begin{equation*} \label{Eq:ltf}
\sum_{x\in \Fl^{(\leq n)}_{J,\gm}(\fq)}\Tr(\wt{w}_{\I_0}\circ
\Fr_x^*,[(\pi_J)_*(\C{F}_{\theta,\varphi_{T,\I_0}})]_x).
\end{equation*}
\end{Emp}

\begin{Emp}
{\bf Reformulation of assertion~(ii) of Section~\re{step3}(b).}

\smallskip

(a) Consider a parahoric subgroup $\Q^{\sc}:=\Q\cap L(G^{\sc})$ of $L(G^{\sc})$ and an open compact subgroup $Q^{\sc}:=\Q^{\sc}(\fq)$ of $G^{\sc}(F)$.

\smallskip

(b) Notice that we have a natural identification $\Fl_{J}(\fq)\simeq G^{\sc}(F)/G^{\sc}_{\fa_0}$.
Moreover, since $\{\Fl^{(\leq n)}\}_n$ is a $\Q^{\sc}$-invariant filtration of $\Fl$, we get that
$\{\Fl_J^{(\leq n)}(\fq)\}_n$ is a $Q^{\sc}$-invariant filtration of $G^{\sc}(F)/G^{\sc}_{\fa_0}$, which we
denote by $G^{\sc}(F)^{(\leq n)}/G^{\sc}_{\fa_0}$. Furthermore, a class $x\in G^{\sc}(F)/G^{\sc}_{\fa_0}$ corresponds
to a point of $\Fl^{(\leq n)}_{J,\gm}(\fq)\subseteq \Fl^{(\leq n)}_{J}(\fq)$ if an only if $x^{-1}\gm x\in G_{\fa_0}$.

\smallskip

(c) By definition, for each sufficiently large $n$, the right hand side of \form{main} equals the sum
$\sum_{x\in G^{\sc}(F)^{(\leq n)}/G^{\sc}_{\fa_0}} t_{\fa_0,\theta}(x^{-1}\gm x)$.
By definition, we have $t_{\fa_0,\theta}(x^{-1}\gm x)=0$, if $x^{-1}\gm x\notin G_{\fa_0}$. Likewise, we have
$t_{\fa_0,\theta}(x^{-1}\gm x)=\Tr R^{\ov{\theta}}_{\ov{\fa}_0}(\ov{x^{-1}\gm x})$, if $x^{-1}\gm x\in G_{\fa_0}$, where
$\ov{x^{-1}\gm x}\in M_{\fa_0}(\fq)$ is the class of $x^{-1}\gm x$.

\smallskip

(d) Using observations of parts~(b), (c) and Section~\re{step6}(c), it remains to show that for every
$x\in \Fl_{J,\gm}(\fq)\subseteq G^{\sc}(F)/G^{\sc}_{\fa_0}$, we have an equality
\begin{equation} \label{Eq:contr}
\Tr(\wt{w}_{\I_0}\circ
\Fr_x^*,[(\pi_J)_*(\C{F}_{\theta,\varphi_{T,\I_0}})]_x)=\Tr R^{\theta}_{\ov{\fa}_0}(\ov{x^{-1}\gm x}).
\end{equation}
\end{Emp}

\begin{Emp}
{\bf Completion of the proof.}

\smallskip

(a) Recall that an Iwahori subgroup $\I_0\subseteq\P_{\al}$ corresponds to the Borel subgroup $\ov{B}:=\I_0/\P_{\fa_0}^+$ of $M_{\fa_0}$, containing the maximal torus $\ov{\fa}_0(\ov{T})\subseteq M_{\fa_0}$.
Next, the abstract Cartan subgroup $T_{M_{\fa_0}}$ is canonically identified with $\ov{T}_{G}$, and under this identification
the admissible isomorphism $\varphi_{T,\I_0}:\ov{T}\isom \ov{T}_G$ corresponds to the admissible isomorphism
$\varphi_{\ov{B}}:\ov{\fa}_0(\ov{T})\isom T_{M_{\fa_0}}$. Moreover, we have a canonical isomorphism $W_J\simeq W_{M_{\fa_0}}$, under which  element $\wt{w}_{\I_0}\in W_J$ corresponds to $w_{\ov{B}}=w_{\varphi_{\ov{B}}}\in W_J$ (see Section~\re{parahoric}  and
Section~\re{comp}).

\smallskip

(b) Since the commutative square \form{SGaff} is
Cartesian, it follows by proper base change that the left hand side of \form{contr} equals
$\Tr(w_{\varphi_{\ov{B}}}\circ\si, H^*(\ov{\Fl}_{M_{\fa_0},\ov{x^{-1}\gm x}},\C{F}_{\ov{\theta},\varphi_{\ov{B}}})$. Therefore, by Lusztig's theorem (\rt{lus}) it is equal to  $R^{\theta}_{\ov{\fa}_0}(\ov{x^{-1}\gm x})$. This completes the proof of equality \form{contr}, from which
equality \form{main} and hence \rt{dl} follows.
\end{Emp}

\subsection{Proof of \rt{equality}}
%{Geometric interpretation of $f^{\st}_{T,\theta}(\gm)$}

\begin{Emp} \label{E:remtr}
{\bf Observations.}

\smallskip

(a) Note that for every $\qlbar[\ov{W}]$-module $V$, the idempotent
$\frac{1}{|\ov{W}|}\sum_{\ov{w}\in\ov{W}}w\in \qlbar[\ov{W}]$ defines a projection
$V\to V^{\ov{W}}$. Therefore the functor of $\ov{W}$-invariants  is exact on
$\qlbar$-vector spaces and the natural composition $V^{\ov{W}}\to V\to V_{\ov{W}}$ is an isomorphism.

\smallskip

(b) Using part~(a), for every finite-dimensional $\qlbar[\ov{W}\rtimes\lan\si\ran]$-module $V$, we have an equality
$\Tr(\si,V^{\ov{W}})=\frac{1}{|\ov{W}|} \sum_{w\in \ov{W}} \Tr(w\si,V)$.

(c) Let $V$ be a $\qlbar[\wt{W}_G\rtimes\lan\si\ran]$-module, which is finitely generated as $\qlbar[\wt{W}_G]$-module. Then each
homology group $H_j(\La_G,V)$ is a  finite-dimensional $\qlbar[\ov{W}\rtimes\lan\si\ran]$-module. Then it follows from parts~(a) and (b) that
we have an equality
\[
\Tr(\si,H_j(\wt{W}_G,V))=\Tr(\si,H_j(\La_G,V)^{\ov{W}})=\frac{1}{|\ov{W}|} \sum_{\ov{w}\in\ov{W}} \Tr(\ov{w}\si,H_j(\La_G,V)).
\]
\end{Emp}

Now we are ready to proof \rt{equality}. For convenience, we divide the proof into five steps:

\smallskip

\noindent{\bf Step 1.} By Section~\re{remtr}, for every $i,j\in\B{Z}$ we have an equality
\begin{equation} \label{Eq:trform1}
\Tr(\si,
H_j(\wt{W}_G,H_i(\Fl_{G,\gm},\C{F}^{\st}_{\theta})))=\frac{1}{|\ov{W}|} \sum_{\ov{w}\in \ov{W}} \Tr(\ov{w}\si,
H_j(\La_G,H_i(\Fl_{G,\gm},\C{F}^{\st}_{\theta}))).
\end{equation}

\smallskip  \noindent{\bf Step 2.} The decomposition $\C{L}^{\st}_{\theta}\simeq\bigoplus_{\varphi}\C{L}_{\theta,\varphi}$ (see Section~\re{notstable}) induces  decompositions $\C{F}^{\st}_{\theta}\simeq\bigoplus_{\varphi}\C{F}_{\theta,\varphi}$ (see Section~\re{ltheta}) and
$H_i(\Fl_{G,\gm},\C{F}_{\theta})\simeq\bigoplus_{\varphi}H_i(\Fl_{G,\gm},\C{F}_{\theta,\varphi})$. Moreover, $\si$ sends
summand $H_i(\Fl_{G,\gm},\C{F}_{\theta,\varphi})\subseteq H_i(\Fl_{G,\gm},\C{F}^{\st}_{\theta})$ to $H_i(\Fl_{G,\gm},\C{F}_{\theta,{}^{\si}\varphi})$, while each $w\in\wt{W}_G$ sends $H_i(\Fl_{G,\gm},\C{F}_{\theta,\varphi})$ to $H_i(\Fl_{G,\gm},\C{F}_{\theta,w\circ \varphi})$ (see \rp{whaction}).

\smallskip

In particular,  $\La_G$ stabilizes each $H_i(\Fl_{G,\gm},\C{F}_{\theta,\varphi})$, so we have a decomposition
\[H_j(\La_G,H_i(\Fl_{G,\gm},\C{F}^{\st}_{\theta}))\simeq\bigoplus_{\varphi}H_j(\La_G,H_i(\Fl_{G,\gm},\C{F}_{\theta,\varphi})).\]
Furthermore, under this decomposition $\si$ sends summand $H_j(\La_G,H_i(\Fl_{G,\gm},\C{F}_{\theta,\varphi}))$ to $H_j(\La_G,H_i(\Fl_{G,\gm},\C{F}_{\theta,{}^\si\varphi}))$, while each $\ov{w}\in\ov{W}$ sends $H_j(\La_G,H_i(\Fl_{G,\gm},\C{F}_{\theta,\varphi}))$ to $H_i(\Fl_{G,\gm},\C{F}_{\theta,\ov{w}\circ \varphi})$.

\smallskip

Since ${}^{\si}\varphi=\ov{w}_{T,\varphi}^{-1}\circ\varphi$ (see Section~\re{basic pair}(b)), the right hand side of
\form{trform1} can be therefore rewritten in the form
\begin{equation} \label{Eq:trform2}
\frac{1}{|\ov{W}|}\sum_{\varphi}\Tr(\ov{w}_{T,\varphi}\circ \si, H_j(\La_G,H_i(\Fl_{G,\gm},\C{F}_{\theta,\varphi}))).
\end{equation}

\smallskip  \noindent{\bf Step 3.}
As in \rn{stconj}, we set $\ov{u}_{T,\varphi}=\ov{w}_{T,\varphi}\circ\si\in\ov{W}\rtimes\lan\si\ran$.
We claim that the trace $\Tr(\ov{u}_{T,\varphi},   H_j(\La_G,H_i(\Fl_{G,\gm},\C{L}_{\theta,\varphi})))$ is independent
of $\varphi$, hence the expression of \form{trform2} equals
$\Tr(\ov{u}_{T,\varphi}, H_j(\La_G,H_i(\Fl_{G,\gm},\C{L}_{\theta,\varphi})))$ for each admissible
$\varphi$.

\smallskip

Indeed, for each pair of admissible isomorphisms $\varphi$ and $\varphi'$, we have an equality $\varphi'=\ov{w}\circ \varphi$ for some
$\ov{w}\in \ov{W}$ (see
Section~\re{admisaff}(b)).  Then we have the equality $u_{T,\varphi'}=\ov{w}^{-1}\ov{u}_{T,\varphi}\ov{w}\in\ov{W}\rtimes\lan\si\ran$.
Since  $\ov{w}\in\ov{W}$ induces an isomorphism
$H_j(\La_G,H_i(\Fl_{G,\gm},\C{L}_{\theta,\varphi}))\isom H_j(\La_G,H_i(\Fl_{G,\gm},\C{L}_{\theta,\varphi'}))$ (by \rp{whaction}), the independence of $\varphi$ follows.

\smallskip
\noindent{\bf Step 4.} By Step 3, the assertion of \rt{equality}(a) follows from \rt{equality}(b) with $\ka=1$. So it remains to show
\rt{equality}(b).

As it was explained in Section~\re{notkappa}(c), every $\ka\in\wh{T}^{\Gm_F}$ gives rise to character $\ka:\La_G\rtimes\lan\wt{u}_{T,\varphi}\ran\to\qlbar$,
and we denote by the same letter its restriction to $\La\rtimes\lan\wt{u}_{T,\varphi}\ran$.

\smallskip

\noindent{\bf Step 5.} Using \rl{ind}(b), we have a natural isomorphism of $\qlbar[\La_G]$-modules
\[
H_i(\Fl_{G,\gm},\C{L}_{\theta,\varphi})_{\ka}\simeq\ind_{\La}^{\La_G}(H_i(\Fl_{\gm},\C{L}_{\theta,\varphi})_{\ka}),
\]
hence a natural isomorphism of $\qlbar[\lan\wt{u}_{T,\varphi}\ran]$-modules
\[
H_j(\La_G,H_i(\Fl_{G,\gm},\C{L}_{\theta,\varphi})_{\ka})\simeq  H_j(\La,H_i(\Fl_{\gm},\C{L}_{\theta,\varphi})_{\ka}).
\]
Thus it remains to show that the equality
\[
f^{\ka}_{T,\theta}(\gm)=\Tr(\ov{u}_{T,\varphi},H_*(\La,H_*(\Fl_{\gm},\C{L}_{\theta,\varphi})_{\ka})).
\]

\noindent{\bf Step 6.}
Since $T\subseteq G$ is a maximal elliptic torus, the induced endomorphism $\ov{u}_{T,\varphi}|_{\La}\in\Aut(\La)$ is elliptic (see Sections~\re{fincor}(d) and \re{finell}(a)).
%
%Therefore for every lift $\wt{u}\in \wt{W}\Gm_{\varphi}$ of $u_{T,\varphi}$
%defines an isomorphism $\ov{\La}\rtimes\lan\wt{u}\ran_n\isom\La_A\bs\wt{W}\Gm_{\varphi}$.
%
%Since $\si^m$ acts trivially on $\La$ for some $m$, $u$ is of finite order as well.
%Next we recall that each hyperspecial parahoric subgroup of $G$
%over $F$ defines a $\Gm_{\fq}$-equivariant isomorphism
%$\wt{W}\isom \La\rtimes W_{G}$ (see Section~\re{splitting}), hence an
%isomorphism $\wt{W}_G\rtimes\Gm_{\fq}\isom \La\rtimes
%(W\rtimes\Gm_{\fq})$.
%
Hence by \rp{trform}(b), for every $i\in\B{Z}$ we have an equality
\begin{equation*}
\Tr(\ov{u}_{T,\varphi},H_*(\La,H_i(\Fl_{\gm},\C{F}_{\theta,\varphi})_{\ka}))=\sum_{\wt{u}}
\Tr_{\gen}(\wt{u}, H_i(\Fl_{\gm},\C{F}_{\theta,\varphi})_{\ka}),
\end{equation*}
%
%By observation Section~\re{???}, the right hand side of \form{3} can
%written in the form
%
%\begin{equation} \label{Eq:sum1}
%\sum_{\wt{w}}\Tr_{\gen}(\wt{w}\circ\si,H_*(\Fl_{\gm},\C{F}_{\theta,\varphi})),
%\end{equation}
%
where $\wt{u}$ runs over a set of representatives of the set of $\La$-conjugacy classes in
$\La \ov{u}_{T,\varphi}\subseteq \La\rtimes\lan \ov{u}_{T,\varphi}\ran$.
%Moreover, since $\La_A$ lies in the center of $\wt{W}\Gm$, the last set coincides
%with the set of $\bar{\La}$-conjugacy classes in
%$\ov{\La}\ov{u}_{T,\varphi}$.

Therefore we get an equality
\begin{equation} \label{Eq:3}
\Tr(\ov{u}_{T,\varphi}, H_*(\La,H_*(\Fl_{\gm},\C{F}_{\theta,\varphi})_{\ka}))=\sum_{\wt{u}}\Tr_{\gen}(\wt{u}, H_*(\Fl_{\gm},\C{F}_{\theta,\varphi})_{\ka}),
\end{equation}
where the summation as in the previous formula.

\smallskip  \noindent{\bf Step 7.}
By Section~\re{elliptic case}(a), the map
$\fa'\mapsto [\wt{u}_{\fa',\varphi}]$ is a bijection
between the set of $G^{\sc}(F)$-conjugacy classes of embeddings $\fa':T\hra G$,
which are stably conjugate to $\fa_0:T\hra G$, and $\La$-conjugacy classes in
$\La \ov{u}_{T,\varphi}\subseteq \La\rtimes\lan \ov{u}_{T,\varphi}\ran$.

%Moreover, the last set coincide with $\La_{\si}\bs\bs\La\ov{w}_{T,\varphi}$ by \rl{finell}.

\smallskip

In other words, the right hand side of \form{3} can be
written in the form
\begin{equation} \label{Eq:fa}
\sum_{\fa}(\sum_{\fa'}\Tr_{\gen}(\wt{u}_{\fa',\varphi},H_*(\Fl_{\gm},\C{F}_{\theta,\varphi})_{\ka})),
\end{equation}
where

\smallskip

\quad\quad$\bullet$ $\fa$ runs over a set of representatives of $G(F)$-conjugacy classes of embeddings $T\hra G$
which are stably conjugate to $\fa_0$;

\smallskip

\quad\quad$\bullet$ $\fa'$ runs over a set of representatives of $G^{\sc}(F)$-conjugacy classes of embeddings $T\hra G$
which are $G(F)$-conjugate to $\fa$.

\smallskip
\noindent{\bf Step 8.}
By construction (see Section~\re{interpr}(f)) for every embedding $\fa':T\hra G$, which is stably conjugate to $\fa_0$, we have
$\wt{u}_{\fa',\varphi}=\la_{\fa'}\wt{u}_{T,\varphi}$ with $\la_{\fa'}\in\La$. Then by definition (see Section~\re{notkappa}(d)),
 we have an
equality
\begin{equation} \label{Eq:fa1}
\Tr_{\gen}(\wt{u}_{\fa',\varphi},H_*(\Fl_{\gm},\C{F}_{\theta,\varphi})_{\ka})=
\lan\ka,\la_{\fa'}\ran \Tr_{\gen}(\wt{u}_{\fa',\varphi},H_*(\Fl_{\gm},\C{F}_{\theta,\varphi})).
\end{equation}

\smallskip
Moreover, we have equalities
\begin{equation} \label{Eq:fa2}
\lan\ka,\la_{\fa'}\ran =\lan\ka,\inv([\wt{u}_{T,\varphi}],[\wt{u}_{\fa',\varphi}])\ran=  \lan\ka,\inv(\fa_0,\fa')\ran,
\end{equation}
the first of which follows from the fact that $\inv([\wt{u}_{T,\varphi}],[\wt{u}_{\fa',\varphi}])\in\La_{\ov{u}_{T,\varphi}}$
(see Section~\re{remstconj}(c)) is the projection of $\la_{\fa'}$, and the second one follows from \rl{stconj}(b).

\smallskip

Combining equalities \form{fa1} and \form{fa2} with \rt{dl} we see that the expression \form{fa} is equal
to $\sum_{\fa}\lan\ka,\inv(\fa_0,\fa)\ran f_{T_{\fa},\theta_{\fa}}(\gm)=
f^{\ka}_{T,\theta}(\gm)$. This completes the proof of the theorem.
%\end{proof}

\subsection{Proof of \rt{stab}}
We start the proof with the following observations:

\begin{Emp} \label{E:dual}
{\bf The canonical maps of Section~\re{setup comp}(c) revisited.}

\smallskip

(a) Recall that for every torus $S$ over $K$, the group
algebra $\qlbar[X_*(S)]$ is naturally isomorphic to the algebra of
regular functions $\qlbar[\wh{S}]$, where $\wh{S}$ is the dual
torus of $S$ defined over $\qlbar$. Therefore we have natural isomorphisms
\[
\qlbar[X_*(S)_{\Gm_K}]\simeq \qlbar[X_*(S)]_{\Gm_K}\simeq
\qlbar[\wh{S}]_{\Gm_K}\simeq \qlbar[\wh{S}^{\Gm_K}].
\]

\smallskip

(b) By part~(a) and Section~\re{pi0}, we have a canonical isomorphism
\[
\qlbar[\pi_0(LS)]\isom\qlbar[X_*(S)_{\Gm_K}]\isom\qlbar[\wh{S}^{\Gm_K}].
\]

\smallskip

(c) The dual torus $\wh{T_G}$ is canonically isomorphic the
abstract Cartan $T_{\wh{G}}$ of the dual group $\wh{G}$.  Therefore we have canonical isomorphisms of $\qlbar$-algebras
\[
\qlbar[X_*(T_G)]\simeq \qlbar[\wh{T_G}]\simeq\qlbar[T_{\wh{G}}],
\]
which induces an isomorphism
\[
\qlbar[\La_G]^{\ov{W}}\simeq \qlbar[X_*(T_G)]^{W_G}\simeq\qlbar[T_{\wh{G}}]^{W_{\wh{G}}}\simeq\qlbar[T_{\wh{G}}/W_{\wh{G}}]\simeq\qlbar[c_{\wh{G}}],
\]
where the first isomorphism is from \form{can}, and $c_{\wh{G}}$ is the Chevalley space of $\wh{G}$.

\smallskip

(d) By the identification of the parts~(b) and (c), for every $\gm\in G^{\rss}(K)$, the canonical morphism $\pr_{\gm}:\qlbar[\La_G]^{\ov{W}}\to\qlbar[\pi_0(LG^0_{\gm})]$ defined
in Section~\re{setup comp}(c) can be viewed as a morphism $\qlbar[c_{\wh{G}}]\to \qlbar[\wh{G^0_{\gm}}^{\Gm_K}]$, hence comes from a morphism of affine algebraic varieties
\begin{equation} \label{Eq:can2}
\wh{G^0_{\gm}}^{\Gm_K}\to c_{\wh{G}}
\end{equation}
Furthermore, it is easy to see that the morphism \form{can2} is noting but the restriction $\can_{\gm}|_{\wh{G^0_{\gm}}^{\Gm_K}}$, where
$\can_{\gm}:\wh{G^0_{\gm}}\to c_{\wh{G}}$ was defined in Section~\re{connected}(a).

\smallskip

(e) By part~(d), morphism $\pr'_{\gm}$ from  Section~\re{setup comp}(c) comes from a morphism $\can'_{\gm}|_{\wh{G^0_{\gm}}^{\Gm_K}}$, where
$\can'_{\gm}=\can_{\gm}\circ\iota$ and $\iota:\wh{G^0_{\gm}}\to \wh{G^0_{\gm}}$ is the inverse map $g\mapsto g^{-1}$.

\end{Emp}

\begin{Emp} \label{E:sheafrep}
{\bf Correspondence between representations and sheaves.}

\smallskip

(a) For every finitely generated commutative $\qlbar$-algebra $A$,
there is a natural equivalence of categories $M\mapsto \C{F}_M$ between $A$-modules and (coherent) quasi-coherent
sheaves on $\Spec A$. Moreover, $M$ is finitely-generated if and only if $\C{F}_M$ is coherent.

\smallskip

(b) By part~(a), for every  finitely-generated abelian group $\Theta$,
there is a natural equivalence of categories between
representations of $\Theta$ over $\qlbar$ and quasi-coherent sheaves on $\Spec\qlbar[\Theta]$.

\smallskip

(c) Let $f:A\to B$ be a homomorphism   finitely-generated
commutative $\qlbar$-algebras, and let $\Spec f$ be the
corresponding morphism $\Spec B\to \Spec A$. Then the restriction
functor  $f^*:\Mod (B)\to \Mod(A)$ corresponds via the
correspondence of part~(a) to the push-forward of quasi-coherent
sheaves.

\smallskip

(d) Note that the multiplication map $m:A\otimes_{\qlbar} A\to A$ is an algebra homomorphism, and
the restriction functor  $m^*:\Mod (A)\to \Mod(A\otimes_{\qlbar}A)$ corresponds via the correspondence of part~(a)
to the push-forward $\Dt_*$ of quasi-coherent sheaves, where $\Dt:\Spec(A)\to \Spec(A)\times\Spec(A)$ is the diagonal map.

\smallskip

(e) Using Sections~\re{dual}(b),(c),  the correspondence of parts~(a),(b) gives an
equivalence between

\smallskip

\quad\quad$\bullet$ representations of $\pi_0(L{G}_{\gm})$ over $\qlbar$ and quasi-coherent
sheaves on $\wh{G^0_{\gm}}^{\Gm_K}$;

\smallskip

\quad\quad$\bullet$ representations of $\La_G$ over $\qlbar$ and  quasi-coherent sheaves on
$T_{\wh{G}}$;

\smallskip

\quad\quad$\bullet$ $\qlbar[\La_G]^{\ov{W}}$-modules and quasi-coherent sheaves
on $c_{\wh{G}}$.
\end{Emp}

\begin{Emp} \label{E:ggmaction}
{\bf Observations.}

\smallskip

(a) Using \rp{whaction}(d) one shows that for every local system $\C{L}$ on $\ov{T}_G$, the natural (left) action of $LG^0_{\gm}$ on $\Fl_G$ induces an action on $H_i(\Fl_{G,\gm},\C{F}_{\C{L}})$, commuting with the $\La_G$-action. Moreover, the action of $LG^0_{\gm}$ on $H_i(\Fl_{G,\gm},\C{F}_{\C{L}})$ factors though the action of
$\pi_0(LG^0_{\gm})$ (compare Section~\re{cent}(c)).

\smallskip

(b) As in Section~\re{notkappa}(d), for every character $\ka:\La_G\to\qlbar\m$ we can form a representation  $H_i(\Fl_{G,\gm},\C{F}_{\C{L}})_{\ka}$ of $\La_G$. Moreover, taking tensor product of the action of $\pi_0(LG^0_{\gm})$ on $H_i(\Fl_{G,\gm},\C{F}_{\C{L}})$ from part~(a) and the trivial action on $(\qlbar)_{\ka}$, we gen an action of $LG^0_{\gm}$  on $H_i(\Fl_{G,\gm},\C{F}_{\C{L}})_{\ka}$, commuting with the $\La_G$-action. In particular, we have an action of  $\pi_0(LG^0_{\gm})$ on each $H_j(\La_G, H_i(\Fl_{G,\gm},\C{F}_{\C{L}})_{\ka})$.

\smallskip

(c) Recall that $H_i(\Fl_{G,\gm},\C{F}_{\C{L}})$ a finitely-generated
$\qlbar[\La_G]$-module (see \rp{fingen}, \rl{fingen} and the observation of Section~\re{cent}(a)).
Therefore each $H_j(\La_G, H_i(\Fl_{G,\gm},\C{F}_{\C{L}})_{\ka})$ is finite-dimensional over $\qlbar$.

\smallskip

(d) By parts~(b),(c) and the correspondence of Section~\re{sheafrep}, $H_j(\La_G, H_i(\Fl_{G,\gm},\C{F}_{\C{L}})_{\ka})$
corresponds to a coherent sheaf $\C{A}$ on $\wh{G^0_{\gm}}^{\Gm_K}$.

\smallskip

(e) Using isomorphism of Section $\qlbar[\La_G]\simeq \qlbar[T_{\wh{G}}]$, character $\ka:\La_G\to\qlbar\m$ gives rise to a point
$\ka\in T_{\wh{G}}$, hence to a point $\nu_{\wh{G}}(\ka)\in c_{\wh{G}}$.

\end{Emp}

The following claim is crucial for what follows.

\begin{Cl} \label{C:support}
The coherent sheaf $\C{A}$ on $\wh{G^0_{\gm}}^{\Gm_K}$ defined in Section~\re{ggmaction}(d)
is supported on a finite set $\on{can}_{\gm}^{-1}(\nu_{\wh{G}}(\ka))\cap \wh{G^0_{\gm}}^{\Gm_K}$.
\end{Cl}

\begin{proof} We  present the argument in steps.

\smallskip

\noindent{\bf Step 1.} Let $\{F^rH_i(\Fl_{G,\gm},\C{F}_{\C{L}})\}_r$ be the filtration of
$H_i(\Fl_{G,\gm},\C{F}_{\C{L}})$ from \rt{action}. Then each graded
piece $V:=\gr^rH_i(\Fl_{G,\gm},\C{F}_{\C{L}})$ is a representation of the product
$\pi_0(LG^0_{\gm})\times\La_G$, hence the vector space $H_j(\La_G,V_{\ka})$
is a representation $\pi_0(LG^0_{\gm})$. Thus, by the correspondence of Section~\re{sheafrep}, $H_j(\La_G,V_{\ka})$
corresponds to a coherent sheaf $\C{W}$ on $\wh{G^0_{\gm}}^{\Gm_K}$, and it suffices to show that $\C{W}$ is
supported on $\on{can}_{\gm}^{-1}(\nu_{\wh{G}}(\ka))$.

%Notice first that we have a natural identification
%$H_*(\La_G,V_{\ka})\simeq V\otimes^L_{\qlbar[\La_G]}\ka^{-1}$, hence an identification
%$H_j(\La_G,V_{\ka})\simeq \Tor_j^{\qlbar[\La_G]}(V,\ka^{-1})$.

\smallskip

\noindent{\bf Step 2.} Recall that $V$ is a $\qlbar[\pi_0(LG^0_{\gm})\times\La_G]$-module, that is, a $\qlbar[\pi_0(LG^0_{\gm})]\otimes_{\qlbar}\qlbar[\La_G]$-module.
In particular, $V$ has a natural structure of a $\qlbar[\La_G]$-module, hence a $\qlbar[\La_G]\otimes_{\qlbar}\qlbar[\La_G]$-module
(see Section~\re{sheafrep}(d)). Moreover, it follows from \rt{action} that the action of a subalgebra
$\qlbar[\La_G]^{\ov{W}}\otimes_{\qlbar}\qlbar[\La_G]\subseteq\qlbar[\La_G]\otimes_{\qlbar}\qlbar[\La_G]$ on $V$ is induced from the action of
$\qlbar[\pi_0(LG^0_{\gm})]\otimes_{\qlbar}\qlbar[\La_G]$ via homomorphism $\pr'_{\gm}\otimes\Id$ (see Section~\re{dual}(e)).

\smallskip

\noindent{\bf Step 3.} Now we apply the correspondence Section~\re{sheafrep}. Namely,

\smallskip

\quad$\bullet$ $V$ viewed as a $\qlbar[\pi_0(LG^0_{\gm})]\otimes_{\qlbar}\qlbar[\La_G]$-module corresponds to a quasi-coherent sheaf
$\C{V}$ on $\wh{G^0_{\gm}}^{\Gm_K}\times T_{\wh{G}}$;

\smallskip

\quad $\bullet$ $V$ viewed as $\qlbar[\La_G]$-module corresponds to a quasi-coherent sheaf
$\C{U}:=(\pr_2)_*(\C{V})$ on $T_{\wh{G}}$;

\smallskip

\quad $\bullet$ $V$ viewed as $\qlbar[\La_G]\otimes_{\qlbar}\qlbar[\La_G]$-module corresponds to a quasi-coherent sheaf
$\Dt_*(\C{U})$ on $T_{\wh{G}}\times T_{\wh{G}}$;

\smallskip

\quad $\bullet$ the compatibility assertion, mentioned above, implies that we have an isomorphism
\begin{equation} \label{Eq:compat}
(\on{can}'_{\gm}\times\Id)_*(\C{V})\simeq(\nu_{\wh{G}}\times\Id)_*\Dt_*(\C{U})
\end{equation}
of quasi-coherent sheaves on $c_{\wh{G}}\times T_{\wh{G}}$.

\smallskip

\noindent{\bf Step 4.} For every $t\in T_{\wh{G}}$ we denote by $m_t:T_{\wh{G}}\to T_{\wh{G}}$ the multiplication map by $t$ and by
$i_t:\on{pt}\to T_{\wh{G}}$ the embedding of a closed point with image $t\in T$.

Then the $\qlbar[\pi_0(LG^0_{\gm})]\otimes_{\qlbar}\qlbar[\La_G]$-module $V_{\ka}$
corresponds to the quasi-coherent sheaf $(\Id\times m_{\ka})_*(\C{V})\simeq(\Id\times m_{\ka^{-1}})^*(\C{V})$,  hence the $\qlbar[\pi_0(LG^0_{\gm})]$-module
$H_j(\La_G,V_{\ka})$ corresponds to the derived pullback
\[
\C{W}:=R^j(\Id\times i_1)^*(\Id\times m_{\ka^{-1}})^*(\C{V})\simeq R^j(\Id\times i_{\ka^{-1}})^*(\C{V}),
\]
and we claim that $\C{W}$ is supported at $\on{can}_{\gm}^{-1}(\nu_{\wh{G}}(\ka))=\on{can}'^{-1}_{\gm}(\nu_{\wh{G}}(\ka^{-1}))$.
It suffices to show that the pushforward $(\on{can}'_{\gm})_*(\C{W})$ is supported at a closed point $\nu_{\wh{G}}(\ka^{-1})$.

\smallskip

\noindent{\bf Step 5.} Using base change and equation \form{compat}, we have an isomorphism
\begin{equation} \label{Eq:compat1}
(\on{can}'_{\gm})_*(\C{W})\simeq R^j(\Id\times i_{\ka^{-1}})^*(\nu_{\wh{G}}\times\Id)_*\Dt_*(\C{U}),
\end{equation}
so it remains to show that the RHS of \form{compat1} is supported at $\nu_{\wh{G}}(\ka^{-1})$.

But this is easy: the pushforward $(\nu_{\wh{G}}\times\Id)_*\Dt_*(\C{U})$ is supported at the graph $\Gm_{\nu_{\wh{G}}}\subseteq c_{\wh{G}}\times T_{\wh{G}}$, so the RHS of \form{compat1} is supported at
\[
\pr_1(\Gm_{\nu_{\wh{G}}}\cap \left(c_{\wh{G}}\times\{\ka^{-1}\}\right))=\pr_1(\{(\nu_{\wh{G}}(\ka^{-1}),\ka^{-1})\})=\{\nu_{\wh{G}}(\ka^{-1})\}.
\]
\end{proof}

%\begin{Emp} \label{E:action}
%(a) Note first that the action of $G_{\gm}(F^{\nr})$ on
%$\Fl_{\gm}$ naturally extends to the action of the groups
%scheme $\un{G}_{\gm}$ (Explain?). In particular, the action of
%$G_{\gm}(F^{\nr})$ on  $V^i_{\gm}(\C{L}^{\st}_{\theta})$ extends to the
%action of  $\un{G}_{\gm}$, thus gives size to the action of the
%group of connected components $\pi_0(\un{G}_{\gm})$. Again the
%action of  $\pi_0(\un{G}_{\gm})$ commutes with the action of
%$\wt{W}_G$.

%(b) By (a), each $V^i_{\gm}(\C{L}^{\st}_{\theta})$ is equipped with an
%action of  the group algebra $\qlbar[\pi_0(\un{G}_{\gm})]$. Using
%homomorphism $\pr'_{\gm}$ from Section~\re{dual}(d),
%$V^i_{\gm}(\C{L}^{\st}_{\theta})$ is therefore is equipped with an
%action of $\qlbar[\La_G]^{W}$.

%(c) On the other hand, each $V^i_{\gm}(\C{L}^{\st}_{\theta})$ is
%equipped with an $\wt{W}_G$-action, so it is equipped with an
%action of $\qlbar[\wt{W}_G]$-action, hence with an action of its
%subalgebra  $\qlbar[\La_G]^{W}$.
%\end{Emp}

\begin{Emp} \label{E:obsstab}
{\bf Observations.}  Consider semidirect product $\pi_0(LG^0_{\gm})\ltimes \pi_G^{-1}(\lan\ov{u}_{T,\varphi}\ran)$ given by the formula $\wt{u}\la\wt{u}^{-1}={}^{\si}\la$ for every $\wt{u}\in\pi_G^{-1}(\ov{u}_{T,\varphi})$ and $\la\in\pi_0(LG^0_{\gm})$.

\smallskip

(a) Using \rp{whaction}(d) and the commutative diagram of Section~\re{homaffspr}(a), there exists a unique action of $\pi_0(LG^0_{\gm})\ltimes \pi_G^{-1}(\lan\ov{u}_{T,\varphi}\ran)$ on $H_i(\Fl_{G,\gm},\C{F}_{\theta,\varphi})$, extending the action of $\pi_0(LG^0_{\gm})$ from Section~\re{setup comp}(a) and the action of $\pi_G^{-1}(\lan\ov{u}_{T,\varphi}\ran)$ from Section~\re{ltheta}(f).

\smallskip

(b) Character $\ka$ from Section~\re{notkappa}(c) uniquely extends to a character of the semidirect product $\pi_0(LG^0_{\gm})\ltimes\pi_G^{-1}(\lan\ov{u}_{T,\varphi}\ran)$ trivial on $\pi_0(LG^0_{\gm})$. Then, as in
Section~\re{notkappa}(d), we can form a representation $H_i(\Fl_{G,\gm},\C{F}_{\theta,\varphi})_{\ka}$ of
$\pi_0(LG^0_{\gm})\ltimes\pi_G^{-1}(\lan\ov{u}_{T,\varphi}\ran)$, making $H_j(\La_G, H_i(\Fl_{G,\gm},\C{F}_{\theta,\varphi})_{\ka})$ becomes a representation of the semi-direct product $\pi_0(LG^0_{\gm})\ltimes\lan\ov{u}_{T,\varphi}\ran$, where $\ov{u}_{T,\varphi}\cdot\la\cdot \ov{u}_{T,\varphi}^{-1}={}^{\si}\la$ for every $\la\in \pi_0(LG^0_{\gm})$.

\smallskip

(c) Let $\gm'$ be a stable conjugate of $\gm$, and $g\in G(K)$ be such that
$g\gm g^{-1}=\gm'$. Using \rp{whaction}(d), we can tensor the isomorphism of $\La_G$-representations
$l_g:H_i(\Fl_{G,\gm},\C{F}_{\C{L}})\isom H_i(\Fl_{G,\gm'},\C{F}_{\C{L}})$ with the identity map of $(\qlbar)_{\ka}$, thus getting an isomorphism $l_g:H_i(\Fl_{G,\gm},\C{F}_{\C{L}})_{\ka}\isom H_i(\Fl_{G,\gm'},\C{F}_{\C{L}})_{\ka}$ of $\La_G$-representations, hence an isomorphism
\[
l_g:H_j(\La_G, H_i(\Fl_{G,\gm},\C{F}_{\C{L}})_{\ka})\isom H_j(\La_G, H_i(\Fl_{G,\gm'},\C{F}_{\C{L}})_{\ka}).
\]
Since $({}^{\si}g)\gm ({}^{\si}g)^{-1}=\gm'$ we similarly get an isomorphism $l_{{}^{\si}g}$. Moreover, arguing as in part~(b), these isomorphisms are connected by the identity $\ov{u}_{T,\varphi}\circ l_g \circ\ov{u}_{T,\varphi}^{-1}=l_{{}^{\si}g}$.
\end{Emp}

\begin{Emp} \label{E:funcdl}
{\bf Construction.}

\smallskip

(a) Consider reductive group $G_1:=G\times^{Z(G)}T$, that is,  $G_1:=(G\times T)/Z(G)$, where $Z(G)$ acts on $G\times T$ by the formula
$(g,t)z=(gz,z^{-1}t)$, and let $\pi:G\hra G_1$ be the isogeny $g\mapsto [g,1]$.
Then $T_1:=T\times^{Z(G)}T\subseteq G\times^{Z(G)}T=G'$ is a maximal elliptic torus. Clearly, $T_1$ splits of $F^{\nr}$,
so $G_1$ splits over $F^{\nr}$.

\smallskip

(b) Note that $\pi$ gives rise to an embedding $\pi:T\hra T_1:t\mapsto[t,1]$, which  has a left inverse $m:[a,b]\mapsto ab$.
Therefore the induced map $\pi_*:H^1(F,T)\to H^1(F,T_1)$ is injective. Similarly, the induced map $\wh{\pi}:\wh{T_1}\to\wh{T}$ has a right inverse $\wh{m}:\wh{T}\to\wh{T_1}$ both of which
are $\Gm_F$-equivariant. In particular, every $\ka\in\wh{T}^{\Gm_F}$ gives rise to an element $\ka_1:=\wh{m}(\ka)\in \wh{T_1}^{\Gm_F}$, and we have  $\wh{\pi}(\ka_1)=\ka$.

\smallskip

(c) We choose a tamely ramified character $\theta_1:T_1(F)\to \qlbar\m$, extending $\theta$. Then $\theta_1$ is in general position,
since $\theta$ is such. So $(\fa_1,\theta_1)$ satisfy the assumptions of Section~\re{DLpadic}, so as in Section~\re{fst}(b) we can form a $\Ad G_1(F)$-invariant function $f^{\ka_1}_{T_1,\theta_1}$ on $G_1^{\rss}(F)$.
\end{Emp}

\begin{Prop} \label{P:restr}
In the situation of Section~\re{funcdl}, the pullback $\pi^*(f^{\ka_1}_{T_1,\theta_1})$ to $G^{\rss}(F)$ equals $f^{\ka}_{T,\theta}$.
\end{Prop}

\begin{proof} To expose the structure of the proof we divide it into steps.

\begin{Emp} \label{E:Step1}
Every embedding $\fa:T\hra G$ stably conjugate to the inclusion $T\hra G$ gives rise to the embedding $\fa_1:T_1\hra G_1$
stably conjugate to the inclusion, given by the formula $\fa_1([a,b])=(\fa(a),b)$. Furthermore,
the assignment $\fa\mapsto \fa_1$ is a bijection, where the inverse map is the restriction.

\smallskip

Moreover, for  two stably conjugate embeddings $\fa,\fa':T\hra G$ with the corresponding embeddings
$\fa_1,\fa'_1:T_1\hra G_1$ the relative positions $\inv(\fa,\fa')\in H^1(F,T)$ and  $\inv(\fa_1,\fa'_1)\in H^1(F,T_1)$
are connected by the identity
\[
\pi_*(\inv(\fa,\fa'))=\inv(\fa_1,\fa'_1).
\]

\smallskip

Since $\pi_*$ is injective (by Section~\re{funcdl}(b)),
we thus conclude that two embeddings $\fa,\fa':T\hra G$ are $G(F)$-conjugate if and only if embeddings $\fa_1,\fa'_1:T_1\hra G_1$ are $G_1(F)$-conjugate.

\end{Emp}

\begin{Emp} \label{E:Step2}
By Section~\re{Step1} it suffices to show that for every embedding  $\fa:T\hra G$ stably conjugate to the inclusion $T\hra G$ with the corresponding embedding $\fa_1:T_1\hra G_1$ the pullback $\pi^*(f_{\fa_1,\theta_1})$ equals $f_{\fa,\theta}$, that is, for every $\gm\in G^{\rss}(F)$ we have an equality
\[
f_{\fa,\theta}(\gm)=f_{\fa_1,\theta_1}(\pi(\gm)).
\]

\smallskip
We will show this assertion in Section~\re{Step3}, using an analogous assertion for Deligne--Lusztig representations over finite fields shown in
\rl{funcdl} and two identities shown in \rl{identities}.
\end{Emp}

\begin{Lem} \label{L:funcdl}
Let $\pi:H\to H_1$ be a quasi-isogeny of connected reductive groups over $\fq$, $S_1\subseteq H_1$ a maximal torus over $\fq$, and $\zeta_1: S_1(\fq)\to\qlbar\m$ a character.

Let $S:=\pi^{-1}(S_1)\subseteq H$ be the induced maximal torus of $H$,
and let $\zeta:=\zeta_1|_{S(\fq)}$ be the pullback of $\zeta_1$. Then the Deligne--Lusztig virtual representation $R^{\zeta}_{S}$ of $H(\fq)$ is isomorphic to the pullback of $R^{\zeta_1}_{S_1}$.
\end{Lem}
\begin{proof}
Note that (see \cite[Definition~11.1]{DM}) the virtual representation $R^{\zeta}_{S}$ can be described as the image of character $\theta\in \Rep S(\fq)$ under the Lusztig induction functor, given by the virtual $(H(\fq),S(\fq))$-bimodule $V:=H^*_c (\C{L}_H^{-1}(U),\qlbar)$, where $B=SU\supseteq S$ is a Borel subgroup defined over $\fqbar$, $\C{L}_H:H\to H$ is the Lang isogeny $h\mapsto h^{-1}{}^{\si}h$, and the action of
$H(\fq)\times S(\fq)$ on $V$ is induced by the action $(h,s)(x)=hxs^{-1}$ on $\C{L}_H^{-1}(U)\subseteq H$.

\smallskip

Similarly, $R^{\zeta_1}_{S_1}$ is isomorphic to the image of $\theta_1$ by the functor, given by the virtual $(H_1(\fq),S_1(\fq))$-bimodule $V_1:=H^*_c (\C{L}_{H_1}^{-1}(U_1),\qlbar)$, so  we have to show that
the $(H(\fq),S_1(\fq))$-bimodules $\Ind_{S(\fq)}^{S_1(\fq)}(V)$ and $\on{Res}_{H_1(\fq)}^{H(\fq)}(V_1)$ are isomorphic.

\smallskip

Note that quasi-isogeny $\pi:H\to H_1$ induces an isomorphism $\pi|_{U}:U\isom U_1$. Hence it induces an $(H(\fq)\times S(\fq)$-equivariant map $\phi:\C{L}_H^{-1}(U)\to\C{L}_{H_1}^{-1}(U_1)$, hence an $(H(\fq)\times S_1(\fq))$-equivariant map
$\phi':\C{L}_H^{-1}(U)\times^{S(\fq)}{S_1(\fq)}\to \C{L}_{H_1}^{-1}(U_1)$,
and it suffices to show that $\phi'$ is an isomorphism.

\smallskip

Moreover, since $\phi'$ is $S_1(\fq)$-equivariant, and $S_1(\fq)$ acts freely on both sides, we can divide by the action of $S_1(\fq)$, so it suffices to show that the induced map
\[
\ov{\phi}:\C{L}_H^{-1}(U)/S(\fq)\to\C{L}_{H_1}^{-1}(U_1)/S_1(\fq)
\]
is an isomorphism. Since Lang isogenies $\C{L}_H$ and $\C{L}_{H_1}$ are  \'etale, and $\pi|_{U}$ is an isomorphism, we conclude that the map $\phi$ is \'etale. Hence $\ov{\phi}$ is \'etale, so it suffices to check that it induces a bijection on $\fqbar$-points.

\smallskip

To show the injectivity of $\ov{\phi}$, note that the map $\ov{\pi}:H/S\to H_1/S_1$, induced by $\pi$, is an isomorphism. It remains to show that the map $\C{L}_H^{-1}(U)/S(\fq)\to H/S$, induced by the inclusion $\C{L}_H^{-1}(U)\hra H$, is injective. Explicitly, we have to show that
\begin{equation} \label{Eq:fqrational}
\text{if }h\in \C{L}_H^{-1}(U)(\fqbar)\text{ and }s\in S(\fqbar)\text{ satisfy }hs\in \C{L}_H^{-1}(U),\text{ then }s\in S(\fq).
\end{equation}

Since $h\in \C{L}_H^{-1}(U)$, we have
\begin{equation} \label{Eq:lang}
(hs)^{-1}{}^{\si}(hs)=s^{-1}(h^{-1}{}^{\si}h){}^{\si}s\in s^{-1}U{}^{\si}s=U(s^{-1}{}^{\si}s),
\end{equation}
so our assumption $hs\in \C{L}_H^{-1}(U)$ implies that $s^{-1}{}^{\si}s=1$, thus $s\in S(\fq)$.

\smallskip

To show the surjectivity of $\ov{\phi}$, we can assume that $\pi$ is the projection $H\to H^{\ad}$. Choose any $h_1\in \C{L}_{H_1}^{-1}(U_1)(\fqbar)$ and $h\in\pi^{-1}(h_1)\subseteq H(\fqbar)$. Then we have $\pi(h^{-1}{}^{\si}h)=h_1^{-1}{}^{\si}h_1\in U_1$, thus $h^{-1}{}^{\si}h\in Uz$ for some $z\in Z(H)(\fqbar)\subseteq S(\fqbar)$.

By Lang's theorem, there exists $s\in S(\fqbar)$ such that $s^{-1}{}^{\si}s=z^{-1}$. Then we have $h^{-1}{}^{\si}h\in U(s{}^{\si}(s^{-1}))$ and arguing as in \form{lang} we conclude that $(hs)^{-1}{}^{\si}(hs)\in U$, that is, $hs\in \C{L}_{H}^{-1}(U)$. Then $\pi(hs)=h_1\pi(s)\in \C{L}_{H_1}^{-1}(U_1)$ so we conclude from assertion \form{fqrational} that $\pi(s)\in S_1(\fq)$, thus $\ov{\phi}([hs])=[h_1\pi(s)]=[h_1]$, as claimed.
\end{proof}

\begin{Lem} \label{L:identities}
We claim that we have identities
$G(F)\cdot(\wt{G_1})_{\fa_1}=G_1(F)$ and $G(F)\cap (\wt{G_1})_{\fa_1}=\wt{G}_{\fa}$.

\end{Lem}
\begin{proof}
We have to show that the composition $c:(\wt{G_1})_{\fa_1}\hra G_1(F)\overset{\pr}{\lra}G_1(F)/G(F)$ is surjective and its kernel is
$\wt{G}_{\fa}$.

Since torus $\fa(T)$ is unramified, the torus $\fa(T)/Z(G)\simeq G_1/G$ is unramied. Thus, as in \cite[Lemma~1.8.17(b)]{KV}, an exact sequence $1\to G\to G_1\to G_1/G\to 1$ gives rise to an exact sequence
\begin{equation} \label{Eq:exact}
1\to G_{\fa}\to (G_1)_{\fa_1}\to (G_1/G)(\C{O})\to 1.
\end{equation}

Since $\fa(T)\subseteq G$ is elliptic, the torus $\fa(T)/Z(G)\simeq G_1/G$ is unisotropic, thus
$(G_1/G)(\C{O})=(G_1/G)(F)$ (see \cite[proof of Lemma~2.2.6]{KV}).
So the surjectivity of the projection $(G_1)_{\fa_1}\to(G_1/G)(\C{O})=(G_1/G)(F)$ implies the surjectivity of the composition
$(G_1)_{\fa_1}\hra G_1(F)\overset{\pr}{\lra} G_1(F)/G(F)$, hence the surjectivity of $c$.

\smallskip

Next let $Z_0\subseteq Z(G)^0$ be the maximal split torus, and we claim that $\wt{G}_{\fa}=G_{\fa}\cdot Z_0(F)$. Indeed,
this follows from equalities $\wt{G_{\fa}}=G_{\fa}\cdot \fa(T)(F)$ (see \cite[Corollary~2.2.7(a)]{KV}), $\fa(T)(F)=\fa(T)(\C{O})\cdot Z_0(F)$
(see \cite[Lemma~2.2.6]{KV}) and inclusion $\fa(T(\C{O}))\subseteq G_{\fa}$ (see Section~\re{DLpadic}(a)). Moreover, since $\fa(T)\subseteq G$ is an elliptic torus, we conclude that $Z_0\subseteq Z(G_1)^0$ is a maximal split torus as well, hence $(\wt{G_1})_{\fa_1}=(G_1)_{\fa_1}\cdot Z_0(F)$.

\smallskip

Write $g_1\in \Ker c\cap (\wt{G_1})_{\fa_1}$ in the form $g_1=gz$, where $g\in (G_1)_{\fa_1}$ and $z\in Z_0(F)$.
Then $z\in Z_0(F)\in G(F)\subseteq\Ker c$, so $g\in \Ker c\cap (G_1)_{\fa_1}$. Then, by the exact sequence \form{exact} we have
$g\in G_{\fa}$, thus $g_1\in \wt{G}_{\fa}$.
\end{proof}

\begin{Emp} \label{E:Step3}
Now we are ready to finish the proof of \rp{restr}.
Recall (see Section~\re{step1}(b)) that for every $\gm\in G^{\rss}(F)$ we have an equality
\[
f_{\fa,\theta}(\gm)=\sum_{g\in G(F)/\wt{G}_{\fa}}t_{\fa,\theta}(g^{-1}\gm g),
\]
where the sum of the right hand side stabilizes uniformly, and similarly
\[
f_{\fa_1,\theta_1}(\gm)=\sum_{g\in G_1(F)/(\wt{G_1})_{\fa_1}}t_{\fa_1,\theta_1}(g^{-1}\gm g).
\]

On the other hand, it follows from equalities of \rl{identities} that the natural map $G(F)/\wt{G}_{\fa}\to G_1(F)/(\wt{G_1})_{\fa_1}$ is a bijection, so (by Section~\re{Step2}) it remains to show that function $t_{\fa,\theta}:G(F)\to\qlbar$ is the restriction of $t_{\fa_1,\theta_1}:G_1(F)\to\qlbar$.

\smallskip

Using the second equality of \rl{identities} it suffices to show that the restriction $R^{\theta_1}_{\fa_1}|_{\wt{G}_{\fa}}$ is isomorphic to $R^{\theta}_{\fa}$. By definition, the restriction $R^{\theta_1}_{\fa_1}|_{Z(G)(F)}$ is $\theta_1|_{Z(G)(F)}=\theta|_{Z(G)(F)}$, while the restriction $R^{\theta_1}_{\fa_1}|_{G_{\fa}}$ is the pullback of the inflation of virtual representation $R^{\ov{\theta}_1}_{\ov{\fa}_1}$
of $M_{\fa_1}(\fq)$ with respect to the inclusion $G_{\fa}\hra (G_1)_{\fa_1}$. Alternatively, $R^{\theta_1}_{\fa_1}|_{G_{\fa}}$ is isomorphic to the inflation of the restriction $R^{\ov{\theta}_1}_{\ov{\fa}_1}|_{M_{\fa}(\fq)}$, corresponding to the injective quasi-isogeny $M_{\fa}\hra M_{\fa_1}$. Since  $R^{\ov{\theta}_1}_{\ov{\fa}_1}|_{M_{\fa}(\fq)}\simeq R^{\ov{\theta}}_{\ov{\fa}}$ by \rl{funcdl}, the restriction
$R^{\theta_1}_{\fa_1}|_{G_{\fa}}$ is thus isomorphic to the inflation of $R^{\ov{\theta}}_{\ov{\fa}}$, as claimed.
\end{Emp}
\end{proof}

Now we are ready to prove \rt{stab}.

\begin{Emp}
\begin{proof}[Proof of \rt{stab}]  To expose the structure of the proof we divide it into steps.

\smallskip

\noindent{\bf Step 1.} It suffices to show that the restriction of  $f^{\ka}_{T,\theta}$ to $G^{\rss}(F)_c$ of compact  regular semisimple elements in $G(F)$ is $\C{E}_{T,\ka}$-stable.

\begin{proof}
Note that for every $\gm\in G^{\rss}(F)$ and $z\in Z(G)(F)$, we have an equality $f_{T,\theta}(z\gm)=\theta(z)\cdot f_{T,\theta}(\gm)$, hence
\begin{equation} \label{Eq:center}
f^{\ka}_{T,\theta}(z\gm)=\theta(z)\cdot f^{\ka}_{T,\theta}(\gm).
\end{equation}
 Moreover, we have $G^0_{z\gm}=G^0_{\gm}$, and if $\gm'$ is stably conjugate to $\gm$, then $z\gm'$ is stably conjugate to $z\gm$ and we have an equality $\inv(z\gm',z\gm)=\inv(\gm',\gm)$. Now the assertion follows from the fact that each $f_{\fa,\theta}$ is supported on $\wt{G}_{\fa}=G_{\fa}\cdot Z(G)(F)$.
 \end{proof}

\noindent{\bf Step 2.} We can assume that $Z(G)$ is connected.

\begin{proof}
Consider the quasi-isogeny $\pi:G\hra G_1$ from Section~\re{funcdl}. Since $Z(G_1)\simeq T$ is connected, it suffices to show that the assertion for $G$ follows from that for $G_1$. Let $T_1,\theta_1$ and $\ka_1$ be as in Section~\re{funcdl}.
Then $\wh{\pi}(\C{E}_{T_1,\ka_1})$ is conjugate to $\C{E}_{T,\ka}$ (see Section~\re{quasiisogeny}(e)), and
$\pi^*(f^{\ka_1}_{T_1,\theta_1})=f^{\ka}_{T,\theta}$ by \rp{restr}. Since $f^{\ka_1}_{T_1,\theta_1}$ is $\C{E}_{T_1,\ka_1}$-stable by assumption, function $f^{\ka}_{T,\theta}$ is $\C{E}_{T,\ka}$-stable by \rl{quasi-isogeny}.
\end{proof}

\noindent{\bf Step 3.} By \rt{equality}, the restriction $f_{T,\theta}^{\ka}|_{G^{\rss}(F)_c}$ can be written in the form $f^{\ka}_{T,\theta}|_{G^{\rss}(F)_c}=\sum_{i,j}(-1)^{i+j}f_{i,j}^{\ka}$, where
$f_{i,j}^{\ka}$ is the function
\[
\gm\mapsto \Tr(\ov{u}_{T,\varphi},H_j(\La,H_i(\Fl_{\gm},\C{F}_{\theta,\varphi})_{\ka}).
\]
Thus, by Step 1 it suffices to show that each function $f_{i,j}^{\ka}$ is $\C{E}_{T,\ka}$-stable.
Moreover, by Step 2 we can assume that Z(G) is connected. Hence, using Section~\re{connected}(b) it suffices to show that for every $\gm\in G^{\rss}(F)_c$ the restriction $f^{\ka}_{i,j}|_{\on{Orb}^{\st}_{G(F)}(\gm)}$ lies in the span of $\{\ev^{\xi}_{\gm}\}_{\xi\in \on{can}_{\gm}^{-1}(\ka)\cap \wh{G^0_{\gm}}^{\Gm_F}}$.

\smallskip

\noindent{\bf Step 4.} By \rcl{support}, the coherent sheaf on $\wh{G^0_{\gm}}^{\Gm_K}$ corresponding to the
representation $V:=H_j(\La_G, H_i(\Fl_{G,\gm},\C{F}_{\C{L}})_{\ka})$ of $\pi_0(LG^0_{\gm})$ is supported on
a finite set $\on{can}_{\gm}^{-1}(\nu_{\wh{G}}(\ka))\cap \wh{G^0_{\gm}}^{\Gm_K}$.
Therefore vector space $V$ decomposes as a direct sum $V=\bigoplus_{\xi\in\on{can}_{\gm}^{-1}(\ka)\cap \wh{G^0_{\gm}}^{\Gm_K}}V_{\xi}$,
where the coherent sheaf on $\wh{G^0_{\gm}}^{\Gm_K}$, corresponding to the
representation $V_{\xi}$, is supported at $\xi$.

\smallskip

\noindent{\bf Step 5.} As it was mentioned in Section~\re{obsstab}(b), vector space $V$ is naturally a representation of the semi-direct product
$\pi_0(LG^0_{\gm})\ltimes\lan\ov{u}_{T,\varphi}\ran$, where $\ov{u}_{T,\varphi}\cdot\la\cdot\ov{u}_{T,\varphi}^{-1}={}^{\si}\la$ for every $\la\in \pi_0(LG^0_{\gm})$. Combining this with observations of Section~\re{genrat}(a)-(c) one gets that for every $\xi\in\on{can}_{\gm}^{-1}(\ka)\cap \wh{G^0_{\gm}}^{\Gm_K}$, we have an inclusion
$\ov{u}_{T,\varphi}(V_{\xi})\subseteq (V_{{}^{\si}\xi})$. In particular, for every $\xi\in\on{can}_{\gm}^{-1}(\ka)\cap \wh{G^0_{\gm}}^{\Gm_F}$, the subspace $V_{\xi}\subseteq V$ is $\ov{u}_{T,\varphi}$-invariant.

\smallskip

\noindent{\bf Step 6.} We claim that we have an equality
\begin{equation*} %\label{Eq:stable}
f^{\ka}_{i,j}|_{\on{Orb}^{\st}_{G(F)}(\gm)}=\sum_{\xi\in \on{can}_{\gm}^{-1}(\nu_{\wh{G}}(\ka))\cap \wh{G^0_{\gm}}^{\Gm_F}}\Tr(\ov{u}_{T,\varphi},V_{\xi})\ev^{\xi}_{\gm},
\end{equation*}
which by Step 3 implies our assertion. Explicitly, we have to show that for every stably conjugate $\gm'$ of $\gm$, we have an equality
\begin{equation} \label{Eq:stable}
\Tr(\ov{u}_{T,\varphi},H_j(\La_G, H_i(\Fl_{G,\gm'},\C{F}_{\C{L}})_{\ka})=
\sum_{\xi\in \on{can}_{\gm}^{-1}(\nu_{\wh{G}}(\ka))\cap \wh{G^0_{\gm}}^{\Gm_F}}\lan \xi,\inv(\gm,\gm')\ran\Tr(\ov{u}_{T,\varphi},V_{\xi}).
\end{equation}
\smallskip

\noindent{\bf Step 7.} Since $\gm'$ is a stable conjugate of $\gm$, there exists $g\in G(F^{\nr})$ such that $g\gm g^{-1}=\gm'$
and $h:=g^{-1}{}^{\si}g\in G^0_{\gm}(F^{\nr})$ (see Section~\re{stableconj}(a),(b)).
As it was mentioned in Section~\re{obsstab}(c), we have natural isomorphisms
\[
l_g,l_{{}^{\si}g}:H_j(\La_G, H_i(\Fl_{G,\gm},\C{F}_{\C{L}})_{\ka})\isom H_j(\La_G, H_i(\Fl_{G,\gm'},\C{F}_{\C{L}})_{\ka}),
\]
which are related by the identity $\ov{u}_{T,\varphi}\circ l_g \circ\ov{u}_{T,\varphi}^{-1}=l_{{}^{\si}g}$. Therefore we have
\[
(l_g)^{-1}\circ \ov{u}_{T,\varphi}\circ l_g=l_{g^{-1}}\circ l_{{}^{\si}g}\circ \ov{u}_{T,\varphi}=l_h\circ \ov{u}_{T,\varphi},
\]
hence the left hand side of equation \form{stable} equals
\[
\Tr((l_g)^{-1}\circ \ov{u}_{T,\varphi}\circ l_g,H_j(\La_G, H_i(\Fl_{G,\gm},\C{F}_{\C{L}})_{\ka}))=\Tr(l_{h}\circ \ov{u}_{T,\varphi},V).
\]

Furthermore, as in Section~\re{setup comp}(a), the last expression equals $\Tr([h]\circ \ov{u}_{T,\varphi},V)$, where $[h]\in\pi_0(LG^0_{\gm})$ is the class of $h\in LG^0_{\gm}$.

\smallskip

To show equality \form{stable}, it suffices to show that
\begin{equation} \label{Eq:stable1}
 \Tr([h]\circ \ov{u}_{T,\varphi},V)=\bigoplus_{\xi\in \on{can}_{\gm}^{-1}(\nu_{\wh{G}}(\ka))\cap \wh{G^0_{\gm}}^{\Gm_F}}\Tr([h]\circ \ov{u}_{T,\varphi},V_{\xi})
\end{equation}
and that for every $\xi\in \on{can}_{\gm}^{-1}(\nu_{\wh{G}}(\ka))\cap \wh{G^0_{\gm}}^{\Gm_F}$ we have
\begin{equation} \label{Eq:stable2}
 \Tr([h]\circ \ov{u}_{T,\varphi},V_{\xi})=\lan\xi,\inv(\gm,\gm')\ran\cdot\Tr(\ov{u}_{T,\varphi},V_{\xi}).
\end{equation}

\smallskip

\noindent{\bf Step 8.} To show equality \form{stable1}, notice that
\[
\Tr\left([h]\circ \ov{u}_{T,\varphi},\bigoplus_{\xi\in \on{can}_{\gm}^{-1}(\nu_{\wh{G}}(\ka))\cap (\wh{G^0_{\gm}}^{\Gm_K}\sm \wh{G^0_{\gm}}^{\Gm_F})}V_{\xi}\right)=0,
\]
which follows from the fact that endomorphism $[h]\circ \ov{u}_{T,\varphi}$ maps each $V_{\xi}$ to $V_{{}^{\si}\xi}$ (see Step 5), so it
permutes factors without fixed points.

\smallskip

Next notice that $\xi\in \on{can}_{\gm}^{-1}(\nu_{\wh{G}}(\ka))\cap \wh{G^0_{\gm}}^{\Gm_F}$ operator $[h]$ acts on the semi-simplication  $(V_{\xi})^{\on{ss}}$ of $V_{\xi}$ by the scalar $\lan\xi,[h]\ran$, where  we identify $[h]$ with its image under isomorphism $\pi_0(LG^0_{\gm})\simeq X_*(G^0_{\gm})_{\Gm_K}$ from Section~\re{pi0}, and the pairing is as in Section~\re{dualgp}(c).
So  we have an equality
\[
\Tr([h]\circ \ov{u}_{T,\varphi},V_{\xi})=\lan\xi,[h]\ran\cdot\Tr(\ov{u}_{T,\varphi},V_{\xi}).
\]
Since $\inv(\gm,\gm')\in H^1(F,G^0_{\gm})\simeq X_*(G^0_{\gm})_{\Gm_F}$
is the class of $[h]\in  X_*(G^0_{\gm})_{\Gm_F}$ (see Section~\re{cohom}(c)), we have an equality
$\lan\xi,[h]\ran=\lan\xi,\inv(\gm,\gm')\ran$, from which equality \form{stable2} follows.
\end{proof}
\end{Emp}

\subsection{Proof of \rt{equiv}}

\begin{Emp} \label{E:gmaction}
{\bf Observations.} Let $\C{L}$ be a $\ov{W}$-equivariant local system on $\ov{T}_G$.

\smallskip

(a) Using \rp{whaction}(d), the natural (left) action of $LG^0_{\gm}$ on $\Fl_G$ induces an action on $H_i(\Fl_{G,\gm},\C{F}_{\C{L}})$, commuting with the $\wt{W}_G$-action.
In particular, we have an action of  $LG^0_{\gm}$ on each $H_j(\wt{W}_G, H_i(\Fl_{G,\gm},\C{F}_{\C{L}}))$.

\smallskip

(b) Moreover, applying \rcl{support} with $\ka=1$ we conclude that the action of $LG^0_{\gm}$ on each
$H_j(\wt{W}_G,H_i(\Fl_{G,\gm},\C{F}_{\C{L}}))$ is unipotent.

\smallskip

(c) By Section~\re{inaut}, we have
the adjoint action of $LG^{\ad}_{\gm}=L(G^{\ad}_{\gm})$ on each
$H_j(\wt{W}_G, H_i(\Fl_{G,\gm},\C{F}_{\C{L}}))$.
\end{Emp}

\begin{Lem} \label{L:adunip}
The adjoint action of $L((G^{\ad}_{\gm})^0)$ on
$H_j(\wt{W}_G,H_i(\Fl_{G,\gm},\C{F}_{\C{L}}))$ is unipotent.
\end{Lem}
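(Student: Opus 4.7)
The plan is to extend the argument for Lemma \ref{L:unip} from $LG_{\gm}$ to $LG^{\ad}_{\gm}$ by separating out the extra adjoint symmetry into a piece lying in $\wt{W}_G$, which acts trivially on $\wt{W}_G$-group cohomology.

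Since any connected algebraic ind-group acts trivially on homology, the adjoint action of $LG^{\ad}_{\gm}$ factors through $\pi_0(LG^{\ad}_{\gm}) \cong X_*(G^{\ad}_{\gm})_{\Gm}$, by Kottwitz. Given $g \in LG^{\ad}_{\gm}$, I would consider its image $\om_g \in \Om_G$ under the composition $LG^{\ad} \to LG^{\ad}/LG^{\sc} \cong \Om_G$ of \re{inaut}(a) and choose a representative $\tilde{\om}_g$. The goal is to factor $g = h \cdot \tilde{\om}_g$ so that (after possibly modifying by central elements that act trivially) $h$ represents an element of $LG_{\gm}$; this uses the fact that $\Ad_g$ fixes $\gm$ together with the knowledge of how $\Ad_{\tilde{\om}_g}$ acts on $\ov{T}_G$ (namely, as $\ov{w}_g \in W$, by \re{inaut}(a)). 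Unwinding the functoriality construction of \re{functoriality} for $\phi = \Ad_g$, the induced operator on $H_i(\Fl_{G,\gm},\C{F}_{\C{L}})$ decomposes as the composition of (i) the left action of $h \in LG_{\gm}$ and (ii) the right action of $\om_g \in \Om_G$ from \re{affsprfib}(d), which via \rp{whaction}(c)(ii) coincides with the operator $a_{\om_g,\C{L}}$ from the $\wt{W}_G$-action.

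On the $\wt{W}_G$-group cohomology, the operator induced by $\om_g \in \Om_G \subset \wt{W}_G$ is the identity: this is the general principle that, for a group $\Gm$ acting on a module $M$, the induced action of any $w \in \Gm$ on $H_*(\Gm,M)$ (coming from the $\Gm$-action on $M$) agrees with the action of inner conjugation by $w$ on $(\Gm,M)$, which is always trivial. The operator induced by $h \in LG_{\gm}$ is unipotent by Lemma \ref{L:unip}. Since the left $LG$-action and the right $\Om_G$-action on $\Fl_G$ commute (hence so do their induced operators on the homology and on its $\wt{W}_G$-coinvariants), the composition is unipotent, as desired.

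The main obstacle will be making the decomposition $g = h \cdot \tilde{\om}_g$ precise and canonical enough that $h$ lies in $LG_{\gm}$ modulo center. Concretely, this requires a Kottwitz-type computation comparing $\pi_0(LG_{\gm}) = X_*(G_{\gm})_{\Gm}$ with $\pi_0(LG^{\ad}_{\gm}) = X_*(G^{\ad}_{\gm})_{\Gm}$, identifying the cokernel with an appropriate quotient mapping into $\Om_G$, so that every element of $\pi_0(LG^{\ad}_{\gm})$ can be lifted to a product of an element from $\pi_0(LG_{\gm})$ and a lift of $\Om_G$.
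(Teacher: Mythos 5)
You have correctly isolated the mechanism of the paper's final step (decompose the adjoint operator into a centralizer piece, handled by \rl{unip}, and an $\Om$-piece that dies on $\wt{W}_G$-cohomology because an element of a group acts trivially on its own derived coinvariants), but the way you propose to run it on $G$ itself has a genuine gap. The factorization $g=h\cdot\tilde{\om}_g$ with $h\in LG_{\gm}$ and $\tilde{\om}_g$ accounted for by $\Om_G\subset\wt{W}_G$ cannot exist in general: the relevant quotient is $LG^{\ad}/\mathrm{im}(LG)$, i.e.\ a piece of $\Om_{G^{\ad}}$, not of $\Om_G$. For instance, for $G=SL_2$ and $\gm$ in an unramified elliptic torus $T$, one has $\Om_G=\{1\}$, while $\pi_0(LG^{\ad}_{\gm})=X_*(T^{\ad})_{\Gm}\cong\B{Z}/2$ receives the trivial group from $\pi_0(LG_{\gm})$ (the map $X_*(T)_\Gm\to X_*(T^{\ad})_\Gm$ is multiplication by $2$, hence zero); so the nontrivial class of $LG^{\ad}_{\gm}$ is neither a centralizer element of $G$ nor a right $\Om_G$-translation. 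The residual symmetry really corresponds to an element of $\Om_{G^{\ad}}$, which does not lie in $\wt{W}_G$, does not act on $\Fl_G$ by the right action of \re{affflvar}(b), and therefore does not become trivial on $H_j(\wt{W}_G,-)$ for free. Relatedly, the decomposition $\Ad_g=l_g\circ r_g$ you invoke only makes sense when $g$ itself lies in $LG$ (left translation by an element of $LG^{\ad}$ on $LG/\I$ is undefined), so your "Kottwitz-type comparison of $\pi_0(LG_{\gm})$ with $\pi_0(LG^{\ad}_{\gm})$" cannot repair the argument: the cokernel simply does not map into $\Om_G$.

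The missing idea is a reduction to the adjoint group before decomposing. The paper first reduces to a constant $W$-equivariant coefficient system $V$ (possible because $\C{L}\hra\ind_1^W\C{L}$ and $\im\red_{\gm}\subset\ov{T}_G$ is a finite $W$-orbit, on which any local system is constant); this is needed because a general $\C{L}$ lives on $\ov{T}_G$ and cannot be transported to $\ov{T}_{G^{\ad}}$. It then uses \rl{ind} (Shapiro) to get an $LG^{\ad}_{\gm}$-equivariant identification
\[
H_j(\wt{W}_G,H_i(\Fl_{G,\gm},\C{F}_V))\cong H_j(\wt{W},H_i(\Fl_{\gm},\C{F}_V))\cong H_j(\wt{W}_{G^{\ad}},H_i(\Fl_{G^{\ad},\pi(\gm)},\C{F}_V)),
\]
so one may assume $G=G^{\ad}$. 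Only then does your intended decomposition become literally true: $g\in LG_{\gm}$ itself gives $\Ad_g=l_g\circ r_g$, with $r_g$ the action of $\om_g\in\Om_G\subset\wt{W}_G$ (hence trivial on $H_j(\wt{W}_G,-)$) and $l_g$ unipotent by \rl{unip}. So your Step-3 idea matches the paper, but without the two preliminary reductions the argument as you propose it fails.
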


\begin{proof}
We will carry out the proof in three steps:

\smallskip
\noindent{\bf Step 1}. We can assume that $\C{L}$ is a constant $\ov{W}$-equivariant local system $V$.

\begin{proof}
By adjointness, a $\ov{W}$-equivariant local system $\C{L}$ has a natural embedding
\[
\C{L}\hra\ind_1^{\ov{W}}(\C{L})=\bigoplus_{\ov{w}\in \ov{W}}\ov{w}^*(\C{L}).
\]
Thus the assertion for $\C{L}$ follows from that for $\ind_1^{\ov{W}}(\C{L})$, so we can assume that $\C{L}=\ind_1^{\ov{W}}(\C{L}')$ for some local
system $\C{L}'$ on $T_G$.

\smallskip

Next, the pullback $\C{F}_{\C{L}}=\red_{\gm}^*(\C{L})$ only depends on the restriction of $\C{L}$ to the  image $\im\red_{\gm}$. Since the image
$\im\red_{\gm}\subseteq \ov{T}_G$ is a $\ov{W}$-orbit, it is finite. Then
$\C{L}'|_{\im\red_{\gm}}$ is a constant local system with fiber $V'$, hence $\C{L}|_{\im\red_{\gm}}$ is a constant local system with fiber
$V=\ind_1^{\ov{W}}(V')$. Thus $\C{F}_{\C{L}}\simeq \red_{\gm}^*(V)$, and the assertion follows.
\end{proof}

\smallskip

\noindent{\bf Step 2}. We can assume that $G=G^{\ad}$.
\begin{proof}
Using \rl{ind}, one sees that the projection $\pi:G\to G^{\ad}$
induces a natural $LG^{\ad}_{\gm}$-equivariant isomorphism
\[
H_j(\wt{W},H_i(\Fl_{\gm},\C{F}_V))\isom H_j(\wt{W}_G,H_i(\Fl_{G,\gm},\C{F}_V))\isom H_j(\wt{W}_{G^{\ad}},H_i(\Fl_{G^{\ad},\pi(\gm)},\C{F}_V)).
\]
Thus we can replace $(G,\gm)$ by $(G^{\ad},\pi(\gm))$.
\end{proof}

\smallskip

\noindent{\bf Step 3}. Assume now that $G=G^{\ad}$. Then for every $g\in LG^0_{\gm}$, the action of $\Ad_g$ on $\Fl_{G,\gm}$ decomposes as $\Ad_g=l_g\circ r_g$, where $l_g$ is the left action (induced by the endomorphism $xI\mapsto gxI$ of $\Fl_G$),
while $r_g$ is the right action
\[
\Fl_G=LG/I\to LG/gIg^{-1}\simeq\Fl_G:xI\mapsto xIg^{-1}=xg^{-1}(gIg^{-1}).
\]
In particular, $r_g$ is the automorphism of $Fl_{G,\gm}$ induced by $\om_g\in\Om_G\subseteq\wt{W}_G$ (compare Section~\re{inaut}), thus $r_g$ induces the trivial action on
$H_j(\wt{W}_G, H_i(\Fl_{G,\gm},\C{F}_{\C{L}}))$.

Therefore the action of $\Ad_g=l_g\circ r_g$ on $H_j(\wt{W}_G, H_i(\Fl_{G,\gm},\C{F}_{\C{L}}))$ coincide with the action of $l_g$, hence is it unipotent by Section~\re{gmaction}(b).
\end{proof}

Now we are ready to show \rt{equiv}.

%\begin{Emp} \label{E:iota}
%(a) Let $\gm,\gm'\in G(F)$ be two stably conjugate regular
%semisimple elements, and let $g\in G(F^{\nr})$ be such $g\gm
%g^{-1}=\gm'$. Then the action $act_g:\Fl\isom \Fl$ of $g$ of $\Fl$ it
%induces an isomorphism $\Fl_{\gm}\isom\Fl_{\gm'}$. Since
%$red_{\gm'}=red_{\gm}\circ act_g:\Fl_{\gm'}\to\ov{T}_G$, is
%therefore induces an isomorphism in homology
%$\eta_g=H_*(\Fl_{\gm},\C{L}^{\st}_{\theta})\isom
%H_*(\Fl_{\gm'},\C{L}^{\st}_{\theta})$.

%(b) The isomorphisms $\eta_g$ satisfy
%$\eta_{g^{-1}}=(\eta_g)^{-1}$ and
%$\eta_{g_1g_2}=\iota_{g_1}\iota_{g_2}$. In particular, the map
%$g\mapsto \iota_g$ defines an action of $G(F^{\nr})$ on each
%$H_i(\Fl_{\gm},\C{L}^{\st}_{\theta})$.
%\end{Emp}

\begin{Emp}
\begin{proof}[Proof of \rt{equiv}]
We have to show that for every stably conjugate elements $\gm\in G^{\rss}(F)$ and $\gm'\in G'^{\rss}(F)$ we have an equality $f^{\st}_{T,\theta}(\gm)=f^{\st}_{T',\theta'}(\gm')$. Arguing as in Step 1 of the proof of \rt{stab},
we reduce to the case when $\gm$ and $\gm'$ are compact.

\smallskip

By \rt{equality}, it suffices to show that for every $i$ and $j$, we have the equality
\begin{equation} \label{Eq:frob}
\Tr(\si,
H_j(\wt{W}_G,H_i(\Fl_{G,\gm},\C{F}^{\st}_{\theta})))=\Tr(\si,
H_j(\wt{W}_{G'},H_i(\Fl_{G',\gm'},\C{F}^{\st}_{\theta'}))).
\end{equation}

Let $g\in G(F^{\nr})$ be as in Section~\re{inner}(b). Replacing $\phi$ by $\phi\circ\Ad_g$, we may assume that
$\phi(\gm)=\gm'$ and there exists $h\in (G^{\ad}_{\gm})^0(F^{\nr})\subseteq L((G^{\ad}_{\gm})^0)(k)$ such that
$\phi^{-1}\circ {}^{\si}\phi=\Ad_h$. Using the equality ${}^{\si}\phi=\si\circ\phi\circ\si^{-1}$, we conclude that
$\Ad_h\circ \si=\phi^{-1}\circ\si\circ\phi$.

\smallskip

Let $\phi_*:\ov{T}_G\isom\ov{T}_{G'}$ be the isomorphism induced by $\phi$ (see Section~\re{functoriality}). By definition, for every admissible isomorphism $\varphi:\ov{T}\isom \ov{T}_G$, the composition $\phi_*\circ\varphi$ is an admissible isomorphism $\ov{T}\isom \ov{T}_{G'}$. Therefore we get an isomorphism  $\phi_*(\C{L}^{\st}_{\theta})\simeq\C{L}^{\st}_{\theta'}$, hence $\phi$ induces an isomorphism
\[
H_j(\wt{W}_G,H_i(\Fl_{G,\gm},\C{F}^{\st}_{\theta})))\isom H_j(\wt{W}_{G'},H_i(\Fl_{G',\gm'},\C{F}^{\st}_{\theta'})).
\]

Thus the right hand side of \form{frob} equals
\[
\Tr(\phi^{-1}\circ\si\circ\phi,
H_j(\wt{W}_G,H_i(\Fl_{G,\gm},\C{F}^{\st}_{\theta})))= \Tr(\Ad_h\circ \si, H_j(\wt{W}_G,H_i(\Fl_{G,\gm},\C{F}^{\st}_{\theta}))),
\]
therefore it remains to show the equality
\begin{equation} \label{Eq:stab}
\Tr(\Ad_h\circ \si,
H_j(\wt{W}_G,H_i(\Fl_{G,\gm},\C{F}^{\st}_{\theta})))=\Tr(\si,
H_j(\wt{W}_G,H_i(\Fl_{G,\gm},\C{F}^{\st}_{\theta}))).
\end{equation}

\smallskip

By \rl{adunip}, there exists a finite increasing filtration $F^r$ of $H_j(\wt{W}_G,H_i(\Fl_{G,\gm},\C{F}^{\st}_{\theta})))$ such that $L((G^{\ad}_{\gm})^0)$ acts trivially on each graded piece $F^{r+1}/F^r$. Moreover, since for every $g\in L((G^{\ad}_{\gm})^0)$ we have
$\si\circ\Ad_g\circ\si^{-1}=\Ad_{\si(g)}$, the filtration $F^r$ can be chosen to be $\si$-invariant. In this case, both sides of \form{stab} equal
$\sum_r\Tr(\si,F^{r+1}/F^r)$.
\end{proof}
\end{Emp}

\appendix
\section{Generalized trace}

\subsection{Construction} \label{S:gentr}
 Let $L$ be an algebraically closed field of characteristic zero, let $A$ be a finitely generated commutative algebra over $L$, and let $\psi\in \Aut(A)$ be an automorphism of $L$-algebra $A$ of order $n$.

\begin{Emp} \label{E:twisted}
{\bf Notation.}
(a) We denote by $B:=A\{u^{\pm 1};\psi\}$ the ring generated by $A$ and elements $u,u^{-1}$ subject to
relations $u\cdot a=\psi(a)\cdot u$ for all $a\in A$. Explicitly, $B$ is a set of finite sums $\sum_{i\in \B{Z}} a_i u^i$ with $a_i\in A$ and the product is given by the formula
\[
(\sum_{i} a_i u^i)\cdot(\sum_j b_j u^j):=\sum_{i,j} (a_i\cdot\psi^i(b_j)) u^{i+j}.
\]

\smallskip

(b) Notice that since $\psi^n=1$, we conclude that elements $u^n,u^{-n}\in B$ are central.

\smallskip

(c) We claim that $B$ is a Noetherian ring. Indeed, this follows from the fact that
$B$ is finite over its commutative subring $A[u^n,u^{-n}]$, and that $A[u^n,u^{-n}]$ is a finitely generated commutative algebra over $L$, thus Noetherian.

\smallskip

(d)  Let $\lan u\ran$ and $\cycl_n$  be the cyclic groups generated by
$u$ of infinite order and of order $n$, respectively, let $\lan\psi\ran$ be the (order $n$) cyclic group, generated by $\psi$,
and for every commutative ring $C$ we denote by   $C[\lan u\ran]$ and $C[\cycl_n]$ the corresponding
group algebras.
\smallskip

(e) We denote by $B_n:=A\{u;\psi\}_n$ the quotient ring $B/(u^n-1)$. Explicitly, $B_n$ is the ring generated by $A$ and element $u$ subject to
relations $u^n=1$ and $u\cdot a=\psi(a)\cdot u$ for all $a\in A$.

\smallskip

(f) Let $A_{\psi}$ be the quotient ring $A/\lan a-\psi(a),a\in A\ran$. Then we the quotient map $A\to A_{\psi}$ which induces quotient maps $B\to A_{\psi}[\cycl]$ and $B_n\to A_{\psi}[\cycl_n]$. Note that $\Spec A_{\psi}$ is the scheme of fixed points $(\Spec A)^{\psi}$. Therefore algebra $A_{\psi}$ is finite-dimensional over $L$ if and only if the scheme  $(\Spec A)^{\psi}$ is finite.
\end{Emp}

\begin{Emp} \label{E:kgroups}
{\bf $K$-groups.} (a) For every $L$-algebra $R$, we denote by $\Mod_R$ the category of $R$-modules, by
$\Mod^{\on{fd}}_R\subseteq\Mod^{\on{fg}}_R\subseteq\Mod_R$ the subcategories of finitely generated $R$-modules and of $R$-modules, which are finite dimensional over $L$, and denote by $K(R)$ and $K(R,\on{fd})$ the $K$-groups of $\Mod^{\on{fg}}_R$ and $\Mod^{\on{fd}}_R$, respectively.

\smallskip

(b) We denote by $\Mod^{\on{fg},u-\on{lf}}_B\subseteq\Mod^{\on{fg}}_B$ the subcategories of modules $M$, which are {\em $u$-locally finite}, that is, unions of $u$-invariant finite dimensional $L$-vector spaces.

\smallskip

(c) Note that  $\Mod^{\on{fd}}_{L[\lan u\ran]}$ is a tensor category with respect to the tensor product $-\otimes_L-$ and that subcategory
 $\Mod^{\on{fg}}_{L[\lan u\ran_n]}=\Mod^{\on{fd}}_{L[\lan u\ran_n]}\subseteq \Mod^{\on{fd}}_{L[\lan u\ran]}$ is a tensor subcategory.

 \smallskip
(d) Note that $\Mod^{\on{fg},u-\on{lf}}_B$ is a module category over $\Mod^{\on{fd}}_{L[\lan u\ran]}$. Namely, to a pair $M\in \Mod^{\on{fg},u-\on{lf}}_B$ and $V\in \Mod^{\on{fd}}_{L[\lan u\ran]}$ one associates a tensor product
$V\otimes_L M\in \Mod^{\on{fg},u-\on{lf}}_B$ such that $a(v\otimes m)=v\otimes a(m)$ and $u(v\otimes m)=u(v)\otimes u(m)$
for every $v\in V, m\in M$ and $a\in A$. Furthermore, $\Mod^{\on{fg}}_{A_{\psi}[\cycl_n]}\subseteq\Mod^{\on{fg}}_{B_n}\subseteq \Mod^{\on{fg},u-\on{lf}}_B$ are  $\Mod^{\on{fg}}_{L[\lan u\ran_n]}$-module subcategories.

\smallskip

(e) We denote by $K(B,u-\on{lf})$ and $K(\cycl,\on{fd})$ the $K$-groups of $\Mod^{\on{fg},u-\on{lf}}_B$ and $\Mod^{\on{fd}}_{L[\lan u\ran]}$, respectively. Moreover, it follows from part~(c) that  $K(\cycl,\on{fd})$ and $K(\cycl_n)$ are commutative rings and
that $K(B,u-\on{lf})$ and $K(B_n)$ are modules over  $K(\cycl,\on{fd})$ and $K(\cycl_n)$, respectively.

\smallskip

(f) Observe that the inclusion $\Mod^{\on{fg}}_{L[\lan u\ran_n]}\hra\Mod^{\on{fd}}_{L[\lan u\ran]}$ induces a ring homomorphism $K(\cycl_n)\to K(\cycl,u-\on{lf})$, while the inclusions
\[
\Mod^{\on{fg}}_{A_{\psi}[\lan u\ran_n]}\hra\Mod^{\on{fg}}_{B_n}\hra \Mod^{\on{fg},u-\on{lf}}_B
\]
induce morphisms
$K(A_{\psi}[\lan u\ran_n])\to K(B_n)\to K(B,u-\on{lf})$ of $K(\cycl_n)$-modules, hence
%
%while the tensor product
%\[
%\otimes: \Mod^{\on{fd}}_{L[\lan u\ran]}\times \Mod^{\on{fg}}_{B_n}\to \Mod^{\on{fg},u-\on{lf}}_B
%\] induces
a morphism
\begin{equation} \label{Eq:kgroups}
K(\cycl,\on{fd})\otimes_{K(\cycl_n)}K(B_n)\to K(B,u-\on{lf})
\end{equation}
of $K(\cycl,\on{fd})$-modules.

\smallskip

(g) Notice that $K(B_n)$ and $K(A_{\psi}[\cycl_n])$ can be interpreted as equivariant $K$-theories $K'_0(\lan\psi\ran,\Spec A)$ and
$K'_0(\lan\psi\ran,(\Spec A)^{\psi})$ of $\lan\psi\ran$-equivariant coherent sheaves on $\Spec A$ and $(\Spec A)^{\psi}=\Spec A_{\psi}$, respectively.
\end{Emp}

\begin{Lem} \label{L:ktheory}
The morphism \form{kgroups} is an isomorphism.
\end{Lem}

\begin{proof}
For every element $a\in L^{\times}$, we denote by  $\Mod^{\on{fg},u-\on{lf},a}_B\subseteq \Mod^{\on{fg},u-\on{lf}}_B$ and $\Mod^{\on{fd},a}_{L[\cycl]}\subseteq \Mod^{\on{fd}}_{L[\cycl]}$ the full subcategory consisting of modules $M$ such that $a$ is the unique generalized eigenvalue of $u^n$ on $M$,
and let $K(B;a)$ and $K(\cycl;a)$ be the $K$-groups of $\Mod^{\on{fg},u-\on{lf},a}_B$ and $\Mod^{\on{fd},a}_{L[\cycl]}$, respectively.

Note that since $u^n\in B$ is central, every $M\in\Mod^{\on{fg},u-\on{lf}}_B$ uniquely decomposes as a finite direct sum $M=\bigoplus_{a\in L\m}M_a$, where
$M_a\in\Mod^{\on{fg},u-\on{lf},a}_B$. Therefore the $K$-group $K(B,u-\on{lf})$ decomposes as $K(B,u-\on{lf})\simeq \bigoplus_{a\in L\m}K(B;a)$. Similarly,
we have a decomposition $K(\cycl,\on{fd})\simeq \bigoplus_{a\in L\m}K(\cycl;a)$, and morphism \form{kgroups} decomposes as a direct sum of morphisms
\begin{equation} \label{Eq:kgroupsa}
K(\cycl;a)\otimes_{K(\cycl_n)}K(B_n)\to K(B;a).
\end{equation}
%
%induced by the tensor product  $\otimes: \Mod^{\on{fg},u-\on{lf},a}_{L[\lan u\ran]}\times \Mod^{\on{fg}}_{B_n}\to \Mod^{\on{fg},u-\on{lf},a}_B$.

\smallskip

Choose $b\in L$ such that $b^n=a$, and let $V_b\in\Mod^{\on{fd}}_{L[\cycl]}$ be the one-dimensional module over $L$ and such that $u(v)=bv$ for every $v\in V_b$.
Then the functor $-\otimes_L V_b$ induces an equivalence of categories $\Mod^{\on{fg},u-\on{lf},1}_{B}\isom\Mod^{\on{fg},u-\on{lf},b^n}_B$, and thus an isomorphism of $K$-groups $K(B;1)\isom K(B;b^n)=K(B;a)$ and similarly an isomorphism $K(\cycl;1)\isom K(\cycl;b^n)=K(\cycl;a)$. Thus it suffices to show that morphism \form{kgroupsa} is an isomorphism for $a=1$.

The latter assertion follows from the fact that the natural inclusion $\Mod^{\on{fg}}_{B_n}\hra\Mod^{\on{fg},u-\on{lf},1}_B$ induces an isomorphism of $K$-groups
$K(B_n)\isom K(B;1)$ and hence also an isomorphism $K(\cycl_n)\isom K(\cycl;1)$.

To see that the map between $K$-groups $K(B_n)\isom K(B;1)$ is an isomorphism, we notice that every $M\in \Mod^{\on{fg},u-\on{lf},1}_B$ has a canonical  filtration
\[
0=M_0\subsetneq M_1\subsetneq M_2\subsetneq\ldots\subsetneq M_n=M,
\]
where $M_i=\Ker(u^n-1)^i$. Since $B$ is Noetherian, we have
$M_{i+1}/M_i\in \Mod^{\on{fg}}_{B_n}$ for every $i$, and the inverse map $K(B;1)\to K(B_n)$ between $K$-groups is given by the formula $[M]\mapsto\sum_i[M_{i+1}/M_i]$.
\end{proof}

\begin{Emp}
{\bf Example.} We have a ring isomorphism $K(\cycl,\on{fd})\simeq\B{Z}[L\m]$, where $\B{Z}[L\m]$ denotes the group ring of $L\m$, such that
$b\in L\m\subseteq \B{Z}[L\m]$ is mapped to the class of character $L[\cycl]\to L:u\mapsto b$, and it induces an isomorphism
$K(\cycl_n)\simeq\B{Z}[\mu_n]$. Moreover, for every $a\in L\m$ the subgroup $K(\cycl;a)\subseteq K(\cycl,\on{fd})$ corresponds
to the $\B{Z}$-span of $b\in L\m$ such that $b^n=a$.
\end{Emp}

\begin{Cor} \label{C:ktheory}
Every group homomorphism  $\Theta:K(A_{\psi}[\cycl_n])\to L$ satisfying
\begin{equation} \label{Eq:local}
\Theta(V\otimes M)=\Tr(u,V)\cdot \Theta(M)
\end{equation}
 for every $M\in \Mod^{\on{fg}}_{A_{\psi}[\cycl_n]}$ and $V\in\Mod^{\on{fg}}_{L[\cycl_n]}$ uniquely extends to a homomorphism
$\Theta:K(B,u-\on{lf})\to L$ satisfying the equality \form{local}
for every $M\in \Mod^{\on{fg},u-\on{lf}}_B$ and $V\in\Mod^{\on{fd}}_{L[\cycl]}$.
\end{Cor}
\begin{proof}
Note that a homomorphism $\Theta:K(B,u-\on{lf})\to L$ satisfies equality \form{local} if and only if it factors through a localization $K(B,u-\on{lf})_u$, where $u$ is the prime ideal of $K(\cycl,\on{fd})$, defined to be the kernel of the homomorphism $\Tr(u,-)$.
By \rl{ktheory}, the natural homomorphism $K(B_n)_u\to K(B,u-\on{lf})_u$ is an isomorphism.

Moreover, the localization theorem (see, for example, \cite[Th\'eor\'eme 2.1]{Th}) together with the observation of Section~\re{kgroups}(g) implies that the natural morphism $K(A_{\psi}[\cycl_n])_u\to K(B_n)_u$ is an isomorphism. Therefore the composition  $K(A_{\psi}[\cycl_n])_u\to  K(B,u-\on{lf})_u$ is an isomorphism, and the assertion follows from the fact that
 a homomorphism $\Theta:K(A_{\psi}[\cycl_n])\to L$ satisfies equality \form{local} if and only if it factors through a localization $K(A_{\psi}[\cycl_n])_u$.
\end{proof}

\begin{Cor} \label{C:gentr}
Assume that the scheme $(\Spec A)^{\psi}$ is finite. Then
there exists a unique homomorphism $\Tr_{\gen}(u,-):K(B,u-\on{lf})\to L$ such that

\smallskip

(i) for every $M\in \Mod^{\on{fg},u-\on{lf}}_B$ and $V\in\Mod^{\on{fg},u-\on{lf}}_{L[\cycl]}$, we have an equality
\[
\Tr_{\gen}(u,V\otimes M)=\Tr(u,V)\cdot \Tr_{\gen}(u,M);
\]

\smallskip

(ii) for every $M\in \Mod^{\on{fd}}_B$, we have an equality
$\Tr_{\gen}(u,M)=\Tr(u,M)$.

\end{Cor}

\begin{proof}
Since $(\Spec A)^{\psi}$ is finite, the algebra $A_{\psi}$ is finite-dimensional over $L$. Therefore every
$M\in \Mod^{\on{fg}}_{A_{\psi}[\cycl_n]}$ belongs to $\Mod^{\on{fd}}_{A_{\psi}[\cycl_n]}$. Moreover, the usual trace map
$\Tr(u,-):K(A_{\psi}[\cycl_n])\to L$ satisfies equality \form{local}.

Therefore it follows from \rco{ktheory} that
there exists a unique homomorphism $\Tr_{\gen}(u,-):K(B,u-\on{lf})\to L$, which satisfies condition (i) and condition (ii) for all
$M\in\Mod^{\on{fd}}_{A_{\psi}[\cycl_n]}$. It remains to show that homomorphism $\Tr_{\gen}(u,-)$  satisfies $\Tr_{\gen}(u,M)=\Tr(u,M)$ for every
$M\in\Mod^{\on{fd}}_{B}$.

Let $I:=\on{Ann}_A(M)$ be the annihilator of $M$. Since $M\in \Mod^{\on{fd}}_{B}$, we conclude that the quotient ring $A':=A/I$ is finite-dimensional over $L$. Also since $M$ is a $B$-module, we conclude that the ideal $I\subseteq A$ is $\psi$-invariant, and $M$ is induced from a $A'\{u^{\pm 1},\psi\}$-module.

It suffices to show that the composition
\[
\Tr_{\gen}(u,-)\circ \pi^*:K(A'\{u^{\pm 1},\psi\},u-\on{lf})\to K(A\{u^{\pm 1},\psi\},u-\on{lf})\to L,
\]
where $\pi:A\to A'$ is the projection, coincides with $\Tr(u,-)$.

Using \rco{ktheory} for $A'$, we have to show that these two maps have the same restrictions to
$K(A'_{\psi}[\cycl_n])$. Since $A'_{\psi}$ is a quotient of $A_{\psi}$, this follows from the definition
of $\Tr_{\gen}(u,-)$.
\end{proof}

\begin{Emp} \label{E:remgentr}
{\bf Remark.}
Note that for every element $\la\in A^{\times}$ there is an unique algebra automorphism $\phi_{\lambda}:B\isom B$ such that $\phi_{\lambda}(a)=a$ for every $a\in A$ and $\phi_{\lambda}(u)=\la u$. Therefore by \rco{gentr} one can form a generalized trace map
\[
\Tr_{\gen}(\la u,-):K(B,\la u-\on{lf})\to L,
\]
which satisfies properties (i) and (ii) of \rco{gentr} in which $u$ is replaced by $\la u$. Note also that in general the subcategory
$\Mod^{\on{fg},\la u-\on{lf}}_{B}\subseteq \Mod^{\on{fg}}_{B}$ could be different from $\Mod^{\on{fg},u-\on{lf}}_{B}$.
\end{Emp}

\subsection{Asymptotic trace}

\begin{Emp} \label{E:graded}
{\bf Set up.} (a) We view $L[t]$ as a graded algebra over $L$ with $\deg t=1$, and let
$\wt{A}=\bigoplus_{i=0}^{\infty}A_i$ be a commutative finitely generated graded $L[t]$-algebra such that
$A_0$ is finite-dimensional over $L$. In particular, each $A_i$ is finite-dimensional over $L$,
and the action of $t$ on $\wt{A}$ induces a morphism $t_i:A_i\to A_{i+1}$ of $A_0$-modules.

\smallskip

(b) For every $b\in L$ we can form a quotient algebra $\sp_b(\wt{A}):=\wt{A}/(t-b)\wt{A}$, where $\sp$ stands for {\em specialization}.
Note that $A:=\sp_1(\wt{A})$ is naturally isomorphic to the inductive limit $\colim_i A_i$ with respect to the transition maps $t_i:A_i\to A_{i+1}$ from part~(a), where the canonical maps $\eta_i:A_i\to A$ are compositions $A_i\hra\wt{A}\overset{\pr}{\lra} A$.
Also $\sp_0(\wt{A})$ is a graded ring
$\sp_0(\wt{A})=\bigoplus_i \ov{A}_i$, where $\ov{A}_0=A_0$ and $\ov{A}_{i+1}=A_{i+1}/t_i(A_i)$ for all $i>0$.

\smallskip

(c) Let $\wt{M}:=\bigoplus_{i=0}^{\infty}M_i$ be a graded $\wt{A}$-module. Then it is a graded $L[t]$-module, so
the action of $t$ on $\wt{M}$ induces a morphism $t_i:M_i\to M_{i+1}$ of $A_0$-modules. For every $b\in L$, we set
$\sp_b(\wt{M})=\wt{M}/(t-1)\wt{M}$. Then $\sp_b(\wt{M})$ is a $\sp_b(\wt{A})$-module, and $A$-module
$M:=\sp_1(\wt{M})$ can be identified with inductive limit $M=\colim_i M_i$ of $A_0$-modules. Again, each $\eta_i:M_i\to M$ decomposes as
 $M_i\hra\wt{M}\overset{\pr}{\lra} M$.

\smallskip

(d) Let $\psi\in \Aut(\wt{A})$ be a (grading preserving) automorphism of order $n$ as a  graded $L[t]$-algebra. Then $\psi$ induces automorphisms of algebra $A$ and of a graded algebra $\sp_0(\wt{A})$, hence of $\Spec A$ and $\Proj(\sp_0(\wt{A}))$. Then the construction of  Section~\re{twisted} applies, so we can form algebras $\wt{B}=\wt{A}\{u^{\pm 1},\psi\}$ and  $B=A\{u^{\pm 1},\psi\}$.
\end{Emp}

\begin{Emp} \label{E:remgraded}
{\bf Remarks.} Suppose that we are in the situation of Section~\re{graded}.

\smallskip

(a) Since $\sp_1(\wt{M})\simeq\colim_i M_i$ (see Section~\re{graded}(c)) and the functor of an inductive limit is exact, the functor $\sp_1:\Mod_{\wt{A}}^{\gr}\to\Mod_A$, where $\Mod_{\wt{A}}^{\gr}$ stands for the category of graded $\wt{A}$-modules, is exact. Moreover,
this description of $\sp_1(\wt{M})$ also implies that $\sp_1(\wt{M})\simeq 0$ if and only if $\wt{A}$-module $\wt{M}$ is $t$-torsion, that is, $t$ acts on $\wt{M}$
locally nilpotently.

\smallskip

(b) Notice that maps $\eta_i:A_i\to A$ from Section~\re{graded}(b) gives rise to the homomorphism of graded
$L[t]$-algebras $\eta:\wt{A}\to A[t]:\sum_i a_i\mapsto\sum_i\eta_i(a_i)t^i$.  Moreover, for every graded $\wt{A}$-module
$\wt{M}$ we have a morphism of graded $\wt{A}$-modules $\eta:\wt{M}\to M[t]:=A[t]\otimes_A M: \sum_i m_i\mapsto\sum_i t^i\otimes\eta_i(m_i)$.
Arguing as in part~(a) one sees that $\Ker\eta$ is $t$-torsion.

\smallskip

(c) Notice that $t$ acts injectively on $\wt{A}$ (resp. $\wt{M}$) if and only if each $\eta_i:A_i\to A$ (resp. $\eta_i:M_i\to M$) is injective
or, what is the same, if the homomorphism  $\eta:\wt{A}\to A[t]$ (resp. $\eta:\wt{M}\to M[t]$) from part~(b) is injective. In this case,
$\{A_i\}_i$ form filtration of $A$ in the sense of Section~\re{Rees}(a) and $\sp_0(\wt{A})=\bigoplus_{i=0}^{\infty} A_i/A_{i-1}$ is the graded ring of $A$.

%each morphism $t_i:M_i\to M_{i+1}$ is injective, and this happens if and only if the composition $M_i\to\wt{M}\to M$ is injective.

\smallskip

(d) Notice that graded algebra $\sp_0(\wt{A})$ is finite-dimensional over $L$ if and only if
$\Proj(\sp_0(\wt{A}))$ is empty. Moreover, by observation of Section~\re{graded}(b) this implies that $A$ is finite-fimensional over $L$
or, what is the same, that $\Spec A$ is finite. Furthermore, using part ~(c) the opposite implication holds if $t$ acts on $\wt{A}$ injectively.
\end{Emp}

\begin{Emp} \label{E:serre}
{\bf Notation.} Suppose that we are in the situation of Section~\re{graded}.

\smallskip

(a)  Denote by $\Mod^{\on{fg},\on{gr}-\on{fd}}_{\wt{B}}\subseteq \Mod^{\on{fg}}_{\wt{B}}$ be the subcategory of graded $\wt{B}$-modules $\wt{M}:=\bigoplus_{i=0}^{\infty}M_i$ such that each $M_i$ is finite-dimensional over $L$. Notice that every $\wt{M}\in \Mod^{\on{fg},\on{gr}-\on{fd}}_{\wt{B}}$ is automatically $u$-locally finite, hence its quotient $M$ is $u$-locally finite as well.
Therefore the exact functor $\sp_1$ from Section~\re{remgraded}(a) gives rise to an exact functor
$\sp_1:\Mod^{\on{fg},\on{gr}-\on{fd}}_{\wt{B}}\to \Mod^{\on{fg},u-\on{lf}}_{B}$.

\smallskip

(b) Note that the subcategory $\Mod^{\on{fg},\on{gr}-\on{fd},t-\on{tor}}_{\wt{B}}\subseteq \Mod^{\on{fg},\on{gr}-\on{fd}}_{\wt{B}}$  of $t$-torsion modules is a Serre subcategory (see \cite[Tag~02M0]{Sta}),
so we can form the quotient abelian category $\ov{\Mod}^{\on{fg},\on{gr}-\on{fd}}_{\wt{B}}:=\Mod^{\on{fg},\on{gr}-\on{fd}}_{\wt{B}}/\Mod^{\on{fg},\on{gr}-\on{fd},t-\on{tor}}_{\wt{B}}$
(see \cite[Tag~02MS]{Sta}).

\smallskip

(c) In particular, (see, for example, \cite[Tag~02MX]{Sta}), we have an exact sequence on $K$-groups
\[
K(\Mod^{\on{fg},\on{gr}-\on{fd},t-\on{tor}}_{\wt{B}})\to K(\Mod^{\on{fg},\on{gr}-\on{fd}}_{\wt{B}})\to
K(\ov{\Mod}^{\on{fg},\on{gr}-\on{fd}}_{\wt{B}})\to 0.
\]
\end{Emp}

\begin{Lem} \label{L:serre}
The exact functor $\sp_1:\Mod^{\on{fg},\on{gr}-\on{fd}}_{\wt{B}}\to \Mod^{\on{fg},u-\on{lf}}_{B}$ from Section~\re{serre}(a) induces an equivalence of categories
\[
\ov{\sp}_1:\ov{\Mod}^{\on{fg},\on{gr}-\on{fd}}_{\wt{B}}\isom\Mod^{\on{fg},u-\on{lf}}_{B}.
\]
\end{Lem}

\begin{proof}
By Section~\re{remgraded}(a), we have $\Mod^{\on{fg},\on{gr}-\on{fd},t-\on{tor}}_{\wt{B}}=\Ker(\sp_1)$.
Therefore functor $\sp_1$ induces a faithful functor $\ov{\sp}_1$ (see \cite[Tags 02MS and 06XK]{Sta})

\smallskip

Next we claim that functor $\sp_1$ is essentially surjective (hence $\ov{\sp}_1$ is such), that is, for every $M\in \Mod_B^{\on{fg},u-\on{lf}}$, there exists $\wt{M}\in\Mod^{\on{fg},\on{gr}-\on{fd}}_{\wt{B}}$ such that $\sp_1(\wt{M})\simeq M$.

\smallskip

To construct $\wt{M}$, we choose generators $m_1,\ldots,m_r$ of $M$ over $A$. For each $i$, we set $M_i:=\sum_{j=1}^r \sum_{k\in\B{Z}} A_{i}(u^k(m_j))\subseteq M$, where $A_i\subseteq\wt{A}$ acts on $M$ via projection $\wt{A}\to A$ (see Section~\re{graded}(b)).
Then $\colim_i M_i\simeq \bigcup_i M_i=M$, each $M_i$ is finite-dimensional over
$L$, because $M$ is $u$-locally finite, and $\wt{M}:=\bigoplus_{i\geq 0} M_i$
is a graded $\wt{B}$-module such that $\wt{M}\in\Mod^{\on{fg},\on{gr}-\on{fd}}_{\wt{B}}$ (by construction),
and
\[
\sp_1(\wt{M})\simeq\colim_i M_i\simeq M
\] (by Section~\re{graded}(c)).

\smallskip

It remains to show that functor $\ov{\sp}_1$ is full, that is, is surjective on morphisms. Explicitly, we have to show that
for every two modules $\wt{M}',\wt{M}''\in\Mod^{\on{fg},\on{gr}-\on{fd}}_{\wt{B}}$ and every morphism
$f:M'\to M''$ of $A$-modules such that $M'=\ov{\ev}_1(\wt{M}')$ and $M''=\ov{\ev}_1(\wt{M}'')$ comes from a morphism
$\wt{f}:\wt{M}'\to\wt{M}''$ in $\ov{\Mod}^{\on{fg},\on{gr}-\on{fd}}_{\wt{B}}$.

\smallskip

Recall that for every $\wt{M}\in\Mod^{\on{fg},\on{gr}-\on{fd}}_{\wt{B}}$ the kernel of morphism $\eta:\wt{M}\to M[t]$ from Section~\re{remgraded}(b) belongs to $\Mod^{\on{fg},\on{gr}-\on{fd},t-\on{tor}}_{\wt{B}}$, hence the induced morphism
$\eta:\wt{M}\to \eta(\wt{M})$ in $\Mod^{\on{fg},\on{gr}-\on{fd}}_{\wt{B}}$ induces an isomorphism in $\ov{\Mod}^{\on{fg},\on{gr}-\on{fd}}_{\wt{B}}$. Thus, replacing
$\wt{M}'$ and $\wt{M}''$ by $\eta(\wt{M}')$ and $\eta(\wt{M}'')$, respectively, we can assume that $\wt{M}'\subseteq M'[t]$ and $\wt{M}''\subseteq M''[t]$.

\smallskip

Homomorphism $f$ gives rise to homomorphism $f[t]=\Id\otimes f:M'[t]\to M''[t]$ of $B[t]$-modules (thus of $\wt{B}$-modules),
and we set $\wt{M}:=\wt{M}''+f[t](\wt{M}')\subseteq M''[t]$. Then $f[t]$ induces a homorphism $f[t]:\wt{M}'\to\wt{M}$ of $\wt{B}$-modules, and
the fact that functor $\sp_1$ is exact implies that the embedding $\nu:\wt{M}''\hra\wt{M}$ induces an isomorphism $\sp_1(\wt{M}'')\isom \sp_1(\wt{M})$. Thus $\nu$ induces an isomorphism in  $\ov{\Mod}^{\on{fg},\on{gr}-\on{fd}}_{\wt{B}}$, and
\[
\wt{f}:=\nu^{-1}\circ f[t]: \wt{M}'\to \wt{M}\isom\wt{M}''
\]
is a morphism we were looking for.
\end{proof}

%\begin{Emp} \label{E:notrem}
%{\bf Notation and remarks.}
%(a) Let $\wt{M}=\bigoplus_{i\in\B{Z}}M_i$ be a graded $L[t]$-module. Then $t$ acts on $\wt{M}$ injectively if and only if
%each morphism $t_i:M_i\to M_{i+1}$ is injective, and this happens if and only if the composition $M_i\to\wt{M}\to M$ is injective.

%\smallskip

%(b) For every graded $L[t]$-module $\wt{M}$, there exists the largest quotient $\wt{M}_{t-\on{inj}}$ on which $t$ acts injectively. Namely,
%$\wt{M}_{t-\on{inj}}$ is the quotient module $\wt{M}/\wt{M}_{t-\on{tor}}$, where $\wt{M}_{t-\on{tor}}=\bigcup_i\Ker t^i\subseteq\wt{M}$ is the largest $t$-torsion submodule. Notice that since $\sp_1(\wt{M}_{t-\on{tor}})=0$ and functor $\sp_1$ is exact, the projection $\wt{M}\to\wt{M}_{t-\on{inj}}$ induces an isomorphism
%$\sp_1(\wt{M})\isom \sp_1(\wt{M}_{t-\on{inj}})$.

%\smallskip

%(c) It follows from part~(b) that if $\wt{M}$ is a graded module over $\wt{A}$ or $\wt{B}$, then $\wt{M}_{t-\on{tor}}$ is a graded submodule, and
%$\wt{M}_{t-\on{inj}}$ is a graded factor module. In particular,  $\wt{A}_{t-\on{tor}}\subseteq \wt{A}$ is a graded ideal, thus
%$\wt{A}_{t-\on{inj}}$ is a graded factor algebra. Furthermore, $\wt{M}_{t-\on{inj}}$ is a graded $\wt{A}_{t-\on{inj}}$-algebra.
%\end{Emp}

\begin{Thm} \label{T:stabilization}
In the notation of Section~\re{graded}, assume that the scheme of fixed points $\Proj(\sp_0(\wt{A}))^{\psi}$ is empty.

\smallskip

(a) For every $\wt{M}\in  \Mod^{\on{fg},\on{gr}-\on{fd}}_{\wt{B}}$  the sequence of traces $\{\Tr(u, M_i)\}_{i\in\B{N}}$ stabilizes,
so we can form the ``asymptotic trace''
\[
\Tr_{\on{as}}(u, \wt{M}):=\lim_i\Tr(u,M_i).
\]

\smallskip

(b) Moreover, there exists a unique homomorphism $\Tr_{\on{as}}(u,-):K(B,u-\on{lf})\to L$ of abelian groups such that for every $\wt{M}\in  \Mod^{\on{fg},\on{gr}-\on{fd}}_{\wt{B}}$ we have
an equality
\[
\Tr_{\on{as}}(u,\sp_1(\wt{M}))=\Tr_{\on{as}}(u,\wt{M}).
\]

\smallskip

(c) Furthermore, the scheme $(\Spec A)^{\psi}$ is finite, and the homomorphism $\Tr_{\on{as}}(u,-)$ of part~(b) coincides with the
homomorphism  $\Tr_{\gen}(u,-)$ of \rco{gentr}.
\end{Thm}

%\begin{Emp}
%{\bf Remark.} By \rco{gentr} we have a generalized trace map $\Tr_{\gen}(u,-)$ under an assumption that the scheme of fixed points
%$(\Spec A)^{\psi}$ is finite.

%On the other hand, by \rt{stabilization}(b) we have a generalized trace map $\Tr'_{\gen}(u,-)$ under an assumption that the scheme of fixed points $\Proj(\gr(A))^{\psi}$ is empty. We are wondering whether one of these conditions implies the other.
%\end{Emp}

The proof of the theorem is based on the following particular case, which will be shown in the next subsection.

\begin{Cl} \label{C:vanish}
In the notation of \rt{stabilization}(a), assume that $\wt{B}$-module $\wt{M}$ is $t$-torsion. Then $\Tr(u, M_i)=0$ for each sufficiently large $i$.
\end{Cl}

%Now we are ready to prove \rl{stab}.

\begin{Emp}
\begin{proof}[Proof of \rt{stabilization}]

\smallskip

(a) The action of $t$ on $\wt{M}$ gives a graded
homomorphism $t:\wt{M}\to\wt{M}$ of degree one, hence a
homomorphism $t_i:M_i\to M_{i+1}$ for all $i$, commuting with the action of
$u$. It suffices to show that for all sufficiently large $i$ we have $\Tr(u, \Ker t_i)=\Tr(u, \Coker
t_i)=0$.

Both $\Ker t=\bigoplus_i \Ker t_i\subseteq\wt{M}$ and $\Coker t=\bigoplus_i \Coker t_i$ are graded
$\wt{B}$-modules, which are killed by $t$. Moreover they are finitely generated because $\wt{M}$ is finitely generated and $\wt{B}$ is Noetherian,
so the assertion follows from \rcl{vanish}.

\smallskip

(b) The assignment $\wt{M}\mapsto \Tr_{\on{as}}(u, \wt{M})$ from part~(a) is additive, so it gives rise to a homomorphism
$\Tr_{\on{as}}(u,-):K(\Mod^{\on{fg},\on{gr}-\on{fd}}_{\wt{B}})\to L$. By \rcl{vanish}, we have $\Tr_{\on{as}}(u, \wt{M})=0$,
if $\wt{M}$ is $t$-torsion, so by Section~\re{serre}(c), homomorphism $\Tr_{\on{as}}(u,-)$ factors unique through
$K(\ov{\Mod}^{\on{fg},\on{gr}-\on{fd}}_{\wt{B}})$. The assertion now follows from \rl{serre}.

\smallskip

(c) Notice that we have isomorphisms
\[
\Proj(\sp_0(\wt{A}))^{\psi}\simeq \Proj(\sp_0(\wt{A})_{\psi})\simeq \Proj(\sp_0(\wt{A}_{\psi}))
\]
and $(\Spec A)^{\psi}\simeq \Spec(A_{\psi})\simeq \Spec(\sp_1(\wt{A}_{\psi}))$. Therefore the first assertion follows from the observation of
Section~\re{remgraded}(d) applied to the graded algebra $\wt{A}_{\psi}$.

\smallskip

To show the second assertion, we have to show that the homomorphism $\Tr_{\on{as}}(u,-)$ satisfies properties (i) and (ii) of \rco{gentr}.
Property (i) follows from the fact that  the assignment $\wt{M}\mapsto \Tr_{\on{as}}(u,\wt{M})$ from part~(a) satisfies property (i), which is clear.
To show property (ii) notice that if $M$ is finite-dimensional over $L$, then the graded module $\wt{M}:=\bigoplus_i M$
satisfies $\sp_1(\wt{M})\simeq M$, in which case we have the equality $\Tr_{\on{as}}(u,M)=\Tr_{\on{as}}(u,\wt{M})=\Tr(u,M)$.
\end{proof}
\end{Emp}

\subsection{Proof of \rcl{vanish}}

The argument is based on the following lemma:

\begin{Lem} \label{L:fixed}
Let $Z$ be a proper scheme over a field $L$ equipped with an
action of a finite cyclic group $\Phi$ with generator $\psi$ such that
the scheme of fixed points $Z^\psi$ is empty.

\smallskip

(a) For every $\Phi$-equivariant coherent sheaf $\C{F}$ on $Z$, we
have an equality
\[
\sum_j (-1)^j\Tr(\psi, H^j(Z,\C{F}))=0.
\]

\smallskip

(b) Assume that $Z$ has an ample $\Phi$-equivariant line bundle
$\C{O}(1)$. Then for every $\Phi$-equivariant coherent sheaf
$\C{F}$ on $Z$ and every sufficiently large $i\in\B{N}$ we have an equality
$\Tr(\psi,H^0(Z,\C{F}(i)))=0$.
\end{Lem}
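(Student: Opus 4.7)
I would first deduce (b) from (a) by Serre vanishing: since $\C{O}(1)$ is ample and $\Phi$-equivariant, for every $\Phi$-equivariant coherent $\C{F}$ and every sufficiently large $i$ one has $H^j(Z,\C{F}(i))=0$ for all $j>0$, so that
\[
\Tr(u,H^0(Z,\C{F}(i))) = \sum_j(-1)^j\Tr(u,H^j(Z,\C{F}(i))),
\]
which is zero by (a) applied to $\C{F}(i)$.

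For (a), the assignment $\C{F}\mapsto \chi_u(\C{F}):=\sum_j(-1)^j\Tr(u,H^j(Z,\C{F}))$ is additive on short exact sequences of $u$-equivariant coherent sheaves (via the long exact cohomology sequence and additivity of trace on short exact sequences of finite-dimensional representations of $\langle u\rangle$). Hence $\chi_u$ factors through the Grothendieck group $K_0^{\langle u\rangle}(Z)$, and it suffices to verify vanishing on a convenient set of generators. The key geometric input is: \emph{no irreducible closed subscheme $W\subset Z$ is $u$-stable}, since such a $W$ would have its unique generic point fixed by $u$, contradicting $Z^u=\emptyset$. Consequently, $u$ permutes the irreducible components of $\mathrm{Supp}(\C{F})$ in orbits of length $m>1$.

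By Noetherian induction on the support, together with the $u$-stability of the set of associated primes of $\C{F}$, I would organize the primary decomposition into $\langle u\rangle$-orbits to obtain a finite $u$-equivariant filtration whose successive quotients are supported either on a proper closed subscheme of $\mathrm{Supp}(\C{F})$ (handled inductively) or on a single $\langle u\rangle$-orbit of irreducible components $W_1,\ldots,W_m$ with $u(W_i)=W_{i+1\bmod m}$, $m>1$. In the latter case the stabilizer of $W_1$ is the proper subgroup $H:=\langle u^m\rangle\subsetneq \langle u\rangle$, and as $\langle u\rangle$-representations
\[
H^j(Z,\C{F})\cong \mathrm{Ind}_{H}^{\langle u\rangle}H^j(W_1,\C{F}|_{W_1}).
\]
Since $u\notin H$, Frobenius reciprocity (or an explicit computation in the cyclic-permutation basis, where the matrix of $u$ has zero diagonal) gives $\Tr(u,\mathrm{Ind}_H^{\langle u\rangle}V)=0$ for every $H$-representation $V$; summing over $j$ yields $\chi_u(\C{F})=0$, completing the induction.

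The main technical obstacle is carrying out the dévissage rigorously, i.e.\ verifying that the primary decomposition of a $u$-equivariant coherent sheaf can be refined to a $u$-equivariant filtration whose graded pieces are of the form described. This follows from standard arguments by grouping associated primes into $\langle u\rangle$-orbits and picking the filtration inductively so each step is $u$-stable, but requires some bookkeeping.
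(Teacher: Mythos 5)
Your reduction of (b) to (a) via Serre vanishing is exactly what the paper does. But the proof of (a) contains a genuine gap: the ``key geometric input'' is false. The emptiness of the fixed-point scheme $Z^u$ does \emph{not} imply that no irreducible closed subscheme of $Z$ is $u$-stable. A topological point can be mapped to itself by $u$ without lying in $Z^u$, because $u$ may act nontrivially on its residue field. Concretely, take $Z$ an elliptic curve and $u$ translation by a nonzero $n$-torsion point (or any free action of a finite cyclic group on an irreducible projective variety): then $Z^u=\emptyset$, yet $Z$ itself is irreducible and $u$-stable. In this situation your d\'evissage has nothing to bite on --- the support of $\C{F}$ can be a single $u$-stable irreducible component, so no orbit of length $m>1$ appears, the induced-representation argument does not apply, and the induction cannot get started. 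Note also that in such examples the vanishing is genuinely an alternating-sum phenomenon (for $\C{F}=\C{O}_Z$ on the elliptic curve, $\Tr(u,H^0)=\Tr(u,H^1)=1$ and only the difference vanishes), so no argument that forces each $\Tr(u,H^j)$ to vanish separately can work in general.

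What is missing is precisely the input the paper uses: after observing (as you do) that $\C{F}\mapsto\sum_j(-1)^j\Tr(u,H^j(Z,\C{F}))$ is additive and factors through $K^{\Phi}(Z)$, the paper writes it as the composition of the equivariant pushforward $R\Gamma: K^{\Phi}(Z)\to K^{\Phi}(\on{pt})$ with $\Tr(u,-)$, notes that this composition factors through the localization $K^{\Phi}(Z)_u$, and invokes Thomason's localization theorem $K^{\Phi}(Z)_u\cong K^{\Phi}(Z^u)_u=0$ since $Z^u=\emptyset$. Your support-theoretic d\'evissage would handle the case where every $u$-stable closed subset is a union of components permuted in orbits of length $>1$, but it cannot treat free actions on irreducible varieties; some form of the localization (or a Lefschetz-type fixed point) theorem is needed there, and that is the step your proposal is missing.
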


\begin{proof}
(a) Thought the assertion is well known to specialists, we sketch the proof for
the convenience of the reader. The map $\C{F}\mapsto\sum_j
(-1)^j\Tr(\psi, H^j(Z,\C{F}))$ is additive, so is gives rise to a homomorphism
$K'_0(\Phi,Z)\to L$ from the equivariant $K$-theory of coherent sheaves on $Z$. By definition, this homomorphism can be written as the
composition of the pushforward $R\Gm:K'_0(\Phi,Z)\to K'_0(\Phi,\on{pt})=K[\Rep \Phi]$ and the trace map $\Tr(\psi,-):K[\Rep
\Phi]\to L$. Thus it suffices to show that the composition $\Tr(\psi,-)\circ R\Gm:K'_0(\Phi,Z)\to L$
vanishes.

Next we observe that the composition $\Tr(\psi,-)\circ R\Gm$ factors through the
localization $K'_0(\Phi,Z)_{\psi}$, while by the localization theorem, we have
$K'_0(\Phi,Z)_{\psi}\simeq K'_0(\Phi,Z^{\psi})_\psi$ (see, for example,
\cite[Th\'eor\'eme 2.1]{Th}) . Since $Z^{\psi}$ is empty, the group $K'_0(\Phi,Z^{\psi})_{\psi}$
vanishes, thus the composition $\Tr(\psi,-)\circ R\Gm$ vanishes as well.

\smallskip

(b) By a theorem of Serre, we have $H^j(Z,\C{F}(i))=0$ for all $j>0$ and all sufficiently large
$i$. Therefore part~(b) follows from part~(a).
\end{proof}

%\subsection{Technical lemma} \label{S:lemma}

\begin{Emp} \label{E:setupgraded}
{\bf Set up.} (a) Let $A=\bigoplus_{i=0}^{\infty}A_i$ be a commutative finitely generated graded algebra over $L$ such that $A_0$ is finite-dimensional over $L$, let $\psi\in \Aut(A)$ be a grading-preserving automorphism of $A$ of order $n$. Then $\psi$ induces an
automorphism of $\Proj A$.

%Explicitly, $B=\bigoplus_{i=0}^{\infty}B_i$, where where $B_i=\bigoplus_{k\in\B{Z}A_i u^k$.

\smallskip

(b) Let $B_n=A\{u,\psi\}_n$ be as in Section~\re{twisted}. Then $B_n$ is naturally a graded ring such that $A\subseteq B_n$ is a graded subring and $\deg u=0$. Let $N=\bigoplus_{i=0}^{\infty} N_i$ be a finitely generated graded $B_n$-module. Then each $N_i$ is an $L[u]$-module finite dimensional over $L$, hence we can consider the trace $\Tr(u,N_i)\in L$.
\end{Emp}

\begin{Cor} \label{C:fixed}
In the situation of Section~\re{setupgraded} assume that  the scheme of fixed points
$(\Proj A)^{\psi}$ is empty. Then for each sufficiently large $i$, we have
$\Tr(\psi,N_i)=0$.
\end{Cor}

\begin{proof}
Assume first that algebra $A$ is generated by $A_1$ over $A_0$. Since $N$ is a finitely generated
$A\{u,\psi\}_n$-module, it is a $\Phi$-equivariant finitely generated $A$-module. Hence module $N$
gives rise to a $\Phi$-equivariant coherent sheaf $\Proj N$ on
$Z:=\Proj A$. Moreover, for each sufficiently large $i\in\B{Z}$ we have an isomorphism $N_i\simeq H^0(Z,\Proj(N)(i))$. Therefore the assertion
follows from \rl{fixed}(b).

\smallskip

To deduce the general case from the particular case shown above, for each $d\in\B{N}$ we consider the graded subalgebra $A^{(d)}:=\sum_{i=0}^{\infty} A_{di}\subseteq A$.
Then there exists $d$ such that the graded algebra $A^{(d)}$ is generated by $(A^{(d)})_1=A_d$ over $(A^{(d)})_0=A_0$ (by \cite[Tag 0EGH]{Sta}).
Also the inclusion $A^{(d)}\hra A$ induces an isomorphism $\Proj A\isom\Proj A^{(d)}$ (by \cite[Tag 0B5J]{Sta}).
In particular, the scheme of $(\Proj A^{(d)})^{\psi}$ is empty.

\smallskip

Notice that $A$ is integral and finitely generated (thus finite) $A^{(d)}$-algebra, thus $N$ is a finitely generated $A^{(d)}$-module.
Moreover, module $N$ decomposes as a direct sum $N=\bigoplus_{j=0}^{d-1} N^{(j)}$, where $N^{(j)}$ is a graded finitely generated $A^{(d)}\{u,\psi\}_n$-module $\bigoplus_i N_{di+j}$. Moreover, since $A^{(d)}$ is generated by $(A^{(d)})_1$,
it follows from the particular case shown above that for each $j$ and each sufficiently large $i$ we have $\Tr(\psi,N_{di+j})=0$.
Since the number of $j$'s is finite, we are done.
\end{proof}
%The following technical lemma will be crucial for what follows.

The following simple criterion will be used later.

\begin{Lem} \label{L:example}
In the situation of Section~\re{setupgraded}(a), assume that for every $i>0$
there exists a basis $\C{B}_i$ of $A_i$ over $L$ such that
$\prod_{j=0}^{n-1} \psi^j(b)=0$ for all $b\in \C{B}_i$.

Then the scheme of fixed points $(\Proj A)^{\psi}$ is empty.
\end{Lem}
\begin{proof}
Fix $\frak{p}\in (\Proj A)^{\psi}$. Then for all $i>0$ and $b\in \C{B}_i$, we have
$\prod_{j=0}^{n-1} \psi^j(b)=0\in \frak{p}$. Then we have $\psi^j(b)\in \frak{p}$ for some $j$ (because
$\frak{p}\subseteq A$ is a prime ideal), hence $b\in\frak{p}$ (because $\frak{p}$ is $\psi$-invariant).
This implies that $\C{B}_i\subseteq\frak{p}$ for every $i$, thus $\bigoplus_{i>0}A_i\subseteq\frak{p}$,
a contradiction.
\end{proof}

Now we are ready to show \rcl{vanish}.

\begin{Emp}
\begin{proof}[Proof of \rcl{vanish}]

Assume first that $t$ and $u^n-1$ act on $\wt{M}$ as zero. Since $\wt{B}/(t,u^n-1)=\sp_0(\wt{A})\{u,\psi\}_n$, $\wt{M}$ is a finitely generated graded $\sp_0(\wt{A})\{u,\psi\}_n$-module, in which case the assertion was proven in \rco{fixed}.

\smallskip

To deduce the assertion in the general case, we argue as in \rl{ktheory}:

\smallskip

Since $\wt{M}$ is a $t$-torsion module over a Noetherian ring $\wt{B}$, it has a finite filtration $0=\wt{M}^{(0)}\subsetneq \wt{M}^{(1)}\subsetneq \ldots \subsetneq \wt{M}^{(n)}=\wt{M}$ by graded submodules, where $\wt{M}^{(i)}=\Ker t^i$. Since the trace is additive, the assertion for $\wt{M}$ follows from the assertion for all quotients $\wt{M}^{(i+1)}/\wt{M}^{(i)}$. Thus we can assume that $t$ acts on $\wt{M}$ as zero.

\smallskip

Next, since $\wt{M}$ is finitely generated and $u$-locally finite, it decomposes as a finite direct sum $\wt{M}=\bigoplus_{a\in L^{\times}}\wt{M}_{(a)}$, where
$u^n$ has a unique generalized eigenvalue $a$ on $\wt{M}_{(a)}$. %Moreover, this sum is finite, because $\wt{M}$ is finitely generated.
Since it suffices to show the assertion for each $\wt{M}_{(a)}$, we can assume that $a$ is the unique generalized eigenvalue of $u^n$ on $\wt{M}_{(a)}$.

\smallskip

Moreover, as in the proof of \rl{ktheory}, choosing $b\in L$ such that $b^n=a$ and twisting, we reduce to the case $a=1$.
In this case, $\wt{M}$ is $(u^n-1)$-torsion, thus arguing as in the second paragraph of the proof, we reduce to the case when $u^n-1$ acts on $\wt{M}$ as zero,
completing the proof.
\end{proof}
\end{Emp}

\subsection{Basic example}

\begin{Emp} \label{E:Rees}
{\bf Rees rings and Rees modules.}

\smallskip

(a) Let $A$ be an $L$-algebra. By a {\em filtration} of $A$, we mean an increasing collection $A_0\subseteq A_1\subseteq A_2\subseteq\ldots\subseteq A$ of subspaces over $L$ such that
$1\in A_0, \bigcup_i A_i=A$ and $A_i\cdot A_j\subseteq A_{i+j}$ for all $i,j$. To each such filtration $\{A_i\}_i$ we associate its {\em Rees ring} $ R(A):=\bigoplus_i A_i t^i\subseteq A[t,t^{-1}]$.

%\smallskip

Then $R(A)$ is a graded $L[t]$-algebra on which $t$-acts injectively, and we have a natural isomorphism $ R(A)/(t-1) R(A)\simeq A$.
Conversely, every graded $L[t]$-algebra on which $t$-acts injectively is isomorphic to a Rees ring $R(A)$ corresponding to some
filtration of $A$ (compare Section~\re{remgraded}(c)).

\smallskip

(b) In the situation of part (a) let $M$ be an $A$-module, equipped with an increasing filtration $\{M_j\}_j$ of subspaces over $L$. We say that $\{M_j\}_j$ is {\em compatible with} $\{A_i\}_i$, if for every $i$ and $j$ we have $A_i(M_j)\subseteq M_{i+j}$.

In this case the Rees module $R(M):=\bigoplus_i M_i$ is a graded $R(A)$-module on which $t$ acts injectively, and $R(M)/(t-1)R(M)\simeq M$. Conversely, every graded $R(A)$-module on which $t$-acts injectively is isomorphic to a Rees module $R(M)$ corresponding to some
filtration of $M$ compatible with $\{A_i\}_i$.

\smallskip

(c) In the situation of part (b), we call a filtration $\{M_j\}_j$ {\em finitely generated} (over $\{A_i\}_i$) (also called {\em good} in other sources), if $R(M)$ is a finitely generated $R(A)$-module. In this case, $M$ is automatically finitely generated over $A$.

Conversely, the proof of the essential surjectivity of $\ov{\sp}_1$ (see \rl{serre}) shows that if $M$ is a finitely generated $A$-module, then $M$ has a finitely generated filtration
$\{M_i\}_i$ over $\{A_i\}_i$.

\smallskip

\end{Emp}

\begin{Emp} \label{E:fil}
{\bf Filtrations on monoids.}

\smallskip

(a) Let $\Gm$ be a monoid, which in our application will be a
group. By a {\em filtration} of $\Gm$, we mean an increasing
collection of subsets
$\Gm_0\subseteq\Gm_1\subseteq\Gm_2\subseteq\ldots\subseteq\Gm$ such that
$1\in\Gm_0, \bigcup_i\Gm_i=\Gm$ and $\Gm_i\cdot\Gm_j\subseteq\Gm_{i+j}$
for all $i,j$.

\smallskip

(b) We will say that the filtration is {\em finitely
generated}, if in addition each $\Gm_i$ is finite,
and there exists $m\in\B{N}$ such that for each $i>m$ we have
$\Gm_i=\bigcup_{j=1}^m \Gm_j\cdot \Gm_{i-j}$. Note that in this case
the monoid $\Gm$ us generated by $\bigcup_{j=0}^m\Gm_i$, thus
$\Gm$ is finitely generated.

\smallskip

(c)  Conversely, every finitely generated monoid $\Gm$ has a finitely generated filtration $\{\Gm_i\}_i$.
Indeed, let $S:=\{\gm_1,....,\gm_n\}\subseteq\Gm$ be a finite set of generators. Then the increasing collection of subsets defined by
$\Gm_0:=\{1\}, \Gm_1:=S\cup\{1\}$ and $\Gm_{i+1}:=\Gm_i\cdot \Gm_1$ form a finitely generated filtration of $\Gm$.
Moreover, if $\Gm$ is equipped with an automorphism $\psi$ of finite order, then we can assume that $S$ and hence also
$\{\Gm_i\}_i$ is $\psi$-invariant.

\smallskip

(d) Let $L$ be a field. Then a filtration $\{\Gm_i\}_i$ on a
monoid $\Gm$ gives rise to a filtration $\{L[\Gm_i]\}_i$ of its
monoid algebra $L[\Gm]$, where $L[\Gm_i]\subseteq L[\Gm]$ denotes the span of
$\Gm_i\subseteq\Gm$ over $L$.

\smallskip

(e) Note that if  the filtration $\{\Gm_i\}_i$ is finitely generated, then the induced filtration
$\{L[\Gm_i]\}_i$ of $L[\Gm]$ is finitely generated as well.
%$R(L[\Gm])$ is a finitely generated $L$-algebra.
In particular, both $L[\Gm]$ and $R(L[\Gm])$ are Noetherian, if $\Gm$ is commutative.
\end{Emp}

\begin{Emp} \label{E:finell}
{\bf Basic example.}

\smallskip

(a) Let  $\La$ be a finitely generated free
abelian group, let $\psi\in \Aut(\La)$ be an {\em elliptic} element
of finite order $n$, that is, $\La^{\psi}=\{1\}$ (compare Section~\re{fincor}(d)), and let $\La_{\psi}:=\La/(\psi-1)\La$ be the group of $\psi$-coinvariants, which is automatically finite.

\smallskip

(b) Let $\lan u\ran$ and $\cycl_n$  be the cyclic groups generated by
$u$ of infinite order and of order $n$, respectively, and let $\Dt:=\La \rtimes \lan u\ran$ and
$\Dt':=\La\rtimes\lan u\ran_n$ be the semi-direct products, where $u$ acts on $\La$ as $\psi$, that is, $u\la u^{-1}=\psi(\la)$
for every $\la\in\La$.

\smallskip

(c) We equip monoid $\La$ with a $\psi$-invariant finitely generated filtration
$\{\La_n\}_n$ with $\La_0=\{1\}$ (see Section~\re{fil}(c)). It induces filtrations on $\Dt$ and $\Dt'$
such that $\deg u=\deg u^{-1}=0$.

\smallskip

(d) Let $L$ be an algebraically closed field of characteristic zero.

\smallskip

Then $A:=L[\La]$ is a commutative finitely generated algebra over $L$ and the automorphism $\psi\in \Aut(\La)$ induces an automorphism of $A$ of finite order, thus putting us in the situation of \re{twisted}.

In particular, the algebra $B=A\{u^{\pm 1},\psi\}$ is canonically identified with $L[\Dt]$, while
$B_n=A\{u,\psi\}_n$ is identified with $L[\Dt']$.  Furthermore, by Section~\re{fil}(d)
the filtrations of part~(c) gives rise to filtrations of $A$ and $B$, thus to Rees algebras $\wt{A}=R(A)$ and $\wt{B}=R(B)$.

% and $B_n:=L[\Dt]$ and $A':=L[\Dt']$ be the
%corresponding filtered group algebras (see Section~\re{fil}(c)), and let $ R(A)$
%and $R(A')$ be the corresponding Rees rings. Then $A'=A/(u^n-1)$ and
%$  R(A')= R(A)/(u^n-1)$.

%\smallskip

%(e) Since $\La$ is commutative, its group algebra $L[\La]$ and Rees algebra $R(L[\La])$ is
%Noetherian (see Section~\re{fil}(d)). Since $R(A')$ is a finitely generated $R(L[\La])$-module (denerated by
%$\{u^i\}_{i=0}^{n-1}$), it is Noetherian as well.
\end{Emp}

\begin{Lem} \label{L:finell}
In the notation of Section~\re{finell}(a), we have

\smallskip

(a) $|\La_{\psi}|=\det(1-\psi):=\det(1-\psi,\La\otimes_{\B{Z}}\B{Q})$;

\smallskip

(b) element $u^n\in\Dt$ is central, and $(\la u)^n=u^n\in\Dt$ for each $\la\in\La$.
\end{Lem}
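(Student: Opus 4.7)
The plan is to prove the two parts separately; both are elementary consequences of ellipticity ($\La^u=\{0\}$), and neither presents a real obstacle.

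For part (a), I would invoke the standard fact that for any endomorphism $\phi:\La\to\La$ of a finitely generated free abelian group with $\det\phi\neq 0$, the cokernel is finite of order $|\det\phi|$. I apply this to $\phi=1-u$. Ellipticity says precisely that $\ker(1-u)=\La^u=\{0\}$, so after extending scalars to $\B{Q}$, the operator $1-u$ on the finite-dimensional $\B{Q}$-vector space $\La\otimes\B{Q}$ is injective hence invertible, i.e.\ $\det(1-u)\neq 0$. This immediately gives $|\La_u|=|\det(1-u)|$. To remove the absolute value I would argue that $\det(1-u)>0$: since $u$ has finite order, its eigenvalues on $\La\otimes\overline{\B{Q}}$ are roots of unity, all different from $1$ by ellipticity; grouping complex-conjugate pairs contributes factors $|1-\zeta|^2>0$, while the eigenvalue $-1$, if present, contributes $2$.

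For part (b), centrality of $u^n$ in $\Dt=\lan u\ran\ltimes\La$ is immediate: $u^n$ trivially commutes with $u$, and for $\la\in\La$ the conjugation rule gives $u^n\la u^{-n}=u^n(\la)=\la$ since $u$ has order $n$ acting on $\La$. For the identity $(\la u)^n=u^n$, I would carry out the direct computation in the semidirect product: moving each $u$ past the $\la$'s on its right using $u\la=u(\la)u$, one obtains (writing $\La$ additively)
\[
(\la u)^n=\bigl(\la+u(\la)+u^2(\la)+\cdots+u^{n-1}(\la)\bigr)\cdot u^n=N_n(\la)\cdot u^n,
\]
where $N_n:=\sum_{i=0}^{n-1}u^i\in\End(\La)$. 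To finish I would observe the telescoping identity $(1-u)N_n=1-u^n=0$ on $\La$, which shows $\im N_n\subset\ker(1-u)=\La^u=\{0\}$; hence $N_n(\la)=0$ and $(\la u)^n=u^n$ as required.
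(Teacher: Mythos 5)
Your proposal is correct and follows essentially the same route as the paper: for (a) you identify $\La_u$ as the cokernel of the injective map $1-u$, so $|\La_u|=|\det(1-u)|$, and you obtain positivity of $\det(1-u)$ by evaluating the characteristic polynomial at $1$, pairing complex-conjugate root-of-unity eigenvalues and treating $-1$ separately, exactly as in the paper. For (b) your computation $(\la u)^n=N_n(\la)\,u^n$ together with the observation that $N_n(\la)$ lies in $\La^u=\{0\}$ is the same argument the paper gives (the paper phrases it as $\mu=\la\,u(\la)\cdots u^{n-1}(\la)$ being $u$-invariant).
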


\begin{proof}
(a) Since $\psi$ is elliptic, endomorphism $1-\psi\in \End(\La)$ is injective.
As $\La_{\psi}$ is the cokernel of the map $1-\psi\in \End(\La)$, we get that
$\La_{\psi}$ is a finite group of cardinality $|\det(1-\psi)|$. It particular,
$\det(1-\psi)\neq 0$, and it remains to show that
$\det(1-\psi)>0$.

Let $f_{\psi}(t)=\det(t-\psi,\La\otimes_{\B{Z}}\B{Q})$ be the characteristic polynomial of
$\psi$. Since $\psi^n=1$, all roots of $f_{\psi}$ are $n$-th roots of unity.
Since $f_{\psi}(1)=\det(1-\psi)\neq 0$, and
$f_{\psi}(t)\in\B{R}[t]$, we have a decomposition
$f_{\psi}(t)=(t+1)^k\prod_i(t-\la_i)(t-\ov{\la_i})$, therefore
$\det(1-\psi)=f_{\psi}(1)=2^k\prod_i (1-\la_i)\ov{(1-\la_i)}>0$.

\smallskip

(b) Since $\psi^n\in\Aut(\La)$ is trivial, element $u^n\in\Dt$ commutes with $\La$
and $u$, hence element $u^n\in\Dt$ is central. Next for every $\la\in\La$, its $n$-power
$(\la u)^n$ equals  $\mu u^n$ with $\mu=\la \psi(\la)\ldots \psi^{n-1}(\la)\in\La$.
Moreover, $\mu\in\La^{\psi}=\{1\}$.
\end{proof}

%\begin{Emp} \label{E:finell2}
%{\bf Remarks.} (a) Note that the map $[\la]\mapsto [\la u]$
%defines a bijection $\La/(1-u)\La\isom \La_{adj}\bs\La u$, where
%as before $\La_{adj}\bs\La u$ is the quotient of the coset $\La
%u\subseteq\Dt$ by the adjoint action of $\La$. Since $\det(1-u)>0$
%(by \rl{finell}), the quotient $\La/(1-u)\La$ and hence also the
%quotient $\La_{adj}\bs \La u$ is finite of cardinality
%$\det(1-u)$.

%(b) For each finitely generated representation $V$ of $\La$ over a
%field $L$, the homology $H_*(\Lambda,V)$ is a virtual
%finite-dimensional vector space. Therefore, each finitely
%generated representation $V$ of $\Dt$ over a field $L$, the
%homology $H_*(\Lambda,V)$ is a virtual finite-dimensional
%representation of $\lan u\ran$.
%\end{Emp}

\begin{Emp} \label{E:trace}
{\bf Remarks.} Assume that we are in the situation of Section~\re{finell}.

\smallskip

(a) Since $\La_{\psi}$ is a finite group, the algebra $A_{\psi}\simeq L[\La_{\psi}]$ is finite-dimensional over $L$. Thus the scheme of fixed points $(\Spec A)^{\psi}$ is finite, hence the generalized trace homomorphism $\Tr_{\gen}(u,-)$ is defined (see \rco{gentr}).

\smallskip

(b) We claim that graded ring $\gr A$ satisfies the condition of \rl{example}, therefore the scheme of fixed points $\Proj(\gr A)^{\psi}$ is empty, hence \rt{stabilization} applies. Indeed, each $\gr_{i+1}(A)=A_{i+1}/A_{i}$ has a natural basis
$\{[\la]\}_{\la\in \La_{i+1}\sm\La_{i}}$, and for every $\la\in \La_{i+1}\sm\La_{i}$, we have $\prod_{j=1}^n\psi^j(\la)\in\La^{\psi}=\{1\}$, thus $\prod_{j=1}^n\psi^j([\la])=0$.

\smallskip

(c) Note that for every $\la\in\La\subseteq A^{\times}$, every $u$-locally finite $M\in\Mod_B^{\on{fg},u-\on{lf}}$ is $u^n=(\la u)^n$-locally finite (see
\rl{finell}(b)), hence is $\la u$-locally finite. Thus the trace
$\Tr_{\gen}(u\la,M)$ is defined (see Section~\re{remgentr}).

\smallskip

(d) Note that the map $[\la]\mapsto [\la u]$ defines a bijection
$\La_{\psi}\isom \La_{\adj}\bs\La u$, where as before $\La_{\adj}\bs\La
u$ is the quotient of the coset $\La u\subseteq\Dt$ by the adjoint
action of $\La$.

\end{Emp}
%Therefore $\Tr(u\la,M)$ only depends on the class $[\la]\in\La_u$, thus the right hand side of \form{tf} is
%independent of the choice of representatives.

\begin{Prop} \label{P:trform}
In the situation of Section~\re{finell}, let $M$ be a finitely generated $u$-locally finite $B$-module.

\smallskip

(a) For every $\la\in\La$, we have an equality $\Tr_{\gen}(\la u\la^{-1},M)=\Tr_{\gen}(u,M)$ (compare Section~\re{trace}(c)).

\smallskip

(b) Each group homology  $H_i(\La,M)$ is a finite dimensional representation of the cyclic group $\cycl$ over $L$, and
we have an equality
\begin{equation} \label{Eq:tf}
\Tr(u,H_*(\La,M)):=\sum_i (-1)^i \Tr(u,H_i(\La,M))=\sum_{\la}\Tr_{\gen}(\la u,M),
\end{equation}
where $\la u$ runs over a set of representatives of the $\La$-conjugacy classes in $\La u$ (see Section~\re{trace}(d)).
\end{Prop}

\begin{proof}
(a) Note that both homomorphisms $\Tr_{\gen}(u,-)$ and $\Tr_{\gen}(\la u\la^{-1},-)$ satisfy formula
\form{local}. Thus, by \rco{ktheory}, it suffices to show the equality when $M$ is a finitely-generated $A_{\psi}[\cycl_n]=L[\La_{\psi}\times \cycl_n]$-module. In this case, $M$ is finite-dimensional over $L$,
so we are reduced to the equality $\Tr(\la u\la^{-1},M)=\Tr(u,M)$.

\smallskip

(b) First we show that each $H_i(\La,M)$ is finite-dimensional. Arguing as in the proofs of \rl{ktheory} and \rcl{vanish}, we reduce to the
case when $u^n$ acts trivially on $M$. In this case, $M$ is a finitely generated $A=L[\La]$-module, in which case the finite-dimensionality
of $H_i(\La,M)$ is well-known.

\smallskip

Secondly we notice that it follows from part~(a) that $\Tr_{\gen}(\la u,M)$ only depends on the $\La$-conjugacy class of $\la u$, thus
the right hand side of \form{tf} is well-defined.

\smallskip

To show the identity \form{tf}, we notice that both sides are additive in $M$ and satisfy formula
\form{local}. Therefore, by \rco{ktheory}, we can assume that $M$ is an
$L[\La_{\psi}\times \cycl_n]$-module, finite-dimensional over $L$.
Moreover, we can assume that  $M$ is irreducible,
that is, $M=M_{\xi}$ corresponds to a character
$\xi:\La_{\psi}\times\cycl_n\to L\m$. Twisting $M$ by a character
of $\cycl_n$, we may assume that $\xi$ is trivial on $u$, thus is a
character of $\La_{\psi}$. In other words, we have to show that for
every character $\xi:\La_{\psi}\to L\m$, we have an equality
\[
\Tr(\psi,H_*(\La,M_{\xi}))=\sum_{\la\in\La_{\psi}}\la(\xi).
\]
If $\xi$ is non-trivial, then both sides of the last equation
vanish. So we can assume that $\xi$ is trivial, in which case we
have to show the equality $\Tr(\psi,H_*(\La,L))=|\La_{\psi}|$.

But this follows from a well-known identity
\[
\Tr(\psi,H_*(\La,L))=\sum_{i=0}^{\rk \La}(-1)^i \Tr(\psi,
\wedge^i\La)=\det(1-\psi,\La)
\]
and \rl{finell}(a).
\end{proof}

\begin{Lem} \label{L:induced}
In the situation of Section~\re{finell}, let $\La^0\subseteq \La$ be a $\psi$-invariant subgroup, let $M^0$ be a finitely generated $u$-locally finite $L[\La^0\rtimes\lan u\ran]$-module, and let $M=\ind_{\La^0}^{\La}M^0$ be the induced representation.

Then we have an equality
\begin{equation} \label{Eq:induced}
\Tr_{\gen}(u,M)=\sum_{\mu}\Tr_{\gen}(\mu^{-1} u\mu,M^0),
\end{equation}
where the sum runs over a finite set of representatives $\mu\in\La$ of classes in $\La/\La^0$ such that  $\mu^{-1} u\mu\in\La^0 u$.
\end{Lem}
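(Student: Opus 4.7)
The plan is to recognize $M$ as an induced representation of the semidirect-product group $\La \rtimes \lan u\ran$. Since $\La^0$ is $u$-invariant, the subgroup $\La^0 \rtimes \lan u\ran \subset \La \rtimes \lan u\ran$ has left coset space canonically identified with $\La/\La^0$, so $M \cong \Ind_{\La^0 \rtimes \lan u\ran}^{\La \rtimes \lan u\ran} M^0 = \bigoplus_{\mu \in R} \mu \otimes M^0$ as $L[\lan u\ran]$-modules, where $R \subset \La$ is a set of representatives for $\La/\La^0$. Under this decomposition, $u$ permutes the summands via the permutation $\sigma$ of $\La/\La^0$ given by $\mu\La^0 \mapsto u\mu u^{-1} \La^0$, and the $\sigma$-fixed cosets are precisely those represented by $\mu \in \La$ with $\mu^{-1} u \mu \in \La^0 u$ --- exactly the cosets indexing the right-hand side of \form{induced}.

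Because $u$ has finite order on $\La$ and $\sigma$ is induced by conjugation by $u$, every $\sigma$-orbit $O \subset \La/\La^0$ is finite. This gives a $u$-stable decomposition $M = \bigoplus_O M_O$ with $M_O := \bigoplus_{\mu\La^0 \in O} \mu \otimes M^0$; note that each $M_O$ is $u$-stable but not $\La$-stable. The plan is to construct a good $u$-stable filtration $\{M_i\}$ of $M$ in the sense of \rl{trace}(a) that is compatible with this orbit decomposition, so that $M_i = \bigoplus_O (M_i \cap M_O)$ with each intersection a $u$-stable finite-dimensional subspace of $M_O$. Starting from a $u$-stable good filtration $\{M^0_i\}$ of $M^0$ and a $u$-stable exhaustion of $R$ by finite unions of complete $\sigma$-orbits, one assembles such $\{M_i\}$ by bookkeeping against the twisting elements $\la_0(\mu) := \sigma(\mu)^{-1} u \mu u^{-1} \in \La^0$ that arise when $u$ moves between summands; the freedom in the choice of filtration on $\La$ guaranteed by \rp{trform}(b) makes the combinatorics manageable.

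Given such a filtration, additivity of traces yields $\Tr_{\gen}(u,M) = \lim_i \Tr(u, M_i) = \sum_O \lim_i \Tr(u, M_i \cap M_O)$, reducing the problem to analyzing each orbit separately. For a non-fixed orbit $O$ of size $k > 1$, the action of $u$ cyclically permutes the $k$ distinct summands $\mu \otimes M^0$ inside $M_O$, so its matrix on any $u$-stable subspace of $M_O$ meeting each summand transversally has no diagonal entries and vanishing trace. For a $\sigma$-fixed coset $\mu\La^0$, the canonical identification $\mu \otimes M^0 \cong M^0$ intertwines the $u$-action on $\mu \otimes M^0$ with the action of $\mu^{-1} u \mu \in \La^0 \rtimes \lan u\ran$ on $M^0$; since $(\mu^{-1} u \mu)|_{\La^0} = u|_{\La^0}$ and $(\mu^{-1} u \mu)^n = u^n$ by \rl{finell}(b), the module $M^0$ is $\mu^{-1} u \mu$-finite and this orbit contribution equals $\Tr_{\gen}(\mu^{-1} u \mu, M^0)$ (as in \re{trace}(b)). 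Summing over $\sigma$-fixed orbits gives exactly \form{induced}.

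The hard part will be the construction of the compatible good filtration in the second paragraph: the pieces $M_i$ must simultaneously exhaust $M$ as a finitely generated $L[\La]$-module with finite-dimensional $u$-stable steps and split cleanly along an orbit decomposition that is not preserved by $\La$. If the direct construction proves too delicate, a fallback is to argue $K$-theoretically in the style of the proofs of \rp{trform}(b),(d): both sides of \form{induced} are additive in $M^0$ and define homomorphisms $K_0(L[\La^0 \rtimes \lan u\ran]) \to L$ that factor through the localization at $u$; by the localization theorem this reduces to the case where $M^0$ is supported on the (finite, by ellipticity) $u$-fixed locus of $\Spec L[\La^0]$, where $M^0$ is finite-dimensional and \form{induced} reduces to the classical Frobenius character formula for induced representations applied orbit by orbit.
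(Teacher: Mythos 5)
Your fallback argument is, in essence, the paper's actual proof, and it is the route that works. The paper first notes that the indexing set is finite because $u$ is elliptic (so $u-1$ is injective on $\La$ and $(u-1)(\La^0)$ has finite index in $\La^0$, whence only finitely many cosets $\mu\La^0$ satisfy $\mu^{-1}u(\mu)\in\La^0$); it then uses additivity of both sides of \form{induced} in $M^0$ (\rp{trform}(a)) together with the localization-theorem reduction from the proof of \rp{trform}(d) to reduce to the case where $M^0$ is finite-dimensional over $L$. At that point no delicate filtration is needed: choosing a $u$-invariant finitely generated filtration $\{\La_i\}_i$ of $\La$, the subspaces $M_i:=\La_i(M^0)$ form a good filtration of $M$, and since $\La^0$ acts invertibly on the finite-dimensional $M^0$, the subspace $\mu M^0\subset M$ depends only on the coset $\mu\La^0$, so that $M_i=\oplus_{\mu}\mu M^0$ with $\mu$ running over representatives of $\La_i\La^0/\La^0$. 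The trace computation you describe (cosets moved by $u$ contribute nothing; a fixed coset $\mu\La^0$ contributes $\Tr(\mu^{-1}u\mu,M^0)$, since the identification $\mu M^0\cong M^0$ intertwines $u$ with $\mu^{-1}u\mu$) then gives $\Tr(u,M_i)=\sum_{\mu}\Tr(\mu^{-1}u\mu,M^0)$, and for $i$ large all of the finitely many fixed cosets occur, which is exactly \form{induced}.

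By contrast, your primary route is incomplete precisely where you flag it: for infinite-dimensional $M^0$ the orbit decomposition $M=\oplus_O M_O$ is not $\La$-stable, and a filtration of $M$ assembled from $\{\La_i\}_i$ and a good filtration $\{M^0_j\}_j$ of $M^0$ does not split cleanly along cosets -- the piece over $\mu\La^0$ is a sum of translates $\mu\la_0 M^0_j$ over $\la_0\in\La^0\cap\mu^{-1}\La_i$, and one would still have to compare the resulting exhaustion of $M^0$ with a good $\La^0$-filtration (using \rl{trace}(b)) to identify the limiting trace with $\Tr_{\gen}(\mu^{-1}u\mu,M^0)$. This bookkeeping can presumably be carried out, but it is exactly what the paper avoids by performing the reduction to finite-dimensional $M^0$ first; so I would recommend promoting your fallback to the main argument. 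Two small points to make explicit in either version: the finiteness of the set of fixed cosets (your parenthetical ``finite, by ellipticity'' is the right reason, but it is a claim of the lemma and deserves a sentence), and the fact that $M=\ind_{\La^0}^{\La}M^0$ is again $u$-finite and finitely generated, so that $\Tr_{\gen}(u,M)$ is defined.
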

\begin{proof}
Notice first that $\mu^{-1} u\mu\in\La^0 u$ if and only if $\mu^{-1}\psi(\mu)\in\La^0$. Therefore the finiteness of the sum follows from the fact that the automorphism $\psi$ is elliptic, hence the induced map $\psi-1:\La\to \La$ is injective, and hence the subgroup $(\psi-1)(\La_0)\subseteq \La^0$ is of finite index.

Next, notice that both sides of \form{induced} are additive in $M^0$. Thus, arguing as in \rp{trform} (using \rco{ktheory}) we reduce to the case when $M^0$ is finite-dimensional over $L$. Moreover, as we observed in Section~\re{trace}(b), \rt{stabilization} applies in our case.

As in Section~\re{finell}(c), choose a $\psi$-invariant finitely generated filtration $\{\La_i\}_i$ of $\La$. Then $M_i:=\La_i(M^0)\subseteq M$ gives a finitely generated filtration of $M$. Then vector space $M_i$ decomposes as $M_i=\bigoplus_{\mu}\mu M^0$, where $\mu $ runs over a set of representatives of the set of cosets $\La_i\La^0/\La^0$, and we have an equality
\[
\Tr(u,M_i)=\sum_{\mu}\Tr(\mu^{-1} u\mu,M^0),
\]
where the sum runs over the set of representatives $\mu\in\La$ of $\La_i\La^0/\La^0$ such that $\psi-(\mu)\La^0=\mu\La^0$, that is,
$\mu^{-1} u\mu\in\La^0 u$. From this the assertion follows from \rt{stabilization}(c).
\end{proof}

\section{Compatibility of actions (proof of \rt{action})}

\subsection{The topological Jordan decomposition}

\begin{Emp} \label{E:bounded}
{\bf Notation.} Let $H$ be a connected linear algebraic
group over $K:=k((t))$, and let $\gm\in H(K)$ be a semisimple element.

We
say that $\gm$ is {\em bounded} (resp. {\em strongly semisimple},
resp. {\em topologically unipotent}), if for every representation
$\rho$ of $H$ over $\ov{K}$, each eigenvalue  $\al\in\ov{K}\m$
satisfies $\val_K(\al)=0$ (resp. $\al\in k\m$, resp.
$\val_K(\al-1)>0$).
\end{Emp}

\begin{Emp} \label{E:strss}
{\bf Example.}
Let $T$ be a (split) torus over $k$. Then $\gm\in T(K)$ is strongly semisimple if and only
if $\gm\in T(k)$.
\end{Emp}

The following result, which is called the {\em topological Jordan
decomposition}, is well known and widely used in the case when $K$ is replaced by a
$p$-adic field, but does not seen to appear in the literature in our (geometric)
setting.

\begin{Lem} \label{L:tjd}
Let $\gm\in H(K)$ be a bounded semisimple element.

\smallskip

(a) There exists a unique decomposition $\gm=su=us$, called {\em the topological Jordan decomposition}, such that $s\in H(K)$ is strongly
semisimple, and $u\in H(K)$ is topologically unipotent.

\smallskip

(b) We have $H_{\gm}=H_s\cap H_u$. Moreover, if $H$ is connected, then $u,s\in H_s^0(K)$.

\smallskip

(c) An element $\gm\in H(K)={L} H(k)$ is strongly semisimple, if and only if the closure
of the cyclic group $\ov{\lan\gm\ran}\subseteq {L} H$ is a diagonalizable algebraic group.
\end{Lem}

\begin{proof}
(a) If $H=\B{G}_m$, then the lemma asserts that for every
$a\in\C{O}_K\m=k[[t]]\m$ there exists a unique decomposition
$a=su$, where $s\in k\m$ and $u\in 1+t\C{O}_K$, which is clear.
Namely, for every $a=\sum_{i\geq 0}a_i t^i\in k[[t]]\m$,
we have $a_0\in k\m$ and $a=a_0(a_0^{-1}a)$ is the required decomposition.

\smallskip

To show the uniqueness of a decomposition in general, we choose a faithful representation
$\rho:G\hra\Aut(V)$ of $G$. Extending $K$, we can assume that all
eigenvalues of $\rho(\gm)$ lie in $K$. Then $V$ decomposes as a
direct sum $V=\bigoplus_{\la} V_{\la}$, where $V_{\la}$ is the
eigenspace of $\rho(\gm)$ corresponding to $\la$. Since $\gm$ is
bounded, each $\la$ belongs $\C{O}_K\m$. Since $s$ and $\gm$
commute, $\rho(s)$ stabilizes each $V_{\la}$. Now since $s$ is
strongly semisimple, while $\gm s^{-1}$ is topologically
unipotent, $\rho(s)|_{V_{\la}}$ has to be $\la_0\Id$.

\smallskip

To show the existence of a decomposition in general, we note that $\gm$ lies in a maximal torus
$T\subseteq H$ defined over $K$. Therefore we can replace $H$ by
$T$, thus assuming that $H$ is a torus. By the uniqueness, we can
replace $K$ by a finite Galois extension, thus assuming that $H$
is split, thus $H\simeq \B{G}_m^n$. Then we reduce to the case of
$H=\B{G}_m$, if which case the assertion was shown before.

\smallskip

(b) Note first that if $\gm=su$ is the topological Jordan decomposition of $\gm$,
then $g\gm g^{-1}=(gsg^{-1})(gug^{-1})$ is the topological Jordan decomposition
of $g\gm g^{-1}$ for every $g\in H(K)$. Therefore  we have an equality
$H_{\gm}=H_s\cap H_u$. It remains to show that if $H$ is connected, then
$u\in H_s^0(K)$. Since $\gm$ is semisimple,  it lies in a maximal torus of $H$, which is connected, hence
$\gm\in H_{\gm}^0(K)\subseteq H^0_s(K)$. Therefore by the uniqueness
of the topological Jordan decomposition, we conclude that $s,u\in
H^0_s(K)$.

\smallskip

(c) As in part~(a), we reduce the assertion first to the case of torus,
then to the case of a split torus, in which case the assertion is clear.

\end{proof}

\begin{Lem} \label{L:topjd2}
Let $G$ be a split connected reductive group over $K$, $\I\subseteq LG$ an Iwahori subgroup, and
$\gm=su$ the topological Jordan decomposition of $\gm\in\I$. Then $s,u\in\I$.
\end{Lem}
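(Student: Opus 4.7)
My approach is to exploit the pro-algebraic structure of the Iwahori $\I$. Recall that $\I = \mathcal{G}_{\I}(\C{O})$ for a smooth affine $\C{O}$-group scheme $\mathcal{G}_{\I}$; consequently $\I$ is the group of $k$-points of a pro-algebraic $k$-group, realized as the inverse limit of the finite-dimensional algebraic $k$-groups $\I_n := \mathrm{Res}_{(\C{O}/t^n)/k}(\mathcal{G}_{\I} \otimes_{\C{O}} \C{O}/t^n)$, with $\I_n(k) = \mathcal{G}_{\I}(\C{O}/t^n)$. In each $\I_n$, every $k$-point admits a unique Jordan decomposition $h = h_s h_u$ into commuting semisimple and unipotent parts, and this decomposition is natural under homomorphisms of algebraic $k$-groups. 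Hence the Jordan decompositions of the images of $\gm$ in the $\I_n(k)$ are compatible with the transition maps and assemble to a decomposition $\gm = s'u' = u's'$ with $s', u' \in \I$.

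It then remains to identify $(s', u')$ with the topological Jordan decomposition $(s,u)$; by the uniqueness clause in \rl{topjd}(a), it suffices to verify that $s'$ is strongly semisimple and $u'$ is topologically unipotent in $G(K)$. For $u'$: its image in each $\I_n$ lies in the unipotent radical, so $u' \in \I^+$; and every element of the pro-unipotent radical $\I^+$ is topologically unipotent since, in any $K$-representation of $G$, its image is a pro-limit of unipotent matrices and hence has all eigenvalues in $1+t\C{O}$. For $s'$: since $\gm$ is semisimple, choose a maximal $K$-torus $T' \subset G$ containing $\gm$, and pass to a finite Galois extension $K'/K$ trivializing $T'$, so that $T'(K') \cong (K'^{\times})^n$ and $\gm$ corresponds to a tuple $(a_1,\ldots,a_n)$ with $a_i \in \C{O}_{K'}^{\times}$ (using that $\gm$ is bounded). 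The coordinatewise decomposition $a_i = \alpha_i\cdot (a_i/\alpha_i)$ with $\alpha_i := a_i \bmod t\C{O}_{K'}$ is simultaneously the topological Jordan decomposition and the pro-algebraic Jordan decomposition in each $\B{G}_m$-factor; after Galois descent the element $s'$ lies in $T'(k')^{\Gal(K'/K)}$, hence in a torus over $k$, and is strongly semisimple by \re{strss}.

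The principal technical obstacle I anticipate is bridging the pro-algebraic Jordan decomposition in $\I$ with the topological Jordan decomposition in $LG$: one must verify compatibility of ``semisimple in $\I$ as a pro-algebraic $k$-group'' with the ``strongly semisimple'' characterization of \rl{topjd}(b) (via diagonalizability of $\ov{\lan s'\ran}$ inside $LG$). This compatibility is most transparent once $\gm$ is viewed inside the split torus $T'_{K'}$ as above, where both flavors of Jordan decomposition reduce to the coordinatewise constant-term decomposition on $\C{O}_{K'}^{\times} \cong k'^{\times} \times (1 + t\C{O}_{K'})$, and Galois descent then transports the conclusion to $\I$.
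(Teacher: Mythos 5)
Your construction inside $\I$ is exactly the paper's: write $\I$ as an inverse limit of algebraic groups $H_n$ over $k$, take the algebraic Jordan decompositions levelwise, and assemble them into commuting elements $s',u'\in\I$ with $\gm=s'u'$; your treatment of $u'$ (unipotent at every finite level, hence $u'\in\I^+$, hence topologically unipotent) is also the paper's. The gap is in the remaining, decisive step: that $s'$ is strongly semisimple. You place $\gm$ in a maximal $K$-torus $T'$, split $T'$ over a finite extension $K'$, and assert that the coordinatewise constant-term decomposition of $\gm$ in $T'(\C{O}_{K'})\cong(\C{O}_{K'}^{\times})^n$ is ``simultaneously the topological Jordan decomposition and the pro-algebraic Jordan decomposition.'' The first half is fine (it is how existence in \rl{tjd}(a) is proved), but the second half is exactly what needs proof: your pro-algebraic decomposition was computed in the pro-algebraic group $\I$ via the Iwahori congruence filtration, while the constant-term decomposition lives in the different pro-algebraic group $T'(\C{O}_{K'})$, and there is no homomorphism in sight through which functoriality of the Jordan decomposition would transport one to the other. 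To compare them at finite levels you would need to know that the constant-term components $s,u$ lie in $\I$ and map to semisimple, resp.\ unipotent, elements of each quotient $H_n$ --- but $s,u\in\I$ is precisely the statement of the lemma, so as written the bridge you flag as the ``principal technical obstacle'' is circular rather than resolved. (Also, your citations should point to \rl{tjd}, not \rl{topjd}: uniqueness is \rl{tjd}(a), the diagonalizability criterion is \rl{tjd}(b).)

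What is missing is an intrinsic criterion for strong semisimplicity checkable directly on $s'$ from its finite-level construction, and this is how the paper closes the argument: the kernel of each transition map $H_{n+1}\to H_n$ is unipotent, while each closure $\ov{\lan s_n\ran}\subset H_n$ is diagonalizable, so each induced map $\ov{\lan s_{n+1}\ran}\to\ov{\lan s_n\ran}$ is an isomorphism; hence $\ov{\lan s'\ran}\cong\ov{\lan s_n\ran}$ is diagonalizable and $s'$ is strongly semisimple by \rl{tjd}(b). Then uniqueness in \rl{tjd}(a) gives $(s,u)=(s',u')\in\I\times\I$, with no torus needed. If you insist on the torus route, you would first have to show $s'$ lies in $LT'$ (say, via $s'\in\ov{\lan\gm\ran}\subset LT'$) and then still argue that its $T'$-coordinates are constants in $k^{\times}$ --- at which point you are re-deriving the diagonalizability argument, so the direct appeal to \rl{tjd}(b) is the natural repair.
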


\begin{proof}
Note that $\I$ has a presentation as a limit $\I\simeq \lim_n H_n$, where
$H_n:=\I/\I_n$ is an algebraic group over $k$ and $\I_n\subseteq \I^+$. Then $\gm\in \I$ corresponds to a family $\gm_n\in H_n$.
Let $\gm_n=s_n u_n=u_n s_n$ be the Jordan decomposition of $\gm_n\in H_n$. By the uniqueness of the Jordan
decomposition, both $\{s_n\}_n$ and $\{u_n\}_n$ form compatible families, thus give rise to elements $s,u\in\I$ such
that $\gm=su=us$. It suffices to show that $s$ is strongly semisimple, while $u$ is topologically unipotent.

Since $u_n$ is unipotent, we conclude that $u\in\I^+$, thus $u$ is topologically unipotent. Next,
the kernel of each projection $H_{n+1}\to H_n$ is unipotent, while each algebraic group $\ov{\lan s_n\ran}\subseteq H_n$
is multiplicative. Therefore each projection   $\ov{\lan s_{n+1}\ran}\to\ov{\lan s_n\ran}$ is an isomorphism.
Thus the projection $\ov{\lan s\ran}\to\ov{\lan s_n\ran}$ is an isomorphism, hence $s\in\I\subseteq LG$ is
strongly semisimple by \rl{tjd}(c).
\end{proof}

\begin{Lem} \label{L:topjd}
Let $G$ be a split connected reductive group over $K$,  and
let $s\in LG(k)=G(K)$ be a strongly semisimple element. Then

\smallskip

(a) the centralizer $G_s^0$ is reductive group split over $K$;

\smallskip

(b) for every Iwahori subgroup $\I\subseteq LG$ containing $s$, the intersection $\I\cap L(G_s^0)$ is an Iwahori subgroup of $L(G_s^0)$.
Conversely, every Ihawori subgroup of  $L(G_s^0)$ is obtained in which way.
\end{Lem}

\begin{proof}
Though both assertions are well-known to specialists, we provide the proof for completeness. Since $G$ is split over $K$, we can assume that $G$ is defined over $k$.

\smallskip

(a) Note first that the statement is equivalent to the assertion that $s$ is $G(K)$-conjugate to some element $s_0\in G(k)$. Indeed, assume that
$G_s^0$ is split over $K$, and let $T\subseteq G^0_s$ be a maximal split torus over $K$. Since $G$ is defined over $k$, there exists a split torus $T_0\subseteq G$ over $k$. Hence there exists $g\in G(K)$ such that $gTg^{-1}=T_0$. Then $s_0:=gsg^{-1}\in T_0(K)$ is a strongly semisimple element, thus $s_0\in T_0(k)\subseteq G(k)$. The converse assertion is clear.

Now let us prove the assertion. Replacing $G$ by $G^{\ad}$ and $s$ by its image under the projection $G\to G^{\ad}$,
we can assume that $G$ is adjoint. Next, replacing $G$ by $G^{\sc}$ and $s$ by its preimage under the projection $G^{\sc}\to G$, we can assume that $G$ is simply connected, thus $G_s$ is connected.

Moreover, since $G_s$ split over a finite Galois extension $K'$ of $K$, the first paragraph of the proof implies that $s$ is $G(K')$-conjugate to some element $s_0\in G(k)$. We claim that $s$ is $G(K)$-conjugate to $s_0$. It suffices to show that $H^1(K,G_{s_0})=1$. Since $G_{s_0}$ is a connected reductive group and field $K$ is of dimension $\leq 1$, the assertion follows from \cite[$\S$ 8.6, page 484]{BS}.

\smallskip

(b) Note that since all Ihawori subgroups of $L(G_s^0)$ are $L(G_s^0)$-conjugate, the second assertion follows from the first.

Fix a (split) maximal torus $T_0\subseteq G$ over $k$. The assertion is clear in the case when $s\in T_0\subseteq L^+(T_0)\subseteq \I$.
Since all pairs $(T,\I)$, where $T\subseteq G$ is a maximal torus split over $K$ and $\I\supseteq L^+(T)$ is an Ihawori subgroup, are
$LG$-conjugate, it suffices to show that there exists a maximal $K$-split torus $T\subseteq G$ such that
$s\in L^+(T)\subseteq \I$.

We claim that there exists a subtorus $T'\subseteq\I$ such that $s\in T'$ and the composition $T'\hra\I\to \I/\I^+$ is an isomorphism.
After this is done, the centralizer $T:=Z_G(T')\subseteq G$ is the maximal $K$-split torus we were looking for.

To construct $T'$, we choose a presentation $\I\simeq\lim_n H_n$ as in the proof of \rl{topjd2}, and let
$s_n\in H_n$ be the projection of $s$. Then $s_n\in H_n$ is semisimple, so there exists a maximal torus $T'_n\subseteq H_n$ such that
$s_n\in T'_n$. Then the composition $T'_n\hra H_n=\I/\I_n\to \I/\I^+$ is automatically an isomorphism. Moreover, we can assume that
the projection $\pi_n:H_{n+1}\to H_n$ induces an isomorphism $T'_{n+1}\isom T'_n$. Indeed, given $T'_n\subseteq H_n$, we choose
$T'_{n+1}$ be any maximal torus of $\pi_n^{-1}(T_n)\subseteq H_{n+1}$ containing $s_{n+1}$. Then $T':=\lim_{n}T'_n\subseteq\I$ be the
torus we were looking for.
\end{proof}

\begin{Emp} \label{E:notawgp}
{\bf Notation.}

\smallskip

(a) Let $s\in G(K)$ be a strongly semisimple element, thus the centralizer $G_s^0$ is reductive group, split over $K$
(by \rl{topjd}(a)). Let $T\subseteq G_s^0$ be a maximal torus split over $K$. Then $s\in T(K)=LT(k)$, thus $s\in L^+(T)$ because $s$ is bounded.

\smallskip

(b) Let $\I\subseteq LG$ be an Iwahori subgroup such that $L^+(T)\subseteq \I$. Thus $s\in \I$, hence $\I_{s}:=\I\cap {L}(G_s^0)$ is
an Iwahori subgroup of ${L}(G_s^0)$ (by \rl{topjd}(b)).

\smallskip

(c) Let $\wt{W}\subseteq\wt{W}_G$  be the affine Weyl group and the extended affine Weyl group of $G$, respectively
(see Section~\re{affweyl}), and let $\ov{\Phi}=\ov{\Phi}_{G,T},\wt{\Phi}=\wt{\Phi}_{G,T}, \wt{\Phi}^+=\wt{\Phi}^+_{G,T,\I}$ and $\wt{\Dt}$ be the set of all roots, affine roots, positive affine roots and simple affine roots of $G$, respectively (see Section~\re{affine roots}).

\smallskip

(d) Let $\ov{\Phi}_s=\ov{\Phi}_{G_s^0,T},\wt{\Phi}_s=\wt{\Phi}_{G_s^0,T}, \wt{\Phi}_s^+=\wt{\Phi}_{G_s^0,T,\I_s}$ and $\wt{\Dt}_s$ be the set of all roots, affine roots, positive affine roots and simple affine roots of $G_s^0$, respectively. By construction, we have inclusions
$\ov{\Phi}_s\subseteq\ov{\Phi},\wt{\Phi}_s\subseteq\wt{\Phi}$ and $\wt{\Phi}_s^+\subseteq\wt{\Phi}^+$.

\smallskip

(e) Let $\C{A}_{G,T}$ be the apartment, corresponding to $(G,T)$. Then every $\al\in\wt{\Phi}$ defines an affine function
on $\C{A}_{G,T}$, and the Iwahori subgroup $\I$ defines an alcove $A\subseteq\C{A}_{G,T}$. Explicitly, $A$ is the set of all
$x\in\C{A}_{G,T}$ such that $\lan\al,x\ran>0$ for all $\al\in\wt{\Dt}$.

\smallskip

(f) Let $A_s\subseteq \C{A}_{G,T}$ be the alcove, corresponding to $\I_s$. Explicitly, $A_s$ is the set of all
$x\in\C{A}_{G,T}$ such that $\lan\beta,x\ran>0$ for all $\beta\in\wt{\Dt}_s$.

\smallskip

(g) Let $\wt{W}_s\subseteq\wt{W}_G$ be the affine Weil group of $G_s^0$. Explicitly, $\wt{W}_s$ is the subgroup, generated by simple affine reflections $\{s_{\al}\}_{\al\in\wt{\Dt}_s}$. Finally, we set
\[
\wt{W}_G^s:=\{w\in\wt{W}_G\,|\,w(\al)\in\wt{\Phi}^+ \text{ for every }\al\in\wt{\Dt}_s\}.
\]

%In particular, $\Phi$ is a subset of $X^*(\ov{T}_G)=\Hom(\La_G,\B{Z})$, thus a subset of a linear functionals on
%$\La_{G,\B{R}}:=\La_G\otimes_{\B{Z}}\B{R}$, while $\wt{\Phi}$ a subset of affine functions on $\La_{G,\B{R}}$.

%(c) We set $G_s^{\sc}:=(G_s^0)^{\sc}$ and $\wt{W}_s:=\wt{W}_{G_s^{\sc}}$. Notice that we have natural embeddings
%$\wt{W}_s\hra \wt{W}_{G_s^0}\hra\wt{W}_G$.

%(d) Let  $\wt{W}_G^s:=\{w\in\wt{W}_G\,|\,w(\al)\in\wt{\Phi}^+ \text{ for every }\al\in\wt{\Dt}_s\}$.

%(e) Let $A\subseteq A_s\subseteq \La_{G,\B{R}}$ be the fundamental alcove of $\wt{\Phi}$ and $\wt{\Phi}_s$, respectively, that is, $A$ (resp. $A_s$) is the set of $x\in \La_{G,\B{R}}$ such that $0<\lan\al,x\ran<1$ for every $\al$ in $\Phi^+$ (resp. $\Phi^+_s$).
\end{Emp}

\begin{Lem} \label{L:awgp}
(a) For every $u\in\Om_G$ and $w\in \wt{W}^s_G$, we have $uw\in\wt{W}^s_G$.

\smallskip

(b) The subset $\wt{W}^s_G\subseteq \wt{W}_G$ is a set of representatives of the
quotient $\wt{W}_G/\wt{W}_{s}$.

\smallskip

(c) For every $\al\in\wt{\Dt}_s$, there exists an element
$w\in \wt{W}^s_G$ such that $w(\al)\in\wt{\Dt}$.

\smallskip

(d) For every reduced decomposition $w=s_ns_{n-1}\ldots s_2s_1$
of $w\in \wt{W}^s_G\cap \wt{W}$, we have $s_i\ldots s_2s_1\in  \wt{W}^s_G$ for
each $i=1,\ldots,n$.

\smallskip

(e) For $w\in \wt{W}^s_G$ and $\al\in\wt{\Dt}$, the following conditions are equivalent:

\quad\quad (i) $s_{\al}w\in\wt{W}^s_G$; (ii) $w^{-1}(\al)\notin \wt{\Dt}_s$; (iii) $w^{-1}(\al)\notin \wt{\Phi}^+_s$; (iv) $w^{-1}(\al)\notin \wt{\Phi}_s$.
\end{Lem}
\begin{proof}
(a) Since for every $u\in \Om_G$ we have $u(\wt{\Phi}^+)=\wt{\Phi}^+$, the assertion follows.

\smallskip

(b) We have to show that for every $w\in\wt{W}_G$ there exists a unique $w_s\in \wt{W}_s$ such that $ww_s\in\wt{W}^s_G$.
Choose a point $x\in A$. Notice that $w\in\wt{W}^s_G$ if and only if for every $\al\in\wt{\Dt}_s$ we have
$\lan\al,w^{-1}(x)\ran=\lan w(\al),x\ran >0$. In other words, $w\in\wt{W}^s_G$ if and only if $w^{-1}(x)\in A_s$.

Since the affine Weyl group acts on the set of alcoves simply transitively, there exists a unique
$w_s\in \wt{W}_s$ such that $w^{-1}(x)\in w_s(A_s)$. The last condition is equivalent to the condition that
$(ww_s)^{-1}(x)= w_s^{-1}w^{-1}(x)\in A_s$, that is, $w w_s\in\wt{W}^s_G$.

\smallskip

(c) First we claim that there exists a point $x\in A_s$ such that $\lan\al,x\ran<|\lan\beta,x\ran|$ for every
$\beta\in\wt{\Phi}\sm\pm\al$. Namely, choose a point $y$ such that $\lan\al,y\ran=0$, $\lan\beta,y\ran> 0$ for all $\beta\in\wt{\Dt}_s\sm\al$
and $\lan\beta,y\ran\neq 0$ for all $\beta\in\wt{\Phi}\sm\pm\al$. Then every point $x$, which is close enough to $y$ and such that
$\lan\al,x\ran>0$, satisfies the required property.

Choose $w\in \wt{W}_G$ such that $w(x)\in A$.
We claim that $w\in\wt{W}^s_G$ and $w(\al)\in \wt{\Dt}$. Indeed, for every $\beta\in\wt{\Phi}$ we have
$\lan w(\beta),w(x)\ran=\lan\beta,x\ran$. Since $x\in A_s$, this implies that $w(\beta)\in\wt{\Phi}^+$ for every $\beta\in\wt{\Dt}_s$, hence $w(\al)\in\wt{\Phi}^+$ and
$w\in\wt{W}^s_G$. Moreover, by our assumption on $x$, for every $\beta\in\wt{\Phi}\sm\pm w(\al)$ we have $\lan w(\al),w(x)\ran<|\lan\beta,w(x)\ran|$, which implies that $w(\al)$ has to be in $\wt{\Dt}_s$.

\smallskip

(d) Assume that $s_i\ldots s_1\notin \wt{W}^s_G$. Then  $s_i\ldots s_1(\beta)\notin\wt{\Phi}^+$ for some
$\beta\in\wt{\Dt}_s\subseteq\wt{\Phi}^+$ hence $l(s_i\ldots s_1s_{\beta})<l(s_i\ldots s_1)=i$.
Since $w=s_n\ldots s_1$ is a reduced decomposition, we conclude that
$l(ws_{\beta})<n=l(w)$, thus $w(\beta)\notin\wt{\Phi}^+$, a contradiction.

\smallskip

(e) By definition, $s_{\al}w\in\wt{W}^s_G$ if and only if $s_{\al} w(\beta)\in\wt{\Phi}^+$ for
all $\beta\in\wt{\Dt}_s$. Since $w\in\wt{W}^s_G$ we have $w(\beta)\in\wt{\Phi}^+$.
Since $\al$ is simple, it is the only positive root which $s_{\al}$ sends to
negative. Therefore we conclude that $s_{\al}w\in\wt{W}^s_G$ if and only if
$w(\beta)\neq\al$ for all $\beta\in\wt{\Dt}_s$, that is, $w^{-1}(\al)\notin\wt{\Dt}_s$.

Similarly, $s_{\al}w\in\wt{W}^s_G$ if and only if $s_{\al} w(\beta)>0$ for
all $\beta\in\wt{\Phi}^+_s$. Therefore the same argument shows that this happens
if and only if  $w^{-1}(\al)\notin\wt{\Phi}^+_s$. Finally, for every $w\in \wt{W}^s_G$ we get
$w(\wt{\Phi}_s^+)\subseteq\wt{\Phi}^+$, thus $-\al\notin w(\wt{\Phi}_s^+)$, hence
$w^{-1}(\al)\notin -\wt{\Phi}^+_s$. Thus,  $w^{-1}(\al)\notin\wt{\Phi}^+_s$ if and only if  $w^{-1}(\al)\notin\wt{\Phi}_s$,
and the proof is complete.
\end{proof}

\subsection{Application to affine Springer fibers}.
\begin{Emp} \label{E:pass cent}
{\bf Set-up.}

\smallskip

(a) Let $\gm\in G(K)$ be a bounded semisimple element, and $s\in G(K)$
a strongly semisimple element such that $\gm\in G_s^0(K)$.

\smallskip

(b) We set $G^{\sc}_s:=(G_s^0)^{\sc}$, and let $\pi$ be the
projection $G^{\sc}_s\to G^0_s\subseteq G$.

\smallskip

(c) Fix a maximal split torus $T\subseteq G$ over $K$, and Iwahori subgroups $\I\subseteq LG$ and $\I_{s}\subseteq{L}(G_s^0)$
as in Section~\re{notawgp}(a),(b).
Then the preimage $\I^{\sc}_s\subseteq{L}(G_s^{\sc})$ of $\I_s$ is an Iwahori subgroup as well.

\smallskip

(d) For every $w\in\wt{W}_G$ choose representative $n_w\in N_{LG}(LT)$.

\smallskip

(e) Let $\Fl_{G^{\sc}_s,\gm}\subseteq \Fl_{G^{\sc}_s}$ be the affine Springer fiber. Explicitly, it is the ind-scheme of all $g\in {L}(G^{\sc}_s)/\I_s^{\sc}$ such that $g^{-1}\gm g\in \I_s$.

\smallskip

(f) Note that our choice of $T$ and $\I$ induces an isomorphism $\zeta:\ov{T}_{G_s^0}\isom\ov{T}_G$. Then for every $\ov{w}\in \ov{W}$, we have an isomorphism $\zeta_{\ov{w}}:=\ov{w}\circ \zeta:\ov{T}_{G_s^0}\isom\ov{T}_G$.
\end{Emp}

\begin{Emp} \label{E:example}
{\bf Example.} Let $\gm\in G(K)$ be a bounded semisimple element with the topological Jordan decomposition
$\gm=su=us$.  Then $\gm\in G_s^0(K)$ (see \rl{tjd}(b)), thus we are in the situation of Section~\re{pass cent}.
\end{Emp}

\begin{Lem} \label{L:sprtjd}
In the situation of Section~\re{pass cent},

\smallskip

(a) for every element $w\in\wt{W}^s_G$, the map
$g\I^{\sc}_s\mapsto \pi(g) n_w^{-1}\I$ defines a morphism $\eta_w: \Fl_{G^{\sc}_s,\gm}\to\Fl_{G,\gm}$, which is  independent of the choice of $n_w$;

\smallskip

(b) if either $s=1$ or $\gm=su$ is the topological Jordan decomposition, then the induced map
$\wt{\eta}:=\bigsqcup_{w\in\wt{W}^s_G} \eta_w:\bigsqcup_{w\in\wt{W}^s_G} \Fl_{G^{\sc}_s,\gm}\to\Fl_{G,\gm}$
is a universal homeomorphism;

\smallskip

(c) for every $w\in \wt{W}^s_G$ and $u\in\Om_G$, we have $uw\in\wt{W}^s_G$, and morphism $\eta_{u w}$ decomposes as
$\Fl_{G^{\sc}_s,\gm}\overset{\eta_w}{\lra}\Fl_{G,\gm}\overset{u}{\lra}\Fl_{G,\gm}$;

\smallskip

(d) for $w\in \wt{W}^s_G$ with projection $\ov{w}\in\ov{W}$, the following diagram is commutative:
\[
\CD
\Fl_{G_s^{\sc},\gm}  @>\eta_w>> \Fl_{G,\gm}\\
@V\red_{\gm}VV       @VV\red_{\gm}V\\
\ov{T}_{G_s^0}  @>{\zeta_{\ov{w}}}>> \ov{T}_G.
\endCD
\]

\end{Lem}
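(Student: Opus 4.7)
\smallskip
\noindent\textbf{Proof plan.}

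For part (a), I will first reinterpret the condition $w \in \wt{W}^s_G$ concretely. From the proof of \rl{awgp}(b), $w \in \wt{W}^s_G$ is equivalent to $w(\al) > 0$ in $\wt{\Phi}$ for every $\al \in \wt{\Dt}_s$. Combined with the equality $\wt{\Phi}^+_s = \wt{\Phi}_s \cap \wt{\Phi}^+$ (coming from $\I_s = \I \cap L(G^0_s)$), this gives $w(\wt{\Phi}_s \cap \wt{\Phi}^+) \subset \wt{\Phi}^+$, which by the standard root-subgroup description of Iwahori subgroups is equivalent to $n_w \I_s n_w^{-1} \subset \I$, i.e.\ $\I_s \subset n_w^{-1} \I n_w$. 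With this in hand, well-definedness of $\iota_w$ on cosets follows from $\pi(\I_s^{\sc}) \subset \I_s$: if $h \in \I_s^{\sc}$, then
\[
\pi(gh) n_w^{-1} \I = \pi(g) n_w^{-1}\bigl(n_w \pi(h) n_w^{-1}\bigr) \I = \pi(g) n_w^{-1} \I.
\]
Independence of the representative $n_w$ (defined modulo $\T = L^+(T)$) holds because $n_w t n_w^{-1} \in L^+(T) \subset \I$ for every $t \in L^+(T)$, since $N_{LG}(LT)$ normalizes $L^+(T)$. Finally, for $g\I_s^{\sc} \in \Fl_{G^{\sc}_s, \gm}$ we have $\pi(g)^{-1} \gm \pi(g) \in \I_s$, hence
\[
(\pi(g) n_w^{-1})^{-1} \gm (\pi(g) n_w^{-1}) = n_w\bigl(\pi(g)^{-1} \gm \pi(g)\bigr) n_w^{-1} \in n_w \I_s n_w^{-1} \subset \I,
\]
so $\iota_w$ takes values in $\Fl_{G,\gm}$.

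For part (b), the main task is the universal homeomorphism. My strategy is to reduce it to two ingredients via the topological Jordan decomposition. Writing $\gm = s u = u s$ with $u$ topologically unipotent and $u, s \in G^0_s(K)$ (by \rl{tjd}(c)), one has $\Fl_{G,\gm} = (\Fl_{G,s})^u$. The first ingredient is the orbit decomposition
\[
LG = \bigsqcup_{w \in \wt{W}^s_G} L(G^0_s)\, n_w^{-1}\, \I,
\]
which I would prove as follows: given $g$ with $g^{-1} s g \in \I$, apply the Hensel-type argument from the proof of \rl{topjd} to conjugate $g^{-1} s g$ back to $s$ within $\I$, showing $g \in L(G^0_s)\, n_w^{-1}\, \I$ for some $w \in \wt{W}_G$; the uniqueness of the topological Jordan decomposition (\rl{tjd}(a)) together with \rl{awgp}(b) pins down a unique $w \in \wt{W}^s_G$. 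This yields a universal homeomorphism $\bigsqcup_{w \in \wt{W}^s_G} \Fl_{G^0_s} \to \Fl_{G,s}$ sending the class $h\I_s$ in the $w$-component to $h n_w^{-1} \I$. The second ingredient is that the map $\Fl_{G^{\sc}_s} \to \Fl_{G^0_s}$ induced by $\pi$ is a universal homeomorphism onto the neutral connected component; since $\gm$ is regular semisimple with $\pi(g)^{-1} \gm \pi(g) \in \I_s$ forcing any Springer-fiber point to lie in this component, restriction to affine Springer fibers is a universal homeomorphism. Since $s \in Z(G^0_s)$ acts trivially on $\Fl_{G^0_s}$, the action of $\gm$ on each component reduces to the action of $u$, and taking $u$-fixed points combines the two ingredients into the desired universal homeomorphism $\wt\iota$.

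Parts (c) and (d) are direct. For (c), the statement $uw \in \wt{W}^s_G$ is \rl{awgp}(a); choosing the lift $n_{uw} = n_u n_w$ with $n_u$ a representative of $u \in \Om_G$, and recalling from \re{affflvar}(b) that $u$ acts on $\Fl_G$ by right multiplication by $n_u^{-1}$, one computes
\[
\iota_{uw}(g\I_s^{\sc}) = \pi(g) n_w^{-1} n_u^{-1} \I = u\bigl(\pi(g) n_w^{-1} \I\bigr) = u\bigl(\iota_w(g\I_s^{\sc})\bigr).
\]
For (d), trace through definitions: for $g\I_s^{\sc} \in \Fl_{G^{\sc}_s, \gm}$,
\[
\red_\gm(\iota_w(g\I_s^{\sc})) = \pr_\I\!\bigl(n_w(\pi(g)^{-1} \gm \pi(g)) n_w^{-1}\bigr),
\]
and $\pi(g)^{-1} \gm \pi(g) \in \I_s$ projects to $\red_\gm(g\I_s^{\sc}) \in \ov{T}_{G^0_s}$. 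The inclusion $\I_s \hookrightarrow \I$ induces the canonical isomorphism $\eta: \I_s/\I_s^+ \isom \I/\I^+$ on quotients (as both identify with $\ov{T}$ via $L^+(T)$), while conjugation by $n_w$ acts on $\I/\I^+ \cong \ov{T}_G$ as $\ov{w}$, by the very construction of the $W$-action on the abstract Cartan in \re{abscartan}. Composing gives $\eta_{\ov{w}} = \ov{w} \circ \eta$, as required.

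The principal obstacle will be part (b): specifically, promoting the pointwise bijection furnished by topological Jordan decomposition plus \rl{awgp}(b) to a \emph{universal} homeomorphism of ind-schemes, and handling the passage $G^0_s \rightsquigarrow G^{\sc}_s$ with its possibly disconnected loop group. The latter is where the hypotheses on $\gm$ (bounded, strongly regular) and the centrality of $s$ in $G^0_s$ enter crucially to confine everything to the correct connected component.
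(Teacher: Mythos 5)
Parts (a), (c) and (d) of your proposal are fine and essentially coincide with the paper's proof: (a) rests on the inclusion $n_w\I_s n_w^{-1}\subset\I$ coming from $w(\wt{\Phi}^+_s)\subset\wt{\Phi}^+$, and (c), (d) are the same direct checks. The genuine problems are all in part (b), which is the heart of the lemma.

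First, your argument assumes that $\gm=su$ is the topological Jordan decomposition, but the situation of \re{pass cent} only assumes $s$ strongly semisimple with $\gm\in G_s^0(K)$; the case $s=1$ with $\gm$ an arbitrary bounded strongly regular semisimple element is allowed and is exactly the case used in \re{variant}(c) and \rl{ind}. In that generality $u:=s^{-1}\gm$ need not be topologically unipotent, and the identity $\Fl_{G,\gm}=(\Fl_{G,s})^u$ does not follow from commutativity alone (it needs $s,u$ to lie in the closure of $\lan\gm\ran$). Second, your ``first ingredient'' is not a disjoint union: the double cosets $L(G_s^0)\bs LG/\I$ are indexed by $\wt{W}_G/\wt{W}_{G_s^0}$, whereas $\wt{W}^s_G$ represents $\wt{W}_G/\wt{W}_s$ (\rl{awgp}(b)), so each piece is repeated $|\Om_{G_s^0}|$ times; at $s=1$ your claim reads $LG=\bigsqcup_{u\in\Om_G}LG\,n_u^{-1}\I$, which is visibly not disjoint, and correspondingly $\bigsqcup_{w}\Fl_{G_s^0}\to\Fl_{G,s}$ is not injective. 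Third, your ``second ingredient'' is false: affine Springer fibers of regular semisimple elements are not contained in the neutral component -- the $s=1$ instance of this very lemma says that $\Fl_{G,\gm}$ is covered by $|\Om_G|$ translates of $\Fl_{\gm}$, one in each component -- so $\Fl_{G_s^{\sc},\gm}\to\Fl_{G_s^0,\gm}$ is not a universal homeomorphism but has ``fibers'' of size $|\Om_{G_s^0}|$. Your two errors are of the same size and would have to cancel, but you give no argument for that, and you also leave open (indeed you flag as the principal obstacle) the passage from a pointwise decomposition to a universal homeomorphism.

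For comparison, the paper's proof avoids all of this: since $\Fl_{G_s^{\sc},\gm}$ and $\Fl_{G,\gm}$ are ind-proper, a universal homeomorphism is the same as a bijection on $\ov{K}$-points for all algebraically closed extensions $\ov{K}\supset k$. Injectivity then follows from the coset-representative property \rl{awgp}(b) together with $n_w\I_s n_w^{-1}\subset\I$, and surjectivity is proved directly for $\gm$ (never passing through $\Fl_{G,s}$): for $[h]\in\Fl_{G,\gm}$ one notes $h\I h^{-1}\cap L(G_s^0)$ is an Iwahori subgroup of $L(G_s^0)$ by \rl{topjd}(b), hence conjugate to $\I_s$ by an element of $G_s^{\sc}(K)$ -- transitivity of the simply connected group on Iwahori subgroups is precisely what handles the component issues you tried to dispose of by hand. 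If you want to salvage your route, you must replace $L(G_s^0)$ by $L(G_s^{\sc})$ in the decomposition (or index by $\wt{W}_G/\wt{W}_{G_s^0}$ and separately decompose $\Fl_{G_s^0,\gm}$ into $\Om_{G_s^0}$ translates of $\Fl_{G_s^{\sc},\gm}$), and you must prove the statement without invoking the topological Jordan decomposition of $\gm$.
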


\begin{proof}
(a) Since $w\in\wt{W}^s_G$, we have $w(\wt{\Phi}^+_s)\subseteq\wt{\Phi}^+$, thus
\begin{equation} \label{Eq:basic incl}
n_w \pi(\I^{\sc}_s) n_w^{-1}\subseteq n_w \I_s n_w^{-1}\subseteq\I.
\end{equation}
This inclusion implies that the map
$g\I^{\sc}_s\mapsto \pi(g) n_w^{-1}\I$ gives a well defined morphism $\eta_w:\Fl_{G^{\sc}_s}\to \Fl_{G}$. Moreover, if $g^{-1}\gm g\in \I_s$, then
$n_w(g^{-1}\gm g)n_w^{-1}\in\I$, hence $\eta_w$ restricts to a morphism
$\eta_w:\Fl_{\G^{\sc}_s,\gm}\to \Fl_{G,\gm}$. Finally, morphism $\eta_w$ is independent of the choice of $n_w$, because $\I\supseteq{L}^+(T)$.

\smallskip

(b) Since both $\Fl_{G^{\sc}_s,\gm}$ and $\Fl_{G,\gm}$ are ind-proper ind-schemes, it suffices to show that morphism
$\wt{\eta}$ induces a bijection on $K$-valued points for every algebraically closed field extension
$K\supseteq k$.

\smallskip

To show that $\wt{\eta}$ is injective, assume that  $\eta_w([g])=\eta_{w'}([g'])$ for some elements $g,g'\in LG^{\sc}_s$. By definition, this means that $[\pi(g)n_w^{-1}]=[\pi(g')n_{w'}^{-1}]$, or, equivalently, that
$n_w\pi(g^{-1}g')n^{-1}_{w'}\in\I$. Let $w''\in\wt{W}_s$ be such that $g^{-1}g'\in \I_s^{\sc}w''\I_{s}^{\sc}$. Using inclusion \form{basic incl}, we conclude that $n_w n_{w''}n^{-1}_{w'}\in\I$, thus $ww''w'^{-1}=1$, hence $w'=ww''$. Since $w,w'\in\wt{W}^s_G$ and
$w''\in  \wt{W}_s$, we conclude from  \rl{awgp}(b) that $w=w'$ and $w''=1$.
This also implies that $[g]=[g']\in \Fl_{G^{\sc}_s}$, thus $\wt{\eta}$ is injective.

\smallskip

To show that $\wt{\eta}$ is surjective, choose $[h]\in\Fl_{G,\gm}$ and a representative $h\in G(K)$ of $[h]$. Then $h^{-1}\gm h\in\I$, hence
$\gm\in h\I h^{-1}\cap L(G_s^0)$. It now follows from \rl{topjd2} that $s\in h\I h^{-1}$. Hence, by \rl{topjd}(b), the intersection $\I'_s:=h\I h^{-1}\cap L(G_s^0)$ is an Iwahori subgroup of $L(G_s^0)$, thus there exists $g\in G_s^{\sc}(K)$ such that $\I'_s=g\I_sg^{-1}$. Then $g^{-1}\gm g\in\I_s$, which means that $[g]\in\Fl_{G_s^{\sc},\gm}$.
We claim that $[h]=\eta_w([g])$ for some $w\in\wt{W}_G^w$.

\smallskip

Consider element $h_1:=h^{-1}\pi(g)\in G(K)$. Since $g\I_sg^{-1}=\I'_s\subseteq h\I h^{-1}$, we conclude that $h_1\I_s h_1^{-1}\subseteq \I$,
hence $h_1 L^+(T) h_1^{-1}\subseteq\I$. Therefore there exists $h_0\in\I$ such that $h_1 L^+(T) h_1^{-1}=h_0 L^+(T) h_0^{-1}$,
hence $h^{-1}_0 h_1\in N_{LG}(LT)$. Since $h_0^{-1}h_1=(hh_0)^{-1}\pi(g)$, we can replace $h$ by $hh_0$, thus assuming that
$h_1=h^{-1}\pi(g)$ equals $n_w$ for some $w\in\wt{W}_G$. Then  $w\in\wt{W}^s_G$ because  $n_w\I_s n_w^{-1}\subseteq \I$, and  $[h]=\eta_w([g])\in\im\wt{\eta}$.

\smallskip

(c) Since $uw\in\wt{W}^s_G$ by \rl{awgp}(a), the assertion follows from the definition.

\smallskip

(d) is straightforward.
\end{proof}

\begin{Emp} \label{E:sprtjd}{\bf Notation.}

\smallskip

(a) In the situation of Section~\re{pass cent}, for every local system $\C{L}$ on $\ov{T}_G$, we denote the local system $\zeta^*(\C{L})$ on $\ov{T}_{G_s^0}$ simply by $\C{L}$. Then it follows from \rl{sprtjd}(d) that for every $w\in\wt{W}_G^s$ we have a morphism
\[
(\eta_w)_*:H_i(\Fl_{G_s^{\sc},\gm},\C{F}_{\C{L}})\to H_i(\Fl_{G,\gm},\C{F}_{\ov{w}_*\C{L}}).
\]

\smallskip

(b) We denote the isomorphism
\[
a_{w,\C{L}}:H_i(\Fl_{G,\gm},\C{F}_{\C{L}})\isom H_i(\Fl_{G,\gm},\C{F}_{\ov{w}_*\C{L}})
\]
from \rp{whaction} simply by $w_*$.
\end{Emp}

\begin{Lem} \label{L:simref}
Let $w\in \wt{W}^s_G$ and $\al\in\wt{\Dt}$.

\smallskip

(a) Assume that $s_{\al}w\in\wt{W}^s_G$ and $\gm=su$ is the topological Jordan decomposition. Then the composition
\[
H_i(\Fl_{G_s^{\sc},\gm},\C{F}_{\C{L}})\overset{(\eta_w)_*}
{\lra}H_i(\Fl_{G,\gm},\C{F}_{\ov{w}_*\C{L}})\overset{(s_{\al})_*}{\lra}H_i(\Fl_{G,\gm},\C{F}_{(\ov{s}_{\al})_*\ov{w}_*\C{L}})
\]
equals $(\eta_{s_{\al}w})_*$.

\smallskip

(b) If $w^{-1}(\al)\in\wt{\Dt}_s$, then the
following diagram is commutative:

%\begin{equation} \label{Eq:actions}
\[
\CD
 H_i(\Fl_{G_s^{\sc},\gm},\C{F}_{\C{L}})  @>(\eta_w)_*>>  H_i(\Fl_{G,\gm},\C{F}_{\ov{w}_*\C{L}})\\
@V(s_{w^{-1}(\al)})_*VV       @VV(s_{\al})_*V\\
H_i(\Fl_{G^{\sc}_s,\gm},\C{F}_{(\ov{s}_{w^{-1}(\al)})_*\C{L}})                     @>(\eta_w)_*>>
H_i(\Fl_{G,\gm},\C{F}_{(\ov{s}_{\al})_*\ov{w}_*\C{L}}).
\endCD
\]
%\end{equation}
\end{Lem}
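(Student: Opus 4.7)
The plan is to prove both parts by analyzing the interaction of the maps $\iota_w$ with the parahoric projection $\pi_\al: \Fl_{G,\gm} \to \Fl_{G,\{s_\al\},\gm}$ corresponding to the simple affine reflection $s_\al$, and using the fact that the $(s_\al)_*$-action on $H_*(\Fl_{G,\gm},\C{F}_\C{L})$ arises from the $\{1,s_\al\}$-action on $(\pi_\al)_*\C{F}_\C{L}$ via \re{wsaction}. Throughout I write $\P_\al := \P_{\{s_\al\},\I}$.

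For part (b), let $\P^s_{w^{-1}(\al)} \subset LG_s^{\sc}$ be the parahoric of type $\{s_{w^{-1}(\al)}\}$. Since the hypothesis $w^{-1}(\al) \in \wt{\Dt}_s$ means that $w$ carries the simple affine root $w^{-1}(\al)$ of $G_s$ to the simple affine root $\al$ of $G$, one verifies, in the same spirit as \rl{sprtjd}(a) but for parahorics, that $n_w\, \pi(\P^s_{w^{-1}(\al)})\, n_w^{-1} \subset \P_\al$. Hence $g\P^s_{w^{-1}(\al)} \mapsto \pi(g) n_w^{-1} \P_\al$ defines a morphism $\ov{\iota}_w: \Fl_{G_s^{\sc},\{s_{w^{-1}(\al)}\},\gm} \to \Fl_{G,\{s_\al\},\gm}$ fitting into a diagram
\[
\CD
\Fl_{G_s^{\sc},\gm} @>\iota_w>> \Fl_{G,\gm} \\
@V\pi^s_{w^{-1}(\al)}VV @VV\pi_\al V \\
\Fl_{G_s^{\sc},\{s_{w^{-1}(\al)}\},\gm} @>\ov{\iota}_w>> \Fl_{G,\{s_\al\},\gm}
\endCD
\]
that one checks to be Cartesian on reduced schemes: fiberwise, both vertical maps are restrictions of $\B{P}^1$-bundles $\P^s_{w^{-1}(\al)}/\I^{\sc}_s$ and $\P_\al/\I$, and conjugation by $n_w$ identifies these compatibly. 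Moreover, the canonical identifications of Levi abstract Cartans intertwine the admissible isomorphism $\varphi_{\I_s}$ with $\varphi_\I$ once one twists by $\eta_{\ov{w}}$ (see \rl{sprtjd}(d)). Applying \re{wsaction}(b) simultaneously to both rows and invoking proper base change along the Cartesian square yields the commutative square of part (b).

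For part (a), the hypothesis $s_\al w \in \wt{W}^s_G$ is equivalent to $w^{-1}(\al) \notin \wt{\Phi}_s$ by \rl{awgp}(e). Since $n_{s_\al} \in \P_\al$, a direct computation gives
\[
\pi_\al \circ \iota_{s_\al w}(g\I^{\sc}_s) = \pi(g) n_w^{-1} n_{s_\al}^{-1} \P_\al = \pi(g) n_w^{-1} \P_\al = \pi_\al \circ \iota_w(g\I^{\sc}_s),
\]
so $\iota_w$ and $\iota_{s_\al w}$ descend to a common morphism $\ov{\iota}: \Fl_{G_s^{\sc},\gm} \to \Fl_{G,\{s_\al\},\gm}$. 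The condition $w^{-1}(\al) \notin \wt{\Phi}_s$ ensures that the two lifts $\iota_w$, $\iota_{s_\al w}$ of $\ov{\iota}$ land in distinct points of each $\B{P}^1$-fiber of $\pi_\al$, differing by the non-trivial representative $n_{s_\al}$. The $s_\al$-action on $(\pi_\al)_*\C{F}_\C{L}$, arising from the Grothendieck--Springer resolution of the Levi $M_{\P_\al}$, interchanges the contributions from these two points of each $\B{P}^1$-fiber (this is the affine analogue of the rank-one finite-dimensional swap in \re{whaction}). Combined with the identity $\ov{\iota}_{s_\al w} = \ov{\iota}_w$ this yields $(s_\al)_* \circ (\iota_w)_* = (\iota_{s_\al w})_*$.

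The main technical hurdle in both parts is the bookkeeping of the local-system twists: in part (a), one must verify that the geometric swap intertwines $\C{F}_{\ov{w}_*\C{L}}$ with $\C{F}_{(\ov{s}_\al)_*\ov{w}_*\C{L}}$ in precisely the sense required by \re{wsaction}(c); in part (b), one must check that under $\eta_{\ov{w}}$ the admissible isomorphism $\varphi_{\I_s}$ matches $\varphi_\I$ on the nose. Both boil down to applying \rp{whaction} to the finite-dimensional situation provided by the Levi quotients $M_{\P_\al}$ and $M_{\P^s_{w^{-1}(\al)}}$, together with the explicit formula $\iota_w(g\I_s^{\sc}) = \pi(g) n_w^{-1} \I$ and the compatibility with reduction to the abstract Cartan spelled out in \rl{sprtjd}(d).
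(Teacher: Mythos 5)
Your part (a) has a genuine gap. The hypothesis $s_{\al}w\in\wt{W}^s_G$, i.e.\ $w^{-1}(\al)\notin\wt{\Phi}_s$ (\rl{awgp}(e)), is used by you only to conclude that $\iota_w([g])$ and $\iota_{s_{\al}w}([g])$ are distinct points of the $\B{P}^1$-fiber of $\pi_{\al}$ — but distinctness is automatic (since $n_{s_{\al}}\notin\I$) and is not what makes the argument work. The action $(s_{\al})_*$ is \emph{not} in general a "swap of the two points": by \re{whaction} it is defined by extending, across the small map for the rank-one Levi $M_{\P_{\al}}$, an isomorphism that is the deck-transformation action only over the regular semisimple locus of $[\frac{B_{\al}}{B_{\al}}]$. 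So the essential step, which your proposal omits entirely, is to check that the composition $\Fl_{G_s^{\sc},\gm}\overset{\iota_w}{\to}\Fl_{G,\gm}\overset{\wt{\red}_{\gm}}{\to}[\frac{\I}{\I}]\to[\frac{B_{\al}}{B_{\al}}]$ lands in that regular semisimple locus, i.e.\ that $\al(\pr_{\I}(n_wg^{-1}\gm g n_w^{-1}))\neq 1$. This is exactly where the topological Jordan decomposition enters (and why the statement of (a) assumes it): since $u$ is topologically unipotent and $g$ centralizes $s$, one has $\pr_{\I}(n_wg^{-1}\gm g n_w^{-1})=\pr_{\I}(n_w s n_w^{-1})$, and $\al(n_w s n_w^{-1})=(w^{-1}\al)(s)\neq 1$ precisely because $w^{-1}(\al)\notin\wt{\Phi}_s$. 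Only after this is established does the identification of $(s_{\al})_*$ on $\im(\iota_w)_*$ with right translation by $n_{s_{\al}}$ (hence $(s_{\al})_*\circ(\iota_w)_*=(\iota_{s_{\al}w})_*$) become legitimate; your proposal never uses the decomposition $\gm=su$ in (a) at all, so the "swap" claim is unjustified as stated.

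In part (b) your route differs from the paper's: the paper simply observes that both actions are pulled back, via the reduction maps, from a commutative diagram of rank-one stacks $[\frac{B_{\beta}}{B_{\beta}}]\to[\frac{B_{\al}}{B_{\al}}]$ (the map being $\Ad n_w^{-1}$), whereas you want a Cartesian square of Springer fibers plus proper base change. Your Cartesianness claim is in fact true, but "one checks" hides the real content: one needs the relative-position/coset argument (via the decomposition $\Fl_{G,\gm}=\sqcup_{w'}\iota_{w'}(\Fl_{G_s^{\sc},\gm})$ of \rl{sprtjd}(b) and \rl{awgp}(b)) showing that the $\pi_{\al}$-fiber over $\ov{\iota}_w([g])$ meets only the image of $\iota_w$ — and this holds exactly because $s_{\al}w=ws_{\beta}\in w\wt{W}_s$ when $w^{-1}(\al)\in\wt{\Dt}_s$; in the situation of (a) the analogous square is \emph{not} Cartesian, since the fiber also meets $\im\iota_{s_{\al}w}$. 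So you should either supply that argument or, more economically, argue as the paper does at the level of the adjoint-quotient stacks, where no base change over the Springer fibers is needed.
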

\begin{Emp}
{\bf Remark.}
By \rl{awgp}(e), exactly one of the assumptions $s_{\al}w\in\wt{W}^s_G$ and $w^{-1}(\al)\in\wt{\Dt}_s$ is satisfied.
%For each $w\in \wt{W}^s_G$ and each simple root $\al$ be a simple
%root of $G$ we have $s_{\al}w\notin\wt{W}^s_G$ if and only if
%$w^{-1}(\al)$ is a simple root of $G_s^0$. In other words, exactly
%one if the cases (a) or (b) always satisfied. Indeed,  since
%$w\in\wt{W}^s_G$ we have $w(I\cap G^0_s)\subseteq I$, hence $w(I\cap
%G^0_s)=I\cap G^0_{w(s)}$. Therefore $s_{\al} w\in\wt{W}^s_G$ if
%and only if $s_{\al}(I\cap G^0_{w(s)})\subseteq I$. Since $\al$ is
%simple, it is the only positive root which $s_{\al}$ sends to
%negative, therefore the last condition is equivalent to the fact
%that $\al$ is not a simple root of $G^0_{w(s)}$, or equivalently,
%that $w^{-1}(\al)$ is not a simple root of $G^0_s$.
\end{Emp}

\begin{proof}
(a) We have to show that the action of $s_{\al}$ on $\im(\eta_w)_*$ is induced by the action by $n_{s_{\al}}$.
By the definition of the $\wt{W}_G$-action it remains to show that the image of
\[
\Fl_{G_s^{\sc},\gm}\overset{\eta_w}{\lra} \Fl_{G,\gm}\overset{\wt{\red}_{\gm}}{\lra}
\left[\frac{\I}{\I}\right]\to\left[\frac{B_{\al}}{B_{\al}}\right]
%\ov{T}_G\overset{\al}{\lra}\B{G}_m %\left[\frac{\I}{\I}\right]\to\left[\frac{B_{\al}}{B_{\al}}\right]
\]
lies in the  regular semisimple locus. Explicitly, we have to show that for every representative $g\in G^{\sc}_s(K)$ of $[g]\in \Fl_{G_s^{\sc},\gm}$, we have $\al(\pr_{\I}(n_w g^{-1}\gm gn_w^{-1}))\neq 1$, where $\pr_{\I}:\I\to\ov{T}_G$ is the projection.
Notice that
\[
\pr_{\I}(n_wg^{-1}\gm gn_w^{-1})=\pr_{\I}(n_w g^{-1}s gn_w^{-1})=\pr_{\I}(n_w s n_w^{-1}),
\]
and that element $n_w s n_w^{-1}\in T(K)$ is strongly semisimple. Hence it remains to show that $\al(n_w s n_w^{-1})\neq 1$. Since $\al(n_w s n_w^{-1})=w^{-1}(\al)(s)$, we have to show that $w^{-1}(\al)\notin\wt{\Phi}_s$, but this follows from
\rl{awgp}(e).

\smallskip

(b) Set $\beta:=w^{-1}(\al)$. The the assertion follows from commutativity of the diagram
\[
\CD
\Fl_{G_s^{\sc},\gm}@>\wt{\red}_{\gm}>>[\frac{\I_s^{\sc}}{\I^{\sc}_s}]  @>>> [\frac{B_{\beta}}{B_{\beta}}] \\
 @V\eta_wVV   @. @VV\Ad n_w^{-1}V\\
\Fl_{G,\gm} @>\wt{\red}_{\gm}>>[\frac{\I}{\I}]  @>>> [\frac{B_{\al}}{B_{\al}}]
\endCD
\]
and the definion of the $\wt{W}_s$- and $\wt{W}_G$-actions.
\end{proof}

\begin{Cor} \label{C:equiv}
Assume that either $s=1$ or $\gm=su$ is the topological Jordan decomposition. Then

\smallskip

(a) for every  $w\in \wt{W}^s_G$, the composition
\[
H_i(\Fl_{G_s^{sc},\gm},\C{F}_{\C{L}})\overset{\eta_*}{\lra}
H_i(\Fl_{G,\gm},\C{F}_{\C{L}})\overset{w_*}{\lra}H_i(\Fl_{G,\gm},\C{F}_{\ov{w}_*\C{L}})
\]
equals $(\eta_w)_*$;

\smallskip

(b) For every $w\in \wt{W}_s$ the following diagram is commutative:
 \[
\CD
 H_i(\Fl_{G_s^{\sc},\gm},\C{F}_{\C{L}})  @>\eta_*>>  H_i(\Fl_{G,\gm},\C{F}_{\C{L}})\\
@Vw_*VV       @VV w_* V\\
 H_i(\Fl_{G_s^{\sc},\gm},\C{F}_{\ov{w}_*\C{L}})  @>\eta_*>>  H_i(\Fl_{G,\gm},\C{F}_{\ov{w}_*\C{L}}).
\endCD
\]

\smallskip

(c) The map $\eta_*: H_i(\Fl_{G_s^{sc},\gm},\C{F}_{\C{L}}) \to
H_i(\Fl_{G,\gm},\C{F}_{\C{L}})$ is  $(\La_{G_s^{\sc}}\times LG_{\gm})$-equivariant.
Moreover, it is the $(\wt{W}_s\times LG_{\gm})$-equivariant, if $\C{L}$ is $\ov{W}_{G_s^0}$-equivariant.
\end{Cor}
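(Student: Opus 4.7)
The strategy is to deduce all three parts from Lemma \ref{L:simref}, using the combinatorics of $\wt{W}^s_G$ from Lemma \ref{L:awgp} together with the $\Om_G$-identity of Lemma \ref{L:sprtjd}(c).

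For part (a), I would first reduce to $w\in\wt{W}^s_G\cap\wt{W}$: writing $w=uw'$ with $u\in\Om_G$ and $w'\in\wt{W}$, Lemma \ref{L:awgp}(a) (and its consequence $u^{-1}\wt{W}^s_G\subset\wt{W}^s_G$) forces $w'\in\wt{W}^s_G$, while Lemma \ref{L:sprtjd}(c) gives $(\iota_w)_*=u_*\circ(\iota_{w'})_*$ and the $\wt{W}_G$-action satisfies $w_*=u_*\circ w'_*$. For $w'\in\wt{W}^s_G\cap\wt{W}$ I would then induct on $l(w')$ along a reduced decomposition $w'=s_n\cdots s_1$: Lemma \ref{L:awgp}(d) guarantees that every partial product $v_i:=s_i\cdots s_1$ lies in $\wt{W}^s_G$, so Lemma \ref{L:simref}(a) (applied at the step $v_{i-1}\rightsquigarrow v_i=s_iv_{i-1}$, whose hypothesis $s_iv_{i-1}\in\wt{W}^s_G$ is automatic) upgrades the inductive identity $(\iota_{v_{i-1}})_*=(v_{i-1})_*\iota_*$ into $(\iota_{v_i})_*=(s_i)_*(v_{i-1})_*\iota_*=(v_i)_*\iota_*$.

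For part (b), I would induct on the length of $w\in\wt{W}_s$. Multiplicativity of the actions makes the inductive step purely formal, so the substantive case is a single simple affine reflection $w=s_\beta$ with $\beta\in\wt{\Dt}_s$. Here the obstacle is that $s_\beta$ is generally not a simple reflection of $\wt{W}_G$, so Lemma \ref{L:simref}(b) cannot be applied with the trivial choice. The resolution is Lemma \ref{L:awgp}(c): there exists $w_0\in\wt{W}^s_G$ with $\al:=w_0(\beta)\in\wt{\Dt}$. Lemma \ref{L:simref}(b) applied to $(w_0,\al)$ then reads $(\iota_{w_0})_*(s_\beta)_*=(s_\al)_*(\iota_{w_0})_*$. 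Substituting $(\iota_{w_0})_*=(w_0)_*\iota_*$ from part (a) and using the standard identity $w_0^{-1}s_\al w_0=s_{w_0^{-1}(\al)}=s_\beta$, one obtains $\iota_*(s_\beta)_*=(s_\beta)_*\iota_*$, as desired.

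For part (c), the $LG_\gm$-equivariance is immediate from the defining formula $\iota([g])=[\pi(g)]$, since the natural projection $\pi:G_s^{\sc}\to G_s^0\subset G$ intertwines the left $LG_\gm$-actions on both ind-schemes. The $\La_{G_s^{\sc}}$-equivariance is the special case of part (b) for $w$ in the translation sublattice $\La_{G_s^{\sc}}\subset\wt{W}_s$, and if $\C{L}$ is $W_{G_s^0}$-equivariant then the full $\wt{W}_s$-equivariance also follows from (b) after composing with the canonical isomorphisms $\ov{w}_*\C{L}\cong\C{L}$ furnished by the equivariant structure. The main technical obstacle is part (b), or more precisely the need for Lemma \ref{L:awgp}(c) to bridge the gap between simple reflections of $G_s^0$ (along $\wt{\Dt}_s$) and of $G$ (along $\wt{\Dt}$); once this bridge is in place, combining part (a) with Lemma \ref{L:simref}(b) suffices.
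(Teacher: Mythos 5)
Your proposal is correct and follows essentially the same route as the paper's own proof: part (a) by splitting $w=uw'$ with $u\in\Om_G$ via \rl{sprtjd}(c) and \rl{awgp}(a) and then inducting along a reduced word using \rl{awgp}(d) and \rl{simref}(a); part (b) by reducing to a simple reflection $s_{\beta}$, $\beta\in\wt{\Dt}_s$, and conjugating it into $\wt{\Dt}$ by means of \rl{awgp}(c), \rl{simref}(b) and part (a); and part (c) from (b) together with the $LG_{\gm}$-equivariance of $\iota$. The only point you leave implicit (and which the paper notes in one clause) is that \rl{simref}(a) also assumes $\gm=su$ is the topological Jordan decomposition; this is harmless, since when $s=1$ one has $\wt{W}^s_G\cap\wt{W}=\{1\}$, so the induction in (a) never invokes that lemma.
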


\begin{proof}
(a) Assume first that $w$ belongs to $\wt{W}$. In
this case, we will show the assertion by induction on $l(w)$. If
$l(w)=0$, thus $w=1$, there is nothing to prove. Assume now that
$l(w)>0$. In this case, $s\neq 1$, thus $\gm=su$ is the topological Jordan decomposition.

Choose a reduced decomposition $w=s_n\ldots s_1$, and
set $w':=s_{n-1}\ldots s_1$. By \rl{awgp}(d), we have $w'\in\wt{W}^s_G\cap \wt{W}$, and by
assumption $w=s_n w'\in \wt{W}^s_G$. Therefore by the induction
hypothesis, $w_*\circ\eta_*=(s_n)_*\circ (w'_*\circ\eta_*)$ equals
$(s_n)_*\circ(\eta_{w'})_*$, while by \rl{simref}(a), the
latter expression equals $(\eta_{s_nw'})_*=(\eta_w)_*$.

\smallskip

Let now $w\in \wt{W}^s_G$ be arbitrary. Write $w$ in the form
$w=uw'$, where $u\in\Om_G$ and $w'\in \wt{W}$. Then $w'=u^{-1}w$
belongs to $\wt{W}^s_G$ (by \rl{awgp}(a)), therefore the
composition $w_*\circ\eta_*=u_*\circ w'_*\circ\eta_*$ equals
$u_*\circ(\eta_{w'})_*$ by the case proven above, hence to
$(\eta_{uw'})_*=(\eta_w)_*$ by \rl{sprtjd}(c).

\smallskip

(b) We have to show that $\eta_*$ commutes
with $(s_{\beta})_*$ for every simple affine root $\beta\in\wt{\Dt}_s$.
By \rl{awgp}(c), there exists $w\in\wt{W}^s_G$ such that
$\al:=w(\beta)\in\wt{\Dt}$. Hence, by
\rl{simref}(b), we have an equality
$(s_{\al})_*\circ(\eta_w)_*=(\eta_w)_*\circ (s_{\beta})_*$, which by (a)
implies that
$(s_{\al})_*\circ (w_*\circ\eta_{*})=(w_*\circ \eta_{*})\circ (s_{\beta})_*$,
or, equivalently, that
$(w^{-1}s_{\al}w)_*\circ\eta_*=\eta_{*}\circ (s_{\beta})_*$. Since
$w^{-1}s_{\al}w=s_{w^{-1}(\al)}=s_{\beta}$, this implies that
$(s_{\beta})_*\circ\eta_*=\eta_*\circ (s_{\beta})_*$, and the proof is complete.

\smallskip

(c)  Since $\eta$ is $LG_{\gm}$-equivariant, both assertions follow from (b).
\end{proof}

\begin{Prop} \label{P:ind}
Let $\gm\in G(K)$ be a bounded semisimple element, and we have either $s=1$ or $\gm=su$ is the
topological Jordan decomposition of $\gm$.
Then for every $\ov{W}$-equivariant local system $\C{L}$ on $\ov{T}_G$, there
is a natural isomorphism
\begin{equation*} \label{Eq:induction}
H_i(\Fl_{G,\gm},\C{F}_{\C{L}})\simeq
\ind_{\wt{W}_{s}}^{\wt{W}_G}H_i(\Fl_{G_s^{\sc},\gm},\C{F}_{\C{L}})
\end{equation*}
of $(\wt{W}_G\times LG_{\gm})$-representations.
\end{Prop}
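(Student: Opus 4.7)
The plan is to exploit the universal homeomorphism from \rl{sprtjd}(b) together with the intertwining formulas of \rco{equiv} to build the induced-representation isomorphism Weyl-coset by Weyl-coset. First I would use that $\wt{\iota}:\sqcup_{w\in\wt{W}_G^s}\Fl_{G_s^{\sc},\gm}\to\Fl_{G,\gm}$ is a universal homeomorphism, so pullback is an equivalence on constructible sheaves and, dually on homology, yields a canonical direct-sum decomposition
\[
\bigoplus_{w\in\wt{W}_G^s}(\iota_w)_*:\bigoplus_{w\in\wt{W}_G^s}H_i(\Fl_{G_s^{\sc},\gm},\iota_w^*\C{F}_\C{L})\isom H_i(\Fl_{G,\gm},\C{F}_\C{L}).
\]
By \rl{sprtjd}(d) we can identify $\iota_w^*\C{F}_\C{L}=\red_\gm^*(\eta_{\ov{w}}^*\C{L})=\red_\gm^*(\eta^*\ov{w}^*\C{L})$. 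The assumption that $\C{L}$ is $W$-equivariant supplies canonical isomorphisms $\ov{w}^*\C{L}\cong \C{L}$ for every $\ov{w}\in W$, hence canonical identifications $\iota_w^*\C{F}_\C{L}\cong \C{F}_\C{L}$ (transported along $\eta$). Combining, we obtain a canonical $\qlbar$-linear isomorphism
\[
\Phi:\ \bigoplus_{w\in\wt{W}_G^s}H_i(\Fl_{G_s^{\sc},\gm},\C{F}_\C{L})\isom H_i(\Fl_{G,\gm},\C{F}_\C{L}),
\]
whose $w$-component is $(\iota_w)_*$ (after the above identification of local systems).

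Next, by \rl{awgp}(b) the set $\wt{W}_G^s$ is a system of representatives for $\wt{W}_G/\wt{W}_s$, so the domain of $\Phi$ is, as a vector space, the underlying space of $\ind_{\wt{W}_s}^{\wt{W}_G}H_i(\Fl_{G_s^{\sc},\gm},\C{F}_\C{L})$. To verify that $\Phi$ is $\wt{W}_G$-equivariant, given $w'\in\wt{W}_G$ and $w\in\wt{W}_G^s$ I would write uniquely $w'w=w''w_s$ with $w''\in\wt{W}_G^s$ and $w_s\in\wt{W}_s$; this is precisely the decomposition encoding the action of $w'$ in the induced representation. Using \rco{equiv}(a) to rewrite $(\iota_w)_*=w_*\circ\iota_*$ and $(\iota_{w''})_*=w''_*\circ\iota_*$, and \rco{equiv}(b) to commute $(w_s)_*$ past $\iota_*$, one computes
\[
w'_*\circ(\iota_w)_*\ =\ (w'w)_*\circ\iota_*\ =\ w''_*\circ(w_s)_*\circ\iota_*\ =\ w''_*\circ\iota_*\circ(w_s)_*\ =\ (\iota_{w''})_*\circ(w_s)_*,
\]
which is exactly the formula for the action of $w'$ in $\ind_{\wt{W}_s}^{\wt{W}_G}H_i(\Fl_{G_s^{\sc},\gm},\C{F}_\C{L})$.

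Finally, $LG_\gm$-equivariance of $\Phi$ is immediate from \rco{equiv}(c), which states that each $\iota_*$ is $LG_\gm$-equivariant; together with the fact (see \re{ggmaction}) that the $LG_\gm$-action commutes with the $\wt{W}_G$-action, this upgrades $\Phi$ to a $(\wt{W}_G\times LG_\gm)$-equivariant isomorphism. The main technical obstacle is not the coset bookkeeping above but rather ensuring that the canonical identifications $\ov{w}^*\C{L}\cong \C{L}$ provided by the $W$-equivariant structure are compatible with the $\wt{W}_G$-actions on source and target of $\Phi$; this coherence is precisely what \rco{equiv}(a)--(b) encodes, which is why that corollary was established in exactly this form. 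Once those coherences are invoked, the remainder of the argument is formal.
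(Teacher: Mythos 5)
Your proposal is correct and relies on exactly the same ingredients as the paper's proof (\rl{sprtjd}(b),(d), \rl{awgp}(b), and \rco{equiv}(a)--(c)); the only difference is organizational: the paper first produces the map $\wt{\iota}_*$ by Frobenius reciprocity from the $(\wt{W}_s\times LG_{\gm})$-equivariance of $\iota_*$ and then identifies its components with the $(\iota_w)_*$ via \rco{equiv}(a) to deduce bijectivity from the universal homeomorphism, whereas you start from the direct-sum decomposition given by the universal homeomorphism and verify $\wt{W}_G$-equivariance by the coset computation $w'w=w''w_s$. These are the same argument run in opposite directions, so no further comparison is needed.
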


\begin{proof}
By \rco{equiv}(c), the map $\eta_*$ is $(\wt{W}_{s}\times
LG_{\gm})$-equivariant, therefore by the Frobenius reciprocity it
induces a unique $(\wt{W}_G\times LG_{\gm})$-equivariant map
\[
\wt{\eta}_*:\ind_{\wt{W}_{s}}^{\wt{W}_G}H_i(\Fl_{G_s^{sc},\gm},\C{F}_{\C{L}})\to
H_i(\Fl_{G,\gm},\C{F}_{\C{L}}).
\]
It remains to show that $\wt{\eta}_*$ is an isomorphism of vector
spaces.

\smallskip

Since $\wt{W}^s_G$ is a set of representatives of the quotient
$\wt{W}_G/\wt{W}_{s}$ (by \rl{awgp}(b)), we see that the underlining vector
space of $\ind^{\wt{W}_G}_{\wt{W}_s}H_i(\Fl_{G_s^{\sc},\gm},\C{F}_{\C{L}})$
decomposes as a direct sum $\bigoplus_{w\in\wt{W}^s_G}\{w\}\otimes
H_i(\Fl_{G_s^{sc},\gm},\C{F}_{\C{L}})$. Moreover, by the $\wt{W}_G$-equivariance  of
$\wt{\eta}_*$, we have $\wt{\eta}_*(\{w\}\otimes
x)=w_*(\eta_*(x))$ for each $w\in\wt{W}^s_G$ and $x\in
H_*(\Fl_{G_s^{sc},\gm},\C{F}_{\C{L}})$. Therefore by \rco{equiv}(a),
we get   $\wt{\eta}_*(\{w\}\otimes x)=(\eta_w)_*(x)$.

\smallskip

In other words, as the map of vector spaces $\wt{\eta}_*$ can be identified with
\[
(\bigsqcup_{w\in \wt{W}^s_G}\eta_w)_*:H_i(\bigsqcup_{w\in\wt{W}^s_G}\Fl_{G_s^{sc},\gm},\C{F}_{\C{L}})\to H_*(\Fl_{G,\gm},\C{F}_{\C{L}}),
\]
hence it is an isomorphism by \rl{sprtjd}(b).
\end{proof}

\begin{Cor} \label{C:ind}
Let $\gm\in G(K)$ be a bounded semisimple element with the topological Jordan decomposition $\gm=su$.
Then for every $\ov{W}$-equivariant local system $\C{L}$ on $\ov{T}_G$, there
is a natural isomorphism
\[
H_i(\Fl_{G,\gm},\C{F}_{\C{L}})\simeq\ind_{\wt{W}_{G^0_s}}^{\wt{W}_G}H_i(\Fl_{G_s^0,\gm},\C{F}_{\C{L}})
\]
of $(\wt{W}_G\times LG_{\gm})$-representations.
\end{Cor}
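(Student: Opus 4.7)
The plan is to deduce the corollary from \rp{ind} combined with \rl{ind}(c), by transitivity of induction, passing from the simply connected cover $G_s^{\sc}$ to the connected centralizer $G_s^0$.

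First I would unwind the setup: by \rl{tjd}(c) one has $s,u\in G_s^0(K)$ and $G_\gm=(G_s^0)_\gm$, so $\gm$ remains a bounded regular semisimple element of the split connected reductive group $G_s^0$ (see \rl{topjd}(a)). The common maximal split torus $T\subset G_s^0\subset G$ identifies $\ov{T}_{G_s^0}$ with $\ov{T}_G$ compatibly with $W_{G_s^0}\subset W_G$ and $\La_{G_s^0}=\La_G$, so that $\wt{W}_{G_s^0}=W_{G_s^0}\ltimes\La_G$ embeds naturally into $\wt{W}_G=W_G\ltimes\La_G$, with $\wt{W}_{G_s^{\sc}}=\wt{W}_s$ sitting inside $\wt{W}_{G_s^0}$ as a subgroup of finite index. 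In particular the pullback $\eta^*\C{L}$ (denoted again by $\C{L}$ as in \re{sprtjd}(a)) remains $W_{G_s^0}$-equivariant on $\ov{T}_{G_s^0}$.

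Next, I would apply \rl{ind}(c) to the triple $(G_s^0,\gm,\C{L})$ to obtain a natural isomorphism of $(\wt{W}_{G_s^0}\times LG_\gm)$-representations
\[
H_i(\Fl_{G_s^0,\gm},\C{F}_{\C{L}})\cong\ind_{\wt{W}_{G_s^{\sc}}}^{\wt{W}_{G_s^0}}H_i(\Fl_{G_s^{\sc},\gm},\C{F}_{\C{L}}),
\]
and apply \rp{ind} to the original $G$ in the topological Jordan decomposition case $\gm=su$ to obtain a natural isomorphism of $(\wt{W}_G\times LG_\gm)$-representations
\[
H_i(\Fl_{G,\gm},\C{F}_{\C{L}})\cong\ind_{\wt{W}_{G_s^{\sc}}}^{\wt{W}_G}H_i(\Fl_{G_s^{\sc},\gm},\C{F}_{\C{L}}).
\]
Combining these by transitivity of induction along the chain $\wt{W}_{G_s^{\sc}}\subset\wt{W}_{G_s^0}\subset\wt{W}_G$ yields the claimed isomorphism.

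The main point that will require care is the compatibility of this transitivity with the two $\wt{W}_G$-module structures appearing above; concretely, one must check that the $\wt{W}_{G_s^0}$-action on $H_i(\Fl_{G,\gm},\C{F}_{\C{L}})$ obtained through \rp{ind} together with the inclusion $\wt{W}_{G_s^0}\subset\wt{W}_G$ agrees with the one transported from $H_i(\Fl_{G_s^0,\gm},\C{F}_{\C{L}})$ via \rl{ind}(c). Both displayed isomorphisms arise, through Frobenius reciprocity, from $LG_\gm$-equivariant pushforward maps built out of the inclusion $\iota$ of \rl{sprtjd}, and this $\iota$ factors naturally through its $G_s^0$-analogue. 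Since the $\wt{W}$-actions of \rp{whaction} depend only on the Iwahori subgroup and the reduction map $\red_\gm$, and since $\I\cap L(G_s^0)$ is precisely the Iwahori used in the $G_s^0$-version (\rl{topjd}(b)), the required compatibility reduces to the statements already established in \rco{equiv}(b),(c) applied inside $G_s^0$.
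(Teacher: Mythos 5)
Your argument is essentially the paper's own proof: the paper likewise obtains the result by applying \rp{ind} both to $G_s^0$ (which, since $s$ is central in $G_s^0$, is exactly the content of your appeal to \rl{ind}(c)) and to $G$ with $\gm=su$, and then concatenating the two isomorphisms by transitivity of induction along $\wt{W}_{G_s^{\sc}}\subset\wt{W}_{G_s^0}\subset\wt{W}_G$. The compatibility you flag at the end is indeed supplied by \rco{equiv}, so there is no gap.
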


\begin{proof}
Applying \rp{ind} to the groups $G_s^0$ and $G$, we conclude that
\[
\ind_{\wt{W}_{G_s^0}}^{\wt{W}_G}H_i(\Fl_{G_s^0,\gm},\C{F}_{\C{L}})\simeq \ind_{\wt{W}_{s}}^{\wt{W}_G}H_i(\Fl_{G_s^{\sc},\gm},\C{F}_{\C{L}})\simeq
H_i(\Fl_{G,\gm},\C{F}_{\C{L}}).
\]
\end{proof}

\begin{Emp} \label{E:pflind}
\begin{proof}[Proof of \rl{ind}]
Note that part~(a) follows from \rco{equiv}(b) for $s=1$, while part~(c) follows from part~(a) and \rp{ind} for $s=1$.
Next, the first assertion of part~(b) follows from (a), while the second assertion follows from part~(c). Namely,
the local system $\C{L}^{\st}:=\bigoplus_{\ov{w}\in\ov{W}}\ov{w}_*\C{L}$ is naturally $\ov{W}$-equivariant, and the
isomorphism
\[
\ind_{\wt{W}}^{\wt{W}_G}H_i(\Fl_{G_s^{\sc},\gm},\C{F}_{\C{L}^{\st}})\simeq H_i(\Fl_{G,\gm},\C{F}_{\C{L}^{\st}})
\]
of $(\wt{W}_G\times LG_{\gm})$-representations from part~(c) restricts to an isomorphism
\[
\ind_{\La}^{\La_G}H_i(\Fl_{G_s^{sc},\gm},\C{F}_{\C{L}})\simeq
H_i(\Fl_{G,\gm},\C{F}_{\C{L}})
\]
of $(\La_G\times LG_{\gm})$-representations.
\end{proof}
\end{Emp}

\subsection{Completion of the proof}

\begin{Lem} \label{L:central}
Assume that $\gm'=z\gm$ and $z\in Z(G)(\C{O})$. Then \rt{action} for $\gm$ implies that for $\gm'$.
\end{Lem}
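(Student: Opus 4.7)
The plan is to reduce \rt{action} for $\gm'$ to the same theorem for $\gm$ by showing that passing from $\gm$ to $\gm'$ merely translates the reduction map by the central element $\bar z := \pr_\I(z) \in \ov T_G$, which is $W$-invariant. Concretely, since $z \in Z(G)(\C{O})$ lies in every Iwahori containing $L^+(T)$ for a maximal split torus $T$ containing $z$, and since $z$ is central, one has $g^{-1}\gm' g = z \cdot g^{-1}\gm g$, so the affine Springer fibers coincide $\Fl_{G,\gm'} = \Fl_{G,\gm}$, the centralizers coincide $LG_{\gm'} = LG_\gm$ together with their actions on this common scheme, and the reduction maps are related by
\[
\red_{\gm'} = t_{\bar z} \circ \red_\gm,
\]
where $t_{\bar z}: \ov T_G \to \ov T_G$ is translation by $\bar z$. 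Centrality of $z$ in $G$ places $\bar z$ in the $W$-fixed subgroup of $\ov T_G$, so $t_{\bar z}$ commutes with the $W$-action on $\ov T_G$.

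The identity $\red_{\gm'}^* \C{L} = \red_\gm^*(t_{\bar z}^* \C{L})$ then gives a canonical identification
\[
H_i(\Fl_{G,\gm'}, \C{F}^{(\gm')}_\C{L}) = H_i(\Fl_{G,\gm}, \C{F}^{(\gm)}_{t_{\bar z}^* \C{L}}).
\]
I would next check that this identification is $\wt W_G \times LG_\gm$-equivariant. The centralizer equivariance is immediate since $LG_\gm$ acts trivially on $\ov T_G$, so $t_{\bar z}$ is tautologically $LG_\gm$-equivariant. For the $\wt W_G$-equivariance, the key input is the canonical isomorphism $\ov w_* \circ t_{\bar z}^* \simeq t_{\bar z}^* \circ \ov w_*$ of functors on local systems on $\ov T_G$ for $\ov w \in W$ (valid because $\bar z$ is $W$-fixed), together with its extension to $\wt W_G$ via the construction of \re{wsaction}--\rp{whaction}, which is functorial in the local system. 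This intertwines the Springer--Lusztig isomorphism $a^{(\gm')}_{w,\C{L}}$ with $a^{(\gm)}_{w, t_{\bar z}^* \C{L}}$ on the two sides of the identification.

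Finally, the homomorphism $\pr_\gm: \qlbar[\La_G]^W \to \qlbar[\pi_0(LG_\gm)]$ depends only on the torus $G_\gm = G_{\gm'}$ and the $W$-orbit of admissible isomorphisms $G_\gm \simeq T_G$, so $\pr_\gm = \pr_{\gm'}$. Applying \rt{action} to $\gm$ with the local system $t_{\bar z}^* \C{L}$ yields a $\wt W_G \times \pi_0(LG_\gm)$-equivariant filtration on $H_i(\Fl_{G,\gm}, \C{F}_{t_{\bar z}^* \C{L}})$ whose graded pieces have the required $\pr_\gm$-compatibility; transported under the identification above, this becomes exactly the filtration witnessing \rt{action} for $\gm'$ with local system $\C{L}$. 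The only mildly delicate point --- the main obstacle worth verifying carefully --- is the compatibility of the canonical isomorphism $\ov w_* t_{\bar z}^* \simeq t_{\bar z}^* \ov w_*$ with the inductive construction of the $\wt W_G$-action in \rp{whaction}. This however reduces, via \rp{whaction}(c), to the cases of $w \in W_J$ (handled by functoriality applied to diagram \form{SGaff}) and $u \in \Om_G$ (handled by \form{CDredgm}), both of which follow immediately from the $W$-fixedness of $\bar z$.
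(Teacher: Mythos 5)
Your proposal is correct and follows essentially the same route as the paper's own proof: identify $\Fl_{G,\gm'}=\Fl_{G,\gm}$ and $LG_{\gm'}=LG_{\gm}$, observe $\red_{\gm'}=m_{\ov z}\circ\red_{\gm}$ (your $t_{\bar z}$), obtain the equivariant identification $H_i(\Fl_{G,\gm'},\C{F}_{\C{L}})\simeq H_i(\Fl_{G,\gm},\C{F}_{m_{\ov z}^*\C{L}})$, and apply \rt{action} for $\gm$ with the translated local system. Your extra verification of the $\wt{W}_G$-equivariance via $W$-fixedness of $\bar z$ and \rp{whaction} only spells out what the paper asserts as canonical.
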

\begin{proof}
Denote by $m_z:\ov{T}_G\to\ov{T}_G$ the morphism induced by the multiplication by $\ov{z}\in Z(G)(k)$.  Our assumptions imply that we have equalities $\Fl_{G,\gm'}=\Fl_{G,\gm}$ and
$LG^0_{\gm}=LG^0_{\gm'}$, while the projections $\red_{\gm}:\Fl_{G,\gm}\to\ov{T}_G$ and $\red_{\gm'}:\Fl_{G,\gm'}\to\ov{T}_G$ are connected by the rule $\red_{\gm'}=m_z\circ\red_{\gm}$. Therefore we have a canonical $(\La_G\times LG^0_{\gm})$-equivariant isomorphism
$H_i(\Fl_{G,\gm'},\C{F}_{\C{L}})\simeq H_i(\Fl_{G,\gm},\C{F}_{m_z^*(\C{L})})$, which is $(\wt{W}_G\times LG^0_{\gm})$-equivariant, if
$\C{L}$ is $\ov{W}$-equivariant. This implies the assertion.
\end{proof}

\begin{Lem} \label{L:qlog}
Set $\fg:=\Lie G$. Then there exists an ${L}(G^{\ad})$-equivariant isomorphism $\Phi:LG_{\on{tu}}\isom {L}\fg_{\on{tn}}$ between locus of topologically unipotent elements of  $LG$ and topologically nilpotent elements of  ${L}\fg$, and inducing an isomorphism $\I^+\isom \Lie(\I^+)$.
\end{Lem}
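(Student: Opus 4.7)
The plan is to construct $\Phi$ as a ``quasi-logarithm'' map following the standard construction of Kazhdan (in the $p$-adic setting, cf.\ also \cite{KV}). Choose a faithful representation $\rho:G\hra GL(V)$ defined over $k$. Under the hypothesis on the characteristic stated in \re{convention}(c), the inclusion $\fg\hra\mathfrak{gl}(V)$ admits a canonical $G$-equivariant linear splitting $\pi:\mathfrak{gl}(V)\to\fg$ (for example, use the orthogonal projection with respect to the trace form on $\mathfrak{gl}(V)$, which is non-degenerate on $\fg$ once $\on{char} k$ is large enough). Note that $\pi$ extends to a $LG^{\ad}$-equivariant morphism $L\mathfrak{gl}(V)\to L\fg$.

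Next I would define $\Phi$ on the locus of topologically unipotent elements by the formula
\[
\Phi(g):=\pi\bigl(\log\rho(g)\bigr)=\pi\Bigl(\sum_{n\geq 1}\frac{(-1)^{n-1}}{n}(\rho(g)-1)^n\Bigr).
\]
If $g\in LG(R)_{\on{tu}}$ for some $k$-algebra $R$, then $\rho(g)-1\in L\mathfrak{gl}(V)(R)$ is topologically nilpotent (here one uses that each eigenvalue in a faithful representation has $\val_K(\al-1)>0$), so the series converges $t$-adically and defines a morphism of ind-schemes; the image is topologically nilpotent because $\pi$ is continuous. Conjugation by any $h\in LG^{\ad}$ commutes with $\log$ and with $\pi$, so $\Phi$ is $LG^{\ad}$-equivariant. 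Conversely, for $X\in L\fg(R)_{\on{tn}}$ the element $\rho(X)$ is topologically nilpotent in $L\mathfrak{gl}(V)$, so $\exp(\rho(X))=\sum_{n\geq 0}\rho(X)^n/n!$ converges (the characteristic hypothesis $\on{char}k>2h$ is precisely what one needs to invert the denominators $n!$ occurring, since only finitely many terms of each ``congruence level'' matter); moreover $\exp(\rho(X))\in \rho(LG)$ since $\exp$ sends $\fg$ into $G$ in $GL(V)$. Thus $\Psi(X):=\rho^{-1}(\exp\rho(X))\in LG_{\on{tu}}$. The identities $\pi\circ\rho=\id_{\fg}$ and $\log\rho(g)\in L\rho(\fg)$ (for $g\in LG_{\on{tu}}$) then give $\Psi\circ\Phi=\id$ and $\Phi\circ\Psi=\id$, so $\Phi$ is an isomorphism.

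Finally, for the statement about $\I^+$, note that $\I^+=\I\cap LG_{\on{tu}}$ and, since $\Phi$ sends $1$ to $0$ and satisfies $d\Phi_1=\id_\fg$, $\Phi$ is compatible with the standard filtrations by congruence levels; more concretely, choose a maximal torus $T\subset G$ with $L^+(T)\subset\I$. Both $\I^+$ and $\Lie(\I^+)$ decompose as products over affine roots (plus the $T$-part $L^+T\cap\I^+$), and on each root subspace $\Phi$ restricts to a power-series map whose linear term is the identity, so it is an isomorphism onto the corresponding summand of $\Lie(\I^+)$. Putting these isomorphisms together yields $\Phi(\I^+)=\Lie(\I^+)$.

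The main obstacle I anticipate is making the construction of the $G$-equivariant splitting $\pi$ truly canonical (or at least well-behaved) under the characteristic assumption, and verifying that the $\I^+$ isomorphism is compatible with the affine root grading; once these are in place, convergence, equivariance and invertibility of $\Phi$ are formal consequences of the standard $p$-adic computations transplanted to the loop setting.
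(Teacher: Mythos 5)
Your construction has a genuine gap: the series defining $\Phi$ and its claimed inverse $\Psi$ do not exist in positive characteristic, which is the case of interest here (the residue field is $k=\fqbar$ of characteristic $p$, only assumed to satisfy $p>2h$ or $p=0$). A topologically unipotent $g$ has $\rho(g)-1$ only \emph{topologically} nilpotent, not nilpotent, so the series $\sum_{n\ge 1}\tfrac{(-1)^{n-1}}{n}(\rho(g)-1)^n$ and $\sum_{n\ge 0}\rho(X)^n/n!$ do not truncate, and for every $n$ divisible by $p$ the coefficients $1/n$ and $1/n!$ are undefined: $n$ and $n!$ are equal to $0$ in $k$, and no amount of $t$-adic smallness of the numerator lets you divide by an element that is zero (not merely non-unit) in $k((t))$. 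Your parenthetical claim that $\on{char}k>2h$ ``is precisely what one needs to invert the denominators $n!$'' is therefore false; the bound on the characteristic does not bound the index $n$ of the terms that occur. The same problem invalidates the assertions $\log\rho(g)\in L\rho(\fg)$ and ``$\exp$ sends $\fg$ into $G$'', which are char-$0$ facts. So your argument proves the lemma only when $\on{char}k=0$.

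This is exactly why the paper avoids $\log/\exp$ altogether: it first reduces, via the isogeny $G^{\sc}\times Z(G)^0\to G$ (whose kernel has order prime to $\on{char}k$ by the hypothesis of \re{convention}(c), so it identifies the topologically unipotent/nilpotent loci), to the cases $G=G^{\sc}$ and $G=\B{G}_m$, and then uses an algebraic \emph{quasi-logarithm} in the sense of \cite[1.8]{KV}: a $G^{\ad}$-equivariant morphism $\Phi:G\to\fg$ of varieties with $\Phi(1)=0$ and $d\Phi_1=\operatorname{id}$ (for instance $g\mapsto\pi(\rho(g)-1)$ with your trace-form splitting $\pi$ --- note: no $\log$). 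That such a polynomial map induces isomorphisms $LG_{\on{tu}}\isom L\fg_{\on{tn}}$ and $\I^+\isom\Lie(\I^+)$ is not seen by writing down an inverse series but by a Hensel/successive-approximation argument on congruence filtrations, as in \cite[Prop.~1.8.16]{KV}. Your splitting $\pi$ and the filtration-compatibility idea at the end are salvageable ingredients, but to repair the proof you must replace $\log\rho(g)$ by $\rho(g)-1$ (or another quasi-logarithm), insert the isogeny reduction to guarantee its existence and equivariance for general reductive $G$, and replace the $\exp$-inverse by the approximation argument establishing bijectivity on the topologically unipotent/nilpotent loci.
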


\begin{proof}
Notice first that if $\pi:G'\to G$ is an isogeny, whose kernel is finite of order prime to the characteristic of $k$, then $\pi$ induces isomorphisms
$LG'_{\on{tu}}\isom LG_{\on{tu}}$ and $LG'_{\on{tn}}\isom LG_{\on{tn}}$ (compare \cite[Lemma~1.8.17]{KV}), thus the assertion for $G$ follows from from that for $G'$.

Now set $G':=G^{\sc}\times Z(G)^0$, and let $\pi:G'\to G$ be the canonical projection. Then the above observation together with
our assumption (see Section~\re{convention}(c)) on the characteristic of $k$ implies that it suffice to show the assertion for $G'$ instead for $G$. Thus it suffices to show the assertion for
$G=G^{\sc}$ and  $G=\B{G}_m$.

In both cases we know that there exists a quasi-logarithm $\Phi:G\to \fg$ in the sense of \cite[Section~1.8]{KV}, that is, an $G^{\ad}$-equivariant morphism, such that $\Phi(1)=0$,
and $d\Phi_1:\fg\to\fg$ is the identity (see \cite[Lemma~1.1.6]{KV}). Then, as in \cite[Proposition~1.8.16]{KV}, the induced morphism $\Phi:LG\to{L}\fg$ induces isomorphisms
$\Phi:LG_{\on{tu}}\isom {L}\fg_{\on{tn}}$ and  $\I^+\isom \Lie(\I^+)$, as claimed.
\end{proof}

\begin{Emp} \label{E:affsprlie}
{\bf The Lie algebra case.} Fix $x\in\fg(K)=L\fg(k)$, and let $G_x\subseteq G$ be the centralizer of $x$.

\smallskip

(a) We denote by $\Fl_{G,x}\subseteq\Fl_G$ the closed sub-indscheme (called the
{\em affine Springer fiber}), such that for every Iwahori subgroup $\I$, the image of $\Fl_{G,x}$ under the canonical isomorphism $\Fl_G\isom LG/\I$ (see Section~\re{affflvar}(b)) consists of all $[g]\in  LG/\I$ such that $\Ad g^{-1}(x)\in\Lie\I$.

\smallskip

(b) As in \cite{Lu2},\cite{Yun} each homology group $H_i(\Fl_{G,x}):=H_i(\Fl_{G,x},\qlbar)$ is equipped with an action of the group
$\wt{W}_G\times\pi_0(LG_x)$.

\smallskip

(c) Assume now that $x\in\fg(K)$ is regular semisimple. Then $G_x\subseteq G$ is a maximal torus, so the construction of
Section~\re{can} gives rise to algebra homomorphisms $\pr_x:=\pr_{G_x}$ and  $\pr'_x:=\pr'_{G_x}$ from $\qlbar[\La_G]^{\ov{W}}$ to $\qlbar[\pi_0(L{G}_{x})]$.
\end{Emp}

The following result, proved in \cite{Yun}, is a Lie algebra analog of \rt{action}.

\begin{Thm} \label{T:yun}
For every $i\in \B{Z}$, there exists a finite
filtration $\{F^j H_i(\Fl_{G,x})\}_j$ of $H_i(\Fl_{G,x})$, stable under
the action of $\wt{W}_G\times\pi_0(LG_{x})$,
such that the action of a subalgebra $\qlbar[\La_G]^{\ov{W}}\subseteq \qlbar[\wt{W}_G]$ on each graded piece
$\gr^j H_i(\Fl_{G,x})$ is induced from the action of $\qlbar[\pi_0(L{G}_{x})]$ via homomorphism $\pr'_{x}$.
\end{Thm}

\begin{Emp} \label{E:sign}
{\bf Remark.} Notice that statement of \rt{yun} is incompatible with that of \cite[Theorem~2]{Yun}, because the algebra homomorphism \cite[formula~(1)]{Yun} used in \cite{Yun} coincides with our homomorphism $\pr_x$ (see Section~\re{affsprlie}(c)), and thus differs from our homomorphism $\pr'_x$ by involution $\iota$.

On the other hand, as it was confirmed by Zhiwei Yun, his argument actually proves our \rt{yun} rather than \cite[Theorem~2]{Yun}. Namely, Yun reduces the assertion to its particular case\footnote{actually a global version of it}, when element $x$ has a regular semi-simple reduction. In this case, Lusztig's action on homology comes from the right geometric action of $\wt{W}_G$ on $\Fl_{G,x}$, which is ``compatible'' with the left action of $\pi_0(LG_{x})$. Therefore one has to precompose one of the actions with involution $\iota$ in order to have a compatibility of left actions on homology.
\end{Emp}

Now we are ready to prove \rt{action}.

\begin{Emp}
{\bf Proof of \rt{action}.}
 To expose the structure of the proof we divide it into steps.

\smallskip

\noindent{\bf Step 1.} If suffices to assume that $\C{L}$ is $\ov{W}$-equivariant.
\begin{proof}

The local system $\C{L}^{\st}:=\bigoplus_{\ov{w}\in\ov{W}}\ov{w}_*\C{L}$ is naturally $\ov{W}$-equivariant, and we claim that assertion for
$\C{L}$ follows from that for $\C{L}^{\st}$. Indeed, we have a $(\La_G\times LG^0_{\gm})$-equivariant decomposition
\[
H_i(\Fl_{G,\gm},\C{F}_{\C{L}^{\st}})=\bigoplus_{\ov{w}\in\ov{W}}H_i(\Fl_{G,\gm},\C{F}_{\ov{w}_*\C{L}}).
\]
Therefore the $(\wt{W}_G\times LG^0_{\gm})$-stable filtration $\{F^j  H_i(\Fl_{G,\gm},\C{F}_{\C{L}^{\st}})\}_j$ of $H_i(\Fl_{G,\gm},\C{F}_{\C{L}^{\st}})$ satisfying \rt{action} for $\C{L}^{\st}$ decomposes as a direct sum of
$(\La_G\times LG^0_{\gm})$-stable filtrations $\{F^j  H_i(\Fl_{G,\gm},\C{F}_{\ov{w}_*\C{L}})\}_j$ of the
$H_i(\Fl_{G,\gm},\C{F}_{\ov{w}_*\C{L}})$'s satisfying \rt{action} for $\ov{w}_*\C{L}$.
\end{proof}

From now on we will assume that $\C{L}$ is $\ov{W}$-equivariant.
\smallskip

\noindent{\bf Step 2.} The assertion holds, if $\gm$ is topologically unipotent and $\C{L}=\qlbar$.
\begin{proof}
Let $\Phi:LG_{\on{tu}}\isom {L}\fg_{\on{tn}}$ be an ${L}(G^{\ad})$-equivariant isomorphism
of \rl{qlog}, inducing an isomorphism $\I^+\isom \Lie(\I^+)$. By an ${L}(G^{\ad})$-equivariance, we have an equality $G_{\gm}=G_{\Phi(\gm)}\subseteq G$, thus $\Phi(\gm)\in\fg(K)$ is regular semisimple (and $G_{\gm}=G_{\gm}^0$). Similarly, the affine Springer fiber $\Fl_{G,\gm}\subseteq\Fl_G$  of $\gm\in LG$ coincides with the affine Springer fiber $\Fl_{G,\Phi(\gm)}\subseteq\Fl_G$  of $\Phi(\gm)\in {L}\fg$.
Moreover, unwinding the definitions, the identification  $H_i(\Fl_{G,\gm},\qlbar)\simeq   H_i(\Fl_{G,\Phi(\gm)},\qlbar)$ is $(\wt{W}_G\times LG^0_{\gm})$-equivariant, so the assertion of \rt{action} for $H_i(\Fl_{G,\gm},\qlbar)$ is equivalent to the assertion of \rt{yun} for   $H_i(\Fl_{G,\Phi(\gm)},\qlbar)$.
\end{proof}

\noindent{\bf Step 3.} The assertion holds, if $\gm$ is topologically unipotent.

\begin{proof}
Since $\gm$ is topologically unipotent, the map $\red_{\gm}:\Fl_{G,\gm}\to \ov{T}_G$ maps all of $\Fl_{G,\gm}$ to the
identity element. Therefore $\C{F}_{\C{L}}$ is the constant sheaf on $\Fl_{G,\gm}$ with fiber $\C{L}_1$, and $\C{L}_1$ is $\ov{W}$-equivariant (hence
$\wt{W}_G$-equivariant), so we have a
$(\wt{W}_G\times LG^0_{\gm})$-equivariant isomorphism
\[
H_i(\Fl_{G,\gm},\C{F}_{\C{L}})\simeq H_i(\Fl_{G,\gm},\qlbar)\otimes_{\qlbar}\C{L}_1.
\]
Therefore the assertion for general $\C{L}$ follows from the particular case $\C{L}=\qlbar$, shown in Step 2.
\end{proof}

\noindent{\bf Step 4.}  Let $\gm=su$ be the topological Jordan decomposition. Then the assertion holds if $s\in Z(G)$.

\begin{proof}
Since $u$ is topologically unipotent, the assertion follows from Step 3 and \rl{central}.
\end{proof}

\noindent{\bf Step 5.} The assertion holds in general.

\begin{proof}
Let $\gm=su$ be the topological Jordan decomposition. Since $G_s^0$ is a split reductive group over $K$ (by \rl{topjd}(a)), and $s\in Z(G_s^0)(K)$ (by \rl{tjd}(b)), the assertion holds for $\gm\in G^0_{s}$ (by Step 3). Since $(G_s^0)_{\gm}=G_{\gm}$ (by \rl{tjd}(b)), this means that there exists a finite filtration
$\{F^jH_i(\Fl_{G_s^0,\gm},\C{F}_{\C{L}})\}_j$ of $H_i(\Fl_{G_s^0,\gm},\C{F}_{\C{L}})$, stable under the action of  $\wt{W}_{G_s^0}\times \pi_0(LG^0_{\gm})$,  such that the action of $\qlbar[\La_G]^{\ov{W}_{G_s^0}}$ on each
graded piece $\gr^jH_i(\Fl_{G_s^0,\gm},\C{F}_{\C{L}})$ is induced from $\qlbar[\pi_0(LG^0_{\gm})]$ via the homomorphism
$\pr'_{G_s^0,\gm}:\qlbar[\La_G]^{\ov{W}_{G_s^0}}\to\qlbar[\pi_0(LG^0_{\gm})]$.

Using \rco{ind}, we obtain the finite $\wt{W}_G\times\pi_0(LG^0_{\gm})$-stable filtration
\[
\Big\{F^jH_i(\Fl_{G,\gm},\C{F}_{\C{L}})\Big\}_j:=\Big\{\ind_{\wt{W}_{G_s^0}}^{\wt{W}_G}F^jH_i(\Fl_{G_s^0,\gm},\C{F}_{\C{L}})\Big\}_j
\]
of $H_i(\Fl_{G,\gm},\C{F}_{\C{L}})$. Since projection $\pr'_{\gm}$ is the restriction of projection $\pr'_{G_s^0,\gm}$ to $\qlbar[\La_G]^{\ov{W}}\subseteq\qlbar[\La_G]^{\ov{W}_{G_s^0}}=\qlbar[\La_{G_s^0}]^{\ov{W}_{G_s^0}}$,
this filtration satisfies the required property.
\end{proof}
\end{Emp}

\pagebreak

\section*{List of main terms and symbols}

%Terms and symbols below are indexed by number number, they are defined.

\begin{multicols}{3}

%\e $\ov{B}_{\I,\P}$, \re{parahoric}

\e admissible isomorphism, \re{admisaff}

\e $\ov{\fa}$, \re{DLpadic}

\e bounded element, \re{bounded}

\e $\can_{\gm}$, \re{connected}

\e $\can'_{\gm}$, \re{dual}

\e Chevalley space $c_H$, \re{weylprop}

\e Demazure product, \ref{L:dem}

\e $\C{E}$-stable function, \re{estable}

\e $\C{E}_{S,\ka}$, $\C{E}_{\la,\ka}$, \re{exenddatum}

\e $\C{E}_{\gm,\ka}$, \re{estable}

\e elliptic element, \re{finell}

\e endoscopic datum, \re{enddatum}

\e $F$, $F^{\nr}$, $\fq$, \re{convention}

\e $\Fl$, $\Fl_{\gm}$, \re{variant}

\e $\Fl_G$, $\Fl_{G,\I}$, $\Fl_{G,J}$, $\Fl_{G,J,I}$, \re{affflvar}

\e $\Fl_{G,\gm}$, $\Fl_{G,J,\gm}$, \re{affsprfib}

\e $\Fl^{\leq n}$, $\Fl^{\leq n; J_r; J'_l; m_{\reg}}$, \re{filhom}

\e $\Fl_{\gm}^{(\leq n)}$,  \re{step4}

\e $\C{F}_{\C{L}}$, \re{affsprfib}

\e $\C{F}_{\theta,\varphi}$, \re{ltheta}

\e $\C{F}_{\theta}^{\st}$, \re{notstable}

\e $f_{\fa, \theta}$, $f_{T,\theta}$, \re{DLpadic}

\e $f_{T,\theta}^{\st}$, $f_{T,\theta}^{\ka}$, \re{fst}

\e filtration, \re{filtr}, \re{Rees}, \re{fil}

\e $G, G^{\rss}$ , \re{convention}

\e $G_{\fa}$, $G^+_{\fa}$, $\wt{G}_{\fa}$, \re{DLpadic}

\e $\wh{G}$, \re{dualgp}

\e $G^{\sc}$, \re{saffsetup}

\e $G_{\gm}$, \re{affsprfib}

\e general position, \re{DLpadic}

\e good position, \re{goodpos}

\e  $H_i(Y,\C{F})$, \re{convention}

\e $H_i(\Fl_{G,\gm},\C{F}_{\theta,\varphi})_{\ka}$, \re{notkappa}

\e $H'_i(Y_{\gm},\C{F}_{\C{L}})$, \re{truncation}, \re{filhom1}

\e $\I$, \re{saffsetup}

\e  $\I^{\sc}$, \re{variant}

\e  $\I^+$, \re{prop}

\e ind-scheme, \re{filtr}

\e $\inv(\gm,\gm')$, $\inv(\fa,\fa')$, \re{stableconj}

\e $J$-longest, \ref{N:k-reg}

\e $J_{\P}$, \re{parahoric}

\e $K$, $k$, \re{convention}

\e $\ov{L}$, $L^{\sep}$, $\ell$, \re{convention}

\e $\C{L}_{\theta}$, $\C{L}_{\theta,\varphi}$, \re{ltheta}

\e $\C{L}_{\theta}^{\st}$, \re{notstable}

\e $l_g$, \re{affsprfib}

\e $l(g)$, $l_{\I}(g)$, \re{bruhatlength}

\e $M_{\fa}$, \re{DLpadic}

\e $M_{\P}$, \re{parahoric}

\e $m$-regular, \re{mreg}

\e $\on{Orb}^{\st}_{G(F)}(\gm)$, \re{estable}

\e $\C{O}$, \re{saffsetup}

\e $\P$, \re{saffsetup}

\e $\P_{\fa}$, \re{DLpadic}

\e $\P^+$, $\P_{J,\I}$, \re{parahoric}

\e  $\pr_{\I}$, \re{prop}

\e $\pr_S$, $\pr'_S$, \re{can}

\e $\pr_{\gm}$, $\pr'_{\gm}$, \re{setup comp}

\e quasi-isogeny, \ref{D:quasiisogeny}

\e $R^{\theta}_{\fa}$, $R^{\ov{\theta}}_{\ov{\fa}}$, \re{DLpadic}

\e Rees module $R(M)$, \re{Rees}

\e Rees ring $R(A)$, \re{Rees}

\e $\red_{\gm}$, \re{affsprfib}

\e $S_{G,\gm}$, \re{cent}

\e $S_{\gm}$, \re{lengthfilt}

\e $\wt{S}$, $\wt{S}_{T,\I}$, \re{affweyl}

\e stable function, \re{stable}

\e stable orbit, \re{estable}

\e stably conjugate, \re{stableconj}, \ref{N:sitor}, \ref{N:conj}, \re{inner}

\e stably equivalent, \re{inner}

\e strongly semisimple, \re{bounded}

\e $T_{\C{O}}$, $\ov{T}$, $\T$, \re{saffsetup}

\e $\ov{T}_G$, $\ov{T}_{G,T}$ , \re{affweyl}

\e $\Tr_{\gen}$, \ref{C:gentr}

\e $T_{\fa}$, \ref{N:stconj}

\e $\T_{\fa}$, \re{DLpadic}

\e tamely ramified, \re{DLpadic}

\e topological Jordan decomposition, \ref{L:tjd}

\e topologically unipotent, \re{bounded}

\e $t_{\fa,\theta}$, \re{formula}

\e $\wt{u}_{\I}$, \ref{N:twisting}

\e $\wt{u}_{T,\varphi}$, $\wt{u}_{\fa,\varphi}$, \re{interpr}

\e $\ov{u}_{T,\varphi}$, $[\wt{u}_{T,\varphi}]$, $[\wt{u}_{\fa,\varphi}]$, \ref{N:stconj}

\e $\ov{W}$, $\ov{W}_G$, \re{prop}

\e $\wt{W}_G$, $\wt{W}$, $\wt{W}_{G,T}$, $W_J$, \re{affweyl}

\e $\wt{W}^{\leq n}$, \re{filtr0}

\e $(\wt{W}\si)_{\tor}$, \ref{N:twisting}

\e $\wt{w}_{\I}$, $\ov{w}_{\I}$, $\ov{w}_{T,\varphi}$, \re{basic pair}

\e  $\wt{w}_{\I,\I'}$, \re{prop}

%\e $\wt{W}^{\leq u}$, \re{notdem}

\e  $X_*(-)$, \re{prop}

\e $Z_0$, \re{interpr}

\e $\Gm_L$ , \re{convention}

\e $\Dt^{\leq n}$, \re{filtr0}

\e  $\eta$, $\eta_u$, \re{variant}

\e $\ov{\theta}$, \re{DLpadic}

\e $\La_0$, \re{interpr}

\e $\La_G$, $\La_{G,T}$, \re{affweyl}

\e $\La_{G,\gm}$, \re{cent}

\e $\La_{\gm}$, \re{lengthfilt}

\e $\nu_H$, $\nu_{T_H}$, \re{weylprop}

\e $\pi_{\fa,\theta}$, \re{DLpadic}

\e $\pi_J$, \re{affflvar}

\e $\pi_0(LG^{\sc}_{\gm})^{\leq n}$, \re{lengthfilt}

\e $\si$, \re{convention}

\e $\lan\si\ran_n$, \ref{N:twisting}

\e $\varphi_{\fa}$, \ref{N:stconj}

\e $\varphi_{T,\I}$, \re{prop}

\e $\Om_G$, $\Om_{G,\I}$, \re{affweyl}

\e

\e

\e

\e

\e

\e

\e

\e

\e

\e

\e

\e

\e

\e

\e

\e

\e

\e

\e

\e

\e

\e

\e

\e

\e

\e

\e

\e

\e

\e

\e

\e

\e

\e

\e

\e

\e

\e

\e

\e

\e

\e

\e

\e

\e

\e

\e

\e

%zzzzzzzz
\end{multicols}

\end{document}